\def\rr{{\mathbb R}}
\def\zz{{\mathbb Z}}
\def\cc{{\mathbb C}}
\def\nn{{\mathbb N}}
\def\cb{{\mathcal B}}
\def\cg{{\mathcal G}}
\def\ch{{\mathcal H}}
\def\cl{{\mathcal L}}
\def\cm{{\mathcal M}}
\def\cs{{\mathcal S}}
\def\cx{{\mathcal X}}
\def\cy{{\mathcal Y}}
\def\sca{{\mathscr A}}
\def\scb{{\mathscr B}}
\def\scc{{\mathscr C}}
\def\scg{{\mathscr G}}
\def\sch{{\mathscr H}}
\def\sci{{\mathscr I}}
\def\scl{{\mathscr L}}
\def\sct{{\mathscr T}}
\def\scx{{\mathscr X}}
\def\scy{{\mathscr Y}}
\def\fz{\infty}
\def\az{\alpha}
\def\bz{\beta}
\def\dz{\delta}
\def\ez{\epsilon}
\def\gz{{\gamma}}
\def\lz{\lambda}
\def\oz{\omega}
\def\boz{\Omega}
\def\tz{\theta}
\def\vz{\varphi}
\def\lf{\left}
\def\r{\right}
\def\hs{\hspace{0.25cm}}
\def\ls{\lesssim}
\def\ov{\overline}
\def\noz{\nonumber}
\def\wz{\widetilde}
\def\st{\subset}
\def\bh{\backslash}
\def\supp{\mathop\mathrm{\,supp\,}}
\def\loc{{\mathop\mathrm{loc\,}}}
\def\diam{\mathop\mathrm{\,diam\,}}
\def\at{{\mathop\mathrm{at}}}
\def\lon{L^1(\mathcal{X})}
\def\ltw{L^2(\mathcal{X})}
\def\llo{L^{\rm log}(\mathcal{X})}
\def\lq{L^q(\mathcal{X})}
\def\li{L^{\infty}(\mathcal{X})}
\def\bmo{\mathop\mathrm{\,{\rm BMO}(\mathcal{X})}}
\def\bmop{\mathop\mathrm{\,{\rm BMO}^{+}(\mathcal{X})}}
\def\hona{H^1_\at (\mathcal{X})}
\def\hfa{H^1_{\at,\,{\rm fin}}(\mathcal{X})}
\def\hlo{H^{\rm log}(\mathcal{X})}
\def\lip{{\mathop\mathrm{\,Lip}}}
\newtheorem{theorem}{Theorem}[section]
\newtheorem{lemma}[theorem]{Lemma}
\newtheorem{corollary}[theorem]{Corollary}
\newtheorem{proposition}[theorem]{Proposition}
\theoremstyle{definition}
\newtheorem{remark}[theorem]{Remark}
\newtheorem{definition}[theorem]{Definition}
\numberwithin{equation}{section}
\begin{document}

\arraycolsep=1pt

\title{\Large\bf Products of Functions in
$\mathop\mathrm{BMO}({\mathcal X})$ and
$H^1_{\rm at}({\mathcal X})$ via Wavelets over
Spaces of Homogeneous Type \footnotetext {\hspace{-0.35cm}
2010 {\it Mathematics Subject Classification}. Primary 42B30;
Secondary 47A07, 42C40, 30L99.
\endgraf {\it Key words and phrases}.
metric measure space of homogeneous type,
product, Hardy space, $\mathop\mathrm{BMO}({\mathcal X})$,
regular wavelet, spline function, bilinear operator, paraproduct.
\endgraf
Dachun Yang is supported by the National
Natural Science Foundation of China
(Grant Nos.~11571039 and 11361020),
the Specialized Research Fund for the Doctoral Program of Higher Education
of China (Grant No. 20120003110003) and the Fundamental Research
Funds for Central Universities of China
(Grant Nos.~2013YB60 and 2014KJJCA10).}}
\author{Xing Fu, Dachun Yang\,\footnote{Corresponding author}
\ \ and Yiyu Liang}
\date{ }
\maketitle

\vspace{-0.8cm}

\begin{center}
\begin{minipage}{13cm}
{\small {\bf Abstract}\quad Let $({\mathcal X},d,\mu)$
be a metric measure space of homogeneous type in the sense of
R. R. Coifman and G. Weiss and $H^1_{\rm at}({\mathcal X})$ be the atomic
Hardy space. Via orthonormal bases of regular wavelets
and spline functions recently constructed by P. Auscher and
T. Hyt\"onen, the authors prove that
the product $f\times g$ of $f\in H^1_{\rm at}({\mathcal X})$
and $g\in\mathop\mathrm{BMO}({\mathcal X})$, viewed as a distribution,
can be written into a sum of two bounded bilinear operators, respectively,
from $H^1_{\rm at}({\mathcal X})\times\mathop\mathrm{BMO}({\mathcal X})$ into
$L^1({\mathcal X})$ and from $H^1_{\rm at}({\mathcal X})
\times\mathop\mathrm{BMO}({\mathcal X})$
into $H^{\log}({\mathcal X})$, which
affirmatively confirms the conjecture suggested by
A. Bonami and F. Bernicot
(This conjecture was presented by L. D. Ky in
[J. Math. Anal. Appl. 425 (2015), 807-817]).}
\end{minipage}
\end{center}

\section{Introduction}\label{s1}

\hskip\parindent Let $H^1(\rr^D)$ and $\mathop\mathrm{BMO}(\rr^D)$
be, respectively, the classical Hardy space and the space of functions
with bounded mean oscillations on $\rr^D$ endowed with the $D$-dimensional
Lebesgue measure.  As is well known,
the pointwise product $fg$ for $f\in H^1(\rr^D)$
and $g\in \mathop\mathrm{BMO}(\rr^D)$ may not be meaningful,
since this pointwise product is not locally integrable on $\rr^D$ in general
(see \cite{bijz} for the details).
Nevertheless, we can view  such a product as a distribution,
which is denoted by $f\times g$ (see \cite{bijz} again).

In 2007, Bonami et al. \cite{bijz}
systematically studied the product of $f\in H^1(\rr^D)$
and $g\in \mathop\mathrm{BMO}(\rr^D)$, which is viewed as a Schwartz
distribution and can be further written as a sum of
an integrable function and a distribution in some adapted
Hardy-Orlicz space. Recently, Bonami, Grellier and Ky \cite{bgk}
essentially improved this result in the following two main aspects.

The first aspect is that the aforementioned Hardy-Orlicz space
can be replaced by a smaller space $H^{\log}(\rr^D)$, which is a
particular case of Musielak-Orlicz-type Hardy spaces
originally introduced by Ky \cite{k14}.
Via the main theorem of Nakai and Yabuta \cite{ny},
Bonami, Grellier and Ky \cite{bgk} further showed that
$H^{\log}(\rr^D)$ is optimal in the sense that it can not
be replaced by a smaller space; see \cite{bgk,k14} for the
details. For more properties on Musielak-Orlicz-type Hardy spaces,
we refer the reader to
\cite{hyy,k14,lhy,lnyz,ly13,ly15,yy}.

Secondly, Bonami, Grellier and Ky \cite{bgk}
answered a question of \cite{bijz}
via showing that $f\times g$ can be written into a sum of
two bilinear bounded operators, respectively, from
$H^1(\rr^D)\times \mathop\mathrm{BMO}(\rr^D)$
into $L^1(\rr^D)$ and from $H^1(\rr^D)\times \mathop\mathrm{BMO}(\rr^D)$
into $H^{\log}(\rr^D)$. As a consequence, they
obtained an optimal endpoint estimate involving the space $H^{\log}(\rr^D)$
for the div-curl lemma, which is related to an implicit conjecture from
\cite{bijz} (see also \cite{bgk,bfg}). Moreover,
the above decomposition of the product plays an essential role in establishing
the bilinear or the subbilinear decompositions, respectively,
for the linear or the sublinear commutators of
singular integrals by Ky \cite{k13}.
For more applications of the above decompositions,
we refer the reader to \cite{ky2,ky-15}.

As is well known, many classical results
of harmonic analysis over Euclidean
spaces can be extended to spaces of homogeneous type
in the sense of Coifman and Weiss \cite{cw71,cw77}, or to
the RD-space introduced by Han, M\"uller and Yang \cite{hmy08}
(see also \cite{hmy06,yz11}).

Recall that a quasi-metric space $(\cx, d)$ equipped
with a nonnegative measure
$\mu$ is called a {\it space of homogeneous type}
in the sense of Coifman and Weiss \cite{cw71,cw77}
if $(\cx, d,\mu)$ satisfies the following {\it measure doubling condition}:
there exists a positive constant $C_{(\cx)}\in[1,\fz)$ such that,
for all balls
$B(x,r):= \{y\in\cx:\,\, d(x, y)< r\}$
with $x\in\cx$ and $r\in(0, \fz)$,
\begin{equation*}
\mu(B(x, 2r))\le C_{(\cx)} \mu(B(x,r)),
\end{equation*}
which further implies that there exists a
positive constant $\wz C_{(\cx)}$ such that,
for all $\lz\in[1,\fz)$,
\begin{equation}\label{a.b}
\mu(B(x, \lz r))\le \wz C_{(\cx)}\lz^{n} \mu(B(x,r)),
\end{equation}
where $n:=\log_2 C_{(\cx)}$. Let
\begin{equation}\label{n0}
n_0:=\inf\{n\in(0,\fz):\ n\ {\rm satisfies}\ (\ref{a.b})\}.
\end{equation}
Observe that $n_0$ measures the
dimension of $\cx$ in some sense, $ n_0\le n$ and
\eqref{a.b} with $n$ replaced by $n_0$ may
not hold true.

It is known that a space of homogeneous type, $(\cx, d,\mu)$,
is called a \emph{metric measure space of homogeneous type}
in the sense of Coifman and Weiss if $d$ is a metric and, moreover, 
a space of homogeneous type, $(\cx,d,\mu)$, is
called an RD-\emph{space} if it satisfies the following additional \emph{reverse
doubling condition} (see \cite{hmy08}): there exist positive constants
$a_0,\ {\widehat C}_{(\cx)}\in(1,\fz)$ such that, for all balls $B(x,r)$
with $x\in\cx$ and $r\in(0, \diam(\cx)/a_0)$,
$$\mu(B(x, a_0r))\ge {\widehat C}_{(\cx)} \mu(B(x,r))$$
(see \cite{yz11} for more equivalent characterizations of RD-spaces).
Here and hereafter,
$$\diam (\cx):=\sup\{d(x,y):\ x,\,y\in\cx\}.$$

Let $(\cx,d,\mu)$ be a space of homogeneous type.
Coifman and Weiss \cite{cw77} introduced the
atomic Hardy space $H^{p,\,q}_\at (\cx,d,\mu)$ for all
$p\in(0,1]$ and $q\in[1,\fz]\cap(p,\fz]$ and showed
that $H^{p,\,q}_\at (\cx,d,\mu)$ is independent
of the choice of $q$, which is hereafter simply denoted by
$H^p_\at(\cx,d,\mu)$, and that its dual space is the
Lipschitz space $\lip_{1/p-1}(\cx,d,\mu)$ when $p\in(0,1)$,
or the space $\mathop\mathrm{BMO}(\cx,d,\mu)$
of functions with bounded mean oscillations when $p=1$.
Coifman and Weiss \cite{cw77} also
introduced the \emph{measure distance $\rho$}
defined by setting, for all $x,\,y\in\cx$,
\begin{equation}\label{a.x}
\rho(x,y):=\inf\lf\{\mu\lf(B_d\r):\
B_d\ \mathrm{is\ a\ ball\ containing}\ x\ {\rm and}\ y\r\},
\end{equation}
where the infimum is taken over all balls in $(\cx,d,\mu)$ containing
$x$ and $y$; see also \cite{ms1}.
It is well known that, although all balls defined by $d$ satisfy the axioms
of the complete system of neighborhoods in $\cx$ [and hence induce a (separated)
topology in $\cx$], the balls $B_d$ are not necessarily
open with respect to the topology
induced by the quasi-metric $d$. However, Mac\'ias and Segovia \cite[Theorem 2]{ms1}
showed that there exists a quasi-metric $\wz{d}$ such that $\wz{d}$
is \emph{equivalent} to $d$, namely, there exists a positive
constant $C$ such that, for all $x,\,y\in\cx$,
$$
C^{-1}d(x,y)\le\wz{d}(x,y)\le Cd(x,y),
$$
and the balls in $(\cx,\wz{d},\mu)$ are open.

We also recall that a quasi-metric measure space $(\cx,\rho,\mu)$
is said to be \emph{normal} in \cite{ms1} if
there exists a fixed positive constant $C_{(\rho)}$
such that, for all $x\in\cx$ and $r\in(0,\fz)$,
$$
C_{(\rho)}^{-1}r\le\mu\lf(B_{\rho}(x,r)\r)\le C_{(\rho)}r.
$$

Assuming that all balls in $(\cx,d,\mu)$ are open,
Coifman and Weiss \cite[p.\,594]{cw77} claimed
that the topology of $\cx$ induced by $d$ coincides with that 
of $\cx$ induced by $\rho$ and
$(\cx,\rho,\mu)$ is a normal space,
which were rigorously proved by
Mac\'ias and Segovia in \cite[Theorem 3]{ms1},
and also that the atomic Hardy space $H^p_\at(\cx,d,\mu)$
associated with $d$ and
the atomic Hardy space $H^p_\at(\cx,\rho,\mu)$
associated with $\rho$ coincide with equivalent quasi-norms
for all $p\in(0,1]$.
Mac\'ias and Segovia \cite[Theorem 2]{ms1} further showed that
there exists a normal quasi-metric
$\wz{\rho}$, which is equivalent to $\rho$, such that 
$\wz{\rho}$ is \emph{$\tz$-H\"older continuous}
with $\tz\in(0,1)$, namely,
there exists a positive constant $C$
such that, for all $x,\,\wz{x},\,y\in\cx$,
$$
\lf|\wz{\rho}(x,y)-\wz{\rho}(\wz{x},y)\r|\le C
\lf[\wz{\rho}(x,\wz{x})\r]^{\tz}
\lf[\wz{\rho}(x,y)+\wz{\rho}(\wz{x},y)\r]^{1-\tz}.
$$
Via certain geometric measure relations
between $(\cx,d,\mu)$ and $(\cx,\rho,\mu)$,
Hu et al. \cite[Theorem 2.1]{hyz} rigorously showed
the claim of Coifman and Weiss \cite[p.\,594]{cw77}
on the coincidence of both $H^p_\at(\cx,d,\mu)$
and $H^p_\at(\cx,\rho,\mu)$, which was also used by Mac\'ias and
Segovia \cite[pp.\,271-272]{ms2}.

When $(\cx,\rho,\mu)$ is a normal quasi-metric measure space,
Coifman and Weiss \cite{cw77} further
established the molecular characterization
for $H^1_\at(\cx,\rho,\mu)$.
When $(\cx,\wz\rho,\mu)$ is a normal quasi-metric
measure space and $\wz{\rho}$ is $\tz$-H\"older
continuous, Mac\'ias and Segovia \cite{ms2}
obtained the grand maximal function characterization
for $H^p_\at(\cx,\wz\rho,\mu)$ with $p\in (\frac 1{1+\tz}, 1]$
via distributions acting on certain spaces of Lipschitz
functions; Han \cite{h94} obtained their
Lusin-area function characterization;
Duong and Yan \cite{dy03} then characterized these
atomic Hardy spaces in terms of Lusin-area functions
associated with some Poisson semigroups;
Li \cite{l98} also obtained a characterization of
$H^p_\at(\cx,\wz\rho,\mu)$ in terms of the grand maximal function
defined via test functions introduced in \cite{hs94}.

Over RD-spaces $(\cx,d,\mu)$ with $d$ being a metric,
for $p\in (\frac {n_0}{n_0+1},1]$
with $n_0$ as in \eqref{n0}, Han et al. \cite{hmy06}
developed a Littlewood-Paley theory
for atomic Hardy spaces $H^p_\at(\cx,d,\mu)$;
Grafakos et al. \cite{gly1} established their characterizations
in terms of various maximal functions.
Moreover, it was shown in \cite{hmy08} that these Hardy spaces
coincide with Triebel-Lizorkin spaces on $(\cx,d,\mu)$.
Some basic tools, including spaces of test functions,
approximations
of the identity and various Calder\'on reproducing formulas on RD-spaces,
were well developed in \cite{hmy06,hmy08},
in order to develop a real-variable theory of Hardy spaces or,
more generally, Besov spaces and Triebel-Lizorkin
spaces on RD-spaces.
From then on, these basic tools play important roles in
harmonic analysis on RD-spaces (see, for example,
\cite{glmy, gly, hmy06, hmy08, kyz10, kyz11, yz08, yz11}).

Let $(\cx, d,\mu)$ be an RD-space. The problem about the product of
$f\in H^1_\at(\cx,d,\mu)$
and $g\in \mathop\mathrm{BMO}(\cx,d,\mu)$ was first studied by
Feuto \cite{f09}. In \cite{f09}, Feuto showed that
the product of $f\in H^1_\at(\cx,d,\mu)$ and
$g\in \mathop\mathrm{BMO}(\cx,d,\mu)$, viewed as a
distribution, can be written as a sum of
an integrable function and a distribution in some adapted
Hardy-Orlicz space. Recently, Ky \cite{ky} improved the above
result via showing that the product $g\times f$ can
be written into a sum of two
linear operators and via replacing the Hardy-Orlicz space
by $H^{\log}(\cx,d,\mu)$ which
is a smaller space than the aforementioned
Hardy-Orlicz space and is known to be optimal
even when $\cx=\rr^D$ endowed with the $D$-dimensional Lebesgue measure.
A. Bonami and F. Bernicot further \emph{conjectured} that
$g\times f$ can be written into a sum of two
\emph{bilinear} operators, which was presented by Ky in
\cite[p.\,809,\ Conjecture]{ky}.

Recently, Auscher and Hyt\"onen \cite{ah13}
built an orthonormal basis
of H\"older continuous wavelets with exponential decay
via developing randomized dyadic structures and properties of
spline functions over general spaces of homogeneous type.
Fu and Yang \cite{fy1} further obtained an unconditional basis of
$H^1_{\rm at}({\mathcal X})$ and several
equivalent characterizations of $H^1_{\rm at}({\mathcal X})$
in terms of wavelets.
Motivated by \cite{ah13}, \cite{bgk} and \cite{fy1}, in this article,
we give an affirmative answer to the aforementioned conjecture
of Bonami and Bernicot on
a metric measure space $(\cx,d,\mu)$ of homogeneous type.

We point out that the main result of this article can be
used to study the end-point boundedness of commutators generated by
linear or sublinear operators (see \cite{k13} for the Euclidean case).
More applications are also possible
(see \cite{ky2,ky-15} for the Euclidean case). But
we will not consider these problems in this article
due to the length of the article.

Throughout this article, for the presentation simplicity,
we \emph{always assume} that $(\cx,d,\mu)$
is a metric measure of homogeneous type, $\diam (\cx)=\fz$
and $(\cx,d,\mu)$ is non-atomic, namely, $\mu(\{x\})=0$ for any
$x\in\cx$. It is known that, if $\diam (\cx)=\fz,$
then $\mu(\cx)=\fz$ (see, for example, \cite[Lemma 8.1]{ah13}).

To state the main result of this article, we first recall
the notion of the space of test functions on $\cx$,
whose following versions were
originally introduced  by Han, M\"uller and Yang
\cite[Definition 2.2]{hmy06} (see also \cite[Definition 2.8]{hmy08}).

\begin{definition}\label{da.d}
Let $x_1\in\cx$, $r\in(0,\fz)$,
$\bz\in(0,1]$ and $\gz\in(0,\fz)$.
A function $f$ on $\cx$ is said to belong to the
{\it space of test functions, $\cg(x_1,r,\bz,\gz)$}, if
there exists a non-negative constant $\wz C$ such that
\begin{itemize}

\item[(T1)] $|f(x)|\le \wz{C}\frac{1}{V_r(x_1)+V(x_1,x)}
[\frac{r}{r+d(x_1,x)}]^\gz$ for all $x\in\cx$;

\item[(T2)] $|f(x)-f(y)|\le \wz{C}[\frac{d(x,y)}{r+d(x_1,x)}]^\bz
\frac{1}{V_r(x_1)+V(x_1,x)}[\frac{r}{r+d(x_1,x)}]^\gz$
for all $x,\,y\in\cx$ satisfying $d(x,y)\le[r+d(x_1,x)]/2$,
\end{itemize}
where $V_r(x_1):=\mu(B(x_1,r))$ and $V(x_1,x):=\mu(B(x_1,d(x_1,x)))$.
Moreover, for $f\in\cg(x_1,r,\bz,\gz)$, its norm is defined by setting
$$
\|f\|_{\cg(x_1,\,r,\,\bz,\,\gz)}:=\inf\lf\{\wz{C}:\ {\wz C}\
{\rm satisfies\ (T1)\ and\ (T2)}\r\}.
$$
\end{definition}

Fix $x_1\in\cx$. It is obvious that $\cg(x_1,1,\bz,\gz)$
is a Banach space. For the notational simplicity, we write
$\cg(\bz,\gz)$ instead of $\cg(x_1,1,\bz,\gz)$

For any given $\ez\in(0,1]$, let $\cg^\ez_0(\bz,\gz)$
be the completion of the set $\cg(\ez,\,\ez)$
in $\cg(\bz,\gz)$ when $\bz,\gz\in(0,\,\ez]$.
Moreover, if $f\in\cg^\ez_0(\bz,\gz)$, we then let
$\|f\|_{\cg^\ez_0(\bz,\gz)}:=\|f\|_{\cg(\bz,\gz)}$.
Recall that the \emph{dual space}
$(\cg^\ez_0(\bz,\gz))^*$ is defined
to be the set of all continuous linear functionals $\cl$ from
$\cg^\ez_0(\bz,\gz)$ to $\cc$ and
endowed with the weak-$*$ topology.

We remark that, for any $x\in\cx$ and $r\in(0,\fz)$,
$\cg(x,r,\bz,\gz)=\cg(x_1,1,\bz,\gz)$ with equivalent norms and
the equivalent positive constants depending on $x$ and $r$.

The following notion of the space $\bmo$ is from \cite{cw77}.

\begin{definition}\label{da.e}
The space $\bmo$ is defined to be
the class of all functions, $b\in L^1_{\loc}(\cx)$,
satisfying
$$
\|b\|_{\bmo}:=\sup_{B}\frac1{\mu(B)}\int_{B}|b(x)-m_{B}(b)|\,d\mu(x)<\fz,
$$
where the infimum is taken over all balls $B$ in $\cx$
and $m_B(b):=[\mu(B)]^{-1}\int_B b\,d\mu$.
\end{definition}

Now we recall the following notion of Hardy spaces $\hona$,
which was introduced in \cite{cw77}.

\begin{definition}\label{dc.k}
Let $q\in(1, \fz]$. A function $a$ on
$\cx$ is called  a {\it $(1, q)$-atom} if

(i) $\supp(a)\subset B$ for some ball $B\subset\cx$;

(ii) $\|a\|_{L^q(\cx)}\le [\mu(B)]^{1/q-1}$;

(iii) $\int_\cx a(x)\, d\mu(x)=0$.

A function $f\in L^1(\cx)$ is said to be in the {\it Hardy space
$H_{\rm at}^{1,\,q}(\cx)$} if there exist
$(1,q)$-atoms $\{a_j\}_{j=1}^\fz$ and numbers
$\{\lz_j\}_{j=1}^\fz\subset\cc$ such that
\begin{equation}\label{c.z}
f=\sum_{j=1}^\fz\lz_j a_j,
\end{equation}
which converges in $L^1(\cx)$, and
$$\sum_{j=1}^\fz|\lz_j|<\fz.$$
Moreover, the norm of $f$ in $H_{\rm at}^{1,\,q}(\cx)$
is defined by setting
$$
\|f\|_{H_{\rm at}^{1,\,q}(\cx)}
:=\inf\lf\{\sum_{j\in\nn}|\lz_j|\r\},
$$
where the infimum is taken over all possible decompositions of
$f$ as in \eqref{c.z}.
\end{definition}

Coifman and Weiss \cite{cw77}
proved that $H_{\rm at}^{1,\,q}(\cx)$ and
$H^{1,\,\fz}_{\rm at}(\cx)$ coincide with equivalent norms for all
different $q\in(1, \fz)$.
Thus, from now on, we denote $H^{1,\,q}_{\rm at}(\cx)$
simply by $H^1_{\rm at}(\cx)$.

\begin{remark}\label{rc.i}
It was shown in \cite{cw77} that $\hona$ is a Banach space
which is the predual of $\mathop\mathrm{BMO}(\cx)$.
\end{remark}

We also need to recall
some notions and results from \cite{ky}.

Let $\llo$ denote the \emph{Musielak-Orlicz-type space}
of $\mu$-measurable functions $f$ such that
$$
\int_{\cx}\frac{|f(x)|}{\log(e+|f(x)|)+\log(e+d(x_0,x))}
\,d\mu(x)<\fz;
$$
see \cite{ky}. For all $f\in\llo$, the norm of $f$ is defined by setting
$$
\|f\|_{\llo}:=\inf\lf\{\lz\in(0,\fz):\
\int_{\cx}\frac{|f(x)|/\lz}{\log(e+|f(x)|/\lz)+\log(e+d(x_0,x))}
\,d\mu(x)\le1\r\}.
$$

\begin{remark}\label{rf.x}
It is easy to see that $\lon\st\llo$ and, for all $f\in\lon$,
$$
\|f\|_{\llo}\le\|f\|_{\lon}.
$$
\end{remark}

Let $\ez\in(0,1]$, $\bz,\,\gz\in(0,\ez]$ and $f\in(\cg^\ez_0(\bz,\gz))^*$.
The \emph{grand maximal function}
$\cm(f)$ is defined by setting, for all $x\in\cx$,
\begin{equation}\label{a.u}
\cm(f)(x):=\sup\lf\{|\langle f,h\rangle|:\
h\in\cg^\ez_0(\bz,\gz),\ \|h\|_{\cg(x,\,r,\,\bz,\,\gz)}\le1
\ {\rm for\ some\ }r\in(0,\fz)\r\}.
\end{equation}

The following notion of Musielak-Orlicz-type
Hardy spaces is from \cite{ky}.

\begin{definition}\label{df.w}
Let $\ez\in(0,1]$ and $\bz,\,\gz\in(0,\ez]$.
The \emph{Hardy space $\hlo$} is defined by setting
$$
\hlo:=\lf\{f\in(\cg^\ez_0(\bz,\gz))^*:\
\|f\|_{\hlo}:=\|\cm(f)\|_{\llo}<\fz\r\}.
$$
\end{definition}

We also need to illustrate the
meaning of the product $f\times g$ for every $f\in\hona$ and
$g\in\bmo$ (see \cite{ky}).
For any $h\in\cg^\ez_0(\bz,\gz)$, let
$$
\langle f\times g, h\rangle:=\langle gh,f\rangle
:=\int_\cx ghf\,d\mu.
$$
From \cite[Proposition 3.1]{ky} (see also Lemma \ref{lf.t} below),
it follows that
$gh\in\bmo$ and hence the above definition is well defined
in the sense of the duality between $\hona$ and $\bmo$.

Now we state the main result of this article as follows.

\begin{theorem}\label{ta.a}
Let $(\cx,d,\mu)$ be a metric measure space of homogeneous type. Then
there exist two bounded bilinear operators $\scl:\ \hona\times\bmo
\to\lon$ and $\sch:\ \hona\times\bmo\to \hlo$
such that, for all $f\in\hona$ and $g\in\bmo$,
$$
f\times g=\scl(f,g)+\sch(f,g)\quad {\rm in}
\quad \lf(\cg^{\ez}_0(\bz,\gz)\r)^*,
$$
where $\ez\in(0,1]$ and $\bz,\,\gz\in(0,\ez]$.
\end{theorem}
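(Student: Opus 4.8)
The plan is to use the regular wavelet basis and spline functions of Auscher and Hyt\"onen \cite{ah13} to build a discrete paraproduct decomposition of the product $f\times g$, mirroring the Euclidean argument of Bonami, Grellier and Ky \cite{bgk} but adapted to $\cx$ via the machinery of Fu and Yang \cite{fy1}. First I would fix an orthonormal wavelet basis $\{\psi_\alpha^k\}$ with H\"older exponent and exponential decay, together with the associated averaging (father) functions, and expand both $f=\sum_{k,\alpha}\langle f,\psi_\alpha^k\rangle\psi_\alpha^k$ and $g=\sum_{k,\alpha}\langle g,\psi_\alpha^k\rangle\psi_\alpha^k$ (the latter modulo constants, which are harmless since $f$ has mean zero on each atom). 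Formally multiplying and regrouping the double sum by comparing the scales $k$ of the two factors yields three pieces: a ``high--high'' diagonal term $\Pi_1(f,g)$ where the scales are comparable, and two ``paraproduct'' terms $\Pi_2(f,g)$, $\Pi_3(f,g)$ where one factor is frozen at a coarse scale (replaced by its average) and the other oscillates at a fine scale.

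The key steps, in order, are as follows. \emph{Step 1:} Make the wavelet expansions rigorous in the pairing $(\cg_0^\ez(\bz,\gz))^*$, using Definition~\ref{da.d} and the regularity/decay of the wavelets to justify that the partial sums converge and that the rearrangement into $\Pi_1+\Pi_2+\Pi_3$ is legitimate; here one leans on the characterizations of $\hona$ in terms of wavelet coefficients from \cite{fy1} and on the fact that $gh\in\bmo$ for $h\in\cg_0^\ez(\bz,\gz)$ (the cited \cite[Proposition 3.1]{ky}, reproduced as Lemma~\ref{lf.t}). \emph{Step 2:} Show $\Pi_2(f,g)$ (the term pairing wavelet coefficients of $f$ against coarse averages of $g$) maps $\hona\times\bmo$ boundedly into $\lon$: estimate $\|\Pi_2(f,g)\|_{\lon}$ by a square-function/tent-space argument, using that the coarse averages of $g$ are controlled by $\|g\|_{\bmo}$ (John--Nirenberg on $\cx$) and that the wavelet square function of $f$ is in $\lon$ with norm $\lesssim\|f\|_{\hona}$. \emph{Step 3:} Handle $\Pi_1(f,g)$ and $\Pi_3(f,g)$: show each maps $\hona\times\bmo$ into $\hlo$. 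For the diagonal term $\Pi_1$, exploit that products of same-scale wavelets behave like molecules, summing to something in $\hona\subset\hlo$ with the Orlicz norm controlled by a Cauchy--Schwarz pairing of the two square functions (one in $\lon$, one in $\mathrm{BMO}$, giving an $L^{\log}$ bound via a generalized H\"older inequality for the Musielak--Orlicz space — this is where Remark~\ref{rf.x} and the defining Luxemburg norm of $\llo$ enter). For $\Pi_3$ (coarse averages of $f$ against wavelet coefficients of $g$), use that the averages of an $\hona$ function over a ball $B$ decay like $\|f\|_{\hona}/\mu(B)$ (vanishing-mean cancellation), so that $\Pi_3(f,g)$ is, up to acceptable errors, a sum of $\mathrm{BMO}$-weighted bumps that assemble into $\hlo$ by the atomic/maximal characterization of $\hlo$ in Definition~\ref{df.w}. \emph{Step 4:} Set $\scl:=\Pi_2$ and $\sch:=\Pi_1+\Pi_3$, check bilinearity (immediate from the coefficient formulas), and verify the identity $f\times g=\scl(f,g)+\sch(f,g)$ in $(\cg_0^\ez(\bz,\gz))^*$ by testing against $h\in\cg_0^\ez(\bz,\gz)$ and matching with $\langle gh,f\rangle$.

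The main obstacle I expect is \emph{Step 3}, specifically producing the $\hlo$ bound for $\Pi_1$ and $\Pi_3$ on a general space of homogeneous type. On $\rn$ one uses the Fourier/smoothness structure of Littlewood--Paley pieces freely; on $\cx$ one must instead work entirely with the wavelet/spline calculus of \cite{ah13}, and the critical point is establishing a sharp pointwise estimate for the relevant maximal function $\cm(\Pi_i(f,g))$ by a product of a Hardy-type maximal function of $f$ (controlled in $\lon$) and a maximal/sharp function of $g$ (controlled by $\|g\|_{\bmo}$), and then invoking the generalized H\"older inequality $\|\varphi\psi\|_{\llo}\lesssim\|\varphi\|_{\lon}\|\psi\|_{\bmo}$ adapted to the weight $\log(e+d(x_0,\cdot))$ appearing in the definition of $\llo$. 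Controlling the error terms generated by replacing a wavelet by a spline average at a truncated scale, and summing the resulting geometric series in the scale parameter against the exponential decay of the wavelets, will be the technically heaviest part; the doubling property \eqref{a.b} and the dimension exponent $n$ must be tracked carefully so that all series converge. Everything else — bilinearity, the distributional identity, and the $\lon$ bound for $\scl=\Pi_2$ — should follow by now-standard tent-space and atomic-decomposition arguments once the wavelet framework is in place.
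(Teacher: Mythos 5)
Your three-term decomposition is the right one (it is exactly the paper's $\Pi_1+\Pi_2+\Pi_3$ from Lemma \ref{le.a}), but you have assigned the target spaces to the wrong pieces, and this is not a cosmetic relabelling: two of your three boundedness claims are false. First, the term you route to $\lon$ --- the paraproduct $\sum_k(Q_kf)(P_kg)$ pairing fine-scale pieces of $f$ against coarse averages of $g$ --- is precisely the term that \emph{cannot} land in $\lon$: for a $(1,2)$-atom $a$ supported in $B_0$ one has $\Pi_2(a,g)=h+a\,m_{B_0}(g)$ with $h\in\hona$ controlled by $\|g\|_{\bmo}$, while $\|a\,m_{B_0}(g)\|_{\lon}=|m_{B_0}(g)|$, and $m_{B_0}(g)$ grows logarithmically in the radius and the location of $B_0$, so it is not bounded by $\|g\|_{\bmo}$ (nor by $\|g\|_{\bmop}$) uniformly over atoms. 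This logarithmic growth is exactly the obstruction that forces the space $\hlo$; the paper's Theorem \ref{tf.d} sends this term to $\hlo$ using Lemmas \ref{lf.v} and \ref{lf.u} together with the identity $\Pi_2(a,1)=a$. Second, your claim that the diagonal term $\sum_k(Q_kf)(Q_kg)$ ``behaves like molecules summing to something in $\hona\subset\hlo$'' fails at the first step: the product $\psi^k_\beta\psi^k_\gamma$ of two same-scale wavelets has no vanishing moment (take $\beta=\gamma$ to get $|\psi^k_\beta|^2$, whose integral is one), so the diagonal term lies in no Hardy space. It is, however, the unique term that genuinely lands in $\lon$: Cauchy--Schwarz gives $\ltw\times\ltw\to\lon$, and the atomic argument of Theorem \ref{tf.b} upgrades this to $\hona\times\bmo\to\lon$. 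The correct assignment, as in the paper, is $\scl:=\Pi_3=\sum_k(Q_kf)(Q_kg)$ into $\lon$ and $\sch:=\Pi_1+\Pi_2$ into $\hlo$, with $\Pi_1=\sum_k(P_kf)(Q_kg)$ in fact landing in the smaller space $\hona$ (Theorem \ref{tf.c}).

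Two secondary points. The constants in the wavelet expansion of $g\in\bmo$ are not ``harmless'': the paper must track them explicitly (the constants $c_{(N)}$ and $c_4$ in the proof of \eqref{f.b}) and absorbs them precisely through $\Pi_2(a,1)=a$ and $\int_\cx\psi^k_\beta\,d\mu=0$; if you had routed $\Pi_2$ to $\lon$ this bookkeeping would break down, since $\Pi_2(f,g+c)=\Pi_2(f,g)+cf$ while $\|g+c\|_{\bmo}=\|g\|_{\bmo}$. Also, passing from estimates on single atoms to bounded operators on all of $\hona$ is not automatic for quasi-Banach targets such as $\hlo$; the paper needs the uniform boundedness criterion of Theorem \ref{td.f}, resting on the Mauceri--Meda equivalence of norms on finite linear combinations of atoms (Theorem \ref{tf.z}), and the lack of compact support of the Auscher--Hyt\"onen wavelets forces the three-way near/far/coarse splitting of each $\Pi_i(a,g)$ that occupies most of Section \ref{s5}.
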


We show Theorem \ref{ta.a} by borrowing some ideas from \cite{bgk}.
Indeed, for all $f,\,g\in\ltw$ with finite wavelet decompositions,
we write the pointwise product $fg$ into a sum in $\ltw$ of three
bilinear operators, $\Pi_1(f,g)$, $\Pi_2(f,g)$ and $\Pi_3(f,g)$,
which are called \emph{paraproducts}
(see Lemma \ref{le.a} below).
We then investigate their boundedness separately.

For $\Pi_3$, from the orthonormal basis of regular wavelets,
it follows easily that $\Pi_3$ is bounded from $\ltw\times\ltw$
into $\lon$ (see Lemma \ref{le.b} below).
We then give the meaning of $\Pi_3(a,g)$
for any $(1,2)$-atom $a$ and $g\in\bmo$.
This is essentially different from the proof of \cite[Theorem 5.2]{bgk}
on Euclidean spaces via compactly supported wavelets, where $\Pi_3(a,g)$
can be written into just one part which
is estimated by the aforementioned boundedness of $\Pi_3$.
However, due to the lack of the compact supports of wavelets
constructed by Auscher and Hyt\"onen \cite{ah13},
we need to decompose $\Pi_3(a,g)$ into three parts via
the wavelet characterizations of $\bmo$ from
Auscher and Hyt\"onen \cite[Theorem 11.4]{ah13}
(see the proof of Theorems \ref{tf.b} below).
On the estimate of the first part, we use the boundedness
of $\Pi_3(a,g)$ from $\ltw\times\ltw$ into $\lon$.
Other two parts are estimated directly via the exponential
decay of the wavelets and a related very useful
technical lemma (namely, \cite[Lemma 6.4]{ah13} which is re-stated
as Lemma \ref{lb.x} below).
Finally, in Theorem \ref{tf.b} below, we extend
$\Pi_3$ to a bounded bilinear operator
from $\hona\times\bmo$ into $\lon$ by establishing a criterion
on the boundedness of a sublinear operator from
$\hona$ into a quasi-Banach space (see Theorem \ref{td.f}
below), where the equivalence of norms on finite linear combinations
of atoms obtained by Mauceri and Meda \cite{mm11} (see
Theorem \ref{tf.z} below) plays a crucial role.

For $\Pi_1$, to show that $\Pi_1$ is bounded
from $\ltw\times\ltw$ into $\hona$ (see Lemma \ref{le.c} below),
we first write $\Pi_1(f,g)$ into a multiple sum
via the spline expansion of $f$ and
the wavelet expansion of $g$. A key point here is,
via smartly relabeling the index of summations of this sum
[see \eqref{e.g} below], we exchange the order
of summations of this sum so that $\Pi_1(f,g)$
becomes a sum of a sequence of Calder\'on-Zygmund
operators acting on some functions which are proved to belong to $\hona$
by the equivalent characterizations of $\hona$ in terms
of wavelets obtained in \cite{fy1} (see also Theorem \ref{tc.d} below), which 
play key roles in this step. Thus, the desired boundedness of $\Pi_1$ follows from
the  boundedness in $\hona$ of Calder\'on-Zygmund
operators and a technical lemma (see Lemma \ref{lc.i} below).
Similar to $\Pi_3$, we give the meaning of
$\Pi_1(a,g)$ by decomposing it into three parts via
the wavelet characterizations of $\bmo$
(see the proof of Theorem \ref{tf.c} below),
which is much more sophisticated
than the Euclidean case due to the lack of the compact supports of wavelets.
For the estimate of the first part, we use the boundedness
of $\Pi_1(a,g)$ from $\ltw\times\ltw$ into $\hona$.
Other two parts are estimated directly via the
vanishing moments of the wavelets, the properties of spline functions,
the orthogonality, the exponential
decay of the wavelets and \cite[Lemma 6.4]{ah13}.
Finally, we extend $\Pi_1$ to a bounded bilinear operator
from $\hona\times\bmo$ into $\hona$ via a way similar to that
used for $\Pi_3$ (see Theorem \ref{tf.c} below).

Now we turn to $\Pi_2$. The boundedness of $\Pi_2$
from $\ltw\times\ltw$ into $\hona$ follows from the boundedness of
$\Pi_1$, since
$\Pi_2(f,g)=\Pi_1(g,f)$
for all $f,\,g\in\ltw$.
To define $\Pi_2(a,g)$ for any
$(1,2)$-atom $a$ and $g\in\bmo$, we decomposing it into three parts
(see the proof of Theorem \ref{tf.d} below), which are different
from those of $\Pi_1$ and $\Pi_3$.
For the estimate of the first part, we use the boundedness
of $\Pi_1(a,g)$ from $\ltw\times\ltw$ into $\hona$ and the fact
$\hona\st\hlo$ (see Corollary \ref{cf.o} below).
The second part is estimated directly via the
vanishing moments of the wavelets, the properties of spline functions,
the orthogonality, the exponential
decay of the wavelets, \cite[Lemma 6.4]{ah13}
and the fact that $\hona\st\hlo$.
For the third part, we need some useful properties
established in \cite{ky} (see also Lemmas \ref{lf.v}
and \ref{lf.u} below, observing these properties are
obtained without resorting to the reverse
doubling property of $\cx$) and an observation that
$\Pi_2(a,1)=a$ [see \eqref{5.10y} below]
via the properties of spline functions.
The last observation much simplifies the estimates for $\Pi_2(a,g)$
(see some similar estimates in \cite[Lemma 5.1]{bgk} for the Euclidean case,
which strongly depend on the compact support property of wavelets).
Finally, we extend $\Pi_2$ to a bounded bilinear operator
from $\hona\times\bmo$ into $\hlo$ by a way similar to that
used for $\Pi_3$ (see Theorem \ref{tf.d} below).

By these boundedness of $\Pi_1$, $\Pi_2$ and $\Pi_3$,
together with the decompositions for products
in $\ltw$, we conclude that, for all $(1,2)$-atoms $a$ and $g\in\bmo$,
$$
a\times g=\Pi_1(a,g)+\Pi_2(a,g)+\Pi_3(a,g)\quad {\rm in}
\quad \lf(\cg^{\ez}_0(\bz,\gz)\r)^*,
$$
which is also different from the result in \cite[Theorem 1.1]{bgk}
with compactly supported wavelets, where the above equality
was proved to hold true in $\ltw$ with $g$ replaced by $g\eta$, where
$\eta$ is a smooth function with compact support.
This, combined with the boundedness of $\Pi_1$, $\Pi_2$ and
$\Pi_3$ and a standard density argument, then
finishes the proof of Theorem \ref{ta.a},
where the wavelet characterizations of $\bmo$
and the fact $\Pi_2(a,1)=a$ again
play crucial roles.

\begin{remark}\label{ra.b}
(i) When $(\cx,d,\mu):=(\rr^D,|\cdot|,dx)$ is the Euclidean
space associated with the $D$-dimensional Lebesgue measure,
Theorem \ref{ta.a} then coincides with \cite[Theorem 1.1]{bgk}.

(ii) As in \cite{k13} on the Euclidean space,
the main result of this article can be applied
to investigate the end-point boundedness of commutators generated by
linear or sublinear operators, since these
commutators can be controlled by two bilinear or subbilinear
bounded operators, which are constructed
via $\Pi_1$, $\Pi_2$ and $\Pi_3$, the details
being presented in another article.
More applications to function spaces associated with operators on spaces
of homogeneous type are also possible
(see \cite{ky2,ky-15} for the Euclidean case).
Also, in a forthcoming paper, we will develop a
complete real-variable theory for Musielak-Orlicz Hardy
spaces, which include $\hlo$ as a special case, on spaces of homogeneous type.

(iii) In what follows, if the metric $d$ is replaced by a
quasi-metric $\rho$, by arguments essentially the same
as those used in the case of $d$,
we conclude that all the results obtained in this article
\emph{remain} valid, since most of the tools we need are from
\cite{ah13,ky} (see also \cite{ah15}),
which were established in the context of
spaces of homogeneous type.
Some minor modifications are needed when dealing with
the inclusion relations between two balls, where
the quasi-triangle constant is involved,
which only alter the corresponding results
by additive positive constants via \eqref{a.b}.
\end{remark}

The organization of this paper is as follows.

In Section \ref{s2}, we recall
some preliminary notions and the equivalent characterizations of $\hona$ in terms of wavelets from \cite{fy1}.

In Section \ref{s4}, we establish the $\ltw$ estimates
for the product of two functions in $\ltw$
and, in Section \ref{s5}, we finish the
proof of Theorem \ref{ta.a}.

Finally, we make some conventions on notation.
Throughout the whole paper, $C$ stands for a {\it positive constant} which
is independent of the main parameters, but it may vary from line to
line. Moreover, we use $C_{(\rho,\,\az,\,\ldots)}$
to denote a positive constant depending
on the parameters $\rho,\,\az,\,\ldots$.
Usually, for a ball $B$, we use $c_{B}$ and $r_{B}$, respectively, to denote
its center and radius. Moreover, for any
$x,\,y\in\cx$, $r,\,\rho\in(0,\fz)$ and ball $B:=B(x,r)$,
$$
\rho B:=B(x,\rho r), \quad V(x,r):=\mu(B(x,r))=:V_r(x),
\quad V(x,y):=\mu(B(x,d(x,y))).
$$
If, for two real functions $f$ and $g$, $f\le Cg$, we then write $f\ls g$;
if $f\ls g\ls f$, we then write $f\sim g$.
From \eqref{a.b}, it easily follows that, for any $x,\,y\in\cx$ and
$r\in(0,\fz)$, if $d(x,y)\le r$, then
\begin{equation}\label{a.e}
V(x,r)\sim V(y,r)
\end{equation}
with the equivalent positive constants independent of $x$, $y$ and $r$.
For any subset $E$ of $\cx$, we use
$\chi_E$ to denote its {\it characteristic function}.
Furthermore, $\langle\cdot,\cdot\rangle$ and $(\cdot,\cdot)$
represent the duality relation and the $\ltw$ inner product,
respectively.

\section{Preliminaries}\label{s2}

\hskip\parindent In this section, we first recall
some preliminary notions and some
lower bounds for regular wavelets and then introduce the
equivalent characterizations of $\hona$ in terms of wavelets from \cite{fy1}.

The following notion of the geometrically doubling
is well known in analysis on metric spaces,
for example, it can be found in
Coifman and Weiss \cite[pp.\,66-67]{cw71}.

\begin{definition}\label{db.b}
A metric space $(\cx,d)$ is said to be \emph{geometrically doubling} if there
exists some $N_0\in \nn$ such that, for any ball
$B(x,r)\st \cx$ with $x\in\cx$ and $r\in(0,\fz)$,
there exists a finite ball covering $\{B(x_i,r/2)\}_i$ of
$B(x,r)$ such that the cardinality of this covering is at most $N_0$,
where, for all $i$, $x_i\in\cx$.
\end{definition}

\begin{remark}\label{rb.l}
Let $(\cx,d)$ be a geometrically doubling metric space.
In \cite{h10}, Hyt\"onen showed that the geometrically doubling condition
is equivalent to each of following statements:
\vspace{-0.25cm}
\begin{itemize}
  \item[\rm(i)] For any $\ez\in (0,1)$ and any ball $B(x,r)\st \cx$
with $x\in\cx$ and $r\in(0,\fz)$,
there exists a finite ball covering $\{B(x_i,\ez r)\}_i$,
with $x_i\in\cx$ for all $i$, of
$B(x,r)$ such that the cardinality of this covering
is at most $N_0\ez^{-G_0}$,
here and hereafter, $N_0$ is as in Definition \ref{db.b} and
$G_0:=\log_2N_0$.
\vspace{-0.25cm}
  \item[\rm(ii)] For every $\ez\in (0,1)$, any ball $B(x,r)\st \cx$
with $x\in\cx$ and $r\in(0,\fz)$ contains
at most $N_0\ez^{-G_0}$ centers of disjoint balls $\{B(x_i,\ez r)\}_i$
with $x_i\in\cx$ for all $i$.
\vspace{-0.25cm}
  \item[\rm(iii)] There exists $M\in \nn$ such that any ball $B(x,r)\st \cx$
with $x\in\cx$ and $r\in(0,\fz)$ contains at most $M$ centers
$\{x_i\}_i\st\cx$ of
  disjoint balls $\{B(x_i, r/4)\}_{i=1}^M$.
  \end{itemize}
\end{remark}

It was proved by Coifman and Weiss in \cite[pp.\,66-68]{cw71}
that metric measure spaces of
homogeneous type are geometrically doubling.

In order to introduce the orthonormal basis of regular wavelets from \cite{ah13},
we first need to recall some notions and notation from \cite{ah13};
see also \cite{fy1}.
For every $k\in\zz$, a set of \emph{reference dyadic points},
$\{x^k_\az\}_{\az\in\sca_k}$, here and hereafter,
\begin{equation}\label{b.v}
\sca_k\ {\rm denotes\ some\ countable\ index\ set\ for\ each}\ k\in\zz,
\end{equation}
is chosen as follows [the Zorn lemma
(see \cite[Theroem I.2]{rs80})
is needed since we consider the maximality]. For
$k=0$, let
$\scx^0:=\{x^0_{\az}\}_{\az\in\sca_0}$ be
a maximal collection of $1$-separated points. Inductively,
for any $k\in\nn$, let
\begin{equation}\label{2.1x}
\scx^k:=\{x^k_\az\}_{\az\in\sca_k}\supset\scx^{k-1}\quad
{\rm and}\quad \scx^{-k}:=\{x^{-k}_\az\}_{\az\in\sca_k}\st\scx^{-(k-1)}
\end{equation}
be maximal $\dz^k$-separated and $\dz^{-k}$-separated collections
in $\cx$ and in $\scx^{-(k-1)}$, respectively.
Fix $\dz$ a small positive parameter, for example, it
suffices to take $\dz\le\frac{1}{1000}$.
From \cite[Lemma 2.1]{ah13}, it follows that
\begin{equation}\label{b.z}
d\lf(x^k_\az,x^k_\bz\r)\ge\dz^k\ {\rm for\ all\ }\az,\,\bz\in\sca_k\
{\rm and}\ \az\neq\bz, \quad
d\lf(x,\scx\r):=\inf_{\az\in\sca_k}d\lf(x,x^k_{\az}\r)<2\dz^k.
\end{equation}

Obviously, the dyadic reference points
$\{x^k_\az\}_{k\in\zz,\,\az\in\sca_k}$ satisfy \cite[(2.3) and (2.4)]{hk12}
with $A_0=1$, $c_0=1$ and $C_0=2$, which further induces a
dyadic system of dyadic cubes over geometrically doubling
metric spaces as in \cite[Theorem 2.2]{hk12}. We restate it
in the following theorem, which is applied to the construction
of the orthonormal basis of regular wavelets
as in \cite{ah13}.

\begin{theorem}\label{tb.c}
Let $(\cx,d)$ be a geometrically doubling metric space.
Then there exist families of sets,
$\mathring{Q}^k_{\az}\st Q^k_{\az}\st\ov{Q}^k_{\az}$
(called, respectively, \emph{open, half-open and
closed dyadic cubes}) such that:

\begin{itemize}
\item[\rm(i)] $\mathring{Q}^k_{\az}$ and $\ov{Q}^k_{\az}$
denote, respectively, the interior and the closure of $Q^k_{\az}$;

\item[\rm(ii)] if $\ell\in\zz\cap[k,\fz)$ and $\az,\,\bz\in\sca_k$,
 then either $Q^\ell_{\bz}\st Q^k_{\az}$ or $Q^k_{\az}\cap Q^\ell_{\bz}=\emptyset$;

\item[\rm(iii)] for any $k\in\zz$,
\begin{equation*}
\cx=\bigcup_{\az\in\sca_k}Q^k_{\az}\quad (disjoint\ union);
\end{equation*}

\item[\rm(iv)] for any $k\in\zz$ and $\az\in\sca_k$ with $\sca_k$ as in \eqref{b.v},
\begin{equation*}
B\lf(x^k_{\az},\frac13\dz^k\r)\st Q^k_{\az}
\st B\lf(x^k_{\az},4\dz^k\r)=:B\lf(Q^k_{\az}\r);
\end{equation*}

\item[\rm(v)] if $k\in\zz,\ \ell\in\zz\cap[k,\fz),\ \az,\,\bz\in\sca_k$ and
$Q^\ell_{\bz}\st Q^k_{\az}$, then $B(Q^\ell_{\bz})\st B(Q^k_{\az}).$

\end{itemize}
The open and closed cubes $\mathring{Q}^k_{\az}$ and $\ov{Q}^k_{\az}$,
with $(k,\az)\in\sca$, here and hereafter,
\begin{equation}\label{b.s}
\sca:=\{(k,\az):\ k\in\zz,\ \az\in\sca_k\},
\end{equation}
depend only on the points $x^\ell_\bz$ for
$\bz\in\sca_{\ell}$ and $\ell\in\zz\cap[k,\fz)$.
The half-open cubes $Q^k_{\az}$,
with $(k,\az)\in\sca$, depend on $x^\ell_\bz$ for $\bz\in\sca_{\ell}$
and $\ell\in\zz\cap[\min\{k,k_0\},\fz)$,
where $k_0\in\zz$ is a preassigned number
entering the construction.
\end{theorem}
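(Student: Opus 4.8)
The plan is to reconstruct the Hyt\"onen--Kairema dyadic system directly from the reference points $\{x^k_\az\}_{(k,\az)\in\sca}$, using only the separation and covering bounds in \eqref{b.z} together with the geometric doubling property; once a consistent family of half-open partitions satisfying (ii), (iii) and (iv) has been produced, the open and closed cubes in (i), the inclusion (v), and the dependence assertions all follow by bookkeeping. \textbf{Step 1 (the parent tree).} First I fix a well-ordering of the countable index set $\sca$ and, for each $(k+1,\bz)$, define its \emph{parent} to be the pair $(k,\az)$ minimizing $d(x^{k+1}_\bz,x^k_\az)$, ties broken by the well-ordering. By the density half of \eqref{b.z}, such an $\az$ exists with $d(x^{k+1}_\bz,x^k_\az)<2\dz^k$, so parents sit within $2\dz^k$ of their children. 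Iterating the parent map yields, for every $\ell\ge k$, an ancestor map $\sca_\ell\to\sca_k$ and a partial order $\preceq$ on $\sca$, where $(\ell,\bz)\preceq(k,\az)$ means that $(k,\az)$ is the $(\ell-k)$-fold ancestor of $(\ell,\bz)$. Summing the geometric series, any descendant center stays within $\sum_{j=k}^{\ell-1}2\dz^j<\frac{2}{1-\dz}\dz^k$ of its level-$k$ ancestor center.

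\textbf{Step 2 (the partition).} For $y\in\cx$ and a scale $m$, let $p_m(y):=x^m_{\bz}\in\scx^m$ be a nearest reference point at scale $m$, ties broken by the well-ordering; by \eqref{b.z}, $d(y,p_m(y))<2\dz^m$. For $k\le m$, let $(k,\az)\in\sca_k$ be the $(m-k)$-fold ancestor of the index $(m,\bz)$ and write $c_{k,m}(y):=x^k_\az$ for its center. By Step 1, every candidate center $c_{k,m}(y)$ lies within a fixed multiple of $\dz^k$ of $y$, so by geometric doubling only finitely many level-$k$ reference points ever arise this way. The crux is to show that, for fixed $k$, the ancestor $(k,\az)$ \emph{stabilizes} as $m\to\fz$, the limit being forced because the strict $\dz^k$-separation of level-$k$ points dominates the $O(\dz^m)$ perturbation caused by refining the scale. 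Denoting the limiting index by $L_k(y)\in\sca_k$, I define the half-open cube by $Q^k_\az:=\{y\in\cx:\ L_k(y)=(k,\az)\}$.

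\textbf{Step 3 (verifying (i)--(v)).} Property (iii) is immediate, since each $y$ receives exactly one label $L_k(y)$ at every scale, whence $\cx=\bigcup_{\az\in\sca_k}Q^k_\az$ is a disjoint union. For (ii), the ancestor relation gives $L_\ell(y)=(\ell,\bz)\Rightarrow L_k(y)=(k,\az)$ whenever $(\ell,\bz)\preceq(k,\az)$, so $Q^\ell_\bz\st Q^k_\az$, while incomparable labels force $Q^k_\az\cap Q^\ell_\bz=\emptyset$. For the two-sided bound (iv): if $d(y,x^k_\az)<\frac13\dz^k$, then at every fine scale the $\dz^k$-separation prevents any competing level-$k$ ancestor from winning, whence $L_k(y)=(k,\az)$ and $B(x^k_\az,\frac13\dz^k)\st Q^k_\az$; conversely, if $y\in Q^k_\az$, then letting $m\to\fz$ with the ancestor fixed at $(k,\az)$ gives $d(y,x^k_\az)\le\sum_{j=k}^{\fz}2\dz^j=\frac{2}{1-\dz}\dz^k<4\dz^k$, so $Q^k_\az\st B(x^k_\az,4\dz^k)=B(Q^k_\az)$. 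Finally I set $\mathring Q^k_\az:=\mathrm{int}\,Q^k_\az$ and $\ov Q^k_\az:=\overline{Q^k_\az}$ to obtain (i); and (v) follows from (ii) and (iv), since $Q^\ell_\bz\st Q^k_\az$ places $x^\ell_\bz$ inside $B(x^k_\az,4\dz^k)$ and a triangle-inequality estimate with $\dz\le\frac1{1000}$ upgrades this to $B(Q^\ell_\bz)\st B(Q^k_\az)$. The dependence statements are read off the construction: the interior and closure involve only descendants, i.e.\ levels $\ge k$, whereas the tie-breaking of the half-open cubes is inherited from a preassigned reference scale $k_0$, bringing in levels $\ge\min\{k,k_0\}$.

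\textbf{Main obstacle.} The heart of the matter is the stabilization in Step 2 --- that is, the well-definedness of the partition --- interlocked with the inner bound in (iv), where one must verify that every fine-scale approximant of a point $y$ with $d(y,x^k_\az)<\frac13\dz^k$ genuinely descends from $x^k_\az$. Both rest on playing the strict separation $\dz^k$ of level-$k$ centers against the covering tail $\sum_{j\ge k}2\dz^j=\frac{2}{1-\dz}\dz^k$, and it is precisely to keep this tail well below the target radius $4\dz^k$ (and to keep $\frac13\dz^k$ safely inside) that one takes $\dz$ small, e.g.\ $\dz\le\frac1{1000}$; extracting the exact constants $\frac13$ and $4$ is the only genuinely quantitative part of the argument, everything else being structural consequences of the tree in Step 1.
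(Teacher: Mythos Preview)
The paper does not prove this theorem; it is quoted from Hyt\"onen--Kairema \cite[Theorem~2.2]{hk12}, so there is no in-paper argument to compare against. Your sketch is an attempt to reconstruct that result, and the overall architecture---build the parent tree, then label each $y\in\cx$ by the level-$k$ ancestor of its nearest fine-scale reference point---is a reasonable first idea, but the argument has a real gap exactly where you flag the ``main obstacle''.

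The stabilization in Step~2 is not established by the separation-versus-tail heuristic. Your own estimate places the candidate ancestor $c_{k,m}(y)$ only within $\frac{2}{1-\dz}\dz^k\approx 2\dz^k$ of $y$, and a ball of that radius around a boundary point $y$ may contain several $\dz^k$-separated level-$k$ centers; nothing then prevents $c_{k,m}(y)$ from oscillating between two of them as $m\to\fz$, since refining the scale changes $p_m(y)$ and hence the entire ancestor chain, not merely its tail. This is not a cosmetic issue: without stabilization you have no map $L_k$, hence no sets $Q^k_\az$, and (iii) fails. Note also that your construction makes no actual use of the preassigned level $k_0$ from the statement---you mention it only in the last sentence---yet in the theorem $k_0$ governs precisely how half-open cubes at level $k>k_0$ depend on coarser data, which signals that the half-open partition cannot be built from levels $\ge k$ alone.

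The Hyt\"onen--Kairema construction avoids stabilization altogether: the closed cube $\ov Q^k_\az$ is defined as the closure of the set of descendant centers $\{x^\ell_\bz:(\ell,\bz)\preceq(k,\az)\}$, the open cube as $\cx\setminus\bigcup_{\az'\ne\az}\ov Q^k_{\az'}$, and the half-open cubes are produced by a one-time boundary-assignment rule at the single scale $k_0$, then propagated to all other scales via the parent tree. With this definition (i)--(v) and the dependence clauses become direct computations. If you wish to repair your approach, you must either prove stabilization for \emph{every} $y\in\cx$---which I do not see how to do, since boundary points genuinely sit between competing ancestors---or replace the limit $L_k(y)$ by a direct assignment at a fixed reference scale, at which point you have essentially reproduced the Hyt\"onen--Kairema scheme.
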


\begin{remark}\label{rb.d}
(i) In what follows, let $\le$ be the
\emph{partial order} for dyadic points as in \cite[Lemma 2.10]{hk12}.
It was shown in \cite[Lemma 2.10]{hk12} with $C_0=2$
that, if
$k\in\zz$, $\az\in\sca_k$ with $\sca_k$ as in \eqref{b.v},
$\bz\in\sca_{k+1}$ and
$(k+1,\bz)\le(k,\az)$, then
$d(x_\bz^{k+1},x_\az^k)<2\dz^k.$

(ii) For any $(k,\az)\in\sca$, let
\begin{equation}\label{b.k}
L(k,\az):=\lf\{\bz\in\sca_{k+1}:\ (k+1,\bz)\le(k,\az)\r\}.
\end{equation}
By the proof of \cite[Theorem 2.2]{hk12}
and the geometrically doubling property, we have the following
conclusions: $1\le\#L(k,\az)\le {\wz N}_0$ and
$
Q^k_{\az}=\bigcup_{\bz\in L(k,\,\az)}Q^{k+1}_{\bz}
$,
where ${\wz N}_0\in\nn$ is independent of $k$ and $\az$.
Here and hereafter, for any finite set $\mathfrak{C}$,
$\#\mathfrak{C}$ denotes its \emph{cardinality}.
\end{remark}

The following useful estimate about the $1$-separated set
is from \cite[Lemma 6.4]{ah13}.

\begin{lemma}\label{lb.x}
Let $\Xi$ be a $1$-separated set in a geometrically doubling
metric space $(\cx,d)$ with positive constant $N_0$.
Then, for all $\ez\in(0,\fz)$, there exists a positive constant
$C_{(\ez,\,N_0)}$, depending on $\ez$ and $N_0$, such that
$$
\sup_{a\in\cx}e^{\ez d(a,\,\Xi)/2}\sum_{b\in\Xi}
e^{-\ez d(a,\,b)}\le C_{(\ez,\,N_0)},
$$
here and hereafter, for any set $\Xi\st\cx$ and $x\in\cx$,
$d(x,\Xi):=\inf_{a\in\Xi}d(x,a)$.
\end{lemma}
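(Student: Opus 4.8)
The plan is to fix an arbitrary point $a\in\cx$ (and assume $\Xi\ne\emptyset$, as is the case for the reference-point sets we use), set $r_0:=d(a,\Xi)$, and bound the quantity $S(a):=e^{\ez r_0/2}\sum_{b\in\Xi}e^{-\ez d(a,b)}$ by a constant independent of $a$ and of $\Xi$. First I would rewrite each summand as $e^{\ez r_0/2-\ez d(a,b)}=e^{-\ez(d(a,b)-r_0/2)}$, so that $S(a)=\sum_{b\in\Xi}e^{-\ez(d(a,b)-r_0/2)}$, and then group the points of $\Xi$ into unit annuli centred at $a$: for each integer $j\ge0$ set $\Xi_j:=\{b\in\Xi:\ r_0+j\le d(a,b)<r_0+j+1\}$. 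Since $d(a,b)\ge r_0$ for every $b\in\Xi$, these annuli are pairwise disjoint and exhaust $\Xi$, and for $b\in\Xi_j$ one has $d(a,b)-r_0/2\ge r_0/2+j$, whence $e^{-\ez(d(a,b)-r_0/2)}\le e^{-\ez r_0/2}e^{-\ez j}$.

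The key geometric input is a polynomial bound on the number of points of $\Xi$ inside a ball, which I would extract from the geometrically doubling condition. Because $\Xi$ is $1$-separated, the balls $\{B(b,1/2)\}_{b\in\Xi}$ are pairwise disjoint. Hence, applying Remark \ref{rb.l}(ii) to the ball $B(a,R)$ with $R\ge1$ and the choice $\ez'=1/(2R)\in(0,1)$ (so that $\ez' R=1/2$), I obtain
$$\#\lf(\Xi\cap B(a,R)\r)\le N_0(\ez')^{-G_0}=N_0(2R)^{G_0}.$$
In particular, taking $R:=r_0+j+1\ge1$, every point of $\Xi_j$ lies in $B(a,r_0+j+1)$, so $\#\Xi_j\le N_0\,(2r_0+2j+2)^{G_0}$, a quantity polynomial in both $r_0$ and $j$.

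Combining the two steps, I would estimate
$$S(a)=\sum_{j\ge0}\sum_{b\in\Xi_j}e^{-\ez(d(a,b)-r_0/2)}\le N_0\,e^{-\ez r_0/2}\sum_{j\ge0}(2r_0+2j+2)^{G_0}e^{-\ez j}.$$
Using the crude splitting $(2r_0+2j+2)^{G_0}\le(2r_0+2)^{G_0}(2j+2)^{G_0}$, the right-hand side factors as $N_0\,\lf[e^{-\ez r_0/2}(2r_0+2)^{G_0}\r]\sum_{j\ge0}(2j+2)^{G_0}e^{-\ez j}$. The series converges and depends only on $\ez$ and $G_0$, and, since exponential decay dominates polynomial growth, $\sup_{r_0\ge0}e^{-\ez r_0/2}(2r_0+2)^{G_0}$ is finite and again depends only on $\ez$ and $G_0$. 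As $G_0=\log_2 N_0$, both factors depend only on $\ez$ and $N_0$, so $S(a)\le C_{(\ez,\,N_0)}$ with a constant independent of $a$ and of the particular $1$-separated set $\Xi$; taking the supremum over $a\in\cx$ then finishes the proof.

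The step I expect to be most delicate is ensuring that the constant is genuinely uniform in $a$, equivalently in $r_0=d(a,\Xi)$. The annular count unavoidably produces a polynomial factor in $r_0$ through the radius $r_0+j+1$ of the enclosing ball; this is precisely the role of the prefactor $e^{\ez d(a,\Xi)/2}$ in the statement, which—after the rewriting above—supplies the decay $e^{-\ez r_0/2}$ that absorbs that polynomial. Verifying that this absorption is uniform, and that the small-radius regime creates no difficulty (a $1$-separated set contains at most one point in any ball of radius $\le1/2$, so restricting to $R\ge1$ loses nothing), is the crux of the argument.
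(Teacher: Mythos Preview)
Your proof is correct. The paper does not actually give its own proof of this lemma: it is simply quoted as \cite[Lemma~6.4]{ah13} and used as a black box throughout. Your argument---partitioning $\Xi$ into unit annuli around $a$, bounding the cardinality of each annulus via Remark~\ref{rb.l}(ii) applied with $\ez'=1/(2R)$, and then absorbing the polynomial factor $(2r_0+2)^{G_0}$ with the remaining decay $e^{-\ez r_0/2}$---is exactly the standard proof and is essentially how \cite{ah13} establishes the result. The only minor cosmetic point is that the splitting $(2r_0+2j+2)^{G_0}\le(2r_0+2)^{G_0}(2j+2)^{G_0}$, while true (since $u+v+1\le(u+1)(v+1)$ for $u,v\ge0$), could be replaced by the slightly cleaner $(2r_0+2j+2)^{G_0}\le 2^{G_0}\max\{2r_0+2,2j+2\}^{G_0}\le 2^{G_0}\bigl[(2r_0+2)^{G_0}+(2j+2)^{G_0}\bigr]$, but either works.
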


We now recall more notions and
notation from \cite{ah13}. Let
$(\Omega,\mathscr{F},\mathbb{P}_{\omega})$ be
the \emph{natural probability measure space}
with the same notation as in \cite{ah13}, where $\mathscr{F}$ is
defined as the smallest $\sigma$-algebra containing the set
$$\lf\{\prod_{k\in\zz}A_k:\ A_k\st\Omega_k:={\{0,1,\ldots,L\}
\times\{1,\ldots,M\}}\ {\rm and\ only\ finite\ many}\
A_k\neq \Omega_k\r\},
$$
where $L$ and $M$ are defined as in \cite{ah13}.
For every $(k,\az)\in\sca$ with $\sca$ as in \eqref{b.s},
the \emph{spline function} is defined by setting
$$s^k_\az(x):=\mathbb{P}_\oz\lf(
\lf\{\oz\in\boz:\ x\in\overline{Q}^k_\az(\oz)\r\}\r),
\quad x\in\cx.$$

The following conclusions of splines are taken from \cite[Theorem 3.1]{ah13}.

\begin{proposition}\label{pb.x}
The spline functions have the following properties:
\begin{enumerate}
\item[\rm (i)] for all $(k,\az)\in\sca$ and $x\in\cx$,
\begin{equation}\label{b.x}
\chi_{B(x^k_\az,\,\frac18\dz^k)}(x)
\le s^k_\az(x)\le\chi_{B(x^k_\az,\,8\dz^k)}(x);
\end{equation}

\item[\rm (ii)] for all $k\in\zz$, $\az,\,\bz\in\sca_k$,
with $\sca_k$ as in \eqref{b.v}, and $x\in\cx$,
\begin{equation}\label{b.y}
s^k_\az(x^k_\bz)=\dz_{\az\bz},\quad
\sum_{\az\in\sca_k}s^k_\az(x)=1\quad {\rm and}\quad
s^k_\az(x)=\sum_{\bz\in\sct_{k+1}}p^k_{\az\bz}s^{k+1}_\bz(x),
\end{equation}
where,  for each $k\in\zz$, $\sct_{k+1}\st\sca_{k+1}$
denotes some countable index set
$$
\dz_{\az\bz}:=\left\{
\begin{array}{cc}
1,\ \ &\ \ {\rm if}\ \ \az=\bz,\\
0,\ \ &\ \ {\rm if}\ \ \az\neq\bz,
\end{array}
\right.
$$
and $\{p^k_{\az\bz}\}_{\bz\in\sct_{k+1}}$ is
a finite nonzero set of nonnegative numbers with
$p^k_{\az\bz}\le1$ for all $\bz\in\sct_{k+1}$;

\item[\rm (iii)] there exist positive constants $\eta\in(0,1]$ and $C$,
independent of $k$ and $\az$, such that,
for all $(k,\az)\in\sca$ and $x,\,y\in\cx$,
\begin{equation}\label{b.1}
\lf|s^k_\az(x)-s^k_\az(y)\r|\le C\lf[\frac{d(x,y)}{\dz^k}\r]^\eta.
\end{equation}
\end{enumerate}
\end{proposition}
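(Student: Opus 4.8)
The plan is to read all three assertions directly off the probabilistic definition $s^k_\az(x)=\mathbb{P}_\oz(\{\oz\in\boz:\ x\in\overline{Q}^k_\az(\oz)\})$, using two inputs: a deterministic (per realization $\oz$) geometric containment of the random cubes, and the probabilistic independence and thin-boundary structure of the randomized dyadic construction of \cite{ah13}. Properties (i) and (ii) will come from essentially translating the cube geometry of Theorem \ref{tb.c} into statements about the average $\mathbb{P}_\oz(\,\cdot\,)$; the only genuinely quantitative ingredient is needed for (iii).

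First I would record the per-realization containment: for every $\oz\in\boz$ and every $(k,\az)\in\sca$,
$$
B\lf(x^k_\az,\tfrac18\dz^k\r)\st \mathring{Q}^k_\az(\oz)\st\overline{Q}^k_\az(\oz)\st B\lf(x^k_\az,8\dz^k\r),
$$
which is the randomized analogue of Theorem \ref{tb.c}(iv) and holds with these explicit constants by the construction in \cite{ah13}. Property (i) is then immediate: if $d(x,x^k_\az)<\frac18\dz^k$, the left containment gives $x\in\overline{Q}^k_\az(\oz)$ for every $\oz$, so $s^k_\az(x)=1$; if $d(x,x^k_\az)>8\dz^k$, the right containment gives $x\notin\overline{Q}^k_\az(\oz)$ for every $\oz$, so $s^k_\az(x)=0$. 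For the interpolation identity $s^k_\az(x^k_\bz)=\dz_{\az\bz}$ in (ii), note that by construction each reference point lies in the interior of its own cube, $x^k_\bz\in\mathring{Q}^k_\bz(\oz)$ for all $\oz$; since the half-open cubes are pairwise disjoint (Theorem \ref{tb.c}(iii)), the open set $\mathring{Q}^k_\bz(\oz)$ is disjoint from $\overline{Q}^k_\az(\oz)$ whenever $\az\neq\bz$, whence $x^k_\bz\in\overline{Q}^k_\bz(\oz)$ always but $x^k_\bz\notin\overline{Q}^k_\az(\oz)$ for $\az\neq\bz$; taking $\mathbb{P}_\oz$ yields $\dz_{\az\bz}$.

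For the remaining two identities in (ii) I would invoke the probabilistic structure of the construction. The partition of unity $\sum_{\az\in\sca_k}s^k_\az(x)=1$ follows by writing $\sum_{\az\in\sca_k}s^k_\az(x)=\mathbb{E}_\oz\lf[\#\{\az\in\sca_k:\ x\in\overline{Q}^k_\az(\oz)\}\r]$ and observing that, for fixed $x$, the half-open cubes $\{Q^k_\az(\oz)\}_\az$ partition $\cx$ for every $\oz$, so the count is at least $1$, while the event that $x$ lies on a cube boundary is $\mathbb{P}_\oz$-null, so the count equals $1$ almost surely; the latter is precisely the thin-boundary property of the randomized cubes. The refinement relation $s^k_\az(x)=\sum_{\bz\in\sct_{k+1}}p^k_{\az\bz}s^{k+1}_\bz(x)$ is the two-scale structure of the construction: each $Q^k_\az(\oz)$ is the union of those children $Q^{k+1}_\bz(\oz)$ whose random assignment sends $\bz$ to $\az$, where $\bz$ ranges over the finite candidate set $L(k,\az)$ of Remark \ref{rb.d}(ii), and this assignment is governed by a coordinate of $\oz$ that is independent of the finer-scale randomness deciding whether $x\in\overline{Q}^{k+1}_\bz(\oz)$. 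Factoring the expectation along this independence gives the claimed convex combination with $p^k_{\az\bz}:=\mathbb{P}_\oz(\bz\mapsto\az)\in[0,1]$, nonzero only for the finitely many $\bz\in L(k,\az)$.

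The main obstacle is (iii). I would bound
$$
\lf|s^k_\az(x)-s^k_\az(y)\r|\le\mathbb{P}_\oz\lf(\lf\{\oz:\ \chi_{\overline{Q}^k_\az(\oz)}(x)\neq\chi_{\overline{Q}^k_\az(\oz)}(y)\r\}\r),
$$
and note that the right-hand event forces the scale-$k$ cube boundary to separate $x$ and $y$, hence to pass within distance $d(x,y)$ of $x$. The heart of the matter is therefore the quantitative thin-boundary estimate of \cite{ah13}: there exist $\eta\in(0,1]$ and a positive constant $C$, independent of $k$, such that, for all $x\in\cx$ and $t\in(0,\fz)$,
$$
\mathbb{P}_\oz\lf(\lf\{\oz:\ \dist\lf(x,\cx\setminus\mathring{Q}^k(x,\oz)\r)\le t\,\dz^k\r\}\r)\le C\,t^\eta,
$$
where $\mathring{Q}^k(x,\oz)$ denotes the open scale-$k$ cube containing $x$. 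Applying this with $t:=d(x,y)/\dz^k$ yields \eqref{b.1} when $d(x,y)\le\dz^k$, and the case $d(x,y)>\dz^k$ is trivial since $s^k_\az$ takes values in $[0,1]$. Reconstructing this thin-boundary estimate, by tracking how the geometric factor $\dz^k$ interacts with the geometrically doubling bound $N_0$ across the successive scales of the random assignment, is the only step requiring real work; everything else is a direct passage from the deterministic cube geometry of Theorem \ref{tb.c} to the probabilistic average defining $s^k_\az$.
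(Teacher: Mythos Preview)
The paper does not prove this proposition at all: immediately before the statement it writes ``The following conclusions of splines are taken from \cite[Theorem 3.1]{ah13},'' and then simply records the result. So there is no in-paper proof to compare against; your proposal is rather a sketch of how the cited argument in Auscher--Hyt\"onen runs.

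As such a sketch it is essentially right. Your reading of (i), the interpolation identity, and the partition of unity are straightforward and correct (the point that an open set disjoint from $Q^k_\az(\oz)$ is automatically disjoint from its closure is exactly what makes the $\dz_{\az\bz}$ step go through). For the refinement identity you are invoking the specific product structure of $(\boz,\mathbb{P}_\oz)$ in \cite{ah13}, which is indeed what drives the factorization into $p^k_{\az\bz}\,s^{k+1}_\bz$; just be aware that the index set $\sct_{k+1}$ appearing in \eqref{b.y} is in general larger than $L(k,\az)$, since a level-$(k+1)$ point within distance a few $\dz^k$ of $x^k_\az$ can be randomly assigned to $\az$ even if it is not a deterministic child in the sense of Remark~\ref{rb.d}(ii). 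For (iii) you have identified the correct mechanism: the quantitative ``small boundary'' probability estimate of \cite{ah13} gives the H\"older bound, and the trivial bound $|s^k_\az|\le 1$ handles $d(x,y)>\dz^k$. Since the paper's own ``proof'' is a citation, your outline goes beyond what the paper does and is consistent with the source it cites.
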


By \cite[Theorem 5.1]{ah13}, we know that there exists a linear,
bounded uniformly on $k\in\zz$, and injective map $U_k:\ell^2(\sca_k)
\to\ltw$ with closed range, defined by
$$
U_k\lz:=\sum_{\az\in\sca_k}\frac{\lz_{\az}}{\sqrt{\mu^k_\az}}s^k_\az,
\quad \lz:=\lf\{\lz^k_\az\r\}_{\az\in\sca_k}\in\ell^2(\sca_k),
$$
here and hereafter, $\mu^k_\az:=\mu(B(x^k_\az,\dz^k))=:V(x^k_\az,\dz^k)$
for all $(k,\az)\in\sca$, $\ell^2(\sca_k)$ denotes the space of all
sequences $\lz:=\{\lz^k_\az\}_{\az\in\sca_k}\subset\cc$ such that
$$
\|\lz\|_{\ell^2(\sca_k)}:=\lf\{\sum_{\az\in\sca_k}
\lf|\lz^k_\az\r|^2\r\}^{1/2}<\fz.
$$

Observe that, if $k\in\zz$, $\lz,\,\wz{\lz}\in\ell^2(\sca_k)$,
$f=U_k\lz$ and $\wz{f}=U_k\wz{\lz}$, then
$$
\lf(f,\wz{f}\r)_{\ltw}=\lf(M_k\lz,\wz{\lz}\r)_{\ell^2(\sca_k)}
$$
with $M_k$ being the infinite matrix which has entries
$M_k(\az,\bz)=\frac{(s^k_\az,s^k_\bz)_{\ltw}}
{\sqrt{\mu^k_\az\mu^k_\bz}}$ for $\az,\,\bz\in\sca_k$. Let $U^*_k$
be the \emph{adjoint operator} of $U_k$ for all $k\in\zz$.
Thus, for each $k\in\zz$, $M_k=U_k^*U_k$ is bounded,
invertible, positive and self-adjoint on $\ell^2(\sca_k)$.
Let $V_k:=U_k(\ell^2(\sca_k))$ for all $k\in\zz$.
The following result from \cite{ah13} (with $\mu^k_\az$ replaced
by $\nu^k_\az:=\int_\cx s^k_\az\,d\mu$)
shows that $\{V_k\}_{k\in\zz}$ is a \emph{multiresolution analysis}
(for short, MRA) of $\ltw$.

\begin{theorem}\label{tb.n}
Suppose that $(\cx,d,\mu)$ is a metric measure space of homogeneous type.
Let $k\in\zz$ and $V_k$ be the closed linear span of
$\{s^k_\az\}_{\az\in\sca_k}$. Then
$V_k\st V_{k+1}$ and
\begin{equation}\label{b.p}
\overline{\bigcup_{k\in\zz}V_k}=\ltw,\quad
\bigcap_{k\in\zz}V_k=\{0\}.
\end{equation}
Moreover, the functions $\{s^k_\az/\sqrt{\nu^k_\az}\}_{\az\in\sca_k}$
form a Riesz basis of $V_k$: for all sequences of complex numbers
$\{\lz_\az^k\}_{\az\in\sca_k}$,
$$
\lf\|\sum_{\az\in\sca_k}\lz^k_\az s^k_\az\r\|_{\ltw}
\sim\lf[\sum_{\az\in\sca_k}\lf|\lz_\az^k\r|^2\nu^k_\az\r]^{1/2}
$$
with equivalent positive constants independent of $k$ and
$\{\lz_\az^k\}_{\az\in\sca_k}$.
\end{theorem}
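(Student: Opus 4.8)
\emph{Proof proposal.} The plan is to establish, in turn, the nesting $V_k\st V_{k+1}$, the Riesz-basis estimate, the density $\overline{\bigcup_kV_k}=\ltw$, and the triviality $\bigcap_kV_k=\{0\}$. Everything reduces to the spline properties in Proposition \ref{pb.x} together with the mapping properties of $U_k$ and $M_k=U_k^*U_k$ recalled above from \cite[Theorem 5.1]{ah13}; the only extra geometric input is that $\diam(\cx)=\fz$ forces $\mu(\cx)=\fz$.

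Three of the four assertions are short. The nesting follows at once from the refinement identity in \eqref{b.y}, which exhibits each $s^k_\az$ as a finite linear combination of scale-$(k+1)$ splines, so $s^k_\az\in V_{k+1}$ and, $V_{k+1}$ being closed and $V_k$ being the closed span of $\{s^k_\az\}_{\az\in\sca_k}$, $V_k\st V_{k+1}$. For the Riesz-basis estimate I would first use \eqref{b.x} and the doubling property to get $\nu^k_\az=\int_\cx s^k_\az\,d\mu\sim\mu^k_\az$ uniformly in $(k,\az)$, reducing to the equivalence with $\nu^k_\az$ replaced by $\mu^k_\az$; then, writing $c_\az:=\lz^k_\az\sqrt{\mu^k_\az}$ so that $\sum_\az\lz^k_\az s^k_\az=U_kc$, and using that $M_k$ is positive, self-adjoint and, by \cite[Theorem 5.1]{ah13}, bounded with bounded inverse uniformly in $k$,
$$
\lf\|\sum_{\az\in\sca_k}\lz^k_\az s^k_\az\r\|_{\ltw}^2=(M_kc,c)_{\ell^2(\sca_k)}\sim\|c\|_{\ell^2(\sca_k)}^2=\sum_{\az\in\sca_k}|\lz^k_\az|^2\mu^k_\az,
$$
with constants independent of $k$ and $\{\lz^k_\az\}_\az$; together with the completeness of $\{s^k_\az\}_\az$ in $V_k$, this is the Riesz-basis assertion. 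For density, I would apply the interpolation operator $I_kf:=\sum_{\az\in\sca_k}f(x^k_\az)s^k_\az$ to $f$ in the dense subspace of continuous functions with compact support: since $0\le s^k_\az\le\chi_{B(x^k_\az,\,8\dz^k)}$ and $\sum_\az s^k_\az\equiv1$, one has $\|I_kf\|_{\li}\le\|f\|_{\li}$, while $I_kf$ is supported in $\{x:\ \dist(x,\supp f)\le8\dz^k\}$, hence in a fixed bounded (finite-measure) neighborhood of $\supp f$ once $\dz^k\le1$; uniform continuity of $f$ then gives $\|I_kf-f\|_{\li}\to0$ as $k\to\fz$, so $\|I_kf-f\|_{\ltw}\to0$, and since $I_kf\in V_k$ density follows.

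The crux is $\bigcap_kV_k=\{0\}$, which I would deduce from a pointwise decay bound for elements of $V_k$. Since $s^k_\az(x^k_\bz)=\dz_{\az\bz}$, evaluating $f=U_k\lz\in V_k$ at the dyadic points identifies the coefficients as $\lz_\bz=f(x^k_\bz)\sqrt{\mu^k_\bz}$, so $f=\sum_\az f(x^k_\az)s^k_\az$ and, by the Riesz-basis estimate, $\|f\|_{\ltw}^2\sim\sum_\az|f(x^k_\az)|^2\mu^k_\az$; here and below one works with the continuous representative of $f$, well defined because the supports $B(x^k_\az,8\dz^k)$, $\az\in\sca_k$, have bounded overlap (by \eqref{b.z} and the geometric doubling of $\cx$). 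Fixing $x$, applying the Cauchy--Schwarz inequality against the weights $s^k_\az(x)$, using $\sum_\az s^k_\az(x)=1$, and noting that $s^k_\az(x)\neq0$ forces $d(x,x^k_\az)<8\dz^k$ and hence $\mu^k_\az\sim V(x,\dz^k)$ (by \eqref{a.e} and doubling), one obtains
$$
|f(x)|^2\le\sum_{\az:\,s^k_\az(x)\neq0}|f(x^k_\az)|^2\ls\frac{1}{V(x,\dz^k)}\sum_{\az\in\sca_k}|f(x^k_\az)|^2\mu^k_\az\ls\frac{\|f\|_{\ltw}^2}{V(x,\dz^k)}.
$$
If $f\in\bigcap_kV_k$, this holds for every $k$; letting $k\to-\fz$, so that $\dz^k\to\fz$ and $V(x,\dz^k)=\mu(B(x,\dz^k))\uparrow\mu(\cx)=\fz$, we get $f(x)=0$ for every $x$, i.e.\ $f\equiv0$. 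The two points that need care are passing rigorously from the $\ltw$-identity $f=U_k\lz$ to a genuine continuous pointwise representative, and keeping every constant in the pointwise bound independent of $k$ — which is exactly why the uniform invertibility of $M_k$ is needed.
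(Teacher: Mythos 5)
Your argument is essentially correct, but note that the paper does not prove this statement at all: Theorem \ref{tb.n} is imported verbatim from Auscher and Hyt\"onen \cite[Theorem 5.1]{ah13} (with $\mu^k_\az$ replaced by $\nu^k_\az$), so what you have written is a self-contained proof where the paper only gives a citation. Each of your four steps is sound: the nesting from the refinement identity in \eqref{b.y}; the Riesz bounds from $\|U_kc\|_{\ltw}^2=(M_kc,c)_{\ell^2(\sca_k)}$ together with $\nu^k_\az\sim\mu^k_\az$ (which follows from \eqref{b.x} and doubling); the density via the quasi-interpolation $I_kf=\sum_\az f(x^k_\az)s^k_\az$ and the partition of unity; and the triviality of the intersection via the pointwise bound $|f(x)|^2\ls\|f\|_{\ltw}^2/V(x,\dz^k)$ combined with $\mu(\cx)=\fz$. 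You also correctly isolate the two delicate points (continuous representatives via bounded overlap of the supports $B(x^k_\az,8\dz^k)$, and $k$-independence of constants). Two small caveats. First, the uniform invertibility of $M_k$ (equivalently, the uniform lower bound $\|U_kc\|_{\ltw}\gs\|c\|_{\ell^2(\sca_k)}$) is itself a nontrivial part of \cite[Theorem 5.1]{ah13} — the text of the present paper only records that $U_k$ is uniformly bounded and injective with closed range — so your Riesz-basis step and your intersection step both rest on quoting that lower bound from \cite{ah13} rather than on anything proved here; this is legitimate but should be said explicitly. Second, on a general (not necessarily locally compact) metric measure space, ``continuous functions with compact support'' may not be dense in $\ltw$; replace them by bounded, uniformly continuous functions with bounded support (which are dense by inner regularity of a Borel measure finite on balls) — your estimate for $I_kf-f$ uses only uniform continuity and boundedness of the support, so the argument is unaffected.
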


Now we are ready to introduce the following
notable orthonormal basis of regular wavelets constructed by
Auscher and Hyt\"onen \cite{ah13}. In what follows,
for all $k\in\zz$, let
\begin{equation}\label{b.w}
\scg_k:=\sca_{k+1}\bh\sca_k
\end{equation}
with $\sca_k$ as in \eqref{b.v}. The following theorem is just \cite[Theorem 7.1]{ah13}.

\begin{theorem}\label{tb.a}
Let $(\cx,d,\mu)$ be a metric measure space of homogeneous type.
Then there exists an
orthonormal basis $\{\psi_{\bz}^k\}_{k\in\zz,\,
\bz\in\scg_k}$
of $\ltw$ and positive constants $\eta\in(0,1]$
as in \eqref{b.1}, $\nu$ and $C_{(\eta)}$,
independent of $k$, $\az$ and $\bz$, such that
\begin{equation}\label{b.a}
\lf|\psi_{\bz}^k(x)\r|\le\frac{C_{(\eta)}}
{\sqrt{V(x_{\bz}^{k+1},\dz^k)}}
e^{-\nu\dz^{-k}d(x_{\bz}^{k+1},\,x)}\ \ {\rm for\ all}\ x\in\cx,
\end{equation}
\begin{equation}\label{b.b}
\lf|\psi_{\bz}^k(x)-\psi_{\bz}^k(y)\r|
\le\frac{C_{(\eta)}}{\sqrt{V(x_{\bz}^{k+1},\dz^k)}}
\lf[\frac{d(x,y)}{\dz^k}\r]^{\eta}
e^{-\nu\dz^{-k}d(x_{\bz}^{k+1},\,x)}
\end{equation}
for all $x,\,y\in\cx$ satisfying $d(x,y)\le\dz^k$, and
\begin{equation}\label{b.c}
\int_{\cx}\psi_{\bz}^k(x)\,d\mu(x)=0.
\end{equation}
\end{theorem}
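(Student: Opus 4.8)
The plan is to realize $\{\psi^k_\bz\}$ as the wavelets associated with the multiresolution analysis $\{V_k\}_{k\in\zz}$ furnished by Theorem \ref{tb.n}. Since $V_k\st V_{k+1}$, $\overline{\bigcup_{k\in\zz}V_k}=\ltw$ and $\bigcap_{k\in\zz}V_k=\{0\}$, setting $W_k:=V_{k+1}\ominus V_k$ yields the orthogonal decomposition $\ltw=\bigoplus_{k\in\zz}W_k$; hence it suffices to equip each $W_k$ with an orthonormal basis indexed by $\scg_k=\sca_{k+1}\bh\sca_k$ [recall from \eqref{2.1x} that $\sca_k\st\sca_{k+1}$, so $\scg_k$ labels the genuinely new reference points at scale $k+1$] and to verify \eqref{b.a}, \eqref{b.b} and \eqref{b.c} for the resulting functions. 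The cancellation \eqref{b.c} will be the easy part, the construction of the basis of $W_k$ will be linear algebra, and the decay and regularity estimates \eqref{b.a} and \eqref{b.b} will be the crux.

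First I would orthonormalize the spline bases. Using $M_k=U_k^*U_k$, which is bounded, invertible, positive and self-adjoint on $\ell^2(\sca_k)$, I set $\Phi_k:=U_kM_k^{-1/2}$; then $\Phi_k^*\Phi_k=M_k^{-1/2}M_kM_k^{-1/2}=\mathrm{Id}$, so $\Phi_k$ is an isometry of $\ell^2(\sca_k)$ onto $V_k$ and the functions $\phi^k_\az:=\Phi_k e_\az$ form an orthonormal basis of $V_k$. The key structural input is that, by the support property \eqref{b.x}, the entry $M_k(\az,\bz)$ vanishes once $d(x^k_\az,x^k_\bz)>16\dz^k$, so $M_k$ is a banded, hence exponentially localized, matrix over the $\dz^k$-separated set $\scx^k$. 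Invoking the standard fact that inverse square roots of positive, invertible, exponentially localized matrices indexed by a separated set in a geometrically doubling space are again exponentially localized (a Demko--Moss--Smith/Jaffard-type estimate, whose convolution sums are controlled by Lemma \ref{lb.x}), I would deduce that the entries of $M_k^{-1/2}$ decay like $e^{-c\dz^{-k}d(x^k_\az,\,x^k_\bz)}$; combining this with \eqref{b.x}, \eqref{b.1} and the doubling property \eqref{a.b} then transfers the pointwise decay and the H\"older estimate of the splines to each $\phi^k_\az$, with the same H\"older exponent $\eta$ as in \eqref{b.1}.

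Next I would construct the wavelets. With $\Phi_k,\Phi_{k+1}$ as above, the map $A_k:=\Phi_{k+1}^*\Phi_k\colon\ell^2(\sca_k)\to\ell^2(\sca_{k+1})$ is an isometry, with entries $A_k(\gz,\az)=\langle\phi^k_\az,\phi^{k+1}_\gz\rangle$ inheriting exponential localization from Step~1, and its range equals $\Phi_{k+1}^*(V_k)$, so $W_k=\Phi_{k+1}(N_k)$ with $N_k:=(\mathrm{ran}\,A_k)^\perp$ and $\mathrm{Id}-A_kA_k^*$ the orthogonal projection of $\ell^2(\sca_{k+1})$ onto $N_k$. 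I would then project the new coordinate vectors, $g_\bz:=(\mathrm{Id}-A_kA_k^*)e_\bz$ for $\bz\in\scg_k$, and check, via a local dimension count together with the refinement relation in \eqref{b.y}, that $\{g_\bz\}_{\bz\in\scg_k}$ is a Riesz basis of $N_k$. Orthonormalizing through the inverse square root of their Gram matrix $G_k(\bz,\bz'):=\langle g_\bz,g_{\bz'}\rangle$---which is again positive, invertible and exponentially localized, so $G_k^{-1/2}$ is exponentially localized---I would define $\psi^k_\bz:=\Phi_{k+1}\big(\sum_{\bz'\in\scg_k}(G_k^{-1/2})_{\bz'\bz}\,g_{\bz'}\big)$. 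By construction $\{\psi^k_\bz\}_{\bz\in\scg_k}$ is an orthonormal basis of $W_k$, whence $\{\psi^k_\bz\}_{k\in\zz,\,\bz\in\scg_k}$ is an orthonormal basis of $\ltw$.

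It remains to verify the three displayed estimates. For \eqref{b.c}, since $\psi^k_\bz\in W_k$ is orthogonal to $V_k$ we have $\langle\psi^k_\bz,s^k_\az\rangle=0$ for every $\az\in\sca_k$; because $\sum_{\az\in\sca_k}s^k_\az=1$ by \eqref{b.y} is a locally finite sum and $\psi^k_\bz$ decays exponentially, I may interchange summation and integration to obtain $\int_\cx\psi^k_\bz\,d\mu=\sum_{\az\in\sca_k}\langle\psi^k_\bz,s^k_\az\rangle=0$. The estimates \eqref{b.a} and \eqref{b.b} are obtained by tracking the exponential decay through the finite-band refinement matrix of \eqref{b.y} and the exponentially localized factors $M_{k+1}^{-1/2}$, $\mathrm{Id}-A_kA_k^*$ and $G_k^{-1/2}$, together with the support and H\"older bounds \eqref{b.x}, \eqref{b.1} of the splines; each resulting convolution of exponentials over a $\dz^k$-separated set is summed by Lemma \ref{lb.x} (at the cost of reducing the decay rate to the final $\nu$), and the normalization $[V(x^{k+1}_\bz,\dz^k)]^{-1/2}$ is produced by \eqref{a.b} and \eqref{a.e}, the scales $\dz^k$ and $\dz^{k+1}$ differing only by the fixed factor $\dz$. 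The main obstacle is exactly this last step: showing that exponential off-diagonal decay survives the several functional-calculus and projection operations uniformly in $k$, and then converting the resulting point-to-point decay into the function bound \eqref{b.a} with the correct center $x^{k+1}_\bz$, scale $\dz^k$ and volume normalization; for \eqref{b.b} one additionally arranges that only the spline factors are differenced, so that the H\"older norm is governed by \eqref{b.1} while all coefficient matrices remain uniformly exponentially localized.
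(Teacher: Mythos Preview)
The paper does not prove this theorem at all: the sentence immediately preceding it reads ``The following theorem is just \cite[Theorem 7.1]{ah13},'' and no argument is given. So there is no ``paper's own proof'' to compare against; the result is imported wholesale from Auscher--Hyt\"onen.

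That said, your sketch is broadly the strategy of \cite{ah13}: build the MRA from the splines, orthonormalize via $M_k^{-1/2}$, pass to $W_k=V_{k+1}\ominus V_k$, and push exponential off-diagonal decay through the functional calculus using Demko--Moss--Smith/Jaffard-type localization together with Lemma~\ref{lb.x}. The cancellation argument for \eqref{b.c} via $\sum_\az s^k_\az=1$ is exactly right. Where your outline is thinnest is the step ``check, via a local dimension count \ldots, that $\{g_\bz\}_{\bz\in\scg_k}$ is a Riesz basis of $N_k$'' with bounds \emph{uniform in $k$}. This is the genuinely delicate point in \cite{ah13}: one needs a uniform lower Riesz bound for the Gram matrix $G_k$ (equivalently, uniform invertibility) before $G_k^{-1/2}$ can inherit exponential localization, and a mere dimension count does not deliver that quantitative bound. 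In \cite{ah13} this is handled by an explicit block-triangular construction tied to the partial order on dyadic points (cf.\ Remark~\ref{rb.d}), rather than by projecting $e_\bz$ and inverting a Gram matrix. If you want your route to go through, you would need to supply that uniform lower bound directly; otherwise the localization of $G_k^{-1/2}$, and with it \eqref{b.a}--\eqref{b.b}, is not justified.
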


In order to introduce the lower bounds of wavelets,
we need to recall a slight difference on the notation
of the orthonormal basis $\{\psi_{\bz}^k\}_{k\in\zz,\,\bz\in\scg_k}$
from \cite{fy1} that
$$
\lf\{\psi_{\az,\,\bz}^k\r\}_{(k,\,\az)\in\wz{\sca},\,
\bz\in\wz{L}(k,\,\az)}:=\lf\{\psi_\bz^k\r\}_{k\in\zz,\,
\bz\in\scg_k},
$$
where,
\begin{equation}\label{2.19x1}
\wz{\sca}:=\{(k,\az)\in\sca:\ \#L(k,\az)>1\}
\end{equation}
with $\sca$ and $L(k,\az)$, respectively, as
in \eqref{b.s} and \eqref{b.k},
and, for all $(k,\az)\in\wz{\sca}$,
\begin{equation}\label{2.19x2}
\wz{L}(k,\az):=L(k,\az)\bh\{\az\},
\end{equation}
via the fact that, for any $k\in\zz$,
$$
\sca_{k+1}\bh\sca_k=\bigcup_{\{\az\in\sca_k:\ \#L(k,\,\az)>1\}}
\wz{L}(k,\az).
$$

Now we introduce the lower bounds of $\psi^k_{\az,\,\bz}$ from \cite{fy1}
which is important to the succeeding context.

\begin{theorem}\label{tb.m}
Let $(\cx,d,\mu)$ be a metric measure space of homogeneous type.
Then there exist positive constants $\ez_0$ and $C$,
independent of $k$, $\az$ and $\bz$, such
that, for all $(k,\az)\in\wz{\sca}$ with $\wz{\sca}$ as in
\eqref{2.19x1},
$\bz\in\wz{L}(k,\az)$ with $\wz{L}(k,\az)$ as in \eqref{2.19x2},
and $x\in B(y^k_\bz,\ez_0\dz^k)\st Q^k_\az$,
$$
\lf|\psi^k_{\az,\,\bz}(x)\r|\ge C\frac1{\sqrt{\mu(Q^k_\az)}}.
$$
\end{theorem}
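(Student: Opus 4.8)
The plan is to convert the normalization $\|\psi^k_{\az,\bz}\|_{\ltw}=1$ into a pointwise lower bound for $\psi:=\psi^k_{\az,\bz}$ at a reference point lying well inside $Q^k_\az$, and then to propagate it to a small ball by the H\"older regularity \eqref{b.b}.

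First, since $\bz\in L(k,\az)$, Remark \ref{rb.d}(i) gives $d(x^{k+1}_\bz,x^k_\az)<2\dz^k$, whence, by \eqref{a.e}, \eqref{a.b} and Theorem \ref{tb.c}(iv), $V(x^{k+1}_\bz,\dz^k)\sim V(x^k_\az,\dz^k)\sim\mu(Q^k_\az)$, while \eqref{b.x} and \eqref{a.b} give $\nu^{k+1}_\gz\ls V(x^{k+1}_\gz,\dz^k)$ for every $\gz\in\sca_{k+1}$. Since $\psi\in W_k\st V_{k+1}$, where $W_k:=V_{k+1}\ominus V_k$, I would expand $\psi=\sum_{\gz\in\sca_{k+1}}c_\gz s^{k+1}_\gz$ (convergent in $\ltw$, and, by the geometric doubling and the localization \eqref{b.x} of splines, a locally finite sum); then \eqref{b.y} forces $c_\gz=\psi(x^{k+1}_\gz)$, and the Riesz-basis estimate of Theorem \ref{tb.n} yields
$$
\sum_{\gz\in\sca_{k+1}}\lf|\psi(x^{k+1}_\gz)\r|^2\nu^{k+1}_\gz\sim\|\psi\|_{\ltw}^2=1.
$$
Next, \eqref{b.a} gives $|\psi(x^{k+1}_\gz)|\le C_{(\eta)}[V(x^{k+1}_\bz,\dz^k)]^{-1/2}e^{-\nu\dz^{-k}d(x^{k+1}_\bz,\,x^{k+1}_\gz)}$; bounding $\nu^{k+1}_\gz$ by $V(x^{k+1}_\gz,\dz^k)\ls(1+\dz^{-k}d(x^{k+1}_\bz,x^{k+1}_\gz))^{n}V(x^{k+1}_\bz,\dz^k)$ via \eqref{a.b}, absorbing the polynomial factor into a smaller exponential, and applying Lemma \ref{lb.x} to the set $\{x^{k+1}_\gz\}_{\gz\in\sca_{k+1}}$ (which is $1$-separated after rescaling by $\dz^{-(k+1)}$), I get, for every $R\in(0,\fz)$,
$$
\sum_{\{\gz\in\sca_{k+1}:\ d(x^{k+1}_\bz,\,x^{k+1}_\gz)>R\dz^k\}}\lf|\psi(x^{k+1}_\gz)\r|^2\nu^{k+1}_\gz\ls e^{-\nu R/2}
$$
with implicit constant independent of $R$, $k$, $\az$ and $\bz$. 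Choosing $R$ to be a suitable fixed constant makes the right-hand side less than half of the total, so $\sum_{\{\gz:\ d(x^{k+1}_\bz,\,x^{k+1}_\gz)\le R\dz^k\}}|\psi(x^{k+1}_\gz)|^2\nu^{k+1}_\gz\gs1$, a sum with only a fixed number of nonzero terms by Remark \ref{rb.l}(ii).

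The key step, and the main obstacle, is to refine this concentration statement so that the resulting ``large'' reference point lies inside $Q^k_\az$ with room for a ball of radius comparable to $\dz^k$. A crude pigeonhole only produces some $\gz^*$ with $d(x^{k+1}_\bz,x^{k+1}_{\gz^*})\le R\dz^k$ and $|\psi(x^{k+1}_{\gz^*})|\gs[\mu(Q^k_\az)]^{-1/2}$ (using $\nu^{k+1}_{\gz^*}\ls V(x^{k+1}_{\gz^*},\dz^k)\sim\mu(Q^k_\az)$ by \eqref{a.e} and \eqref{a.b}), but such a $\gz^*$ need not lie in $Q^k_\az$, and a ball of radius $\sim\dz^k$ about it need not sit inside $Q^k_\az\st B(x^k_\az,4\dz^k)$. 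I would instead show that a definite fraction of $\|\psi\|_{\ltw}^2$ is carried by the children reference points $\{x^{k+1}_\eta\}_{\eta\in L(k,\az)}$ of the host cube --- for which Theorem \ref{tb.c}(ii) and (iv) give $B(x^{k+1}_\eta,\tfrac13\dz^{k+1})\st Q^{k+1}_\eta\st Q^k_\az$ --- and then pigeonhole over these at most $\wz{N}_0$ points (Remark \ref{rb.d}(ii)) to obtain a point $y^k_\bz:=x^{k+1}_\eta$ with $|\psi(y^k_\bz)|\gs[\mu(Q^k_\az)]^{-1/2}$ and clearance $\tfrac13\dz^{k+1}$ inside $Q^k_\az$. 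Proving this refinement is exactly where the fine structure of the construction of $\{\psi^k_{\az,\bz}\}$ in \cite{ah13,fy1} must be used, rather than the black-box bounds of Theorem \ref{tb.a}: $\psi^k_{\az,\bz}$ arises by orthogonalizing, within $W_k$, a natural family of pre-wavelets indexed by the new points $\sca_{k+1}\bh\sca_k$, the one associated with $\bz$ being built from the spline $s^{k+1}_\bz$ and hence essentially localized near $x^{k+1}_\bz\in Q^k_\az$; using the two-scale relations \eqref{b.y}, the lower bound \eqref{b.x} on splines and the exponential decay of the orthogonal projections onto the spaces $V_k$ from \cite{ah13}, one checks that the orthogonalization correction carries only a controlled fraction of the $\ltw$-mass, so that the coefficients $\{c_\eta\}_{\eta\in L(k,\az)}=\{\psi^k_{\az,\bz}(x^{k+1}_\eta)\}_{\eta\in L(k,\az)}$ retain a definite fraction of $\|\psi^k_{\az,\bz}\|_{\ltw}^2$. (If instead one can show directly that $|\psi^k_{\az,\bz}(x^{k+1}_\bz)|\gs[\mu(Q^k_\az)]^{-1/2}$, one simply takes $y^k_\bz:=x^{k+1}_\bz$, whose clearance inside $Q^k_\az$ again follows from $\bz\in L(k,\az)$.)

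Finally, once such a $y^k_\bz$ and a structural constant $C_0\in(0,\fz)$ with $|\psi(y^k_\bz)|\ge 2C_0[\mu(Q^k_\az)]^{-1/2}$ and $B(y^k_\bz,\tfrac13\dz^{k+1})\st Q^k_\az$ are in hand, for every $x$ with $d(x,y^k_\bz)\le\dz^k$ the H\"older estimate \eqref{b.b}, together with $V(x^{k+1}_\bz,\dz^k)\sim\mu(Q^k_\az)$, gives
$$
\lf|\psi(x)-\psi(y^k_\bz)\r|\ls\frac1{\sqrt{\mu(Q^k_\az)}}\lf[\frac{d(x,y^k_\bz)}{\dz^k}\r]^{\eta}.
$$
Choosing $\ez_0\in(0,\dz/3]$ small enough that the right-hand side is at most $C_0[\mu(Q^k_\az)]^{-1/2}$ whenever $d(x,y^k_\bz)\le\ez_0\dz^k$, we conclude that $B(y^k_\bz,\ez_0\dz^k)\st Q^k_\az$ and $|\psi^k_{\az,\bz}(x)|\ge C_0[\mu(Q^k_\az)]^{-1/2}$ on this ball, which is the desired estimate with $C:=C_0$. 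All constants produced along the way depend only on the structural constants of $(\cx,d,\mu)$ and on $\dz$, $\nu$, $\eta$, $N_0$ and $\wz{N}_0$, so $\ez_0$ and $C$ are independent of $k$, $\az$ and $\bz$.
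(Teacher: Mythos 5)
A preliminary remark: the paper does not prove Theorem \ref{tb.m} at all; it imports it verbatim from \cite{fy1} (``Now we introduce the lower bounds of $\psi^k_{\az,\,\bz}$ from \cite{fy1}''), so there is no internal proof to compare your plan against, and it must be judged on its own terms. The outer layers of your plan are sound and routine: the $L^2$-concentration argument (expansion of $\psi^k_{\az,\,\bz}\in W_k\st V_{k+1}$ in the Riesz basis of Theorem \ref{tb.n}, the identification $c_\gz=\psi^k_{\az,\,\bz}(x^{k+1}_\gz)$ via \eqref{b.y}, and the tail bound from \eqref{b.a}, \eqref{a.b} and Lemma \ref{lb.x}) and the final H\"older propagation via \eqref{b.b} are both correct. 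The genuine gap is exactly the step you yourself flag as ``the key step, and the main obstacle'': that a definite fraction of the mass sits at the specific point $y^k_\bz=x^{k+1}_\bz$, or even at some child reference point of $Q^k_\az$. This is not derivable from Theorem \ref{tb.a}, and you do not derive it --- the sentence ``one checks that the orthogonalization correction carries only a controlled fraction of the $L^2$-mass'' is precisely the assertion to be proved. The properties \eqref{b.a}--\eqref{b.c} together with $\|\psi^k_{\az,\,\bz}\|_{\ltw}=1$ are perfectly consistent with $\psi^k_{\az,\,\bz}$ vanishing identically on $Q^k_\az$ while carrying all of its mass on reference points of neighbouring cubes at distance $\ls\dz^k$ from $x^{k+1}_\bz$: since $Q^k_\az\st B(x^k_\az,4\dz^k)$, the concentration ball $B(x^{k+1}_\bz,R\dz^k)$ overspills $Q^k_\az$ for any useful $R$, so no amount of massaging the black-box bounds can localize the mass inside the cube. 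One must open up the Auscher--Hyt\"onen construction --- the pre-wavelets built from splines at consecutive scales and the explicit orthogonalization within $W_k$ --- and track a quantitative pointwise lower bound through it; that is the entire content of the theorem and is what \cite{fy1} does.

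A second, smaller point: even granting your pigeonhole over the children $L(k,\az)$, it produces a good point $x^{k+1}_\eta$ for \emph{some} $\eta\in L(k,\az)$ depending on the wavelet, whereas the statement --- read with the paper's convention $y^j_\bz:=x^{j+1}_\bz$, used throughout Sections \ref{s4} and \ref{s5} and in Theorem \ref{tc.d} --- pins the ball at the specific new point $x^{k+1}_\bz$ indexing the wavelet. Your parenthetical fallback, namely proving $|\psi^k_{\az,\,\bz}(x^{k+1}_\bz)|\gs[\mu(Q^k_\az)]^{-1/2}$ directly, is therefore not optional but required, and it is again a statement about the construction rather than about the axioms \eqref{b.a}--\eqref{b.c}.
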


We also recall the molecular characterization of $\hona$ from
\cite{hyz}, which plays important roles in the succeeding content, since it
partially compensates the defect of the
regular wavelets without bounded supports.

The following notions of $(1,q,\eta)$-molecules are from \cite{hyz}.

\begin{definition}\label{dc.z}
Let $q\in(1, \fz]$ and $\{\eta_k\}_{k\in\nn}\st[0,\fz)$ satisfy
\begin{equation}\label{c.w}
\sum_{k\in\nn}k\eta_k<\fz.
\end{equation}

A function $m\in\lq$ is called a \emph{$(1,q,\eta)$-molecule centered
at a ball $B:=B(x_0,r)$}, for some $x_0\in\cx$ and $r\in(0,\fz)$, if

(M1) $\|m\|_{\lq}\le[\mu(B)]^{1/q-1}$;

(M2) for all $k\in\nn$,
$$
\lf\|m\chi_{B(x_0,2^kr)\bh B(x_0,2^{k-1}r)}\r\|_{\lq}
\le\eta_k2^{k(1/q-1)}[\mu(B)]^{1/q-1};
$$

(M3) $\int_\cx m(x)\,d\mu(x)=0$.
\end{definition}

Then the following molecular characterization of the space $\hona$
is a slight variant of \cite[Theorem 2.2]{hyz} which is originally related
to the quasi-metric $\rho$ as in \eqref{a.x}
and is obviously true with $\rho$ replaced by $d$.

\begin{theorem}\label{tc.y}
Suppose that $(\cx,d,\mu)$ is a metric measure space of homogeneous type.
Let $q\in(1, \fz]$ and $\eta=\{\eta_k\}_{k\in\nn}\st[0,\fz)$
satisfy \eqref{c.w}.
Then there exists a positive constant
$C$ such that, for any $(1,q,\eta)$-molecule $m$, $m\in\hona$
and
$$\|m\|_{\hona}\le C.$$
Moreover, $f\in\hona$ if and only if there exist
$(1,q,\eta)$-molecules $\{m_j\}_{j\in\nn}$ and numbers
$\{\lz_j\}_{j\in\nn}\st\cc$ such that
$$
f=\sum_{j\in\nn}\lz_jm_j,
$$
which converges in $\lon$. Furthermore,
$$
\|f\|_{\hona}\sim\inf\lf\{\sum_{j\in\nn}|\lz_j|\r\},
$$
where the infimum is taken over all the decompositions of $f$
as above and the equivalent positive constants
are independent of $f$.
\end{theorem}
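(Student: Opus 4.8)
The plan is to deduce Theorem~\ref{tc.y} from \cite[Theorem 2.2]{hyz}, which is exactly this molecular characterization but with the metric $d$ replaced throughout by the measure distance $\rho$ of \eqref{a.x}, combined with the identity $\hona=H^1_\at(\cx,\rho,\mu)$ with equivalent quasi-norms established by Hu et al. \cite[Theorem 2.1]{hyz}. Granting these, what remains is a routine geometric comparison showing that the $(1,q,\eta)$-molecules of Definition~\ref{dc.z}, built from $d$-balls, and the $\rho$-molecules of \cite{hyz} can be passed back and forth, at the cost of replacing the decay sequence $\{\eta_k\}_{k\in\nn}$ by a new sequence $\{\wz\eta_j\}_{j\in\nn}$ still satisfying $\sum_{j\in\nn}j\wz\eta_j<\fz$. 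The inputs for this comparison are the elementary inclusions relating $d$-balls and $\rho$-balls --- by \eqref{a.x} and the normality of $(\cx,\rho,\mu)$ (see \cite{ms1}), $B(x_0,r)\st B_\rho(x_0,C\mu(B(x_0,r)))$ and $\mu(B_\rho(x_0,\mu(B(x_0,r))))\sim\mu(B(x_0,r))$ for all $x_0\in\cx$ and $r\in(0,\fz)$ --- together with the doubling estimate \eqref{a.b}, which controls how a dyadic $d$-annulus is distributed among dyadic $\rho$-annuli; the $L^q$-normalizations in (M1)--(M2) match because, by normality, the measure of a $\rho$-ball is comparable to its radius. Once this comparison is in place, \cite[Theorems 2.1 and 2.2]{hyz} give the full statement, with the constant $C$ depending on $q$ and on $\sum_{k\in\nn}k\eta_k$.

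A more self-contained route, which I would also indicate, is to argue directly in the $d$-setting along the classical lines used in \cite{hyz}. One implication is immediate: by Definition~\ref{dc.k} a $(1,q)$-atom is a $(1,q,\eta)$-molecule with $\eta_k=0$ for every $k\in\nn$, so the atomic decomposition \eqref{c.z} of an arbitrary $f\in\hona$ is already a molecular decomposition with $\sum_j|\lz_j|\ls\|f\|_{\hona}$. It therefore suffices to prove the single-molecule bound: every $(1,q,\eta)$-molecule $m$ centered at a ball $B=B(x_0,r)$ lies in $\hona$ with $\|m\|_{\hona}\le C$. Indeed, once this is available, any molecular decomposition $f=\sum_j\lz_jm_j$ converges in $\hona$ with $\|f\|_{\hona}\ls\sum_j|\lz_j|$ (using the completeness of $\hona$, Remark~\ref{rc.i}, and $\hona\hookrightarrow\lon$), and this together with the trivial implication yields the asserted quasi-norm equivalence.

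For the single-molecule bound I would decompose $m$ over dyadic $d$-annuli. Put $B_k:=B(x_0,2^kr)$, $A_k:=B_k\bh B_{k-1}$ for $k\in\nn$ and $A_0:=B$, so that $m=\sum_{k\ge0}m\chi_{A_k}$, and set $I_k:=\int_{A_k}m\,d\mu$ and $\sigma_k:=\sum_{j=0}^k I_j$, with the convention $\sigma_{-1}:=0$. Condition (M3) gives $\sum_{k\ge0}I_k=0$, hence $\sigma_k=-\sum_{j>k}I_j$, while (M2), H\"older's inequality and \eqref{a.b} control $|I_k|$ in terms of $\eta_k$ and $\mu(B)$; in particular $\sigma_k\to0$. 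Setting
$$
a_k:=m\chi_{A_k}-\frac{\sigma_k}{\mu(B_k)}\chi_{B_k}+\frac{\sigma_{k-1}}{\mu(B_{k-1})}\chi_{B_{k-1}},\qquad k\in\{0\}\cup\nn,
$$
one checks that $\int_\cx a_k\,d\mu=0$, $\supp a_k\st B_k$, $\sum_{k\ge0}a_k=m$ in $\lon$, and $\|a_k\|_{\lq}\le\lz_k[\mu(B_k)]^{1/q-1}$ for suitable $\lz_k\ge0$; then each $\lz_k^{-1}a_k$ is a $(1,q)$-atom, $m=\sum_k\lz_k(\lz_k^{-1}a_k)$, and $\|m\|_{\hona}\ls\sum_k\lz_k$.

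The delicate point --- and the step I expect to be the main obstacle --- is the summability $\sum_k\lz_k\ls\sum_k k\eta_k<\fz$. Estimating $\lz_k$ means bounding $\|a_k\|_{\lq}$, hence the terms $|\sigma_k|[\mu(B_k)]^{-1/q}$ and $|\sigma_{k-1}|[\mu(B_{k-1})]^{-1/q}$, using only the upper doubling bound \eqref{a.b} and no reverse doubling property of $\cx$; the exponents must be tracked carefully so that the geometric losses are absorbed by the gain $2^{k(1/q-1)}$ built into (M2), and the extra factor $k$ in \eqref{c.w} is precisely what makes $\sum_k k\eta_k$ (rather than $\sum_k\eta_k$) the natural quantity to appear. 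This bookkeeping is exactly what collapses in the $\rho$-picture, where $(\cx,\rho,\mu)$ is normal and the space behaves one-dimensionally, which is why I would present the first route as the main proof and relegate the direct exponent chase to a remark.
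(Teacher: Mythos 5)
The paper offers no actual proof of Theorem \ref{tc.y}: it only declares the statement to be a ``slight variant'' of \cite[Theorem 2.2]{hyz} (formulated there for the measure distance $\rho$) that is ``obviously true with $\rho$ replaced by $d$''. Your write-up is therefore already more explicit than the source, and you have correctly located the crux: the single-molecule bound, and within it the summability of the atomic coefficients $\lz_k$. The easy direction and the telescoping construction of the $a_k$ are standard and correct.

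The genuine gap is that neither of your routes closes that crux, and with the normalization actually written in (M2) it cannot be closed. In Route 2, (M2) normalizes the $k$-th annulus by $2^{k(1/q-1)}[\mu(B)]^{1/q-1}$, while the atom $a_k$ supported in $B_k:=B(x_0,2^kr)$ must be normalized by $[\mu(B_k)]^{1/q-1}$; since \eqref{a.b} only gives $\mu(B_k)\le\wz C_{(\cx)}2^{kn}\mu(B)$ and $1/q-1<0$, the best comparison is
$$
2^{k(1/q-1)}[\mu(B)]^{1/q-1}\le \wz C_{(\cx)}^{\,1-1/q}\,2^{k(n-1)(1-1/q)}\,[\mu(B_k)]^{1/q-1},
$$
so $\lz_k\ls\eta_k2^{k(n-1)(1-1/q)}$ plus corrections, and $\sum_k k\eta_k<\fz$ does not control $\sum_k\lz_k$ when $n>1$. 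Route 1 does not escape this: re-expanding the $k$-th $d$-annulus over $\rho$-annuli (whose indices reach roughly $kn$) and matching the normalization of \cite{hyz} costs exactly the same factor $[\mu(B_k)/(2^k\mu(B))]^{1-1/q}$, so the asserted existence of $\{\wz\eta_j\}$ with $\sum_j j\wz\eta_j<\fz$ is unsubstantiated. In fact the uniform bound is false as stated: on $\rr^2$ with Lebesgue measure, take $A_k:=B(0,2^k)\bh B(0,2^{k-1})$ and $m_K:=c_K\lf(|A_K|^{-1}\chi_{A_K}-|A_{K-1}|^{-1}\chi_{A_{K-1}}\r)$ with $c_K:=K^{-3}2^{K/2}$; each $m_K$ is a $(1,2,\eta)$-molecule centered at $B(0,1)$ for the single sequence $\eta_k:=Ck^{-3}$ satisfying \eqref{c.w}, yet $\|m_K\|_{L^1(\rr^2)}=2c_K\to\fz$, whence $\|m_K\|_{\hona}\to\fz$. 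The statement, and your Route 2 argument, become correct if (M2) is normalized by $[\mu(B(x_0,2^kr))]^{1/q-1}$ instead of $2^{k(1/q-1)}[\mu(B)]^{1/q-1}$: then $|I_j|\le\eta_j$, so $\sum_k|\sigma_k|\le\sum_j j\eta_j$ and $\sum_k\lz_k\ls\sum_k k\eta_k$, using only \eqref{a.b} and no reverse doubling; this corrected form still covers the paper's only application (the exponentially decaying wavelets in Lemma \ref{le.c}).
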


Now we introduce the notion of Calder\'on-Zygmund
operators from \cite{cw71}; see also \cite{ah13}.

\begin{definition}\label{d4.6}
A function $K\in L_\loc^1(\{\cx\times
\cx\}\bh\{(x,x):x\in\cx\})$ is called a \emph{Calder\'on-Zygmund
kernel} if there exists a positive constant $C_{(K)}$,
depending on $K$, such that
\begin{enumerate}
\item[(i)] for all $x,\,y\in\cx$ with $x\ne y$,
\begin{equation}\label{d.a}
|K(x,y)|\le C_{(K)}\frac{1}{V(x,y)};
\end{equation}

\item[(ii)] there exist positive constants
$s\in (0,1]$ and $c_{(K)}\in(0,1)$, depending on $K$,
such that
\begin{enumerate}
\item[${\rm (ii)}_1$] for all $x,\,\wz x,\,y\in\cx$
with $d(x,y)\ge c_{(K)}d(x,\wz{x})>0$,
\begin{equation}\label{d.b}
|K(x,y)-K(\wz{x},y)|\le C_{(K)}
\lf[\frac{d(x,\wz{x})}{d(x,y)}\r]^s
\frac1{V(x,y)};
\end{equation}

\item[${\rm (ii)}_2$] for all $x,\,\wz x,\,y\in\cx$
with $d(x,y)\ge c_{(K)}d(y,\wz{y})>0$,
\begin{equation}\label{d.g}
\lf|K(x,y)-K(x,\wz{y})\r|\le C_{(K)}
\lf[\frac{d(y,\wz{y})}{d(x,y)}\r]^s
\frac1{V(x,y)}.
\end{equation}
\end{enumerate}
\end{enumerate}

Let $T:\ C^s_b(\cx)\to (C^s_b(\cx))^*$ be a linear continuous
operator. The operator $T$ is
called  a \emph{Calder\'on-Zygmund operator}
with kernel $K$ satisfying \eqref{d.a}, \eqref{d.b}
and \eqref{d.g} if, for
all $f\in C^s_b(\cx)$,
\begin{equation}\label{d.c}
Tf(x):=\int_{\cx}K(x,y)f(y)\,d\mu(y),\quad x\not\in\supp(f).
\end{equation}
\end{definition}

Then we recall some results from
\cite[Proposition 3.1]{yz08}
(see also \cite[Theorem 4.2]{hyz}) about the
boundedness of Calder\'on-Zygmund operators. In what follows
$T^*1=0$ means that, for all $(1,2)$-atom $a$,
$\int_\cx Ta(x)\,d\mu(x)=0$. By some careful
examinations, we see that this result remains valid over
the metric measure space of homogeneous type
without resorting to the reverse doubling condition,
the details being omitted.

\begin{theorem}\label{te.g}
Let $(\cx,d,\mu)$ be a metric measure space of homogeneous type.
Suppose that $T$ is a Calder\'on-Zygmund operator as in
\eqref{d.c} which is bounded on $\ltw$.
\begin{enumerate}
\item[\rm (i)] Then there exists a positive constant
$C$, depending only on $\|T\|_{\cl(\ltw)}$, $s$, $C_{(K)}$,
$c_{(K)}$ and $\wz{C}_{(\cx)}$,
 such that, for all $f\in \hona$, $Tf\in \lon$ and
$\|Tf\|_{\lon}\le C\|f\|_{\hona}$.

\item[\rm (ii)] If further assuming that $T^*1=0$, then
there exists a positive constant $\wz{C}$,
depending only on $\|T\|_{\cl(\ltw)}$, $s$, $C_{(K)}$,
$c_{(K)}$ and $\wz{C}_{(\cx)}$,
such that, for all $f\in \hona$, $Tf\in \hona$ and
$\|Tf\|_{\hona}\le C\|f\|_{\hona}$.
\end{enumerate}
\end{theorem}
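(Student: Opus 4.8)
The plan is to reduce both assertions to uniform estimates on a single $(1,2)$-atom and then to pass to a general $f\in\hona$ via its atomic decomposition together with the completeness of the target space. Throughout, $a$ denotes a $(1,2)$-atom with $\supp(a)\st B:=B(x_0,r)$, and $\kappa\ge2$ is a fixed constant, depending only on $c_{(K)}$, large enough that the smoothness estimate \eqref{d.g} applies whenever $x\notin\kappa B$ and $y\in B$ (with $x_0$ in the role of $\wz{y}$); for such $x,y$ one also has $d(x,y)\sim d(x_0,x)$ and $V(x,y)\sim V(x_0,x)$.

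For part (i), I would write $\|Ta\|_{\lon}=\int_{\kappa B}|Ta|\,d\mu+\int_{\cx\setminus\kappa B}|Ta|\,d\mu$. On $\kappa B$, Hölder's inequality, the $\ltw$-boundedness of $T$, the size bound $\|a\|_{\ltw}\le[\mu(B)]^{-1/2}$ and the doubling property \eqref{a.b} give $\int_{\kappa B}|Ta|\,d\mu\le[\mu(\kappa B)]^{1/2}\|T\|_{\cl(\ltw)}[\mu(B)]^{-1/2}\ls1$. Off $\kappa B$, I use the cancellation $\int_\cx a\,d\mu=0$ to write $Ta(x)=\int_B[K(x,y)-K(x,x_0)]a(y)\,d\mu(y)$ for $x\notin\supp(a)$, so that \eqref{d.g} and $\int_B|a|\,d\mu\le1$ yield $|Ta(x)|\ls[r/d(x_0,x)]^s[V(x_0,x)]^{-1}$; decomposing $\cx\setminus\kappa B$ into the annuli $2^jB\setminus2^{j-1}B$ and using \eqref{a.e} and \eqref{a.b} then gives $\int_{\cx\setminus\kappa B}|Ta|\,d\mu\ls\sum_j2^{-js}\ls1$. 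Hence $\|Ta\|_{\lon}\le C$ uniformly in $a$.

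For part (ii), I would show that, under the additional hypothesis $T^*1=0$, each $Ta$ is, up to a fixed multiplicative constant, a $(1,q,\eta)$-molecule centered at $B$ in the sense of Definition \ref{dc.z}, with $q\in(1,\min\{2,\frac{1}{1-s}\})$ (a nonempty range since $s\in(0,1]$) and $\eta=\{\eta_k\}_{k\in\nn}$ satisfying $\sum_k k\eta_k<\fz$; Theorem \ref{tc.y} then finishes. Put $\tz:=s-1+1/q$, which is positive by the choice of $q$, and $A_k:=2^kB\setminus2^{k-1}B$. Condition (M3) is precisely $\int_\cx Ta\,d\mu=0$, i.e.\ the hypothesis $T^*1=0$ tested against $a$. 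For (M1), splitting $\cx$ into $\kappa B$ and the annuli $A_k$ and arguing as in part (i)---Hölder and doubling on $\kappa B$, and the pointwise bound $|Ta(x)|\ls2^{-ks}[\mu(2^kB)]^{-1}$ for $x\in A_k\st\cx\setminus\kappa B$, together with $\mu(B)\le\mu(2^kB)$ and $1-q<0$---gives $\|Ta\|_{\lq}\ls[\mu(B)]^{1/q-1}$. For (M2), on the finitely many annuli contained in $\kappa B$ one uses $\|Ta\chi_{A_k}\|_{\lq}\le\|Ta\chi_{\kappa B}\|_{\lq}\ls[\mu(B)]^{1/q-1}$ and absorbs the bounded factor $2^{k(1/q-1)}$ into a constant $\eta_k$; on the remaining annuli the pointwise bound gives $\|Ta\chi_{A_k}\|_{\lq}\ls2^{-ks}[\mu(2^kB)]^{1/q-1}\le2^{-k\tz}2^{k(1/q-1)}[\mu(B)]^{1/q-1}$, so $\eta_k:=C2^{-k\tz}$ suffices and $\sum_k k\eta_k<\fz$. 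Thus $C^{-1}Ta$ is a $(1,q,\eta)$-molecule, and Theorem \ref{tc.y} gives $\|Ta\|_{\hona}\le C$ uniformly in $a$.

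It remains to pass from atoms to a general $f\in\hona$. Given an atomic decomposition $f=\sum_j\lz_ja_j$ converging in $\lon$ with $\sum_j|\lz_j|<\fz$, the series $\sum_j\lz_jTa_j$ converges absolutely in $\lon$ (for part (i)) and, since $\hona$ is a Banach space (Remark \ref{rc.i}), also in $\hona$ (for part (ii)); writing $g$ for its sum, one has $\|g\|_{\lon}\le C\sum_j|\lz_j|$, respectively $\|g\|_{\hona}\le C\sum_j|\lz_j|$, and taking the infimum over all such decompositions yields the asserted bounds. I expect the only genuinely delicate point to be the identification $g=Tf$ in $(\cg^\ez_0(\bz,\gz))^*$: since $T$ is a priori defined only on $C^s_b(\cx)$, one must check that its action on atoms---given by \eqref{d.c} off the support and by the $\ltw$-boundedness on it---is consistent, and that $T$ intertwines $\lon$-convergence with convergence in $(\cg^\ez_0(\bz,\gz))^*$; this is standard (a density argument via finite atomic combinations), and the remaining estimates are routine precisely because $d$ is a genuine metric, so the annular bookkeeping and the verification that \eqref{d.g} applies where it is used present no difficulty beyond tracking $\kappa$ and choosing $q$ close enough to $1$ that $\tz>0$.
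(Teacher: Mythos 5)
Your argument is correct, and it is essentially the standard one. Note, however, that the paper does not prove Theorem \ref{te.g} at all: it imports the statement from \cite[Proposition 3.1]{yz08} and \cite[Theorem 4.2]{hyz} with the remark that the proof there survives without the reverse doubling condition. Your proof reconstructs exactly what those references do: for (i), the classical split into $\kappa B$ (Cauchy--Schwarz plus $\ltw$-boundedness plus doubling) and its complement (atom cancellation plus the H\"ormander-type estimate \eqref{d.g}); for (ii), the verification that $Ta$ is a fixed multiple of a $(1,q,\eta)$-molecule with $q\in(1,\min\{2,\frac{1}{1-s}\})$ and $\eta_k\sim 2^{-k(s-1+1/q)}$, followed by the molecular characterization of Theorem \ref{tc.y} -- this is precisely the route of \cite{hyz}, where that molecular theory is developed for this purpose. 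The exponent bookkeeping checks out: on the far annuli $\|Ta\chi_{A_k}\|_{\lq}\ls 2^{-ks}[\mu(2^kB)]^{1/q-1}\le 2^{-k(s-1+1/q)}2^{k(1/q-1)}[\mu(B)]^{1/q-1}$ since $1/q-1<0$, and the finitely many annuli inside $\kappa B$ only perturb finitely many $\eta_k$. The one point you flag as delicate -- passing from uniform atom bounds to boundedness on all of $\hona$ and identifying the sum with $Tf$ -- is genuinely the place where care is needed (uniform boundedness on individual atoms alone does not suffice in general), but the paper has the machinery to close it: Theorem \ref{td.f}, built on the Mauceri--Meda equivalence of Theorem \ref{tf.z}, gives the extension from $\hfa$ once one has the bound on finite atomic combinations, and the identification with $Tf$ on $\hona\cap\ltw$ follows from the $\ltw$-boundedness; invoking that would make your final paragraph airtight without any new estimate.
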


Now we recall several equivalent characterizations
for $\hona$ via wavelets from \cite{fy1}.
To this end, we need more notation.
We point out that, for any $(k,\az,\bz)\in\sci$, where
\begin{equation}\label{b.2}
\mathscr{I}:=\lf\{(k,\az,\bz):\ (k,\az)\in\wz{\sca},\,
\bz\in\wz{L}(k,\az)\r\}
\end{equation}
with $\wz{\sca}$ and $\wz{L}(k,\az)$ respectively
as in \eqref{2.19x1} and \eqref{2.19x2},
we have $\psi^k_{\az,\,\bz}\in\li$ and hence
$\langle f,\psi^k_{\az,\,\bz}\rangle$ is well defined for
any $f\in\lon$ in the sense of duality between $\lon$ and $\li$.

\begin{theorem}\label{tc.d}
Let $(\cx,d,\mu)$ be a metric measure space of homogeneous type.
Suppose that $f\in\lon$ and
$$
f=\sum_{(k,\,\az,\,\bz)\in\sci}
\langle f,\psi^k_{\az,\,\bz}\rangle\psi^k_{\az,\,\bz}
\quad {\rm in}\quad \lon.
$$
Then the following statements are mutually equivalent:
\begin{enumerate}
\item[{\rm (i)}] $f\in \hona$;

\item[{\rm (ii)}] $\sum_{(k,\,\az,\,\bz)\in\sci}
\langle f,\psi^k_{\az,\,\bz}\rangle\psi^k_{\az,\,\bz}$
converges unconditionally in $L^1(\cx)$;

\item[{\rm (iii)}] $\|f\|_{\rm (iii)}
:=\|\{\sum_{(k,\,\az,\,\bz)\in\sci}
|\langle f,\psi^k_{\az,\,\bz}\rangle|^2
|\psi^k_{\az,\,\bz}|^2\}^{1/2}\|_{L^1(\cx)}<\fz;
$

\item[{\rm (iv)}] $\|f\|_{\rm (iv)}:=
\|\{\sum_{(k,\,\az,\,\bz)\in\sci}
|\langle f,\psi^k_{\az,\,\bz}\rangle|^2
\frac{\chi_{Q_\az^k}}{\mu(Q_\az^k)}\}^{1/2}\|_{L^1(\cx)}<\fz;
$

\item[{\rm (v)}] $\|f\|_{\rm (v)}:=
\|\{\sum_{(k,\,\az,\,\bz)\in\sci}
|\langle f,\psi^k_{\az,\,\bz}\rangle|^2
[R^k_{\az,\,\bz}]^2\}^{1/2}\|_{L^1(\cx)}<\fz$,
\end{enumerate}
here and hereafter,
\begin{equation}\label{c.d}
R^k_{\az,\,\bz}:=\frac{\chi_{W^k_{\az,\,\bz}}}
{\sqrt{\mu(Q^k_\az)}},
\end{equation}
and
$
W^k_{\az,\,\bz}:=B(y^k_{\bz},\ez_0\dz^k)\st Q^k_\az
$
as in Theorem \ref{tb.m}.

Moreover, $\|\cdot\|_{\rm (iii)}$, $\|\cdot\|_{\rm (iv)}$
and $\|\cdot\|_{\rm (v)}$ give norms on $\hona$, which are
equivalent to $\|\cdot\|_{\hona}$, respectively.
\end{theorem}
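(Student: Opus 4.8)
The plan is to establish the chain of equivalences $(\mathrm{i})\Rightarrow(\mathrm{ii})\Rightarrow(\mathrm{iii})\Rightarrow(\mathrm{iv})\Rightarrow(\mathrm{v})\Rightarrow(\mathrm{i})$, together with the norm equivalences, by combining the standard wavelet square-function machinery with the molecular characterization of $\hona$ from Theorem \ref{tc.y} and the boundedness of Calder\'on--Zygmund operators from Theorem \ref{te.g}. The key technical facts we will lean on throughout are: the size and H\"older estimates \eqref{b.a}, \eqref{b.b} and the cancellation \eqref{b.c} for the wavelets $\psi^k_{\az,\,\bz}$; the exponential-decay summation estimate of Lemma \ref{lb.x}; and the lower bound of Theorem \ref{tb.m} which is what makes $R^k_{\az,\,\bz}$ (and hence characterization (v)) usable. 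The geometry \eqref{a.b}--\eqref{a.e} and the nesting properties (iv), (v) of the dyadic cubes in Theorem \ref{tb.c} will be used repeatedly to pass between balls $B(x^{k+1}_\bz,\dz^k)$, the cubes $Q^k_\az$, and the sets $W^k_{\az,\,\bz}$.

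First I would prove $(\mathrm{i})\Rightarrow(\mathrm{ii})$ and the bound $\|\cdot\|_{\rm(iii)}\ls\|\cdot\|_{\hona}$ by an atomic argument: for a $(1,2)$-atom $a$ supported in a ball $B$, split the wavelet indices into those with $B(Q^k_\az)$ close to $B$ and those far away. For the near part one uses the $\ltw$-boundedness of the wavelet square function (an immediate consequence of Bessel/Plancherel for the orthonormal basis) together with H\"older's inequality and the doubling estimate \eqref{a.b}; for the far part one exploits the cancellation \eqref{b.c} of $a$ (or of $\psi^k_{\az,\,\bz}$, whichever is cheaper) against the H\"older regularity \eqref{b.b}, summing the resulting geometric-type series via Lemma \ref{lb.x}. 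Since atoms are dense in $\hona$ and the square-function estimate is sublinear, this yields $(\mathrm{iii})$ with the right norm control, and a routine refinement (Fatou plus the standard argument that $\ell^2$-summability of the wavelet coefficients against an $L^1$-function forces unconditional $L^1$-convergence once the square function is in $L^1$) gives $(\mathrm{ii})$.

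Next, $(\mathrm{iii})\Leftrightarrow(\mathrm{iv})\Leftrightarrow(\mathrm{v})$ are purely ``pointwise-to-geometric'' comparisons of the three square functions. The inequality $\|f\|_{\rm(v)}\ls\|f\|_{\rm(iv)}$ is trivial since $W^k_{\az,\,\bz}\st Q^k_\az$ forces $R^k_{\az,\,\bz}\le \chi_{Q^k_\az}/\sqrt{\mu(Q^k_\az)}$ pointwise. For $\|f\|_{\rm(iii)}\ls\|f\|_{\rm(iv)}$ one bounds $|\psi^k_{\az,\,\bz}(x)|^2$ using \eqref{b.a}: the Gaussian-type factor lets one replace $|\psi^k_{\az,\,\bz}(x)|^2$ by a sum of dyadic-annulus pieces each comparable to a multiple of $\chi_{Q^k_\az}/\mu(Q^k_\az)$ up to translates, and the off-diagonal contributions are again controlled by Lemma \ref{lb.x} applied to the $1$-separated set $\{\dz^{-k}x^{k+1}_\bz\}$; passing the resulting $\ell^2$-sum inside (Minkowski) gives the claim. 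For $\|f\|_{\rm(iv)}\ls\|f\|_{\rm(v)}$, here is where Theorem \ref{tb.m} is essential: on $W^k_{\az,\,\bz}=B(y^k_\bz,\ez_0\dz^k)\st Q^k_\az$ we have $|\psi^k_{\az,\,\bz}|\gs 1/\sqrt{\mu(Q^k_\az)}$, so $[R^k_{\az,\,\bz}]^2\ls |\psi^k_{\az,\,\bz}|^2$ on $W^k_{\az,\,\bz}$; but to recover the full $\chi_{Q^k_\az}/\mu(Q^k_\az)$ one must also use that $\mu(W^k_{\az,\,\bz})\sim\mu(Q^k_\az)$ (from Theorem \ref{tb.c}(iv) and \eqref{a.b}) together with the standard fact that for any $\ell^2$-valued sequence $\{c_i\}$ and measurable sets $E_i\st F_i$ with $\mu(E_i)\sim\mu(F_i)$ one has $\|(\sum |c_i|^2\chi_{F_i}/\mu(F_i))^{1/2}\|_{L^1}\sim\|(\sum|c_i|^2\chi_{E_i}/\mu(E_i))^{1/2}\|_{L^1}$ via a maximal-function / weak-type argument. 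This last point is the main obstacle: unlike the compactly supported Euclidean case, the wavelet is only pinned from below on the small ball $W^k_{\az,\,\bz}$, not on all of $Q^k_\az$, so the equivalence $(\mathrm{iv})\Leftrightarrow(\mathrm{v})$ genuinely requires the measure comparison and a covering/maximal argument rather than a one-line pointwise estimate.

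Finally, $(\mathrm{v})\Rightarrow(\mathrm{i})$ closes the loop. Given $f\in\lon$ with $\|f\|_{\rm(v)}<\fz$, one builds an atomic (or molecular) decomposition of $f$ by the classical tent-space/stopping-time procedure applied to the square function $g:=(\sum_{(k,\az,\bz)\in\sci}|\la f,\psi^k_{\az,\,\bz}\ra|^2[R^k_{\az,\,\bz}]^2)^{1/2}$: set $O_j:=\{g>2^j\}$, group the indices $(k,\az,\bz)$ according to the largest $j$ with $Q^k_\az\st O_j$, and form, for each Whitney-type cube $Q$ in a Calder\'on--Zygmund decomposition of $O_j$, the function $m_Q:=\sum_{(k,\az,\bz): Q^k_\az \text{ associated to } Q}\la f,\psi^k_{\az,\,\bz}\ra\psi^k_{\az,\,\bz}$; cancellation of each $\psi^k_{\az,\,\bz}$ gives $\int m_Q\,d\mu=0$, \eqref{b.a} plus Lemma \ref{lb.x} gives the size and decay bounds (M1), (M2) for a $(1,2,\eta)$-molecule (with $\eta_k$ of exponential decay, so \eqref{c.w} holds), and the usual tent-space estimate gives $\sum_{j,Q}|\lz_{j,Q}|\ls\|g\|_{L^1}=\|f\|_{\rm(v)}$; then Theorem \ref{tc.y} yields $f\in\hona$ with $\|f\|_{\hona}\ls\|f\|_{\rm(v)}$, which simultaneously finishes the norm-equivalence claims for $\|\cdot\|_{\rm(iii)},\|\cdot\|_{\rm(iv)},\|\cdot\|_{\rm(v)}$. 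As a byproduct, Theorem \ref{te.g} can be invoked to cross-check: the operator sending $f\mapsto\sum\la f,\psi^k_{\az,\,\bz}\ra\psi^k_{\az,\,\bz}$ restricted to suitable subfamilies is (a sum of) Calder\'on--Zygmund operators with the required kernel estimates, confirming unconditional $\hona$-convergence and tying (ii) back to (i).
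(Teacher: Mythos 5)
The first thing to note is that this paper does not actually prove Theorem \ref{tc.d}: the statement is imported verbatim from \cite{fy1} (``Now we recall several equivalent characterizations for $\hona$ via wavelets from \cite{fy1}''), so there is no in-paper proof to compare your argument against. What can be checked is whether your outline is consistent with the toolkit this paper does quote from \cite{fy1}, and it largely is: the lower bound of Theorem \ref{tb.m} is indeed the only thing that makes characterization (v) usable, and Lemma \ref{lc.i} (stated here as \cite[Lemma 3.18]{fy1}) is precisely your step (v)$\Rightarrow$(i) restricted to finite index sets, obtained by assembling groups of wavelets into $(1,2,\eta)$-molecules and invoking Theorem \ref{tc.y}, exactly as you propose. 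So the architecture you describe is very plausibly the one used in the source.

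Two places where your sketch is too thin to count as a proof. First, (i)$\Rightarrow$(ii): unconditional $L^1$-convergence does not follow from ``$\ell^2$-summability of the coefficients against an $L^1$-function''; the honest route is to show that the square function of any tail tends to $0$ in $\lon$ (dominated convergence against the full square function, once $\|f\|_{\rm(v)}<\fz$ is known) and then apply the finite-sum estimate of Lemma \ref{lc.i} to conclude that the partial sums form a Cauchy net in $\hona$, hence in $\lon$. In other words, (ii) sits downstream of the hard direction, not of the easy one, and your ordering of the implications should reflect that. Second, both comparisons $\|f\|_{\rm(iii)}\ls\|f\|_{\rm(iv)}$ and $\|f\|_{\rm(iv)}\ls\|f\|_{\rm(v)}$ require more than Minkowski, Lemma \ref{lb.x}, and a ``weak-type argument'': since $\psi^k_{\az,\,\bz}$ is not supported in $Q^k_\az$ and is only pinned from below on the small ball $W^k_{\az,\,\bz}$, the standard mechanism is the pointwise bound $\chi_{E}\ls[M(\chi_{F})]^{1/r}$ for $r\in(0,1)$ and comparable sets $E,\,F$ (using $\mu(W^k_{\az,\,\bz})\sim\mu(Q^k_\az)$, which follows from Theorem \ref{tb.c}(iv), Remark \ref{rb.d}(i) and \eqref{a.b}), followed by the Fefferman--Stein vector-valued maximal inequality on $(\cx,d,\mu)$ (available from \cite{gly}). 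Without that vector-valued $L^{1/r}(\ell^{2/r})$ estimate the $\ell^2$-sums cannot be interchanged with the $L^1$-norm. With those two points made precise, the plan is sound.
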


The following lemma is taken from \cite[Lemma 3.18]{fy1}.

\begin{lemma}\label{lc.i}
Let $(\cx,d,\mu)$ be a metric measure space of homogeneous type.
For any family of numbers,
$\{a(j,\az,\bz)\}_{(j,\,\az,\,\bz)\in\sci}\st\cc$
with $\sci$ as in \eqref{b.2}, let $\cs$ be any finite
subset of $\sci$ and
$$
\vz_{\cs}(x):=\lf\{\sum_{(j,\,\az,\,\bz)\in\cs}
|a(j,\az,\bz)|^2
\lf[R^j_{\az,\,\bz}(x)\r]^2\r\}^{1/2},\quad x\in\cx,
$$
where $R^j_{\az,\,\bz}$ is as in \eqref{c.d}.
Suppose that $\vz_{\cs}\in\lon$.
Then the function
$$
\sum_{(j,\,\az,\,\bz)\in\cs}a(j,\az,\bz)
\psi^j_{\az,\,\bz}\in \hona
$$
and there exists a positive constant $C$, independent of $\cs$,
such that
$$
 \lf\|\sum_{(j,\,\az,\,\bz)\in\cs}a(j,\az,\bz)
\psi^j_{\az,\,\bz}\r\|_{\hona}\le C\|\vz_{\cs}\|_{\lon}.
$$
\end{lemma}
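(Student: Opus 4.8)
\textbf{Proof proposal for Lemma \ref{lc.i}.}

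The plan is to reduce the statement to the molecular characterization of $\hona$ provided by Theorem \ref{tc.y}, exactly as the wavelet square-function characterization (v) in Theorem \ref{tc.d} is presumably proved. First I would fix the finite subset $\cs\st\sci$ and set $f:=\sum_{(j,\,\az,\,\bz)\in\cs}a(j,\az,\bz)\psi^j_{\az,\,\bz}$; since $\cs$ is finite and each $\psi^j_{\az,\,\bz}\in\li\cap\ltw$, the function $f$ is a genuine element of $\ltw\cap\lon$, so there is no convergence issue to worry about and $\langle f,\psi^j_{\az,\,\bz}\rangle=a(j,\az,\bz)$ by orthonormality. The goal is then the single quantitative bound $\|f\|_{\hona}\le C\|\vz_{\cs}\|_{\lon}$ with $C$ independent of $\cs$, because membership in $\hona$ then follows and, by a routine exhaustion/limiting argument over an increasing sequence of finite sets, extends to any $\cs$ with $\vz_{\cs}\in\lon$.

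The core construction is a Calder\'on--Zygmund/stopping-time decomposition of the level sets of the square function $\vz_{\cs}$. For $i\in\zz$ let $O_i:=\{x\in\cx:\ \vz_{\cs}(x)>2^i\}$, which is open with finite measure, and take a Whitney-type covering of $O_i$ by dyadic cubes from Theorem \ref{tb.c}. One then partitions the index set $\cs$ according to which level $i$ is the largest with $Q^j_\az\subset O_i$ up to a fixed dilation, i.e. for which $\mu(Q^j_\az\cap O_i)$ exceeds half of $\mu(Q^j_\az)$ but the analogous condition fails for $O_{i+1}$; grouping the wavelets attached to each Whitney cube at each level $i$ produces functions $m_{i,\ell}$, and one shows each $m_{i,\ell}$, suitably normalized by $\lz_{i,\ell}:=2^i\mu(\text{Whitney cube})$, is (a fixed multiple of) a $(1,q,\eta)$-molecule centered at the containing ball, with $q=2$ and a decay sequence $\eta=\{\eta_k\}$ coming from the exponential decay \eqref{b.a} of the wavelets. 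The vanishing moment (M3) is immediate from \eqref{b.c}; the size bound (M1) and the annular bounds (M2) follow from the $\ltw$-orthonormality together with \eqref{b.a}, Lemma \ref{lb.x} and the localization $W^j_{\az,\,\bz}\subset Q^j_\az$ built into $R^j_{\az,\,\bz}$ via \eqref{c.d} and Theorem \ref{tb.m}. Finally $\sum_{i,\ell}|\lz_{i,\ell}|\lesssim\sum_i 2^i\mu(O_i)\sim\|\vz_{\cs}\|_{\lon}$ by the layer-cake formula, and Theorem \ref{tc.y} converts this into $\|f\|_{\hona}\lesssim\|\vz_{\cs}\|_{\lon}$.

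The main obstacle I expect is controlling the ``tails'' of the wavelets that lie outside the Whitney cube to which they are assigned: because the wavelets of Auscher and Hyt\"onen are not compactly supported, a wavelet $\psi^j_{\az,\,\bz}$ grouped into $m_{i,\ell}$ contributes mass on all the annuli $B(x_0,2^kr)\setminus B(x_0,2^{k-1}r)$, and one must sum these contributions over all wavelets in the group and verify that the total still obeys the molecular annular estimate (M2) with a summable-against-$k$ coefficient sequence $\eta_k$. This is precisely where the exponential decay in \eqref{b.a}, the $1$-separation of the reference points $\{x^k_\az\}$, and the summation Lemma \ref{lb.x} must be combined carefully; the bookkeeping is delicate because wavelets of many different scales $j$ can be assigned to the same level $i$. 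A secondary, more routine, technical point is checking the ``doubling'' comparisons $V(x^{j}_\bz,\dz^j)\sim\mu(Q^j_\az)$ and the nesting of the relevant balls, which follow from \eqref{a.b}, \eqref{a.e} and Theorem \ref{tb.c}(iv). Once these estimates are in place, the proof is a standard assembly via Theorem \ref{tc.y}.
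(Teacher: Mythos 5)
The paper does not prove this lemma itself but imports it verbatim from \cite[Lemma 3.18]{fy1}, and the proof given there is essentially the strategy you outline: a level-set/stopping-time decomposition attached to $\{x\in\cx:\ \vz_{\cs}(x)>2^i\}$, a grouping of the wavelets over maximal dyadic cubes into $(1,2,\eta)$-molecules (rather than atoms, precisely because the Auscher--Hyt\"onen wavelets lack compact support), and a conclusion via the molecular characterization of Theorem \ref{tc.y} together with the layer-cake bound on $\sum_{i,\ell}|\lz_{i,\ell}|$. Your sketch is correct in approach and correctly isolates the real technical burden, namely verifying (M1)--(M2) for the tails of the grouped wavelets by combining \eqref{b.a}, the separation of the reference points and Lemma \ref{lb.x}, so nothing essential is missing beyond writing out those estimates.
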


\section{$\ltw$ Estimates for Products of Two Functions}\label{s4}

\hskip\parindent In this section, we obtain $\ltw$ estimates for
the product of two functions in $\ltw$.

If two functions in $\ltw$ both have finite wavelet decompositions,
we have the following conclusion.

\begin{lemma}\label{le.a}
Suppose that $(\cx,d,\mu)$ is a metric measure space of homogeneous type.
Let $f,\,g\in\ltw$, $\{V_k\}_{k\in\zz}$ be an MRA of $\ltw$
as in Theorem \ref{tb.n}, $W_k$ be the orthogonal [in $\ltw$]
complement of $V_k$ in $V_{k+1}$ and $P_k$
and $Q_k$ be the projection operators from $\ltw$ onto, respectively, $V_k$
and $W_k$. Suppose that $f$ and $g$ both have finite
wavelet decompositions, namely,
there exist $M_1,\,M_2\in\nn$ such that
\begin{equation}\label{4.1x}
f=\sum_{k=-M_1}^{M_1}\sum_{\bz\in\scg_k}\lf(f,\psi^k_\bz\r)\psi^k_\bz \quad
{\rm and}\quad
g=\sum_{k=-M_2}^{M_2}\sum_{\bz\in\scg_k}\lf(g,\psi^k_\bz\r)\psi^k_\bz,
\end{equation}
where $\scg_k$ for $k\in\zz$ is as in \eqref{b.w}.
Then
\begin{eqnarray}\label{e.z}
fg&&=\sum_{k\in\zz}(P_kf)(Q_kg)+\sum_{k\in\zz}(Q_kf)(P_kg)
+\sum_{k\in\zz}(Q_kf)(Q_kg)\\
&&\noz=:\Pi_1(f,g)+\Pi_2(f,g)+\Pi_3(f,g)
\end{eqnarray}
in $\ltw$.
\end{lemma}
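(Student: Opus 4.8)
The plan is to derive \eqref{e.z} from an elementary telescoping identity for the MRA projections, after using the finiteness of the wavelet expansions in \eqref{4.1x} to turn every series in \eqref{e.z} into a \emph{finite} sum. Recall that, by the Auscher and Hyt\"onen construction underlying Theorems \ref{tb.n} and \ref{tb.a}, for each $k\in\zz$ the family $\{\psi^k_\bz\}_{\bz\in\scg_k}$ is an orthonormal basis of $W_k$, the subspaces $\{W_k\}_{k\in\zz}$ are mutually orthogonal, $W_k\st V_{k+1}$, and $W_k\perp V_j$ for every $j\in\zz\cap(-\fz,k]$. Combined with \eqref{4.1x}, this yields, for all $k\in\zz$,
$$
Q_kf=\sum_{\bz\in\scg_k}\lf(f,\psi^k_\bz\r)\psi^k_\bz,\qquad Q_kf=0\ \ {\rm if}\ \ |k|>M_1,
$$
and $f=\sum_{k=-M_1}^{M_1}Q_kf\in\bigoplus_{k=-M_1}^{M_1}W_k\st V_{M_1+1}$, so that $f\perp V_{-M_1}$. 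Using $P_jP_\ell=P_{\min\{j,\ell\}}$ and $P_{k+1}=P_k+Q_k$ for the MRA projections, we deduce $P_kf=f$ for all $k\in\zz\cap[M_1+1,\fz)$, $P_kf=0$ for all $k\in\zz\cap(-\fz,-M_1]$, and $P_kf=f-\sum_{j=k}^{M_1}Q_jf$ for $k\in\zz\cap(-M_1,M_1+1]$; in particular each $P_kf$ is again a finite linear combination of wavelets, possibly the zero combination. The analogous statements hold for $g$ with $M_1$ replaced by $M_2$. Since every wavelet is bounded (by \eqref{b.a}, as the exponential factor is at most $1$) and lies in $\ltw$, all the functions $f,g,P_kf,Q_kf,P_kg,Q_kg$ belong to $\li\cap\ltw$; hence every pointwise product of two of them belongs to $\ltw$, being the product of an $\li$ function and an $\ltw$ function. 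In particular $fg\in\ltw$ and only finitely many summands in \eqref{e.z} are nonzero.

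Next I would establish the one-step identity: for any $u,v\in\li\cap\ltw$ and any $k\in\zz$, writing $P_{k+1}=P_k+Q_k$ and expanding the product gives, $\mu$-almost everywhere on $\cx$,
$$
(P_{k+1}u)(P_{k+1}v)-(P_ku)(P_kv)=(P_ku)(Q_kv)+(Q_ku)(P_kv)+(Q_ku)(Q_kv).
$$
Take $u:=f$, $v:=g$ and $N:=\max\{M_1,M_2\}+1$, and sum this identity over $k\in\zz\cap[-N,N]$. The left-hand side telescopes to $(P_{N+1}f)(P_{N+1}g)-(P_{-N}f)(P_{-N}g)$, which equals $fg$ by the stabilization facts of the previous paragraph (indeed $N+1\ge M_1+1$, $N+1\ge M_2+1$, $-N\le-M_1$ and $-N\le-M_2$). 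On the right-hand side, for $|k|>\max\{M_1,M_2\}$ one has $Q_kf=0=Q_kg$, so each of its three terms vanishes; thus extending the summation from $\zz\cap[-N,N]$ to all of $\zz$ adds only zero terms. This gives
$$
fg=\sum_{k\in\zz}(P_kf)(Q_kg)+\sum_{k\in\zz}(Q_kf)(P_kg)+\sum_{k\in\zz}(Q_kf)(Q_kg),
$$
which is precisely \eqref{e.z}; since every summand lies in $\ltw$ and the sums are finite, the identity holds in $\ltw$, as claimed.

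The computation is essentially bookkeeping once it is set up. The one step I would treat with care is the localization of the projections — $Q_kf=0$ for $|k|>M_1$ together with $P_kf=f$ for $k$ large positive and $P_kf=0$ for $k$ large negative (and likewise for $g$) — because this is exactly what makes every series in \eqref{e.z} a finite sum and thereby legitimizes both the telescoping and the passage to summation over all of $\zz$. This rests on the orthogonality relations $W_k\perp V_j$ for $j\le k$ and $W_k\st V_{k+1}$ encoded in the MRA structure of Theorem \ref{tb.n}, together with the fact that $\{\psi^k_\bz\}_{\bz\in\scg_k}$ is an orthonormal basis of $W_k$, both of which come from the construction recalled in Section \ref{s2}.
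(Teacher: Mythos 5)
Your proof is correct in substance and rests on the same core identity as the paper's, namely the telescoping of
$(P_{k+1}f)(P_{k+1}g)-(P_kf)(P_kg)=(P_kf)(Q_kg)+(Q_kf)(P_kg)+(Q_kf)(Q_kg)$
over a finite range of $k$. Where you genuinely diverge is in how the projections are stabilized at the two ends. The paper first proves, for an arbitrary $F\in\ltw$, that $\lim_{k\to\fz}P_kF=F$ (density of $\bigcup_{k}V_k$) and $\lim_{k\to\fz}P_{-k}F=0$ (a Banach--Alaoglu compactness argument combined with $\bigcap_{k}V_k=\{0\}$), records this as the claim \eqref{e.d}, and only then feeds in the finite wavelet decomposition to get $f=P_{M_1+1}f$ and $P_{-M}f=0$ for large $M$. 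You bypass both limit arguments: from $f=\sum_{k=-M_1}^{M_1}Q_kf\in\bigoplus_{k=-M_1}^{M_1}W_k$ and the orthogonality $W_k\perp V_j$ for $j\le k$ you read off the exact identities $P_jf=0$ for $j\le -M_1$ and $P_jf=f$ for $j\ge M_1+1$, which is more elementary and makes the whole proof a finite algebraic computation. The trade-off is that your argument is tied to the finite-decomposition hypothesis, whereas the paper's intermediate claim \eqref{e.d} is a statement about general $L^2$ functions that it establishes in passing.

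One inaccuracy you should repair: you assert that $P_kf$ and $Q_kf$ are ``finite linear combinations of wavelets'' and hence lie in $\li$, and you use this to place all the products in $\ltw$. By Remark \ref{r4.1x}(i) the index sets $\scg_k$ may be infinite, so the inner sums in \eqref{4.1x} need not be finite; moreover $\sup_{\bz\in\scg_k}\|\psi^k_\bz\|_{\li}$ need not be finite, since the factor $[V(x^{k+1}_\bz,\dz^k)]^{-1/2}$ in \eqref{b.a} is not uniformly bounded in $\bz$. Hence the $\li$ membership does not follow as stated. This affects only the justification that the identity holds ``in $\ltw$'' rather than merely pointwise with each summand in $\lon$ (which follows from the Cauchy--Schwarz inequality); the paper's own proof is no more explicit on this point, and what is actually used later is the finiteness of the sum over $k$ together with the identity itself.
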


\begin{proof}
We first claim that, for any $\ell\in\zz$,
\begin{equation}\label{e.d}
f=\sum_{k\in\zz}Q_kf=P_{\ell}f+\sum_{k=\ell}^\fz Q_kf
\end{equation}
holds true in $\ltw$.
Indeed, let $f\in\ltw$. By \eqref{b.p}, we see that,
for any given $\ez\in(0,\fz)$, there exist $k_0\in\zz$ and $f_0\in V_{k_0}$
such that $\|f-f_0\|_{\ltw}<\ez$.
Moreover, by the fact that $P_{k_0}$ is the projection operator
from $\ltw$ onto $V_{k_0}$, we know that
$$
\lf\|f-P_{k_0}f\r\|_{\ltw}\le\lf\|f-f_0\r\|_{\ltw}<\ez,
$$
which, combined with the fact that, for all $k>k_0$,
$P_{k_0}f\in V_{k_0}\st V_k$, implies that
$$
\lf\|f-P_{k}f\r\|_{\ltw}\le\lf\|f-P_{k_0}f\r\|_{\ltw}<\ez.
$$
Thus,
\begin{equation}\label{e.b}
f=\lim_{k\to\fz}P_kf \quad {\rm in}\quad \ltw.
\end{equation}

On the other hand, we show that
\begin{equation}\label{e.a}
\lim_{k\to\fz}P_{-k}f=0 \quad {\rm in}\quad \ltw.
\end{equation}
For any $k\in\zz$, from $P_kf=P_{k-1}f+Q_{k-1}f$ and
$P_{k-1}f\bot Q_{k-1}f$, it follows that
$$
\lf\|P_{k}f\r\|^2_{\ltw}=\lf\|P_{k-1}f\r\|^2_{\ltw}
+\lf\|Q_{k-1}f\r\|^2_{\ltw}
\ge\lf\|P_{k-1}f\r\|^2_{\ltw}.
$$
Therefore, $\{\|P_{-k}f\|_{\ltw}\}_{k\in\nn}$ is
decreasing and bounded below.
Thus,
$$
\lim_{k\to\fz}\|P_{-k}f\|_{\ltw}
$$
exists. Meanwhile, by the Banach-Alaoglu theorem (see,
for example, \cite[Theorem IV.\,21]{rs80}),
we conclude that the unit ball in $\ltw$ is compact,
and hence there exists $f_1\in\ltw$ and a subsequence
$\{P_{-n_k}f\}_{k\in\nn}$ of $\{P_{-k}f\}_{k\in\nn}$ such that
$$
\lim_{k\to\fz}P_{-n_k}f=f_1 \quad {\rm in}\quad \ltw
$$
and $\|f_1\|_{\ltw}=\lim_{k\to\fz}\|P_{-n_k}f\|_{\ltw}$.
Moreover, since, for any $m\in\nn$ and $k\ge m$,
$P_{-k}f\in V_{-m}$ and $V_{-m}$ is closed, we have
$f_1\in V_{-m}$. This, together with \eqref{b.p},
implies that $f_1\in\bigcap_{m\in\zz}V_m=\{0\}$,
and hence
$$
\lim_{k\to\fz}P_{-n_k}f=0 \quad {\rm in}\quad \ltw.
$$
From this and the fact that $\lim_{k\to\fz}\|P_{-k}f\|_{\ltw}$
exists, we deduce that
$$
\lim_{k\to\fz}P_{-k}f=0 \quad {\rm in}\quad \ltw,
$$
which shows \eqref{e.a}.

Furthermore, from \eqref{e.b}, $P_{k+1}=P_k+Q_k$
and \eqref{e.a}, we deduce that
\begin{eqnarray*}
f&&=\lim_{k\to\fz}P_kf=\lim_{m\to\fz}\lf[\sum_{k=-m}^m
\lf(P_{k+1}f-P_kf\r)+P_{-m}f\r]\\
&&=\lim_{m\to\fz}\lf[\sum_{k=-m}^m Q_kf+P_{-m}f\r]
=\lim_{m\to\fz}\lf[\sum_{k=-m}^{\ell-1} Q_kf+P_{-m}f
+\sum_{k=\ell}^m Q_kf\r]\\
&&=\lim_{m\to\fz}\lf[P_{\ell}f+\sum_{k=\ell}^m Q_kf\r]
=P_{\ell}f+\sum_{k=\ell}^\fz Q_kf
\end{eqnarray*}
holds true in $\ltw$,
which completes the proof of the above claim \eqref{e.d}.

By the finite wavelet decomposition of $f$,
we have $Q_kf=0$ for all $k\not\in\{-M_1,\ldots,M_1\}$,
which, together with $P_k=P_{k-1}+Q_{k-1}$ and \eqref{e.a}, implies that
\begin{eqnarray}\label{4.5x1}
f&&=\sum_{k=-M_1}^{M_1}\sum_{\bz\in\scg_k}\lf(f,\psi^k_\bz\r)\psi^k_\bz
=\sum_{k=-M_1}^{M_1}Q_kf=P_{M_1+1}f-P_{-M_1}f\\
&&\noz=P_{M_1+1}f-\lim_{k\to\fz}P_{-k}f
=P_{M_1+1}f \quad {\rm in}\quad \ltw.
\end{eqnarray}
Similarly, $g=P_{M_2+1}g$ in $\ltw$. Let $M_0:=\max\{M_1+1,M_2+1\}$.
Then, by the finite wavelet decompositions of $f$ and $g$ again,
we see that, for all $k\in\nn$, $P_{M_0+k}f=P_{M_0}f$
and $P_{M_0+k}g=P_{M_0}g$ and, for all $k\in\nn\bh\{1,\ldots,M_0-1\}$,
$P_{-k}f=0=P_{-k}g$. These facts, combined with \eqref{e.b}, \eqref{e.a}
and $P_{k+1}=P_k+Q_k$ for all $k\in\zz$, imply that
\begin{eqnarray}\label{4.5x2}
fg&&=\lim_{M\to\fz}P_M fP_M g
=\lim_{M\to\fz}\lf[P_M fP_M g-P_{-M}fP_{-M}g\r]\\
&&\noz=\lim_{M\to\fz}\sum_{k=-M}^{M-1}
\lf(P_{k+1}fP_{k+1}g-P_k fP_k g\r)\\
&&\noz=\sum_{k\in\zz}\lf(P_{k+1}fP_{k+1}g-P_k fP_k g\r)\\
&&\noz=\sum_{k\in\zz}\lf[\lf(P_kf+Q_kf\r)\lf(P_kg+Q_kg\r)-P_k fP_k g\r]\\
&&\noz=\sum_{k\in\zz}(P_kf)(Q_kg)+\sum_{k\in\zz}(Q_kf)(P_kg)
+\sum_{k\in\zz}(Q_kf)(Q_kg),
\end{eqnarray}
which completes the proof of Lemma \ref{le.a}.
\end{proof}

\begin{remark}\label{r4.1x}
(i) We point out that $f$ and $g$ as in \eqref{4.1x} have finite
wavelet decompositions only in some sense, since $\scg_k$ for any
$k\in\zz$ may have an infinite cardinality.

(ii) The finite wavelet decompositions of $f$ and $g$ play
a key role in the proofs of \eqref{4.5x1} and \eqref{4.5x2}
appeared in the proof of Lemma \ref{le.a}.
\end{remark}

The proof of the boundedness of $\Pi_3$ from $\ltw\times\ltw$
into $\lon$ is parallel to that of \cite[Lemma 4.1]{bgk},
the details being omitted.

\begin{lemma}\label{le.b}
Let $(\cx,d,\mu)$ be a metric measure space of homogeneous type. Then
the bilinear operator $\Pi_3$ in \eqref{e.z} is bounded from $\ltw\times\ltw$
into $\lon$.
\end{lemma}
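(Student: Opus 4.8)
The plan is to reduce everything to the elementary fact that $\Pi_3$ carries a ``double oscillation'' and hence needs no cancellation or kernel estimate at all: a pointwise Cauchy--Schwarz in the scale index $k$, followed by H\"older's inequality in $x$, suffices. First I would record that, since $\{\psi^k_\bz\}_{k\in\zz,\,\bz\in\scg_k}$ is an orthonormal basis of $\ltw$ (Theorem \ref{tb.n}) and the wavelets at scale $k$ span $W_k$, the projection $Q_k$ acts by $Q_kh=\sum_{\bz\in\scg_k}(h,\psi^k_\bz)\psi^k_\bz$, and the Bessel--Plancherel identity for this orthonormal basis yields $\sum_{k\in\zz}\|Q_kh\|_{\ltw}^2=\|h\|_{\ltw}^2$ for every $h\in\ltw$. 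It then suffices to prove the a priori bound $\|\Pi_3(f,g)\|_{\lon}\le\|f\|_{\ltw}\|g\|_{\ltw}$ for all $f,g$ having finite wavelet decompositions as in \eqref{4.1x} (for which $\Pi_3(f,g)$ of \eqref{e.z} is a finite sum, by Lemma \ref{le.a}), and afterwards to extend $\Pi_3$ to a bounded bilinear operator on $\ltw\times\ltw$ by density.

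For the a priori bound, introduce the square function $S(h):=\{\sum_{k\in\zz}|Q_kh|^2\}^{1/2}$. For $f,g$ with finite decompositions, pointwise
$$
|\Pi_3(f,g)(x)|\le\sum_{k\in\zz}|Q_kf(x)|\,|Q_kg(x)|\le S(f)(x)\,S(g)(x)
$$
by the Cauchy--Schwarz inequality in $\ell^2(\zz)$. Integrating over $\cx$ and applying the Cauchy--Schwarz inequality with exponents $2$ and $2$ gives $\|\Pi_3(f,g)\|_{\lon}\le\|S(f)\|_{\ltw}\|S(g)\|_{\ltw}$. Finally, by the Tonelli theorem and the identity above,
$$
\|S(h)\|_{\ltw}^2=\int_\cx\sum_{k\in\zz}|Q_kh(x)|^2\,d\mu(x)=\sum_{k\in\zz}\|Q_kh\|_{\ltw}^2=\|h\|_{\ltw}^2,
$$
whence $\|\Pi_3(f,g)\|_{\lon}\le\|f\|_{\ltw}\|g\|_{\ltw}$, as desired.

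To pass from finite decompositions to arbitrary $f,g\in\ltw$, I would observe that the partial sums $\sum_{|k|\le N}(Q_kf)(Q_kg)$ form a Cauchy sequence in $\lon$: by the same two applications of Cauchy--Schwarz, $\|\sum_{M<|k|\le N}(Q_kf)(Q_kg)\|_{\lon}\le\{\sum_{M<|k|\le N}\|Q_kf\|_{\ltw}^2\}^{1/2}\,\|g\|_{\ltw}$, and the right-hand side tends to $0$ as $M,N\to\fz$ since $\sum_{k\in\zz}\|Q_kf\|_{\ltw}^2<\fz$. Hence $\Pi_3(f,g):=\sum_{k\in\zz}(Q_kf)(Q_kg)$ converges absolutely in $\lon$ for all $f,g\in\ltw$, satisfies the same bound, and agrees on functions with finite decompositions with the operator of Lemma \ref{le.a}.

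I do not expect a genuine obstacle here: this is precisely why $\Pi_3$ is the ``easy'' paraproduct, in contrast to $\Pi_1$ and $\Pi_2$, which require the wavelet/spline characterizations of $\hona$ and Calder\'on--Zygmund theory. The only points needing mild care are the elementary convergence bookkeeping just described and the fact that all estimates are uniform in the (possibly infinite) cardinality of $\scg_k$, which is automatic since everything is controlled through the single orthonormal-basis identity $\sum_{k\in\zz}\|Q_kh\|_{\ltw}^2=\|h\|_{\ltw}^2$.
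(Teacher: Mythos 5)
Your argument is correct and is essentially the proof the paper has in mind: the authors omit the details precisely because the bound follows, as you show, from term-by-term Cauchy--Schwarz together with the orthogonality identity $\sum_{k\in\zz}\|Q_kh\|_{\ltw}^2=\|h\|_{\ltw}^2$, exactly as in \cite[Lemma 4.1]{bgk}. Your square-function formulation and the tail estimate justifying the extension to all of $\ltw\times\ltw$ are a harmless repackaging of that same computation.
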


Before we deal with $\Pi_1$ in \eqref{e.z}, we need to establish some
important estimates of some integral operators defined as follows.
Let
\begin{equation}\label{4.4.x1}
\scc:=\{(k,\bz):\ k\in\zz,\ \bz\in\scg_k\},
\end{equation}
where $\scg_k$ is as in \eqref{b.w}.
Choosing a fixed collection
\begin{equation}\label{4.5x}
\lf\{\scc_N:\ N\in\nn,\ \scc_N\st\scc\ {\rm and\ \scc_N\ is\ finite}\r\}
\end{equation}
satisfies $\scc_N\uparrow\scc$, namely, for any $N\in\nn$,
$\scc_N\st\scc_{N+1}$ and $\scc=\bigcup_{N\in\nn}\scc_N$.
Fixed $k\in\zz$ and $N\in\nn$, for any $j\in\zz$,
$(j,\bz)\in\scc$ and
\begin{equation}\label{4.5y}
\sca^k_{j,\,\bz}:=\lf\{\az\in\sca_j:\
2^k\dz^{j+1}\le d(x^j_{\az},y^j_\bz)<2^{k+1}\dz^{j+1}\r\},
\end{equation}
where $x^j_\az$ is as in \eqref{2.1x} with $k$ replaced by $j$,
and $y^j_\bz:=x^{j+1}_\bz$ for $\bz\in\scg_j$.
By the geometrically doubling condition and
Remark \ref{rb.l}(ii),
we see that, for all $j,\,k\in\zz$ and $\bz\in\scg_j$,
\begin{equation}\label{4.5z}
M^k_{j,\,\bz}:=\#\sca^k_{j,\,\bz}\le N_02^{(k+1)G_0}=:m_k,
\end{equation}
where $G_0$ and $N_0$ are the same as in Remark \ref{rb.l}(ii).

Now we relabel the set $\sca^k_{j,\,\bz}$ as
$\sca^k_{j,\,\bz}=:\{\az^i_{j,\,\bz}\}_{i=1}^{M^k_{j,\,\bz}}$.
If $M^k_{j,\,\bz}<m_k$, then we further enlarge $\sca^k_{j,\,\bz}$ to
$\{\az^i_{j,\,\bz}\}_{i=1}^{m_k}$ with
$s^j_{\az^i_{j,\,\bz}}:=0$ for any $i\in\nn\cap(M^k_{j,\,\bz},m_k]$.
If $M^k_{j,\,\bz}=m_k$, the set $\sca^k_{j,\,\bz}$ remains unchanged.
Let $\az:=\az^i_{j,\,\bz}\in\sca^k_{j,\,\bz}$,
\begin{equation}\label{4.4.x}
\wz{\psi}^{k,\,i}_{j,\,\bz}:=e^{\nu\dz2^{k-2}}s^j_{\az}\psi^j_\bz
\quad {\rm and}\quad
U^N_{k,\,i}g:=\sum_{(j,\,\bz)\in\scc_N}\lf(g,\psi^j_\bz\r)
\wz{\psi}^{k,\,i}_{j,\,\bz} \quad {\rm for\ all\ }g\in\ltw,
\end{equation}
where $\psi^j_\bz$ is as in Theorem \ref{tb.a} with $k$ and $\az$
replaced, respectively, by $j$ and $\bz$. We also define
\begin{equation}\label{3.5x}
\scy^k:=\scx^{k+1}\bh\scx^k
\end{equation}
with $\scx^k$ as in \eqref{2.1x}.
Then it is obvious that $U^N_{k,\,i}g\in\ltw$
for all $g\in\ltw$, noticing $\scc_N$ is finite. Moreover,
we have the following result.

\begin{proposition}\label{pe.e}
Suppose that $(\cx,d,\mu)$ is a metric measure space of homogeneous type.
Let $U^N_{k,\,i}$ be defined as in \eqref{4.4.x} for
$N\in\nn$, $k\in\zz$ and $i\in\{1,\ldots, m_k\}$
with $m_k$ as in \eqref{4.5z}. Then there exists a
positive constant $C$, independent of $N$, $k$ and $i$, such that,
for all $g,\,h\in\ltw$,
\begin{eqnarray}\label{e.h}
\lf|\lf(U^N_{k,\,i}g,h\r)\r|
\le C\lf[\sum_{(j,\,\bz)\in\scc_N}
\lf|\lf(g,\psi^j_\bz\r)  \r|^2\r]^{1/2}
\|h\|_{\ltw}\le C\|g\|_{\ltw}\|h\|_{\ltw}.
\end{eqnarray}
\end{proposition}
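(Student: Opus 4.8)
The plan is to reduce \eqref{e.h} to an operator-norm ($TT^*$) estimate for the molecule-type system attached to $U^N_{k,\,i}$, and to establish that estimate by an almost-orthogonality (Cotlar--Stein / Schur-test) argument in which the exponential gain $e^{-\nu\dz 2^{k-1}}$ beats all polynomial-in-$2^k$ combinatorial losses. First I would dispose of the second inequality in \eqref{e.h}: since $\{\psi^j_\bz\}_{j\in\zz,\,\bz\in\scg_j}$ is an orthonormal basis of $\ltw$ (Theorem \ref{tb.a}), Bessel's inequality gives $[\sum_{(j,\,\bz)\in\scc_N}|(g,\psi^j_\bz)|^2]^{1/2}\le\|g\|_{\ltw}$. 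For the first inequality, write $g^{k,\,i}_{j,\,\bz}:=s^j_{\az^i_{j,\,\bz}}\psi^j_\bz$, so that $\wz\psi^{k,\,i}_{j,\,\bz}=e^{\nu\dz 2^{k-2}}g^{k,\,i}_{j,\,\bz}$, and use $(U^N_{k,\,i}g,h)=\sum_{(j,\,\bz)\in\scc_N}(g,\psi^j_\bz)\,(\wz\psi^{k,\,i}_{j,\,\bz},h)$ together with the Cauchy--Schwarz inequality over the finite set $\scc_N$; it then suffices to prove the uniform bound
\[
\sum_{(j,\,\bz)\in\scc_N}\lf|\lf(\wz\psi^{k,\,i}_{j,\,\bz},h\r)\r|^2
=e^{\nu\dz 2^{k-1}}\sum_{(j,\,\bz)\in\scc_N}\lf|\lf(g^{k,\,i}_{j,\,\bz},h\r)\r|^2\le C^2\|h\|_{\ltw}^2,
\]
with $C$ independent of $N$, $k$, $i$. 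Equivalently, for $T(c):=\sum_{(j,\,\bz)\in\scc_N}c_{j,\,\bz}\,g^{k,\,i}_{j,\,\bz}$ on $\ell^2(\scc_N)$ one must show $\|T\|^2=\|TT^*\|_{\cl(\ltw)}\ls e^{-\nu\dz 2^{k-1}}$.

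Next I would record three properties of the blocks $g^{k,\,i}_{j,\,\bz}$. By \eqref{b.x}, $\supp g^{k,\,i}_{j,\,\bz}\st B(x^j_{\az^i_{j,\,\bz}},8\dz^j)$; since $\az^i_{j,\,\bz}\in\sca^k_{j,\,\bz}$, the separation $d(x^j_{\az^i_{j,\,\bz}},y^j_\bz)\ge 2^k\dz^{j+1}$ from \eqref{4.5y}, combined with the exponential decay \eqref{b.a} of $\psi^j_\bz$ around $y^j_\bz$ and the doubling \eqref{a.b}, yields (once $2^k\dz$ is large) $\|g^{k,\,i}_{j,\,\bz}\|_{\ltw}\ls (2^k)^{n/2}e^{-\nu 2^{k-1}\dz}$ and $\|g^{k,\,i}_{j,\,\bz}\|_{\lon}\ls (2^k)^{n/2}e^{-\nu 2^{k-1}\dz}[V(x^j_{\az^i_{j,\,\bz}},\dz^j)]^{1/2}$, while for small $2^k\dz$ one at least has $\|g^{k,\,i}_{j,\,\bz}\|_{\ltw}\le\|\psi^j_\bz\|_{\ltw}=1$ and compact support. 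Secondly, since $s^j_{\az^i_{j,\,\bz}}\in V_j$ while $\psi^j_\bz\in W_j$ is orthogonal to $V_j$ (Theorem \ref{tb.n}, Lemma \ref{le.a}), one gets the cancellation $\int_\cx g^{k,\,i}_{j,\,\bz}\,d\mu=0$; this is the fact that makes cross-scale cancellation available. Thirdly, by Proposition \ref{pb.x}(iii) and \eqref{b.b}, $g^{k,\,i}_{j,\,\bz}$ is $\eta$-H\"older continuous at scale $\dz^j$ with the same smallness factor.

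The remaining — and hardest — step is the almost-orthogonality estimate for $TT^*$. The obstacle is that a scale-by-scale bound does not suffice: for fixed $j$, the geometrically doubling condition (Remark \ref{rb.l}) together with $\#\sca^k_{j,\,\bz}\le m_k$ from \eqref{4.5z} and the fact that each $x^j_\az$ is $\az^i_{j,\,\bz}$ for at most $\ls 2^{kG_0}$ indices $\bz$ gives only $\sum_{\bz:\,(j,\,\bz)\in\scc_N}|(g^{k,\,i}_{j,\,\bz},h)|^2\ls 2^{kG_0}(2^k)^n e^{-\nu 2^k\dz}\|h\|_{\ltw}^2$, with no decay in $j$, hence not summable over $j\in\zz$. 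Instead I would write $TT^*=\sum_{j}\Sigma_j$ with $\Sigma_j:=\sum_{\bz:\,(j,\,\bz)\in\scc_N}(\cdot,g^{k,\,i}_{j,\,\bz})g^{k,\,i}_{j,\,\bz}$ and invoke a Cotlar--Stein-type argument: from the fixed-scale computation, $\|\Sigma_j\|\ls 2^{kG_0}(2^k)^n e^{-\nu 2^k\dz}$ (the support overlap of the balls $B(x^j_{\az^i_{j,\,\bz}},8\dz^j)$ being counted via geometric doubling and Lemma \ref{lb.x}); and for $j\ne j'$ the entries $(g^{k,\,i}_{j,\,\bz},g^{k,\,i}_{j',\,\bz'})$ decay in $|j-j'|$ by pairing the cancellation of the finer block against the H\"older regularity of the coarser one (Step 2), so that after summing over the relevant $\bz,\bz'$ by geometric doubling and Lemma \ref{lb.x} one is left with a factor $\dz^{c|j-j'|}$, $c>0$, and a convergent geometric series. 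Assembling these yields $\|TT^*\|\ls P(2^k)\,e^{-\nu 2^k\dz}$ for a fixed polynomial $P$; since $\sup_{k\in\zz}P(2^k)e^{-\nu 2^k\dz}e^{\nu\dz 2^{k-1}}<\fz$, the required bound follows, and in the regime where $2^k\dz$ is small one argues the same way but using the compact support of the splines (so the supports of the $g^{k,\,i}_{j,\,\bz}$ are compact) to get $\|TT^*\|\ls 1$ directly, $e^{\nu\dz 2^{k-1}}$ being then bounded. The delicate point throughout is bookkeeping: every combinatorial or volume loss incurred in summing over scales and over $\scc_N$ must be kept polynomial in $2^k$ so that it is absorbed by $e^{-\nu 2^k\dz}$, which is exactly where geometric doubling (Remark \ref{rb.l}), the bound \eqref{4.5z}, and Lemma \ref{lb.x} do the work.
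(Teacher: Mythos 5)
Your proposal is correct and rests on the same three pillars as the paper's proof of Proposition \ref{pe.e}: the reduction by the Cauchy--Schwarz inequality to a uniform Bessel-type bound for the system $\{\wz{\psi}^{k,\,i}_{j,\,\bz}\}$, the separation $d(x^j_{\az^i_{j,\,\bz}},y^j_\bz)\ge 2^k\dz^{j+1}$ which turns the prefactor $e^{\nu\dz 2^{k-2}}$ into a harmless quantity, and cross-scale almost-orthogonality obtained by pairing the cancellation $\int_\cx s^j_\az\psi^j_\bz\,d\mu=0$ (and the orthogonality of $\psi^s_\gz$ to $V_s$) against H\"older regularity, with all spatial sums controlled by Lemma \ref{lb.x} and geometric doubling. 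The only difference is packaging: the paper expands $h$ in the wavelet basis and runs a Schur test on the mixed Gram matrix $(\psi^s_\gz,\wz{\psi}^{k,\,i}_{j,\,\bz})$ split into diagonal, same-scale, coarser and finer pieces, whereas you run a $TT^*$ (Cotlar--Stein) argument on the Gram matrix of the modified system itself; the two are equivalent and require the same inner-product estimates, including the near/far splitting of the integration region that you leave implicit.
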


\begin{proof}
Let $k\in\zz$, $i\in\{1,\ldots,m_k\}$,
with $m_k$ as in \eqref{4.5z}, and
$U^N_{k,\,i}$ be as in \eqref{4.4.x}.
By the H\"older inequality and Theorem \ref{tb.a},
we see that, for all $g,\,h\in\ltw$,
\begin{eqnarray*}
\lf|\lf(U^N_{k,\,i}g,h\r)\r|&&
\le\sum_{(j,\,\bz)\in\scc_N}\lf|\lf(g,\psi^j_\bz\r)  \r|
\lf|\lf(\wz{\psi}^{k,\,i}_{j,\,\bz},h\r)\r|\\
&&\le\lf[\sum_{(j,\,\bz)\in\scc_N}
\lf|\lf(g,\psi^j_\bz\r)  \r|^2\r]^{1/2}
\lf[\sum_{(j,\,\bz)\in\scc_N}
\lf|\lf(\wz{\psi}^{k,\,i}_{j,\,\bz},h\r)\r|^2\r]^{1/2}\\
&&\le\|g\|_{\ltw}\lf[\sum_{j\in\zz}\sum_{\bz\in\scg_j}
\lf|\lf(\wz{\psi}^{k,\,i}_{j,\,\bz},h\r)\r|^2\r]^{1/2}.
\end{eqnarray*}
Thus, to show \eqref{e.h}, it suffices to prove that
\begin{equation}\label{e.i}
{\rm I}:=\lf[\sum_{j\in\zz}\sum_{\bz\in\scg_j}
\lf|\lf(\wz{\psi}^{k,\,i}_{j,\,\bz},h\r)\r|^2\r]^{1/2}\ls\|h\|_{\ltw}.
\end{equation}

To this end, since $h\in\ltw$ and $\{\psi^s_\gz\}_{s\in\zz,\,\gz\in\scg_k}$
is an orthonormal basis of $\ltw$ (see Theorem \ref{tb.a}), we write
\begin{eqnarray*}
{\rm I}&&\le\lf\{\sum_{j\in\zz}\sum_{\bz\in\scg_j}
\lf[\sum_{s\in\zz}\sum_{\gz\in\scg_s}
\lf|\lf(h,\psi^s_\gz\r)\r|
\lf|\lf(\psi^s_\gz,\wz{\psi}^{k,\,i}_{j,\,\bz}\r)\r|\r]^2\r\}^{1/2}\\
&&\le\lf\{\sum_{j\in\zz}\sum_{\bz\in\scg_j}
\lf|\lf(h,\psi^j_\bz\r)\r|^2
\lf|\lf(\psi^j_\bz,\wz{\psi}^{k,\,i}_{j,\,\bz}\r)\r|^2\r\}^{1/2}\\
&&\hs+\lf\{\sum_{j\in\zz}\sum_{\bz\in\scg_j}
\lf[\sum_{\{\gz\in\scg_j:\ \gz\neq\bz\}}
\lf|\lf(h,\psi^j_\gz\r)\r|
\lf|\lf(\psi^j_\gz,\wz{\psi}^{k,\,i}_{j,\,\bz}\r)\r|\r]^2\r\}^{1/2}\\
&&\hs+\lf\{\sum_{j\in\zz}\sum_{\bz\in\scg_j}
\lf[\sum_{s=-\fz}^{j-1}\sum_{\gz\in\scg_s}
\lf|\lf(h,\psi^s_\gz\r)\r|
\lf|\lf(\psi^s_\gz,\wz{\psi}^{k,\,i}_{j,\,\bz}\r)\r|\r]^2\r\}^{1/2}\\
&&\hs+\lf\{\sum_{j\in\zz}\sum_{\bz\in\scg_j}
\lf[\sum_{s=j+1}^\fz\sum_{\gz\in\scg_s}
\cdots\r]^2\r\}^{1/2}
=:\sum_{t=1}^4{\rm I}_t.
\end{eqnarray*}

To deal with ${\rm I}_1$, we first estimate
$|(\psi^j_\bz, \wz{\psi}^{k,\,i}_{j,\,\bz})|$ for any
$(j,\bz)\in\scc$, with $\scc$ as in \eqref{4.4.x1}
and $\az\in\sca_j$ with $\sca_j$ as in \eqref{b.v}.

Observe that, by \eqref{a.b},
we see that, for any $r_0,\,\nu_0\in(0,\fz)$
and $x_0\in\cx$,
\begin{eqnarray}\label{3.9x1}
&&\int_{\cx}e^{-\nu_0 d(x,\,x_0)/r_0}\,d\mu(x)\\
&&\noz\hs\ls\int_{B(x_0,\,r_0)}e^{-\nu_0 d(x,\,x_0)/r_0}\,d\mu(x)
+\sum_{\ell=1}^\fz\int_{B(x_0,\,(\ell+1)r_0)
\bh B(x_0,\,\ell r_0)}\cdots\\
&&\noz\hs\ls V(x_0,r_0)+\sum_{\ell=1}^\fz e^{-\nu_0\ell}
V\lf(x_0,[\ell+1]r_0\r)\\
&&\noz\hs\ls V(x_0,r_0)+\sum_{\ell=1}^\fz e^{-\nu_0\ell}
(\ell+1)^nV\lf(x_0,r_0\r)\ls V(x_0,r_0).
\end{eqnarray}

From \eqref{4.4.x}, \eqref{b.x}, \eqref{b.a},
$d(x^j_\az,y_\bz^j)\ge2^k\dz^{j+1}$ and \eqref{3.9x1},
we deduce that
\begin{eqnarray*}
\lf|\lf(\psi^j_\bz, \wz{\psi}^{k,\,i}_{j,\,\bz}\r)\r|
&&\le\int_\cx \lf|\psi^j_\bz(x)\wz{\psi}^{k,\,i}_{j,\,\bz}(x)\r|\,d\mu(x)
\ls e^{\nu\dz2^{k-2}}\int_{B(x^j_\az,\,8\dz^j)}\lf|\psi^j_\bz(x)\r|^2
\,d\mu(x)\\
&&\ls e^{\nu\dz2^{k-2}}\frac1{V(y^j_\bz,\dz^j)}
\int_{B(x^j_\az,\,8\dz^j)}e^{-2\nu\dz^{-j}d(y_\bz^j,\,x)}\,d\mu(x)\\
&&\ls e^{\nu\dz2^{k-2}}e^{-\nu\dz^{-j}d(y_\bz^j,\,x^j_\az)}
\frac1{V(y^j_\bz,\dz^j)}
\int_{B(x^j_\az,\,8\dz^j)}e^{-\nu\dz^{-j}d(y_\bz^j,\,x)}\,d\mu(x)\\
&&\ls e^{\nu\dz2^{k-2}}e^{-\nu\dz2^{k}}\ls1.
\end{eqnarray*}
By this and Theorem \ref{tb.a}, we obtain
$$
{\rm I}_1\ls\lf\{\sum_{j\in\zz}\sum_{\bz\in\scg_j}
\lf|\lf(h,\psi^j_\bz\r)\r|^2\r\}^{1/2}
\sim\|h\|_{\ltw}.
$$

Now we turn to estimate ${\rm I}_2$. Let $G_0$ be the same
as in Remark \ref{rb.l}(ii).
Observe that, by \eqref{b.w} and \eqref{b.z}, we know that,
for given $j\in\zz$ and $\bz\in\scg_j$, $\gz\in\scg_j$
and $\gz\neq\bz$ if and only if $\gz\in\scg_j$
and $d(y^j_\gz,y^j_\bz)=d(x^{j+1}_\gz,x^{j+1}_\bz)\ge\dz^{j+1}$.
By this fact and the Minkowski inequality, we have
\begin{eqnarray}\label{x.u}
\qquad{\rm I}_2&&=\lf\{\sum_{j\in\zz}\sum_{\bz\in\scg_j}
\lf[\sum_{\{\gz\in\scg_j:\ d(y^j_\gz,\,y^j_\bz)\ge\dz^{j+1}\}}
\lf|\lf(h,\psi^j_\gz\r)\r|
\lf|\lf(\psi^j_\gz,\wz{\psi}^{k,\,i}_{j,\,\bz}\r)\r|\r]^2\r\}^{1/2}\\
&&\noz\le\sum_{s=0}^\fz\lf\{\sum_{j\in\zz}\sum_{\bz\in\scg_j}
\lf[\sum_{\{\gz\in\scg_j:\ 2^s\dz^{j+1}\le d(y^j_\gz,\,y^j_\bz)
<2^{s+1}\dz^{j+1}\}}\lf|\lf(h,\psi^j_\gz\r)\r|
\lf|\lf(\psi^j_\gz,\wz{\psi}^{k,\,i}_{j,\,\bz}\r)\r|\r]^2\r\}^{1/2}.
\end{eqnarray}

Moreover, from Remark \ref{rb.l}(ii), we deduce that
\begin{equation}\label{x.z}
\#\lf\{\bz\in\scg_j:\ 2^s\dz^{j+1}\le d\lf(y^j_\gz,\,y^j_\bz\r)
<2^{s+1}\dz^{j+1}\r\}\ls2^{(s+1)G_0},
\end{equation}
which, together with \eqref{x.u} and the H\"older inequality,
further implies that
\begin{eqnarray*}
{\rm I}_2&&\ls\sum_{s=0}^\fz2^{\frac{(s+1)}{2}G_0}
\lf\{\sum_{j\in\zz}\sum_{\bz\in\scg_j}
\sum_{\{\gz\in\scg_j:\ 2^s\dz^{j+1}\le d(y^j_\gz,\,y^j_\bz)
<2^{s+1}\dz^{j+1}\}}\lf|\lf(h,\psi^j_\gz\r)\r|^2
\lf|\lf(\psi^j_\gz,\wz{\psi}^{k,\,i}_{j,\,\bz}\r)\r|^2\r\}^{1/2}\\
&&\sim\sum_{s=0}^\fz2^{\frac{(s+1)}{2}G_0}
\lf\{\sum_{j\in\zz}\sum_{\gz\in\scg_j}
\lf|\lf(h,\psi^j_\gz\r)\r|^2
\sum_{\{\bz\in\scg_j:\ 2^s\dz^{j+1}\le d(y^j_\gz,\,y^j_\bz)
<2^{s+1}\dz^{j+1}\}}
\lf|\lf(\psi^j_\gz,\wz{\psi}^{k,\,i}_{j,\,\bz}\r)\r|^2\r\}^{1/2}.
\end{eqnarray*}

We then estimate $|(\psi^j_\gz, \wz{\psi}^{k,\,i}_{j,\,\bz})|$  for any
$(j,\gz)\in\scc$ with $\scc$ as in \eqref{4.4.x1},
$s\in\zz_+$ and $\bz$
satisfying
$
2^s\dz^{j+1}\le d(y^j_\gz,\,y^j_\bz)
<2^{s+1}\dz^{j+1}
$.
From \eqref{b.x}, \eqref{b.a}, $\az\in\sca^k_{j,\,\bz}$,
$2^s\dz^{j+1}\le d(y^j_\gz,\,y^j_\bz)$, the H\"older inequality
and \eqref{3.9x1}, it follows that
\begin{eqnarray*}
\lf|\lf(\psi^j_\gz, \wz{\psi}^{k,\,i}_{j,\,\bz}\r)\r|
&&\le\int_\cx \lf|\psi^j_\gz(x)\wz{\psi}^{k,\,i}_{j,\,\bz}(x)\r|\,d\mu(x)
\ls e^{\nu\dz2^{k-2}}\int_{B(x^j_\az,\,8\dz^j)}
\lf|\psi^j_\gz(x)\psi^j_\bz(x)\r|\,d\mu(x)\\
&&\ls e^{\nu\dz2^{k-2}}
\int_{B(x^j_\az,\,8\dz^j)}\frac{e^{-\nu\dz^{-j}d(y_\gz^j,\,x)}}
{\sqrt{V(y^j_\gz,\dz^j)}}\frac{e^{-\nu\dz^{-j}d(y_\bz^j,\,x)}}
{\sqrt{V(y^j_\bz,\dz^j)}}\,d\mu(x)\\
&&\ls e^{\nu\dz2^{k-2}}e^{-\frac{\nu}{4}\dz^{-j}
d(y_\gz^j,\,y^j_\bz)}
e^{-\frac{\nu}{4}\dz^{-j}d(y_\bz^j,\,x^j_\az)}\\
&&\hs\times\int_{B(x^j_\az,\,8\dz^j)}
\frac{e^{-\frac{3\nu}{4}\dz^{-j}d(y_\gz^j,\,x)}}
{\sqrt{V(y^j_\gz,\dz^j)}}\frac{e^{-\frac{\nu}{2}\dz^{-j}d(y_\bz^j,\,x)}}
{\sqrt{V(y^j_\bz,\dz^j)}}\,d\mu(x)\\
&&\ls e^{-\nu\dz2^{s-2}}
\lf\|\frac{e^{-\frac{3\nu}{4}\dz^{-j}d(y_\gz^j,\,\cdot)}}
{\sqrt{V(y^j_\gz,\dz^j)}}\r\|_{\ltw}
\lf\|\frac{e^{-\frac{\nu}{2}\dz^{-j}d(y_\bz^j,\,\cdot)}}
{\sqrt{V(y^j_\bz,\dz^j)}}\r\|_{\ltw}
\ls e^{-\nu\dz2^{s-2}}.
\end{eqnarray*}
Thus, by this, \eqref{x.z} and Theorem \ref{tb.a}, we obtain
$$
{\rm I}_2\ls\sum_{s=0}^\fz2^{(s+1)G_0}
e^{-\nu\dz2^{s-2}}
\lf\{\sum_{j\in\zz}\sum_{\gz\in\scg_j}
\lf|\lf(h,\psi^j_\gz\r)\r|^2\r\}^{1/2}
\ls\|h\|_{\ltw}.
$$

Now we turn to consider ${\rm I}_3$.
We first estimate $|(\psi^s_\gz, \wz{\psi}^{k,\,i}_{j,\,\bz})|$ for any
$(j,\gz),\,(s,\gz)\in\scc$ with
$d(y^s_\gz,\,y^j_\bz)\ge\dz^{j+1}$ and $s\in\zz\cap(-\fz,j-1]$.
From $s^j_\az\in V_j$, $\psi^j_\bz\in W_j$
and $V_j\bot W_j$ with $V_j$ and $W_j$ for all
$j\in\zz$ as in Lemma \ref{le.a}, it follows that
$$
\int_\cx\wz{\psi}^{k,\,i}_{j,\,\bz}(x)\,d\mu(x)
=e^{\nu\dz2^{k-2}}\int_\cx s^j_\az(x)\psi^j_\bz(x)\,d\mu(x)=0,
$$
which, combined with \eqref{b.x}, \eqref{b.a}, $\az\in\sca^k_{j,\,\bz}$,
$d(y^s_\gz,\,y^j_\bz)\ge\dz^{j+1}$, the H\"older inequality
and \eqref{3.9x1}, further implies that
\begin{eqnarray*}
\lf|\lf(\psi^s_\gz, \wz{\psi}^{k,\,i}_{j,\,\bz}\r)\r|
&&=\lf|\int_{\cx}\lf[\psi^s_\gz(x)-\psi^s_\gz\lf(y^j_\bz\r)\r]
\wz{\psi}^{k,\,i}_{j,\,\bz}(x)\,d\mu(x)\r|\\
&&\ls e^{\nu\dz2^{k-2}}\int_{B(x^j_\az,\,8\dz^j)}
\lf|\psi^s_\gz(x)-\psi^s_\gz\lf(y^j_\bz\r)\r|
\lf|\psi^j_\bz(x)\r|\,d\mu(x)\\
&&\ls e^{\nu\dz2^{k-2}}\int_{B(x^j_\az,\,8\dz^j)}
\lf|\psi^s_\gz(x)-\psi^s_\gz\lf(y^j_\bz\r)\r|
\frac{e^{-\nu\dz^{-j}d(y_\bz^j,\,x)}}
{\sqrt{V(y^j_\bz,\dz^j)}}\,d\mu(x)\\
&&\ls e^{\nu\dz2^{k-2}}e^{-\frac{\nu}{2}\dz^{-j}d(y_\bz^j,\,x^j_\az)}
\lf\|\frac{e^{-\frac{\nu}{4}\dz^{-j}d(y_\bz^j,\,\cdot)}}
{\sqrt{V(y^j_\bz,\dz^j)}}\r\|_{\ltw}\\
&&\hs\times\lf\{\int_{B(x^j_\az,\,8\dz^j)}
\lf|\psi^s_\gz(x)-\psi^s_\gz\lf(y^j_\bz\r)\r|^2
e^{-\frac{\nu}{2}\dz^{-j}d(y_\bz^j,\,x)}\,d\mu(x)\r\}^{1/2}\ls {\rm E}^{1/2},
\end{eqnarray*}
where
$$
{\rm E}:=\int_{\cx}\lf|\psi^s_\gz(x)-\psi^s_\gz\lf(y^j_\bz\r)\r|^2
e^{-\frac{\nu}{2}\dz^{-j}d(y_\bz^j,\,x)}\,d\mu(x).
$$

Now we estimate ${\rm E}$ by writing
\begin{eqnarray*}
{\rm E}&&=\int_{B(y^j_\bz,\,\dz^j)}
\lf|\psi^s_\gz(x)-\psi^s_\gz\lf(y^j_\bz\r)\r|^2
e^{-\frac{\nu}{2}\dz^{-j}d(y_\bz^j,\,x)}\,d\mu(x)\\
&&\hs+\int_{B(y^j_\bz,\,\dz^s)\bh B(y^j_\bz,\,\dz^j)}\cdots
+\int_{\cx\bh B(y^j_\bz,\,\dz^s)}\cdots
=:{\rm E}_1+{\rm E}_2+{\rm E}_3.
\end{eqnarray*}

For ${\rm E}_1$, by $\dz^j<\dz^s$, \eqref{b.b}
and \eqref{3.9x1}, we know that
\begin{eqnarray*}
{\rm E}_1&&\ls\int_{B(y^j_\bz,\,\dz^j)}
\frac{e^{-2\nu\dz^{-s}d(y_\gz^s,\,x)}}
{V(y^s_\gz,\dz^s)}
\lf[\frac{d(x,y^j_\bz)}{\dz^s}\r]^{2\eta}
e^{-\frac{\nu}{2}\dz^{-j}d(y_\bz^j,\,x)}\,d\mu(x)\\
&&\ls e^{-\nu\dz^{-s}d(y_\gz^s,\,y^j_\bz)}
\dz^{2(j-s)\eta}\frac1{V(y^s_\gz,\dz^s)}
\int_{B(y^j_\bz,\,\dz^j)}
e^{-\nu\dz^{-s}d(y_\gz^s,\,x)}\,d\mu(x)\\
&&\ls e^{-\nu\dz^{-s}d(y_\gz^s,\,y^j_\bz)}
\dz^{2(j-s)\eta}.
\end{eqnarray*}

For ${\rm E}_2$, by \eqref{b.b} and \eqref{3.9x1}, we have
\begin{eqnarray*}
{\rm E}_2&&\ls\sum_{t=0}^{j-s-1}
\int_{B(y^j_\bz,\,\dz^{s+t})\bh B(y^j_\bz,\,\dz^{s+t+1})}
\frac{e^{-2\nu\dz^{-s}d(y_\gz^s,\,x)}}
{V(y^s_\gz,\dz^s)}
\lf[\frac{d(x,y^j_\bz)}{\dz^s}\r]^{2\eta}
e^{-\frac{\nu}{2}\dz^{-j}d(y_\bz^j,\,x)}\,d\mu(x)\\
&&\ls\sum_{k=0}^{j-s-1}\dz^{2t\eta}
\frac{e^{-\nu\dz^{-s}d(y_\gz^s,\,y^j_\bz)}}
{V(y^s_\gz,\dz^s)}
\int_{B(y^j_\bz,\,\dz^{s+t})\bh B(y^j_\bz,\,\dz^{s+t+1})}
e^{-\nu\dz^{-s}d(y_\gz^s,\,x)}
e^{-\frac{\nu}{2}\dz^{-j}d(y^j_\bz,\,x)}\,d\mu(x)\\
&&\ls e^{-\nu\dz^{-s}d(y_\gz^s,\,y^j_\bz)}
\sum_{t=0}^{j-s-1}\dz^{2t\eta}
e^{-\frac{\nu}{2}\dz^{s+t+1-j}}
\int_{B(y^j_\bz,\,\dz^{s+t})\bh B(y^j_\bz,\,\dz^{s+t+1})}
\frac{e^{-\nu\dz^{-s}d(y_\gz^s,\,x)}}{V(y^s_\gz,\dz^s)}
\,d\mu(x)\\
&&\ls e^{-\nu\dz^{-s}d(y_\gz^s,\,y^j_\bz)}
\sum_{t=0}^{j-s-1}\dz^{2t\eta}
e^{-\frac{\nu}{2}\dz^{s+t+1-j}}.
\end{eqnarray*}

To show ${\rm E}_3$, by \eqref{b.a} and \eqref{3.9x1},
we conclude that
\begin{eqnarray*}
{\rm E}_3&&\ls\sum_{t=0}^\fz\int_{B(y^j_\bz,\,2^{t+1}\dz^s)
\bh B(y^j_\bz,\,2^t\dz^s)}
\lf[\lf|\psi^s_\gz(x)\r|^2+\lf|\psi^s_\gz\lf(y^j_\bz\r)\r|^2\r]
e^{-\frac{\nu}{2}\dz^{-j}d(y_\bz^j,\,x)}\,d\mu(x)\\
&&\ls\sum_{t=0}^\fz\int_{B(y^j_\bz,\,2^{t+1}\dz^s)
\bh B(y^j_\bz,\,2^t\dz^s)}
\lf[e^{-2\nu\dz^{-s}d(y_\gz^s,\,x)}
+e^{-2\nu\dz^{-s}d(y_\gz^s,\,y^j_\bz)}\r]
\frac{e^{-\frac{\nu}{2}\dz^{-j}d(y_\bz^j,\,x)}}
{V(y^s_\gz,\dz^s)}\,d\mu(x)\\
&&\ls e^{-\frac{\nu}{4}\dz^{-s}d(y_\gz^s,\,y^j_\bz)}
\sum_{t=0}^\fz e^{-\nu2^{t-2}\dz^{s-j}}
\frac1{V(y^s_\gz,\dz^s)}\lf\{\int_{B(y^j_\bz,\,2^{t+1}\dz^s)
\bh B(y^j_\bz,\,2^t\dz^s)}
e^{-\frac{7\nu}{4}\dz^{-s}d(y_\gz^s,\,x)}\,d\mu(x)\r.\\
&&\lf.\hs+e^{-\frac{3\nu}{2}\dz^{-s}d(y^j_\bz,\,y_\gz^s)}
\int_{B(y^j_\bz,\,2^{t+1}\dz^s)
\bh B(y^j_\bz,\,2^t\dz^s)}
e^{-\frac{\nu}{4}\dz^{-s}d(y_\gz^s,\,x)}\,d\mu(x)\r\}\\
&&\ls e^{-\frac{\nu}{4}\dz^{-s}d(y_\gz^s,\,y^j_\bz)}
\sum_{t=0}^\fz 2^{-2(t-2)\eta}\dz^{2(j-s)\eta}
\ls e^{-\frac{\nu}{4}\dz^{-s}d(y_\gz^s,\,y^j_\bz)}\dz^{2(j-s)\eta}.
\end{eqnarray*}

Combining the estimates for ${\rm E}_1$, ${\rm E}_2$ and ${\rm E}_3$, we have
\begin{eqnarray}\label{4.16x1}
\lf|\lf(\psi^s_\gz, \wz{\psi}^{k,\,i}_{j,\,\bz}\r)\r|
&&\ls{\rm E}^{1/2}\ls\lf\{e^{-\frac{\nu}{4}\dz^{-s}d(y_\gz^s,\,y^j_\bz)}
\sum_{t=0}^{j-s}\dz^{2t\eta}
e^{-\frac{\nu}{2}\dz^{s+t+1-j}}\r\}^{1/2}\\
&&\noz=:e^{-\frac{\nu}{8}\dz^{-s}d(y_\gz^s,\,y^j_\bz)}S_{j,\,s},
\end{eqnarray}
where $S_{j,\,s}:=\{\sum_{t=0}^{j-s}\dz^{2t\eta}
e^{-\frac{\nu}{2}\dz^{s+t+1-j}}\}^{1/2}$.

Moreover, Observe that, by \eqref{b.w}, \eqref{b.z},
$\scg_j\cap\scg_s=\emptyset$
and $\sca_{s+1}\st\sca_{j+1}$ for $s,\,j\in\zz$ with $s<j$, we know that,
for given $j\in\zz$, $\bz\in\scg_j$ and $s\in\zz\cap(-\fz,j)$,
$\gz\in\scg_s$ if and only if $\gz\in\scg_s$ and
$d(y^s_\gz,y^j_\bz)=d(x^{s+1}_\gz,x^{j+1}_\bz)\ge\dz^{j+1}$.
From this, together with \eqref{4.16x1}
and the H\"older inequality, we deduce that
\begin{eqnarray*}
{\rm I}_3&&=\lf\{\sum_{j\in\zz}\sum_{\bz\in\scg_j}
\lf[\sum_{s=-\fz}^{j-1}\sum_{\{\gz\in\scg_s:
\ d(y^s_\gz,\,y^j_\bz)\ge\dz^{j+1}\}}
\lf|\lf(h,\psi^s_\gz\r)\r|
\lf|\lf(\psi^s_\gz,\wz{\psi}^{k,\,i}_{j,\,\bz}\r)\r|\r]^2\r\}^{1/2}\\
&&\ls\lf\{\sum_{j\in\zz}\sum_{\bz\in\scg_j}
\lf[\sum_{s=-\fz}^{j-1}\sum_{\{\gz\in\scg_s:
\ d(y^s_\gz,\,y^j_\bz)\ge\dz^{j+1}\}}
\lf|\lf(h,\psi^s_\gz\r)\r|
e^{-\frac{\nu}{8}\dz^{-s}d(y_\gz^s,\,y^j_\bz)}S_{j,\,s}
\r]^2\r\}^{1/2}\\
&&\ls\lf\{\sum_{j\in\zz}\sum_{\bz\in\scg_j}
\lf[\sum_{s=-\fz}^{j-1}\sum_{\{\gz\in\scg_s:
\ d(y^s_\gz,\,y^j_\bz)\ge\dz^{j+1}\}}
\lf|\lf(h,\psi^s_\gz\r)\r|^2
e^{-\frac{\nu}{8}\dz^{-s}d(y_\gz^s,\,y^j_\bz)}S_{j,\,s}
\r]\r.\\
&&\hs\lf.\times\lf[\sum_{s=-\fz}^{j-1}
\sum_{\{\gz\in\scg_s:\ d(y^s_\gz,\,y^j_\bz)\ge\dz^{j+1}\}}
e^{-\frac{\nu}{8}\dz^{-s}d(y_\gz^s,\,y^j_\bz)}S_{j,\,s}
\r]\r\}^{1/2}.
\end{eqnarray*}
Observe that, by the elementary inequality
\begin{equation}\label{x.x}
\lf[\sum_{j=0}^\fz\lf|a_j\r|\r]^p\le\sum_{j=0}^\fz\lf|a_j\r|^p
\quad {\rm for\ all\ }\{a_j\}_{j=0}^{\fz}\st\cc\ {\rm and\ }
p\in(0,1],
\end{equation}
we have
\begin{eqnarray}\label{4.16x2}
\sum_{s=-\fz}^{j-1}S_{j,\,s}
&&\le\sum_{s=-\fz}^{j-1}\sum_{t=0}^{j-s}\dz^{t\eta}
e^{-\frac{\nu}{4}\dz^{s+t+1-j}}
=\sum_{i=1}^{\fz}\sum_{t=0}^i\dz^{t\eta}
e^{-\frac{\nu}{4}\dz^{t+1-i}}\\
&&\noz=\sum_{t=0}^{\fz}\dz^{t\eta}\sum_{i=t}^\fz
e^{-\frac{\nu}{4}\dz^{t+1-i}}
=\sum_{t=0}^{\fz}\dz^{t\eta}\sum_{j=0}^\fz
e^{-\frac{\nu}{4}\dz^{1-j}}\ls1
\end{eqnarray}
and, similarly,
\begin{eqnarray}\label{4.16x3}
\sum_{j=s+1}^{\fz}S_{j,\,s}
\le\sum_{j=s+1}^{\fz}\sum_{t=0}^{j-s}\dz^{t\eta}
e^{-\frac{\nu}{4}\dz^{s+t+1-j}}
=\sum_{i=1}^{\fz}\sum_{t=0}^i\dz^{t\eta}
e^{-\frac{\nu}{4}\dz^{t+1-i}}\ls1.
\end{eqnarray}
Thus, from Lemma \ref{lb.x} and \eqref{4.16x2}, we deduce that
\begin{eqnarray*}
\sum_{s=-\fz}^{j-1}\sum_{\{\gz\in\scg_s:
\ d(y^s_\gz,\,y^j_\bz)\ge\dz^{j+1}\}}
e^{-\frac{\nu}{8}\dz^{-s}d(y_\gz^s,\,y^j_\bz)}S_{j,\,s}
\ls\sum_{s=-\fz}^{j-1}S_{j,\,s}
e^{-\frac{\nu}{16}\dz^{-s}d(\scy^s,\,y^j_\bz)}
\ls\sum_{s=-\fz}^{j-1}S_{j,\,s}\ls1,
\end{eqnarray*}
where $\scy^s$ for $s\in\zz$ is as in \eqref{3.5x},
which, together with Lemma \ref{lb.x} again and \eqref{4.16x3},
further implies that
\begin{eqnarray*}
{\rm I}_3&&\ls\lf\{\sum_{j\in\zz}\sum_{\bz\in\scg_j}
\sum_{s=-\fz}^{j-1}\sum_{\{\gz\in\scg_s:\ d(y^s_\gz,\,y^j_\bz)\ge\dz^{j+1}\}}
\lf|\lf(h,\psi^s_\gz\r)\r|^2
e^{-\frac{\nu}{8}\dz^{-s}d(y_\gz^s,\,y^j_\bz)}S_{j,\,s}\r\}^{1/2}\\
&&\sim\lf\{\sum_{j\in\zz}\sum_{s=-\fz}^{j-1}S_{j,\,s}
\sum_{\gz\in\scg_s}\lf|\lf(h,\psi^s_\gz\r)\r|^2
\sum_{\{\bz\in\scg_j:\ d(y^s_\gz,\,y^j_\bz)\ge\dz^{j+1}\}}
e^{-\frac{\nu}{8}\dz^{-s}d(y_\gz^s,\,y^j_\bz)}\r\}^{1/2}\\
&&\ls\lf\{\sum_{j\in\zz}\sum_{s=-\fz}^{j-1}S_{j,\,s}
\sum_{\gz\in\scg_s}\lf|\lf(h,\psi^s_\gz\r)\r|^2
e^{-\frac{\nu}{16}\dz^{-s}d(y_\gz^s,\,\scy^j)}\r\}^{1/2}\\
&&\ls\lf\{\sum_{j\in\zz}\sum_{s=-\fz}^{j-1}S_{j,\,s}
\sum_{\gz\in\scg_s}\lf|\lf(h,\psi^s_\gz\r)\r|^2\r\}^{1/2}
\sim\lf\{\sum_{s\in\zz}\lf(\sum_{j=s+1}^{\fz}S_{j,\,s}\r)
\sum_{\gz\in\scg_s}\lf|\lf(h,\psi^s_\gz\r)\r|^2\r\}^{1/2}\\
&&\ls\lf\{\sum_{s\in\zz}
\sum_{\gz\in\scg_s}\lf|\lf(h,\psi^s_\gz\r)\r|^2\r\}^{1/2}
\sim\|h\|_{\ltw}.
\end{eqnarray*}

Finally, we deal with ${\rm I}_4$.
We first estimate $|(\psi^s_\gz, \wz{\psi}^{k,\,i}_{j,\,\bz})|$ for any
$(j,\gz),\,(s,\gz)\in\scc$ with
$d(y^s_\gz,\,y^j_\bz)\ge\dz^{s+1}$ and $s\in\zz\cap[j+1,\fz)$.
From $\psi^s_\gz\in W_s$, $s^j_\az\in V_j\st V_s$ and
$W_s\bot V_s$ with $V_k$ and $W_k$ for any $k\in\zz$
as in Lemma \ref{le.a}, it follows that
$$\int_\cx\psi^s_\gz(x)s^j_\az(x)\,d\mu(x)=0,$$
which, together with
\eqref{b.x}, \eqref{b.a}, the H\"older inequality
and \eqref{3.9x1}, further implies that
\begin{eqnarray*}
\lf|\lf(\psi^s_\gz, \wz{\psi}^{k,\,i}_{j,\,\bz}\r)\r|
&&=\lf|e^{\nu\dz2^{k-2}}\int_{\cx}
\psi^s_\gz(x)s^j_\az(x)\psi^j_\bz(x)\,d\mu(x)\r|\\
&&=\lf|e^{\nu\dz2^{k-2}}\int_{\cx}
\psi^s_\gz(x)s^j_\az(x)
\lf[\psi^j_\bz(x)-\psi^j_\bz\lf(y^s_\gz\r)\r]\,d\mu(x)\r|\\
&&\ls e^{\nu\dz2^{k-2}}\int_{B(x^j_\az,\,8\dz^j)}
\lf|\psi^s_\gz(x)\r|
\lf|\psi^j_\bz(x)-\psi^j_\bz\lf(y^s_\gz\r)\r|
\,d\mu(x)\\
&&\ls e^{\nu\dz2^{k-2}}\int_{B(x^j_\az,\,8\dz^j)}
\lf|\psi^j_\bz(x)-\psi^j_\bz\lf(y^s_\gz\r)\r|
\frac{e^{-\nu\dz^{-s}d(y_\gz^s,\,x)}}
{\sqrt{V(y^s_\gz,\dz^s)}}\,d\mu(x)\\
&&\ls e^{\nu\dz2^{k-2}}
\lf\|\frac{e^{-\frac{\nu}{2}\dz^{-s}d(y_\gz^s,\,\cdot)}}
{\sqrt{V(y^s_\gz,\dz^s)}}\r\|_{\ltw}\\
&&\hs\times\lf\{\int_{B(x^j_\az,\,8\dz^j)}
\lf|\psi^j_\bz(x)-\psi^j_\bz\lf(y^s_\gz\r)\r|^2
e^{-\nu\dz^{-s}d(y_\bz^j,\,x)}\,d\mu(x)\r\}^{1/2}\ls {\rm F}^{1/2},
\end{eqnarray*}
where
$$
{\rm F}:= e^{\nu\dz2^{k-1}}\int_{B(x^j_\az,\,8\dz^j)}
\lf|\psi^j_\bz(x)-\psi^j_\bz\lf(y^s_\gz\r)\r|^2
e^{-\nu\dz^{-s}d(y_\bz^j,\,x)}\,d\mu(x).
$$

Now we estimate ${\rm F}$ by writing
\begin{eqnarray*}
{\rm F}&&=\int_{B(y^s_\gz,\,\dz^s)}
e^{\nu\dz2^{k-1}}\chi_{B(x^j_\az,\,8\dz^j)}(x)
\lf|\psi^j_\bz(x)-\psi^j_\bz\lf(y^s_\gz\r)\r|^2
e^{-\nu\dz^{-s}d(y_\bz^j,\,x)}\,d\mu(x)\\
&&\hs+\int_{B(y^s_\gz,\,\dz^j)\bh B(y^s_\gz,\,\dz^s)}\cdots
+\int_{\cx\bh B(y^s_\gz,\,\dz^j)}\cdots
=:{\rm F}_1+{\rm F}_2+{\rm F}_3.
\end{eqnarray*}

For ${\rm F}_1$, by \eqref{b.x}, $\dz^s<\dz^j$, \eqref{b.b},
$\az\in\sca^k_{j,\,\bz}$ and \eqref{3.9x1}, we know that
\begin{eqnarray*}
{\rm F}_1&&\ls e^{\nu\dz2^{k-1}}\int_{B(y^s_\gz,\,\dz^s)
\cap B(x^j_\az,\,8\dz^j)}
\frac{e^{-2\nu\dz^{-j}d(y_\bz^j,\,x)}}
{V(y^j_\bz,\dz^j)}
\lf[\frac{d(x,y^s_\gz)}{\dz^j}\r]^{2\eta}
e^{-\nu\dz^{-s}d(y_\gz^s,\,x)}\,d\mu(x)\\
&&\ls e^{\nu\dz2^{k-1}}e^{-\frac{\nu}{2}\dz^{-j}d(y^s_\gz,\,y_\bz^j)}
\dz^{2(s-j)\eta}e^{-\frac{\nu}{2}\dz^{-j}d(y_\bz^j,\,x^j_\az)}
\int_{B(y^s_\gz,\,\dz^s)}
\frac{e^{-\nu\dz^{-j}d(y_\bz^j,\,x)}}
{V(y^j_\bz,\dz^j)}\,d\mu(x)\\
&&\ls e^{-\frac{\nu}{2}\dz^{-j}d(y_\gz^s,\,y^j_\bz)}
\dz^{2(s-j)\eta}.
\end{eqnarray*}

For ${\rm F}_2$, by \eqref{b.x}, $\dz^s<\dz^j$, \eqref{b.b},
$\az\in\sca^k_{j,\,\bz}$ and \eqref{3.9x1}, we have
\begin{eqnarray*}
{\rm F}_2&&\ls e^{\nu\dz2^{k-1}}\sum_{t=0}^{s-j-1}
\int_{[B(y^s_\gz,\,\dz^{j+t})\bh B(y^s_\gz,\,\dz^{j+t+1})]
\cap B(x^j_\az,\,8\dz^j)}
\frac{e^{-2\nu\dz^{-j}d(y_\bz^j,\,x)}}
{V(y^j_\bz,\dz^j)}\lf[\frac{d(x,y^s_\gz)}{\dz^j}\r]^{2\eta}\\
&&\hs\times e^{-\nu\dz^{-s}d(y_\gz^s,\,x)}\,d\mu(x)\\
&&\ls e^{\nu\dz2^{k-1}}
e^{-\frac{\nu}{2}\dz^{-j}d(y^j_\bz,\,x_\az^j)}
e^{-\frac{\nu}{2}\dz^{-j}d(y_\gz^s,\,y^j_\bz)}\\
&&\hs\times\sum_{t=0}^{s-j-1}\dz^{2t\eta}
e^{-\nu\dz^{j+t+1-s}}
\frac1{V(y^j_\bz,\dz^j)}
\int_{B(y^s_\gz,\,\dz^{j+t})\bh B(y^s_\gz,\,\dz^{j+t+1})}
e^{-\nu\dz^{-j}d(y_\bz^j,\,x)}\,d\mu(x)\\
&&\ls e^{-\frac{\nu}{2}\dz^{-j}d(y_\gz^s,\,y^j_\bz)}
\sum_{t=0}^{s-j-1}\dz^{2t\eta}e^{-\nu\dz^{j+t+1-s}}.
\end{eqnarray*}

To estimate ${\rm F}_3$, by \eqref{b.a} and \eqref{3.9x1}, we conclude that
\begin{eqnarray*}
{\rm F}_3&&\ls e^{\nu\dz2^{k-1}}\sum_{t=0}^\fz\int_{[B(y^s_\gz,\,2^{t+1}\dz^{j})
\bh B(y^s_\gz,\,2^t\dz^{j})]\cap B(x^j_\az,\,8\dz^j)}
\lf[\lf|\psi^j_\bz(x)\r|^2+\lf|\psi^j_\bz\lf(y^s_\gz\r)\r|^2\r]\\
&&\hs\times e^{-\nu\dz^{-s}d(y_\gz^s,\,x)}\,d\mu(x)\\
&&\ls e^{\nu\dz2^{k-1}}\sum_{t=0}^\fz\int_{[B(y^s_\gz,\,2^{t+1}\dz^{j})
\bh B(y^s_\gz,\,2^t\dz^{j})]\cap B(x^j_\az,\,8\dz^j)}
\lf[e^{-2\nu\dz^{-j}d(y_\bz^j,\,x)}
+e^{-2\nu\dz^{-j}d(y_\gz^s,\,y^j_\bz)}\r]\\
&&\hs\times\frac{e^{-\nu\dz^{-s}d(y_\gz^s,\,x)}}
{V(y^j_\bz,\dz^j)}\,d\mu(x)\\
&&\ls e^{\nu\dz2^{k-1}}
e^{-\frac{\nu}{2}\dz^{-j}d(y^j_\bz,\,x_\az^j)}
\sum_{t=0}^\fz\frac1{V(y^j_\bz,\dz^j)}\\
&&\hs\times\lf\{e^{-\nu 2^{t-1}\dz^{j-s}}
e^{-\frac{\nu}{2}\dz^{-j}d(y_\gz^s,\,y^j_\bz)}
\int_{B(y^s_\gz,\,2^{t+1}\dz^j)
\bh B(y^s_\gz,\,2^t\dz^j)}
e^{-\nu\dz^{-j}d(y_\bz^j,\,x)}\,d\mu(x)\r.\\
&&\lf.\hs\hs+e^{-\nu 2^{t-2}\dz^{j-s}}
e^{-\frac{5\nu}{4}\dz^{-j}d(y_\gz^s,\,y^j_\bz)}
\int_{B(y^s_\gz,\,2^{t+1}\dz^j)
\bh B(y^s_\gz,\,2^t\dz^j)}
e^{-\frac{\nu}{4}\dz^{-j}d(y_\bz^j,\,x)}\,d\mu(x)\r\}\\
&&\ls e^{-\frac{\nu}{2}\dz^{-j}d(y_\gz^s,\,y^j_\bz)}
\sum_{t=0}^\fz 2^{-2(t-2)\eta}\dz^{2(s-j)\eta}\\
&&\ls e^{-\frac{\nu}{2}\dz^{-j}d(y_\gz^s,\,y^j_\bz)}
\sum_{t=0}^\fz 2^{-2(t-2)\eta}\dz^{2(s-j)\eta}
\ls e^{-\frac{\nu}{2}\dz^{-s}d(y_\gz^s,\,y^j_\bz)}\dz^{2(s-j)\eta}.
\end{eqnarray*}

Combining the estimates for ${\rm F}_1$, ${\rm F}_2$ and ${\rm F}_3$, we have
\begin{eqnarray}\label{7.31x1}
\lf|\lf(\psi^s_\gz, \wz{\psi}^{k,\,i}_{j,\,\bz}\r)\r|
&&\ls{\rm F}^{1/2}\ls e^{-\frac{\nu}{4}\dz^{-j}d(y_\gz^s,\,y^j_\bz)}
\lf\{\sum_{t=0}^{s-j}\dz^{2t\eta}
e^{-\nu\dz^{j+t+1-s}}\r\}^{1/2}\\
&&\noz=:e^{-\frac{\nu}{4}\dz^{-j}d(y_\gz^s,\,y^j_\bz)}T_{j,\,s},
\end{eqnarray}
where $T_{j,\,s}:=\{\sum_{t=0}^{s-j}\dz^{2t\eta}
e^{-\nu\dz^{j+k+1-s}}\}^{1/2}$.

Moreover, Observe that, by \eqref{b.w}, \eqref{b.z},
$\scg_j\cap\scg_s=\emptyset$
and $\sca_j\st\sca_s$ for $s,\,j\in\zz$ with $j<s$, we know that,
for given $j\in\zz$, $\bz\in\scg_j$ and $s\in\zz\cap(j,\fz)$,
$\gz\in\scg_s$ if and only if $\gz\in\scg_s$ and
$d(y^s_\gz,y^j_\bz)=d(x^{s+1}_\gz,x^{j+1}_\bz)\ge\dz^{s+1}$,
which, combined with \eqref{7.31x1} and
the H\"older inequality, further implies that
\begin{eqnarray*}
{\rm I}_4&&=\lf\{\sum_{j\in\zz}\sum_{\bz\in\scg_j}
\lf[\sum_{s=j+1}^{\fz}\sum_{\{\gz\in\scg_s:\ d(y^s_\gz,\,y^j_\bz)\ge\dz^{s+1}\}}
\lf|\lf(h,\psi^s_\gz\r)\r|
\lf|\lf(\psi^s_\gz,\wz{\psi}^{k,\,i}_{j,\,\bz}\r)\r|\r]^2\r\}^{1/2}\\
&&\ls\lf\{\sum_{j\in\zz}\sum_{\bz\in\scg_j}
\lf[\sum_{s=j+1}^{\fz}\sum_{\{\gz\in\scg_s:\ d(y^s_\gz,\,y^j_\bz)\ge\dz^{s+1}\}}
\lf|\lf(h,\psi^s_\gz\r)\r|
e^{-\frac{\nu}{4}\dz^{-s}d(y_\gz^s,\,y^j_\bz)}T_{j,\,s}
\r]^2\r\}^{1/2}\\
&&\ls\lf\{\sum_{j\in\zz}\sum_{\bz\in\scg_j}
\lf[\sum_{s=j+1}^{\fz}\sum_{\{\gz\in\scg_s:\ d(y^s_\gz,\,y^j_\bz)\ge\dz^{s+1}\}}
\lf|\lf(h,\psi^s_\gz\r)\r|^2
e^{-\frac{\nu}{4}\dz^{-s}d(y_\gz^s,\,y^j_\bz)}T_{j,\,s}\r]\r.\\
&&\hs\lf.\times\lf[\sum_{s=j+1}^{\fz}
\sum_{\{\gz\in\scg_s:\ d(y^s_\gz,\,y^j_\bz)\ge\dz^{s+1}\}}
e^{-\frac{\nu}{4}\dz^{-s}d(y_\gz^s,\,y^j_\bz)}T_{j,\,s}
\r]\r\}^{1/2}.
\end{eqnarray*}
Observe that, by \eqref{x.x}, we have
\begin{eqnarray}\label{7.31x2}
\sum_{s=j+1}^{\fz}T_{j,\,s}
&&\le\sum_{s=j+1}^{\fz}\sum_{t=0}^{s-j}\dz^{t\eta}
e^{-\frac{\nu}{2}\dz^{j+t+1-s}}
=\sum_{i=1}^{\fz}\sum_{t=0}^i\dz^{t\eta}
e^{-\frac{\nu}{2}\dz^{t+1-i}}\\
&&\noz=\sum_{t=0}^{\fz}\dz^{t\eta}\sum_{i=t}^\fz
e^{-\frac{\nu}{2}\dz^{t+1-i}}
=\sum_{t=0}^{\fz}\dz^{t\eta}\sum_{j=0}^\fz
e^{-\frac{\nu}{2}\dz^{1-j}}\ls1
\end{eqnarray}
and, similarly,
\begin{equation}\label{7.31x3}
\sum_{j=-\fz}^{s-1}T_{j,\,s}
\le\sum_{j=-\fz}^{s-1}\sum_{t=0}^{s-j}\dz^{t\eta}
e^{-\frac{\nu}{2}\dz^{j+ t+1-s}}
=\sum_{i=1}^{\fz}\sum_{t=0}^i\dz^{t\eta}
e^{-\frac{\nu}{2}\dz^{t+1-i}}\ls1.
\end{equation}
Thus, from Lemma \ref{lb.x} and \eqref{7.31x2}, we deduce that
\begin{eqnarray*}
&&\sum_{j=s+1}^{\fz}\sum_{\{\gz\in\scg_s:
\ d(y^s_\gz,\,y^j_\bz)\ge\dz^{s+1}\}}
e^{-\frac{\nu}{4}\dz^{-j}d(y_\gz^s,\,y^j_\bz)}T_{j,\,s}\\
&&\hs\ls\sum_{s=j+1}^{\fz}T_{j,\,s}
e^{-\frac{\nu}{8}\dz^{-j}d(\scy^s,\,y^j_\bz)}
\ls\sum_{s=j+1}^{\fz}T_{j,\,s}\ls1,
\end{eqnarray*}
which, combined with Lemma \ref{lb.x} again and \eqref{7.31x3},
further implies that
\begin{eqnarray*}
{\rm I}_4&&\ls\lf\{\sum_{j\in\zz}\sum_{\bz\in\scg_j}
\sum_{s=j+1}^{\fz}\sum_{\{\gz\in\scg_s:\ d(y^s_\gz,\,y^j_\bz)\ge\dz^{s+1}\}}
\lf|\lf(h,\psi^s_\gz\r)\r|^2
e^{-\frac{\nu}{4}\dz^{-j}d(y_\gz^s,\,y^j_\bz)}T_{j,\,s}\r\}^{1/2}\\
&&\sim\lf\{\sum_{j\in\zz}\sum_{s=j+1}^{\fz}T_{j,\,s}
\sum_{\gz\in\scg_s}\lf|\lf(h,\psi^s_\gz\r)\r|^2
\sum_{\{\bz\in\scg_j:\ d(y^s_\gz,\,y^j_\bz)\ge\dz^{s+1}\}}
e^{-\frac{\nu}{4}\dz^{-j}d(y_\gz^s,\,y^j_\bz)}\r\}^{1/2}\\
&&\ls\lf\{\sum_{j\in\zz}\sum_{s=j+1}^{\fz}T_{j,\,s}
\sum_{\gz\in\scg_s}\lf|\lf(h,\psi^s_\gz\r)\r|^2
e^{-\frac{\nu}{8}\dz^{-j}d(y_\gz^s,\,\scy^j)}\r\}^{1/2}\\
&&\ls\lf\{\sum_{j\in\zz}\sum_{s=j+1}^{\fz}T_{j,\,s}
\sum_{\gz\in\scg_s}\lf|\lf(h,\psi^s_\gz\r)\r|^2\r\}^{1/2}
\sim\lf\{\sum_{s\in\zz}\lf(\sum_{j=-\fz}^{s-1}T_{j,\,s}\r)
\sum_{\gz\in\scg_s}\lf|\lf(h,\psi^s_\gz\r)\r|^2\r\}^{1/2}\\
&&\ls\lf\{\sum_{s\in\zz}
\sum_{\gz\in\scg_s}\lf|\lf(h,\psi^s_\gz\r)\r|^2\r\}^{1/2}
\sim\|h\|_{\ltw}.
\end{eqnarray*}
This, combined with the estimates for ${\rm I}_1$, ${\rm I}_2$
and ${\rm I}_3$, finishes the proof of \eqref{e.i} and hence
Proposition \ref{pe.f}.
\end{proof}

We also need to establish some estimates of integral kernels
defined as follows. Let $k\in\zz$
and $i\in\{1,\ldots,m_k\}$ with $m_k$ as in \eqref{4.5z}.
For $(x,y)\in(\cx\times\cx)\bh \{(x,x):\ x\in\cx\}$, let
\begin{equation}\label{K-1}
K_{k,\,i}(x,y):=\sum_{j\in\zz}\sum_{\bz\in\scg_j}
\wz{\psi}^{k,\,i}_{j,\,\bz}(x)\overline{\psi^j_\bz(y)},
\end{equation}
where $\scg_j$ for any $j\in\zz$ is as in \eqref{b.w}, and,
for each $N\in\nn$ and $x,\,y\in\cx$,
\begin{equation}\label{K-N}
K^N_{k,\,i}(x,y):=\sum_{(j,\,\bz)\in\scc_N}
\wz{\psi}^{k,\,i}_{j,\,\bz}(x)\overline{\psi^j_\bz(y)},
\end{equation}
where $\scc_N$ for any $N\in\nn$ is as in \eqref{4.5x}.

\begin{proposition}\label{pe.f}
Suppose that $(\cx,d,\mu)$ is a metric measure space of homogeneous type,
$N\in\nn$, $k\in\zz$ and $i\in\{1,\ldots,m_k\}$ with $m_k$ as in \eqref{4.5z}.
Let $K_{k,\,i}$, $K^N_{k,\,i}$ be defined as in \eqref{K-1} and \eqref{K-N}.
Then
$$
K_{k,\,i},\,K^N_{k,\,i}\in L^1_{\loc}(\{\cx\times\cx\}\bh \{(x,x):\ x\in\cx\})
$$
and satisfy \eqref{d.a}, \eqref{d.b} and \eqref{d.g}
with $s:=\eta/2$ and $\eta$ as in \eqref{b.1}.
\end{proposition}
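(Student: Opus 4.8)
The plan is to verify directly the three Calderón–Zygmund conditions \eqref{d.a}, \eqref{d.b} and \eqref{d.g} for the kernel $K_{k,\,i}$ (the argument for $K^N_{k,\,i}$ being identical, only with the full sum $\sum_{j\in\zz}\sum_{\bz\in\scg_j}$ replaced by the finite sum $\sum_{(j,\,\bz)\in\scc_N}$, which trivially also gives membership in $L^1_{\loc}$ off the diagonal). Recall from \eqref{4.4.x} that $\wz{\psi}^{k,\,i}_{j,\,\bz}=e^{\nu\dz2^{k-2}}s^j_{\az}\psi^j_\bz$ where $\az=\az^i_{j,\,\bz}\in\sca^k_{j,\,\bz}$, so that by \eqref{b.x} the function $\wz{\psi}^{k,\,i}_{j,\,\bz}$ is supported in $B(x^j_\az,8\dz^j)$, and by \eqref{4.5y} the center $x^j_\az$ satisfies $d(x^j_\az,y^j_\bz)\sim2^k\dz^{j+1}\sim2^k\dz^j$. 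Combining this with the pointwise bound \eqref{b.a} for $\psi^j_\bz$ (and $e^{\nu\dz2^{k-2}}e^{-\nu\dz^{-j}d(y^j_\bz,\,x^j_\az)}\ls e^{\nu\dz2^{k-2}}e^{-c\nu\dz2^k}\ls1$ along the lines of the estimate for $(\psi^j_\bz,\wz{\psi}^{k,\,i}_{j,\,\bz})$ in the proof of Proposition \ref{pe.e}), one sees that each term $\wz{\psi}^{k,\,i}_{j,\,\bz}(x)\overline{\psi^j_\bz(y)}$ is controlled, uniformly in $k$ and $i$, by a product of two ``normalized bumps'' of scale $\dz^j$ centered near $y^j_\bz$.

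For the size estimate \eqref{d.a}: fix $x\ne y$ and estimate
$$
|K_{k,\,i}(x,y)|\le\sum_{j\in\zz}\sum_{\bz\in\scg_j}
\big|\wz{\psi}^{k,\,i}_{j,\,\bz}(x)\big|\,\big|\psi^j_\bz(y)\big|.
$$
Using the bumps above together with Lemma \ref{lb.x} (applied to the $1$-separated set $\dz^{-j}\scy^j$ of points $\{\dz^{-j}y^j_\bz\}_{\bz\in\scg_j}$) to sum over $\bz\in\scg_j$ for each fixed $j$, one gets, for each $j$, a bound of the form $\frac{1}{V(x,\dz^j)}e^{-c\dz^{-j}d(x,y)}\sim\frac{1}{V(y,\dz^j)}e^{-c\dz^{-j}d(x,y)}$ (using \eqref{a.e}); this is exactly the familiar geometric sum estimate that yields $|K_{k,\,i}(x,y)|\ls\frac{1}{V(x,d(x,y))}=\frac{1}{V(x,y)}$ after summing the geometric series in $j$, splitting at the scale $\dz^j\sim d(x,y)$. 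Crucially the constant is independent of $k$ and $i$ because the factor $e^{\nu\dz2^{k-2}}$ is always beaten by the decay of $\psi^j_\bz$ away from its center, whose distance to $x^j_\az$ is $\sim2^k\dz^j$, exactly as in the computation of $|(\psi^j_\bz,\wz{\psi}^{k,\,i}_{j,\,\bz})|$ in Proposition \ref{pe.e}.

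For the regularity estimates \eqref{d.b} and \eqref{d.g}: by the symmetry of the roles of $x$ and $y$ in the structure of $K_{k,\,i}$ (the $x$-variable carries $s^j_\az\psi^j_\bz$, the $y$-variable carries $\psi^j_\bz$; both satisfy Hölder bounds of exponent $\eta$ by \eqref{b.1} and \eqref{b.b}), it suffices to treat one of them, say \eqref{d.b}. Assume $d(x,y)\ge c_{(K)}d(x,\wz x)>0$ with $c_{(K)}$ a small fixed constant. Write $K_{k,\,i}(x,y)-K_{k,\,i}(\wz x,y)=\sum_{j}\sum_{\bz}[\wz{\psi}^{k,\,i}_{j,\,\bz}(x)-\wz{\psi}^{k,\,i}_{j,\,\bz}(\wz x)]\overline{\psi^j_\bz(y)}$, split each $j$-sum according to whether $\dz^j\gtrsim d(x,\wz x)$ or $\dz^j\lesssim d(x,\wz x)$. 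For the ``large scales'' $\dz^j\gtrsim d(x,\wz x)$ one uses the Hölder estimate
$$
\big|\wz{\psi}^{k,\,i}_{j,\,\bz}(x)-\wz{\psi}^{k,\,i}_{j,\,\bz}(\wz x)\big|
\ls\Big[\frac{d(x,\wz x)}{\dz^j}\Big]^{\eta}
\frac{e^{\nu\dz2^{k-2}}e^{-\nu\dz^{-j}d(y^j_\bz,\,x)/2}}{\sqrt{V(x^j_\az,\dz^j)}},
$$
which follows from the product rule $|fg-\wz f\wz g|\le|f-\wz f||g|+|\wz f||g-\wz g|$ applied to $f=s^j_\az$, $g=\psi^j_\bz$, together with \eqref{b.1}, \eqref{b.a}, \eqref{b.b} and $d(x,\wz x)\le\dz^j$; summing over $\bz$ via Lemma \ref{lb.x} and over $j$ gives $\ls[d(x,\wz x)/d(x,y)]^\eta\frac{1}{V(x,y)}$ after a geometric sum (here the factor $\dz^{j\eta}$ in the denominator forces convergence at the top end, and the exponential decay $e^{-c\dz^{-j}d(x,y)}$ handles the bottom end $\dz^j\lesssim d(x,y)$). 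For the ``small scales'' $\dz^j\lesssim d(x,\wz x)\le C^{-1}d(x,y)$, one does not use the difference but merely the triangle-type bound $|\wz{\psi}^{k,\,i}_{j,\,\bz}(x)-\wz{\psi}^{k,\,i}_{j,\,\bz}(\wz x)|\le|\wz{\psi}^{k,\,i}_{j,\,\bz}(x)|+|\wz{\psi}^{k,\,i}_{j,\,\bz}(\wz x)|$; since both $x$ and $\wz x$ are then at distance $\gtrsim d(x,y)\gg\dz^j$ from $y$ (hence from $y^j_\bz$ up to the scale $\dz^j$), the exponential factor $e^{-c\dz^{-j}d(x,y)}$ is super-polynomially small, and summing over $\bz$ (Lemma \ref{lb.x}) and over the finitely-exponentially-many relevant $j$ produces a bound $\ls e^{-c\dz^{-j_0}d(x,y)}\ls[d(x,\wz x)/d(x,y)]^\eta\frac{1}{V(x,y)}$, where $\dz^{j_0}\sim d(x,\wz x)$; absorbing $d(x,\wz x)^{-\eta}$ into the exponential of $d(x,y)\gtrsim d(x,\wz x)$ is the routine final estimate. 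The estimate \eqref{d.g} is handled identically, now differencing in the $y$-variable and using \eqref{b.b} for $\psi^j_\bz$; the splitting of $s:=\eta/2$ rather than $\eta$ is imposed simply because the bump-times-bump products lose half the Hölder exponent when one passes the decay of one factor onto the argument of the Hölder modulus of the other, exactly as in the $E$ and $F$ computations of Proposition \ref{pe.e}. The main obstacle, and the only place real care is needed, is tracking the $k$-dependence: one must repeatedly verify that the ubiquitous factor $e^{\nu\dz2^{k-2}}$ is absorbed by $e^{-\nu\dz^{-j}d(y^j_\bz,\,x)}\le e^{-c\nu\dz^{-j}d(y^j_\bz,\,x^j_\az)}\cdot e^{-c\nu\dz^{-j}d(y^j_\bz,\,x)}\ls e^{-c\nu\dz2^k}e^{-c\nu\dz^{-j}d(y^j_\bz,\,x)}$ since $x\in B(x^j_\az,8\dz^j)$ and $d(x^j_\az,y^j_\bz)\ge2^k\dz^{j+1}$, yielding uniformity in $k$ and $i$; this is precisely the mechanism already exploited throughout the proof of Proposition \ref{pe.e}, so the present proof is, in effect, a bookkeeping variant of that one with the roles of the two test-function slots made symmetric.
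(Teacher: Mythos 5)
Your overall strategy is the same as the paper's: bound each term $\wz{\psi}^{k,\,i}_{j,\,\bz}(x)\overline{\psi^j_\bz(y)}$ by localized exponential bumps, absorb the factor $e^{\nu\dz2^{k-2}}$ using $d(x^j_{\az},y^j_\bz)\ge2^k\dz^{j+1}$ and the support of $s^j_\az$, sum over $\bz$ with Lemma \ref{lb.x}, and then sum over $j$ by splitting at the scale $\dz^j\sim d(x,y)$ (and, for the regularity estimates, also at $\dz^j\sim d(x,\wz x)$). Your organization of \eqref{d.b} via the product rule $|fg-\wz f\wz g|\le|f-\wz f||g|+|\wz f||g-\wz g|$ differs from the paper, which instead uses $|a-b|\le(|a|^{1/2}+|b|^{1/2})|a-b|^{1/2}$ to keep half of the exponential decay; your route would actually yield the full exponent $\eta$ rather than $\eta/2$ (consistent with the paper's Remark \ref{r4.5x}), so your closing explanation that $s=\eta/2$ is ``imposed'' by a loss of half the H\"older exponent contradicts your own argument --- harmless, but confused. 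Also, the two variables are not symmetric (only the $x$-slot carries the spline), so ``by symmetry'' should be replaced by the observation that \eqref{d.g} is the easier case of the same scheme.

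The one genuine gap is in the large-scale part of the $j$-summation for \eqref{d.a} (and its analogues in \eqref{d.b} and \eqref{d.g}). After summing over $\bz$ you claim a per-scale bound of the form $[V(y,\dz^j)]^{-1}e^{-c\dz^{-j}d(x,y)}$ and call the remaining sum over $j$ ``the familiar geometric sum estimate.'' For the scales $\dz^j<d(x,y)$ this is indeed routine via \eqref{a.b}. But for the scales $\dz^j\ge d(x,y)$ the exponential factor is essentially $1$, and on a general metric measure space of homogeneous type --- with no reverse doubling --- the sum $\sum_{\{j:\,\dz^j\ge d(x,y)\}}[V(y,\dz^j)]^{-1}$ need not converge, let alone be bounded by $[V(x,y)]^{-1}$. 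The paper's proof keeps the extra factor $e^{-\frac{\nu}{8}\dz^{-j}d(y,\scy^j)}$ that Lemma \ref{lb.x} actually produces and then invokes \cite[Lemma 8.3]{ah13}, which exploits the geometry of the sets $\scy^j$ of new dyadic points to control precisely this large-scale sum. Since you do invoke Lemma \ref{lb.x}, the needed factor is available to you, but your sketch drops it and treats the top end as automatic; as written, that step fails in the stated generality and must be repaired by retaining $e^{-c\dz^{-j}d(y,\scy^j)}$ and citing \cite[Lemma 8.3]{ah13} (or an equivalent argument).
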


\begin{proof}
Let $N\in\nn$, $k\in\zz$, $i\in\{1,\ldots,m_k\}$,
and the kernels $K_{k,\,i}$ and $K^N_{k,\,i}$ be defined
as in \eqref{K-1} and \eqref{K-N}, respectively.
We only prove the results
for $K_{k,\,i}$, since the proof of $K^N_{k,\,i}$ is similar.
Obviously, if $K_{k,\,i}$ satisfies \eqref{d.a},
then $K_{k,\,i}\in L^1_{\loc}(\{\cx\times\cx\}\bh \{(x,x):\ x\in\cx\})$.
Thus, it suffices to show that
$K_{k,\,i}$ satisfies \eqref{d.a}, \eqref{d.b} and \eqref{d.g}.

Now we prove that $K_{k,\,i}$ satisfies \eqref{d.a}.
By \eqref{b.x}, \eqref{b.a} and $\az\in\sca^k_{j,\,\bz}$
with $\sca^k_{j,\,\bz}$ as in \eqref{4.5y}, we see that,
for all $x,\,y\in\cx$ with $x\neq y$,
\begin{eqnarray}\label{x.v}
\qquad|K_{k,\,i}(x,y)|&&\ls e^{\nu\dz2^{k-2}}\sum_{j\in\zz}\sum_{\bz\in\scg_j}
\chi_{B(x^j_\az,\,8\dz^j)}(x)
e^{-\frac{\nu}{2}\dz^{-j}d(y^j_\bz,\,x)}
\lf|\psi^j_\bz(x)\r|^{1/2}
\frac{|\psi^j_\bz(y)|}{[V(y^j_\bz,\dz^j)]^{1/4}}\\
&&\noz\ls e^{\nu\dz2^{k-2}}e^{-\frac{\nu}{2}\dz^{-j}d(y^j_\bz,\,x^j_\az)}
\sum_{j\in\zz}\sum_{\bz\in\scg_j}
\lf|\psi^j_\bz(x)\r|^{1/2}
\frac{|\psi^j_\bz(y)|}{[V(y^j_\bz,\dz^j)]^{1/4}}\\
&&\noz\ls\sum_{j\in\zz}\sum_{\bz\in\scg_j}
\lf|\psi^j_\bz(x)\r|^{1/2}
\frac{|\psi^j_\bz(y)|}{[V(y^j_\bz,\dz^j)]^{1/4}}=:{\rm H}
\end{eqnarray}
with $\scg_j$ for $j\in\zz$ as in \eqref{b.w}.

Notice that, by \eqref{a.b}, for $j\in\zz$ and $\bz\in\scg_j$
\begin{equation}\label{x.t}
V\lf(y,\dz^j\r)\le V\lf(y^j_\bz,\dz^j+d\lf(y,y^j_\bz\r)\r)
\ls\lf[\frac{\dz^j+d(y,y^j_\bz)}{\dz^j}\r]^n
V\lf(y^j_\bz,\dz^j\r).
\end{equation}
By this and $y^j_\bz:=x^{j+1}_\bz$ for all $\bz\in\scg_j$,
together with \eqref{b.a} and Lemma \ref{lb.x}, we write
\begin{eqnarray*}
{\rm H}&&\ls\sum_{j\in\zz}\sum_{\bz\in\scg_j}
e^{-\frac{\nu}{2}\dz^{-j}d(x,\,y^j_\bz)}
e^{-\nu\dz^{-j}d(y,\,y^j_\bz)}
\frac1{V(y^j_\bz,\dz^j)}\\
&&\ls\sum_{j\in\zz}\sum_{\bz\in\scg_j}
e^{-\frac{\nu}{2}\dz^{-j}d(x,\,y)}
e^{-\frac{\nu}{2}\dz^{-j}d(y,\,y^j_\bz)}
\lf[\frac{\dz^j+d(y,y^j_\bz)}{\dz^j}\r]^n\frac1{V(y,\dz^j)}\\
&&\ls\sum_{j\in\zz}\frac1{V(y,\dz^j)}e^{-\frac{\nu}{2}\dz^{-j}d(x,\,y)}
\sum_{\bz\in\scg_j}e^{-\frac{\nu}{4}\dz^{-j}d(y,\,y^j_\bz)}
\ls\sum_{j\in\zz}\frac1{V(y,\dz^j)}e^{-\frac{\nu}{2}\dz^{-j}d(x,\,y)}
e^{-\frac{\nu}{8}\dz^{-j}d(y,\,\scy^j)}\\
&&\sim\sum_{\{j\in\zz:\ \dz^j\ge d(x,\,y)\}}
\frac1{V(y,\dz^j)}e^{-\frac{\nu}{2}\dz^{-j}d(x,\,y)}
e^{-\frac{\nu}{8}\dz^{-j}d(y,\,\scy^j)}
+\sum_{\{j\in\zz:\ \dz^j<d(x,\,y)\}}\cdots=:{\rm H}_1+{\rm H}_2,
\end{eqnarray*}
where, for any $j\in\zz$, $\scy^j$ is as in \eqref{3.5x}.

To estimate ${\rm H}_1$, from \cite[Lemma 8.3]{ah13}
(with $a:=1$, $\nu:=1$ and $\gz$ replaced by $\nu/8$)
and \eqref{a.e}, it follows that
$$
{\rm H}_1\ls\sum_{\{j\in\zz:\ \dz^j\ge d(x,\,y)\}}
\frac1{V(y,\dz^j)}e^{-\frac{\nu}{8}\dz^{-j}d(y,\,\scy^j)}
\ls\frac1{V(y,x)}\sim\frac1{V(x,y)}.
$$

Now we deal with ${\rm H}_2$. By \eqref{a.b} and \eqref{a.e},
we obtain
\begin{eqnarray*}
{\rm H}_2&&\ls\sum_{\{j\in\zz:\ \dz^j<d(x,\,y)\}}
\frac1{V(y,x)}\lf[\frac{d(x,y)}{\dz^j}\r]^n
e^{-\frac{\nu}{2}\dz^{-j}d(x,\,y)}\\
&&\ls\frac1{V(x,y)}\sum_{\{j\in\zz:\ \dz^j<d(x,\,y)\}}
\frac{\dz^j}{d(x,y)}\ls\frac1{V(x,y)}.
\end{eqnarray*}

Combining the estimates of ${\rm H}_1$ and ${\rm H}_2$,
we further know that ${\rm H}\ls{\rm H}_1+{\rm H}_2\ls\frac1{V(x,y)}$,
which, together with \eqref{x.v}, implies that
$K_{k,\,i}$ satisfies \eqref{d.a}.

Then we prove that $K_{k,\,i}$ satisfies \eqref{d.g}.
For all $x,\,y,\,\wz{y}\in\cx$ with $0<d(y,\wz{y})\le\frac12 d(x,y)$,
by \eqref{b.x}, \eqref{b.a} and $\az\in\sca^k_{j,\,\bz}$
with $\sca^k_{j,\,\bz}$ as in \eqref{4.5y},
together with $y^j_\bz:=x^{j+1}_\bz$ for all $\bz\in\scg_j$, we have
\begin{eqnarray}\label{x.s}
&&\lf|K_{k,\,i}(x,y)-K_{k,\,i}(x,\wz{y})\r|\\
&&\noz\hs\ls e^{\nu\dz2^{k-2}}\sum_{j\in\zz}\sum_{\bz\in\scg_j}
\chi_{B(x^j_\az,\,8\dz^j)}(x)
e^{-\frac{\nu}{2}\dz^{-j}d(y^j_\bz,\,x)}
\lf|\psi^j_\bz(y)-\psi^j_\bz\lf(\wz{y}\r)\r|
\frac{|\psi^j_\bz(x)|^{1/2}}{[V(y^j_\bz,\dz^j)]^{1/4}}\\
&&\noz\hs\ls e^{\nu\dz2^{k-2}}e^{-\frac{\nu}{2}\dz2^k}
\sum_{j\in\zz}\sum_{\bz\in\scg_j}
\lf|\psi^j_\bz(y)-\psi^j_\bz\lf(\wz{y}\r)\r|
\frac{|\psi^j_\bz(x)|^{1/2}}{[V(y^j_\bz,\dz^j)]^{1/4}}\\
&&\noz\hs\ls\sum_{j\in\zz}\sum_{\bz\in\scg_j}
\lf|\psi^j_\bz(y)-\psi^j_\bz\lf(\wz{y}\r)\r|
\frac{|\psi^j_\bz(x)|^{1/2}}{[V(y^j_\bz,\dz^j)]^{1/4}}=:{\rm J}.
\end{eqnarray}
To estimate $\rm J$, we consider the following two cases.

\textbf{Case (i)} $d(y,\wz{y})\ge\dz^j$.
In this case, from \eqref{b.a},
$d(x,\wz{y})\ge d(x,y)-d(y,\wz{y})\ge\frac12 d(x,y)$,
$d(y,\wz{y})\ge\dz^j$, \eqref{a.b} and some computations similar
to those used in the estimates of $\rm H$,
together with $y^j_\bz:=x^{j+1}_\bz$ for all $\bz\in\scg_j$, we deduce that
\begin{eqnarray*}
{\rm J}&&\ls\sum_{j\in\zz}\sum_{\bz\in\scg_j}
\frac{e^{-\nu\dz^{-j}d(y,\,y^j_\bz)}}{V(y^j_\bz,\dz^j)}
e^{-\frac{\nu}{2}\dz^{-j}d(x,\,y^j_\bz)}
+\sum_{j\in\zz}\sum_{\bz\in\scg_j}
\frac{e^{-\nu\dz^{-j}d(\wz{y},\,y^j_\bz)}}{V(y^j_\bz,\dz^j)}
e^{-\frac{\nu}{2}\dz^{-j}d(x,\,y^j_\bz)}\\
&&\ls\sum_{j\in\zz}\sum_{\bz\in\scg_j}
\frac{e^{-\frac{\nu}{2}\dz^{-j}d(y,\,y^j_\bz)}}{V(y^j_\bz,\dz^j)}
e^{-\frac{\nu}{2}\dz^{-j}d(x,\,y)}
+\sum_{j\in\zz}\sum_{\bz\in\scg_j}
\frac{e^{-\frac{\nu}{2}\dz^{-j}d(\wz{y},\,y^j_\bz)}}{V(y^j_\bz,\dz^j)}
e^{-\frac{\nu}{2}\dz^{-j}d(x,\,\wz{y})}\\
&&\ls\sum_{j\in\zz}\lf[\frac{\dz^j}{d(x,y)}\r]^{\eta}\sum_{\bz\in\scg_j}
\frac{e^{-\frac{\nu}{2}\dz^{-j}d(y,\,y^j_\bz)}}{V(y^j_\bz,\dz^j)}
e^{-\frac{\nu}{4}\dz^{-j}d(x,\,y)}\\
&&\hs+\sum_{j\in\zz}\lf[\frac{\dz^j}{d(x,\wz{y})}\r]^{\eta}\sum_{\bz\in\scg_j}
\frac{e^{-\frac{\nu}{2}\dz^{-j}d(\wz{y},\,y^j_\bz)}}{V(y^j_\bz,\dz^j)}
e^{-\frac{\nu}{4}\dz^{-j}d(x,\,\wz{y})}\\
&&\ls\lf[\frac{d(y,\wz{y})}{d(x,y)}\r]^{\eta}
\sum_{j\in\zz}\sum_{\bz\in\scg_j}
\frac{e^{-\frac{\nu}{2}\dz^{-j}d(y,\,y^j_\bz)}}{V(y^j_\bz,\dz^j)}
e^{-\frac{\nu}{4}\dz^{-j}d(x,\,y)}\\
&&\hs+\lf[\frac{d(y,\wz{y})}{d(x,\wz{y})}\r]^{\eta}\sum_{j\in\zz}
\sum_{\bz\in\scg_j}\frac{e^{-\frac{\nu}{2}
\dz^{-j}d(\wz{y},\,y^j_\bz)}}{V(y^j_\bz,\dz^j)}
e^{-\frac{\nu}{4}\dz^{-j}d(x,\,\wz{y})}\\
&&\ls\lf[\frac{d(y,\wz{y})}{d(x,y)}\r]^{\eta}
\frac1{V(x,y)}+\lf[\frac{d(y,\wz{y})}{d(x,\wz{y})}\r]^{\eta}
\frac1{V(x,\wz{y})}
\ls\lf[\frac{d(y,\wz{y})}{d(x,y)}\r]^{\eta}\frac1{V(x,y)}.
\end{eqnarray*}
This completes the proof of \textbf{Case (i)}.

\textbf{Case (ii)} $d(y,\wz{y})<\dz^j$.
In this case, by \eqref{b.a}, \eqref{b.b},
\eqref{x.t} and Lemma \ref{lb.x}, we further write
\begin{eqnarray*}
{\rm J}&&\ls\sum_{j\in\zz}\sum_{\bz\in\scg_j}
\lf[\frac{d(y,\wz{y})}{\dz^j}\r]^{\eta}
\frac{e^{-\nu\dz^{-j}d(y,\,y^j_\bz)}}{V(y^j_\bz,\dz^j)}
e^{-\frac{\nu}{2}\dz^{-j}d(x,\,y^j_\bz)}\\
&&\ls\sum_{j\in\zz}\sum_{\bz\in\scg_j}
\lf[\frac{d(y,\wz{y})}{\dz^j}\r]^{\eta}e^{-\frac{\nu}{2}\dz^{-j}d(x,\,y)}
\frac{e^{-\frac{\nu}{2}\dz^{-j}d(y,\,y^j_\bz)}}{V(y^j_\bz,\dz^j)}\\
&&\ls\sum_{j\in\zz}\sum_{\bz\in\scg_j}
\lf[\frac{d(y,\wz{y})}{\dz^j}\r]^{\eta}e^{-\frac{\nu}{2}\dz^{-j}d(x,\,y)}
\lf[\frac{\dz^j+d(y,y^j_\bz)}{\dz^j}\r]^n
\frac{e^{-\frac{\nu}{2}\dz^{-j}d(y,\,y^j_\bz)}}{V(y,\dz^j)}\\
&&\ls\sum_{j\in\zz}\lf[\frac{d(y,\wz{y})}{\dz^j}\r]^{\eta}
e^{-\frac{\nu}{2}\dz^{-j}d(x,\,y)}\frac1{V(y,\dz^j)}\sum_{\bz\in\scg_j}
e^{-\frac{\nu}{4}\dz^{-j}d(y,\,y^j_\bz)}\\
&&\ls\sum_{j\in\zz}\lf[\frac{d(y,\wz{y})}{\dz^j}\r]^{\eta}
e^{-\frac{\nu}{2}\dz^{-j}d(x,\,y)}\frac1{V(y,\dz^j)}
e^{-\frac{\nu}{8}\dz^{-j}d(y,\,\scy^j)}\\
&&\sim\sum_{\{j\in\zz:\ \dz^j\ge d(x,\,y)\}}
\lf[\frac{d(y,\wz{y})}{\dz^j}\r]^{\eta}
e^{-\frac{\nu}{2}\dz^{-j}d(x,\,y)}\frac1{V(y,\dz^j)}
e^{-\frac{\nu}{8}\dz^{-j}d(y,\,\scy^j)}
+\sum_{\{j\in\zz:\ \dz^j<d(x,\,y)\}}\cdots\\
&&=:{\rm J}_1+{\rm J}_2,
\end{eqnarray*}
where $\scy^j$ for any $j\in\zz$ is as in \eqref{3.5x}.

Similar to the estimates for ${\rm H}_1$, we conclude that
$$
{\rm J}_1\ls\lf[\frac{d(y,\wz{y})}{d(x,y)}\r]^{\eta}
\sum_{\{j\in\zz:\ \dz^j\ge d(x,\,y)\}}
e^{-\frac{\nu}{2}\dz^{-j}d(x,\,y)}\frac1{V(y,\dz^j)}
e^{-\frac{\nu}{8}\dz^{-j}d(y,\,\scy^j)}
\ls\lf[\frac{d(y,\wz{y})}{d(x,y)}\r]^{\eta}\frac1{V(x,y)}.
$$

Now we turn to estimate ${\rm J}_2$. By \eqref{a.b} and \eqref{a.e},
we see that
\begin{eqnarray*}
{\rm J}_2&&\ls\sum_{\{j\in\zz:\ \dz^j<d(x,\,y)\}}
\lf[\frac{d(y,\wz{y})}{\dz^j}\r]^{\eta}
\frac1{V(y,x)}\lf[\frac{d(x,y)}{\dz^j}\r]^n
e^{-\frac{\nu}{2}\dz^{-j}d(x,\,y)}\\
&&\ls\frac1{V(y,x)}\sum_{\{j\in\zz:\ \dz^j<d(x,\,y)\}}
\lf[\frac{d(y,\wz{y})}{\dz^j}\r]^{\eta}
\lf[\frac{d(x,y)}{\dz^j}\r]^n
\lf[\frac{\dz^j}{d(x,y)}\r]^{n+\eta+1}\\
&&\ls\frac1{V(y,x)}\lf[\frac{d(y,\wz{y})}{d(x,y)}\r]^{\eta}
\sum_{\{j\in\zz:\ \dz^j<d(x,\,y)\}}\frac{\dz^j}{d(x,y)}
\ls\frac1{V(x,y)}\lf[\frac{d(y,\wz{y})}{d(x,y)}\r]^{\eta}.
\end{eqnarray*}

Combining the estimates of ${\rm J}_1$ and ${\rm J}_2$,
we further know that
$$
{\rm J}\ls{\rm J}_1+{\rm J}_2\ls\frac1{V(x,y)}
\lf[\frac{d(y,\wz{y})}{d(x,y)}\r]^{\eta},
$$
which completes the proof \textbf{Case (ii)}.
This, together with \eqref{x.s} and \textbf{Case (i)},
further implies that $K$ satisfies \eqref{d.g}.

Finally, we show that $K$ satisfies \eqref{d.b}.
For all $x,\,\wz{x},\,y\in\cx$ with $0<d(x,\wz{x})\le\frac12 d(x,y)$,
by \eqref{b.x}, \eqref{b.a} and $\az\in\sca^k_{j,\,\bz}$, we find that
\begin{eqnarray*}
&&\lf|K_{k,\,i}(x,y)-K_{k,\,i}(\wz{x},y)\r|\\
&&\hs\le e^{\nu\dz2^{k-2}}\sum_{j\in\zz}\sum_{\bz\in\scg_j}
\lf|s^j_\az(x)\psi^j_\bz(x)-s^j_\az\lf(\wz{x}\r)\psi^j_\bz\lf(\wz{x}\r)\r|
\lf|\psi^j_\bz(y)\r|\\
&&\hs\le e^{\nu\dz2^{k-2}}\sum_{j\in\zz}\sum_{\bz\in\scg_j}
\lf[\lf|s^j_\az(x)\psi^j_\bz(x)\r|^{1/2}
+\lf|s^j_\az\lf(\wz{x}\r)\psi^j_\bz\lf(\wz{x}\r)\r|^{1/2}\r]\\
&&\hs\hs\times\lf|s^j_\az(x)\psi^j_\bz(x)-s^j_\az\lf(\wz{x}\r)
\psi^j_\bz\lf(\wz{x}\r)\r|^{1/2}\lf|\psi^j_\bz(y)\r|\\
&&\hs\ls e^{\nu\dz2^{k-2}}\sum_{j\in\zz}\sum_{\bz\in\scg_j}
\lf[\chi_{B(x^j_\az,\,8\dz^j)}(x)
\frac{e^{-\frac{\nu}{2}\dz^{-j}d(y^j_\bz,\,x)}}
{[V(y^j_\bz,\dz^j)]^{1/4}}
+\chi_{B(x^j_\az,\,8\dz^j)}\lf(\wz{x}\r)
\frac{e^{-\frac{\nu}{2}\dz^{-j}d(y^j_\bz,\,\wz{x})}}
{[V(y^j_\bz,\dz^j)]^{1/4}}\r]\\
&&\hs\hs\times\lf|s^j_\az(x)\psi^j_\bz(x)-s^j_\az\lf(\wz{x}\r)
\psi^j_\bz\lf(\wz{x}\r)\r|^{1/2}\lf|\psi^j_\bz(y)\r|\\
&&\hs\ls\sum_{j\in\zz}\sum_{\bz\in\scg_j}
\lf|s^j_\az(x)\psi^j_\bz(x)-s^j_\az\lf(\wz{x}\r)
\psi^j_\bz\lf(\wz{x}\r)\r|^{1/2}
\frac{|\psi^j_\bz(y)|}{[V(y^j_\bz,\dz^j)]^{1/4}}\\
&&\hs\ls\sum_{j\in\zz}\sum_{\bz\in\scg_j}
\lf|s^j_\az(x)\r|^{1/2}
\lf|\psi^j_\bz(x)-\psi^j_\bz\lf(\wz{x}\r)\r|^{1/2}
\frac{|\psi^j_\bz(y)|}{[V(y^j_\bz,\dz^j)]^{1/4}}\\
&&\hs\hs+\sum_{j\in\zz}\sum_{\bz\in\scg_j}
\lf|s^j_\az(x)-s^j_\az\lf(\wz{x}\r)\r|^{1/2}
\lf|\psi^j_\bz\lf(\wz{x}\r)\r|^{1/2}
\frac{|\psi^j_\bz(y)|}{[V(y^j_\bz,\dz^j)]^{1/4}}\hs=:{\rm A}+{\rm B}.
\end{eqnarray*}

By some arguments similar to those used
in the estimates for $\rm J$, we have
$$
{\rm A}\ls\sum_{j\in\zz}\sum_{\bz\in\scg_j}
\lf|\psi^j_\bz(x)-\psi^j_\bz\lf(\wz{x}\r)\r|^{1/2}
\frac{|\psi^j_\bz(y)|}{[V(y^j_\bz,\dz^j)]^{1/4}}
\ls\frac1{V(x,y)}\lf[\frac{d(x,\wz{x})}{d(x,y)}\r]^{\eta}.
$$

To estimate ${\rm B}$, by \eqref{b.1}, we further write
\begin{eqnarray*}
{\rm B}&&\ls\sum_{j\in\zz}\sum_{\bz\in\scg_j}
\lf[\frac{d(x,\wz{x})}{\dz^j}\r]^{\eta/2}
\lf|\psi^j_\bz(x)\r|^{1/2}
\frac{|\psi^j_\bz(y)|}{[V(y^j_\bz,\dz^j)]^{1/4}}\\
&&\sim\sum_{\{j\in\zz:\ \dz^j\ge d(x,\,y)\}}\sum_{\bz\in\scg_j}
\lf[\frac{d(x,\wz{x})}{\dz^j}\r]^{\eta/2}
\lf|\psi^j_\bz(x)\r|^{1/2}
\frac{|\psi^j_\bz(y)|}{[V(y^j_\bz,\dz^j)]^{1/4}}\\
&&\hs+\sum_{\{j\in\zz:\ d(x,\,\wz{x})\le\dz^j<d(x,\,y)\}}
\sum_{\bz\in\scg_j}\cdots
+\sum_{\{j\in\zz:\ d(x,\,\wz{x})>\dz^j\}}\sum_{\bz\in\scg_j}\cdots\\
&&=:{\rm B}_1+{\rm B}_2+{\rm B}_3.
\end{eqnarray*}

From some arguments similar to those used
in the estimates for $\rm H$, it follows that
\begin{eqnarray*}
{\rm B}_1\ls\lf[\frac{d(x,\wz{x})}{d(x,y)}\r]^{\eta/2}
\sum_{\{j\in\zz:\ \dz^j\ge d(x,\,y)\}}\sum_{\bz\in\scg_j}
\lf|\psi^j_\bz(x)\r|^{1/2}
\frac{|\psi^j_\bz(y)|}{[V(y^j_\bz,\dz^j)]^{1/4}}
\ls\frac1{V(x,y)}\lf[\frac{d(x,\wz{x})}{d(x,y)}\r]^{\eta/2}.
\end{eqnarray*}

Moreover, by some arguments similar to those used
in the estimates for $\rm H$,
\eqref{a.e} and \eqref{a.b}, we conclude that
\begin{eqnarray*}
{\rm B}_2&&\ls\sum_{\{j\in\zz:\ d(x,\,\wz{x})\le\dz^j<d(x,\,y)\}}
\lf[\frac{d(x,\wz{x})}{\dz^j}\r]^{\eta/2}
\frac1{V(\wz{x},\dz^j)}
e^{-\frac{\nu}{8}\dz^{-j} d(\scy^j,\,x)}
e^{-\frac{\nu}{2}\dz^{-j} d(x,\,y)}\\
&&\sim\sum_{\{j\in\zz:\ d(x,\,\wz{x})\le\dz^j<d(x,\,y)\}}
\lf[\frac{d(x,\wz{x})}{\dz^{j}}\r]^{\eta/2}
\frac1{V(x,\dz^j)}
e^{-\frac{\nu}{8}\dz^{-j} d(\scy^j,\,x)}
e^{-\frac{\nu}{2}\dz^{-j} d(x,\,y)}\\
&&\ls\sum_{\{j\in\zz:\ d(x,\,\wz{x})\le\dz^j<d(x,\,y)\}}
\lf[\frac{d(x,\wz{x})}{\dz^{j}}\r]^{\eta/2}
\frac1{V(x,y)}
\lf[\frac{d(x,y)}{\dz^{j}}\r]^{n}
\lf[\frac{\dz^{j}}{d(x,y)}\r]^{n+\frac{\eta}{2}+1}\\
&&\sim\lf[\frac{d(x,\wz{x})}{d(x,y)}\r]^{\eta/2}
\frac1{V(x,y)}\sum_{\{j\in\zz:\ d(x,\,\wz{x})\le\dz^j<d(x,\,y)\}}
\frac{\dz^{j}}{d(x,y)}\\
&&\sim\lf[\frac{d(x,\wz{x})}{d(x,y)}\r]^{\eta/2}
\frac1{V(x,y)}\sum_{j=\lceil\log_{1/\dz}1/d(x,\,y)
\rceil}^\fz\frac{\dz^{j}}{d(x,y)}
\ls\frac1{V(x,y)}\lf[\frac{d(x,\wz{x})}{d(x,y)}\r]^{\eta/2},
\end{eqnarray*}
where $\scy^j$ for any $j\in\zz$ is as in \eqref{3.5x}.

From $0<d(x,\wz{x})\le (1/2)d(x,y)$ and
some arguments similar to those used in the  estimate
of ${\rm B}_2$, we deduce that
\begin{eqnarray*}
{\rm B}_3&&\ls\sum_{\{j\in\zz:\ d(x,\,\wz{x})>\dz^j\}}
\lf[\frac{d(x,\wz{x})}{\dz^j}\r]^{\eta/2}
\frac1{V(\wz{x},\dz^j)}
e^{-\frac{\nu}{2}\dz^{-j} d(\scy^j,\,x)}
e^{-\nu\dz^{-j} d(x,\,y)}\\
&&\ls\sum_{\{j\in\zz:\ \dz^j<d(x,\,y)\}}
\lf[\frac{d(x,\wz{x})}{\dz^{j}}\r]^{\eta/2}
\frac1{V(\wz{x},d(x,y))}
\lf[\frac{d(x,y)}{\dz^{j}}\r]^{n}
\lf[\frac{\dz^{j}}{d(x,y)}\r]^{n+\frac{\eta}{2}+1}\\
&&\ls\lf[\frac{d(x,\wz{x})}{d(x,y)}\r]^{\eta/2}
\frac1{V(x,y)}\sum_{j=\lceil\log_{\frac1{\dz}}\frac1{d(x,\,y)}
\rceil}^\fz\frac{\dz^{j}}{d(x,y)}
\ls\frac1{V(x,y)}\lf[\frac{d(x,\wz{x})}{d(x,y)}\r]^{\eta/2}.
\end{eqnarray*}

Combining the estimates of ${\rm B}_1$, ${\rm B}_2$, ${\rm B}_3$ and ${\rm A}$,
we know that $K_{k,\,i}$ satisfies \eqref{d.b},
which completes the proof of Proposition \ref{pe.f}.
\end{proof}

\begin{remark}\label{r4.5x}
By a slight modification on the proof of Proposition \ref{pe.f},
we can show that, for any $N\in\nn$, $k\in\zz$ and $i\in\{1,\ldots,.m_k\}$,
$K_{k,\,i}$ and $K^N_{k,\,i}$ satisfy \eqref{d.b} and \eqref{d.g}
with any $s\in(0,\eta)$ and $\eta$ as in \eqref{b.1}.
\end{remark}

Now we are ready to establish the following boundedness result for
$\Pi_1$ in \eqref{e.z}, which is an extension of \cite[Lemma 4.2]{bgk}.

\begin{lemma}\label{le.c}
Let $(\cx,d,\mu)$ be a metric measure space of homogeneous type.
Then the bilinear operator $\Pi_1$ in \eqref{e.z},
originally defined for $f,\,g\in\ltw$
with finite wavelet decompositions as in \eqref{4.1x},
can be extended to a bounded bilinear
operator from $\ltw\times\ltw$ into $\hona$.
\end{lemma}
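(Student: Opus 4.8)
The plan is to follow the roadmap sketched in the introduction: reduce the boundedness of $\Pi_1$ to the $\hona$-boundedness of the Calder\'on--Zygmund operators $U^N_{k,\,i}$ constructed above, summed against a rapidly decaying weight in $k$. First I would fix $f,\,g\in\ltw$ with finite wavelet decompositions as in \eqref{4.1x} and expand $\Pi_1(f,g)=\sum_{k\in\zz}(P_kf)(Q_kg)$ using, for $P_kf$, the spline Riesz basis from Theorem \ref{tb.n} (writing $P_kf=\sum_{\az\in\sca_k}\lz^k_\az s^k_\az$) and, for $Q_kg$, the wavelet expansion $Q_kg=\sum_{\bz\in\scg_k}(g,\psi^k_\bz)\psi^k_\bz$. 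This produces a multiple sum $\sum_k\sum_{\az\in\sca_k}\sum_{\bz\in\scg_k}\lz^k_\az(g,\psi^k_\bz)s^k_\az\psi^k_\bz$. The crucial bookkeeping step, alluded to by \eqref{e.g} in the introduction, is to reorganize this sum according to the dyadic scale separation between $x^k_\az$ and $y^k_\bz$: for each $k$ and each $(j,\bz)$ with $j$ the wavelet scale, only those $\az$ with $x^j_\az\in B(y^j_\bz,\,C\dz^j)$ contribute meaningfully via \eqref{b.x}, but to get a clean decomposition one groups the pairs into the annuli $\sca^k_{j,\,\bz}$ of \eqref{4.5y}, indexed further by $i\in\{1,\ldots,m_k\}$ as in \eqref{4.5z}. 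After relabeling, $\Pi_1(f,g)$ becomes $\sum_{k\in\zz}e^{-\nu\dz2^{k-2}}\sum_{i=1}^{m_k}U^N_{k,\,i}\big(\text{something}\big)$ applied to functions in $\hona$, where the factor $e^{-\nu\dz2^{k-2}}$ is exactly the compensation inserted into $\wz\psi^{k,\,i}_{j,\,\bz}$ in \eqref{4.4.x}.

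Next I would identify the function to which $U^N_{k,\,i}$ is applied and show it lies in $\hona$ with controlled norm. The natural candidate is (a piece of) $P_kf$ itself, or rather a suitable wavelet-type rearrangement of $f$; by Theorem \ref{tc.d}, membership in $\hona$ is governed by the square function $\|\{\sum|\langle f,\psi\rangle|^2[R^k_{\az,\,\bz}]^2\}^{1/2}\|_{\lon}$, and by Lemma \ref{lc.i} a finite sum $\sum a(j,\az,\bz)\psi^j_{\az,\,\bz}$ sits in $\hona$ with norm controlled by the corresponding square-function quantity $\|\vz_{\cs}\|_{\lon}$. So the strategy is: express the argument of $U^N_{k,\,i}$ as such a finite wavelet combination, bound its $\hona$-norm by Lemma \ref{lc.i}, and then invoke Theorem \ref{te.g}(ii) --- using that $K_{k,\,i}$ and $K^N_{k,\,i}$ are Calder\'on--Zygmund kernels (Proposition \ref{pe.f}), that $U^N_{k,\,i}$ is $\ltw$-bounded with a constant \emph{uniform} in $N,\,k,\,i$ (Proposition \ref{pe.e}, which gives $\|(U^N_{k,\,i}g,h)\|\le C\|g\|_{\ltw}\|h\|_{\ltw}$), and that $U^N_{k,\,i}$ satisfies $T^*1=0$ because $\int_\cx\psi^j_\bz\,d\mu=0$ by \eqref{b.c} --- to conclude $\|U^N_{k,\,i}(\cdot)\|_{\hona}\le C\|\cdot\|_{\hona}$ with $C$ independent of $N,\,k,\,i$. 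Summing over $i\in\{1,\ldots,m_k\}$ costs a factor $m_k\le N_02^{(k+1)G_0}$ by \eqref{4.5z}, and summing over $k\in\zz$ costs the factor $e^{-\nu\dz2^{k-2}}$; since $\sum_k 2^{(k+1)G_0}e^{-\nu\dz2^{k-2}}<\fz$ (the double-exponential decay beats the geometric growth for $k\to+\fz$, and for $k\to-\fz$ the sum of $\sca^k_{j,\,\bz}$ is genuinely empty or the weight is harmless), the full series converges in $\hona$ with norm $\ls\|f\|_{\hona}\|g\|_{\bmo}$ --- or rather $\ls\|f\|_{\ltw}\|g\|_{\ltw}$ at this stage, as required by the statement.

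Finally, I would pass from finite wavelet decompositions to all of $\ltw\times\ltw$ by density: the bilinear form $(f,g)\mapsto\Pi_1(f,g)$ has just been shown to satisfy $\|\Pi_1(f,g)\|_{\hona}\le C\|f\|_{\ltw}\|g\|_{\ltw}$ on the dense subspace of pairs with finite wavelet decompositions, so it extends uniquely and boundedly to $\ltw\times\ltw\to\hona$; one should check the extension is consistent, i.e.\ the $N\to\fz$ limits $K^N_{k,\,i}\to K_{k,\,i}$ and $U^N_{k,\,i}g\to U_{k,\,i}g$ behave well, which is where having the $N$-uniform estimates in Propositions \ref{pe.e} and \ref{pe.f} pays off. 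The main obstacle is the second step: correctly organizing the relabeling so that $\Pi_1(f,g)$ genuinely becomes $\sum_k e^{-\nu\dz2^{k-2}}\sum_i U^N_{k,\,i}(\text{element of }\hona)$ \emph{and} verifying that the function fed into $U^N_{k,\,i}$ has $\hona$-norm bounded by $\|f\|_{\ltw}$ uniformly --- this requires careful use of the equivalence between the $\ltw$-norm of $f$, the $\ell^2$-norm of its wavelet coefficients, and (via Lemma \ref{lc.i} and Theorem \ref{tc.d}) the relevant square function, together with the geometric-doubling counting \eqref{4.5z} to keep the number of pieces at each scale $k$ under control. The kernel estimates (Proposition \ref{pe.f}) and the uniform $\ltw$-bound (Proposition \ref{pe.e}) are the technical engine, but they are already in hand; the remaining work is the combinatorial assembly and the summability in $k$.
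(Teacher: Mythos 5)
Your proposal follows essentially the same route as the paper's proof: the annular relabeling \eqref{e.g} of the spline--wavelet double expansion, the operators $U_{k,\,i}$ with $N$-uniform Calder\'on--Zygmund bounds (Propositions \ref{pe.e} and \ref{pe.f}), the $\hona$-boundedness criterion of Theorem \ref{te.g}(ii), the wavelet characterization of $\hona$ (Theorem \ref{tc.d} and Lemma \ref{lc.i}), and the summability of $m_k e^{-\nu\dz 2^{k-2}}$ over $k\in\zz_+$, followed by a density argument.

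Two points in your sketch need repair. First, $U_{k,\,i}^*1=0$ does not follow from $\int_\cx\psi^j_\bz\,d\mu=0$ as you assert: the operator sends $\psi^j_\bz$ to $\wz{\psi}^{k,\,i}_{j,\,\bz}=e^{\nu\dz2^{k-2}}s^j_{\az^i_{j,\,\bz}}\psi^j_\bz$, and multiplying by the spline destroys that cancellation in general. The correct reason is the orthogonality $V_j\perp W_j$, which gives $\int_\cx s^j_\az\psi^j_\bz\,d\mu=(s^j_\az,\psi^j_\bz)_{\ltw}=0$, hence $\int_\cx\wz{\psi}^{k,\,i}_{j,\,\bz}\,d\mu=0$; one must also justify interchanging $U_{k,\,i}a$ with the integral over $\cx$, which the paper does by splitting against $\chi_{2B_0}$ and $\chi_{\cx\bh 2B_0}$ and using the kernel estimates. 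Second, the function fed into $U_{k,\,i}$, namely $\sum_{j}\sum_{\bz}(f,{\mathfrak s}^j_{\az^i_{j,\,\bz}})(g,\psi^j_\bz)\psi^j_\bz$, does \emph{not} have $\hona$-norm uniformly bounded by $\|f\|_{\ltw}$: since $x^j_{\az^i_{j,\,\bz}}$ lies at distance $\sim2^k\dz^j$ from $y^j_\bz$, comparing $|(f,{\mathfrak s}^j_{\az^i_{j,\,\bz}})|$ with the maximal function $M(f)$ on the cube $Q^j_\gz$ costs a doubling factor $[\wz{C}_{(\cx)}]^k$, so the relevant bound grows geometrically in $k$, and it is precisely the double-exponential weight $e^{-\nu\dz2^{k-2}}$ (together with $m_k\ls 2^{(k+1)G_0}$) that makes the series over $k$ converge. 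With these two corrections, and with the combinatorial assembly you defer actually carried out via the $M(f)$-comparison and Theorem \ref{tc.d}(iv), your outline coincides with the paper's argument.
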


\begin{proof}
Let $f,\,g\in\ltw$ with finite wavelet decompositions as in
\eqref{4.1x}, ${\mathfrak s}^j_\az:=s^j_\az/\nu^j_\az$
for all $(j,\az)\in\sca$, with $\sca$ as in \eqref{b.s}, and
$\nu^j_\az:=\int_{\cx}s^j_\az\,d\mu\sim\mu^j_\az$ [see \eqref{b.x}],
where $\mu^j_\az:=\mu(B(x^j_\az,\dz^j))$ for all $(j,\az)\in\sca$.

We first observe that, for any given $j\in\zz\cap[-M_2,M_2]$,
with $M_2$ as in \eqref{4.1x}, and $\bz\in\scg_j$,
with $\scg_j$ as in \eqref{b.w}, by $\scg_j,\,\sca_j\st\sca_{j+1}$,
$\scg_j\cap\sca_j=\emptyset$ and \eqref{b.z},
we know that $\bz\in\scg_j$ if and
only if $\bz\in\scg_j$ and $d(x^j_{\az},y^j_\bz)\ge\dz^{j+1}$.
Moreover, by the finite wavelet decomposition of $g$, we
see that $Q_jg=0$ for all $j\not\in\zz\cap[-M_2,M_2]$.
From these facts, \eqref{e.z} and
Theorems \ref{tb.a} and \ref{tb.n}, it follows that
\begin{eqnarray}\label{e.c}
\Pi_1(f,g)&&=\sum_{j=-M_2}^{M_2}\lf(P_jf\r)\lf(Q_jg\r)\\
&&\noz=\sum_{j=-M_2}^{M_2}\lf[\sum_{\az\in\sca_j}
\lf(f,\frac{s^j_\az}{\sqrt{\nu^j_\az}}\r)
\frac{s^j_\az}{\sqrt{\nu^j_\az}}\r]
\lf[\sum_{\bz\in\scg_j}\lf(g,\psi^j_\bz\r)\psi^j_\bz\r]\\
&&\noz=\sum_{j=-M_2}^{M_2}\sum_{\az\in\sca_j}
\sum_{\{\bz\in\scg_j:\ d(x^j_{\az},y^j_\bz)\ge\dz^{j+1}\}}
\lf(f,{\mathfrak s}^j_\az\r)\lf(g,\psi^j_\bz\r) s^j_\az\psi^j_\bz
\end{eqnarray}
in $\lon$.

Now we show that
\begin{eqnarray}\label{x.o}
T&&:=\sum_{j=-M_2}^{M_2}\sum_{\bz\in\scg_j}
\sum_{\{\az\in\sca_j:\ d(x^j_{\az},y^j_\bz)\ge\dz^{j+1}\}}
\lf|\lf(f,{\mathfrak s}^j_\az\r)\r|\lf|\lf(g,\psi^j_\bz\r)\r|\\
&&\noz\hs\times\int_{\cx}s^j_\az(x)\lf|\psi^j_\bz(x)\r|\,d\mu(x)<\fz.
\end{eqnarray}
Indeed, from \eqref{b.x}, \eqref{b.a}, the H\"older inequality,
\eqref{3.9x1}, \eqref{a.b} and $\mu^j_\az\sim\nu^j_\az$,
we deduce that
\begin{eqnarray*}
T&&\ls\sum_{j=-M_2}^{M_2}\sum_{\bz\in\scg_j}
\sum_{\{\az\in\sca_j:\ d(x^j_{\az},y^j_\bz)\ge\dz^{j+1}\}}
\lf|\lf(f,{\mathfrak s}^j_\az\r)\r|\lf|\lf(g,\psi^j_\bz\r)\r|
\int_{B(x^j_\az,\,8\dz^j)}\frac{e^{-\nu\dz^{-j}d(x,\,y^j_\bz)}}
{\sqrt{V(y^j_\bz,\dz^j)}}\,d\mu(x)\\
&&\ls\sum_{j=-M_2}^{M_2}\sum_{\bz\in\scg_j}
\sum_{\{\az\in\sca_j:\ d(x^j_{\az},y^j_\bz)\ge\dz^{j+1}\}}
\lf|\lf(f,\frac{s^j_\az}{\sqrt{\nu^j_\az}}\r)\r|\lf|\lf(g,\psi^j_\bz\r)\r|
e^{-\frac{\nu}{2}\dz^{-j}d(x^j_\az,\,y^j_\bz)}\frac1{\sqrt{\nu^j_\az}}\\
&&\hs\times\lf\{\int_{B(x^j_\az,\,8\dz^j)}
\lf[\frac{e^{-\frac{\nu}{2}\dz^{-j}d(x,\,y^j_\bz)}}
{\sqrt{V(y^j_\bz,\dz^j)}}\r]^2\,d\mu(x)\r\}^{1/2}
\lf[V\lf(x^j_\az,8\dz^j\r)\r]^{1/2}\\
&&\ls\sum_{j=-M_2}^{M_2}\sum_{\bz\in\scg_j}
\sum_{\{\az\in\sca_j:\ d(x^j_{\az},y^j_\bz)\ge\dz^{j+1}\}}
\lf|\lf(f,\frac{s^j_\az}{\sqrt{\nu^j_\az}}\r)\r|\lf|\lf(g,\psi^j_\bz\r)\r|
e^{-\nu\dz^{-j}d(x^j_\az,\,y^j_\bz)/2},
\end{eqnarray*}
which, combined with the H\"older inequality again, Lemma \ref{lb.x},
Theorems \ref{tb.n} and \ref{tb.a}, further implies that
\begin{eqnarray*}
T&&\ls\lf\{\sum_{j=-M_2}^{M_2}\sum_{\bz\in\scg_j}
\sum_{\{\az\in\sca_j:\ d(x^j_{\az},y^j_\bz)\ge\dz^{j+1}\}}
\lf|\lf(f,\frac{s^j_\az}{\sqrt{\nu^j_\az}}\r)\r|^2
e^{-\nu\dz^{-j}d(x^j_\az,\,y^j_\bz)/2}\r\}^{1/2}\\
&&\hs\times\lf\{\sum_{j=-M_2}^{M_2}\sum_{\bz\in\scg_j}
\lf|\lf(g,\psi^j_\bz\r)\r|^2 \sum_{\az\in\sca_j}
e^{-\nu\dz^{-j}d(x^j_\az,\,y^j_\bz)/2}\r\}^{1/2}\\
&&\ls\lf\{\sum_{j=-M_2}^{M_2}\sum_{\az\in\sca_j}
\lf|\lf(f,\frac{s^j_\az}{\sqrt{\nu^j_\az}}\r)\r|^2
\sum_{\{\bz\in\scg_j:\ d(x^j_{\az},y^j_\bz)\ge\dz^{j+1}\}}
e^{-\nu\dz^{-j}d(x^j_\az,\,y^j_\bz)/2}\r\}^{1/2}\\
&&\hs\times\lf\{\sum_{j=-M_2}^{M_2}\sum_{\bz\in\scg_j}
\lf|\lf(g,\psi^j_\bz\r)\r|^2\r\}^{1/2}\\
&&\ls\lf\{\sum_{j=-M_2}^{M_2}\sum_{\az\in\sca_j}
\lf|\lf(f,\frac{s^j_\az}{\sqrt{\nu^j_\az}}\r)\r|^2\r\}^{1/2}\|g\|_{\ltw}
\ls M_2^{1/2}\|f\|_{\ltw}\|g\|_{\ltw}<\fz.
\end{eqnarray*}
This shows \eqref{x.o}.

Let $\sca^k_{j,\,\bz}$ and $m_k$ be as in \eqref{4.5y}
and \eqref{4.5z}, respectively, for any
$j\in\nn\cap[-M_2,M_2]$, $\bz\in\scg_j$, with $\scg_j$
as in \eqref{b.w}, and $k\in\zz_+$.
Then, by \eqref{e.c}, \eqref{x.o} and the Fubini theorem, we write
\begin{eqnarray}\label{e.g}
\Pi_1(f,g)&&=\sum_{j=-M_2}^{M_2}\sum_{\bz\in\scg_j}
\sum_{\{\az\in\sca_j:\ d(x^j_{\az},y^j_\bz)\ge\dz^{j+1}\}}
\lf(f,{\mathfrak s}^j_\az\r)\lf(g,\psi^j_\bz\r)  s^j_\az\psi^j_\bz\\
&&\noz=\sum_{k=0}^\fz\sum_{j=-M_2}^{M_2}
\sum_{\bz\in\scg_j}\sum_{\az\in\sca^k_{j,\,\bz}}
\lf(f,{\mathfrak s}^j_\az\r)\lf(g,\psi^j_\bz\r)  s^j_\az\psi^j_\bz\\
&&\noz=\sum_{k=0}^\fz\sum_{i=1}^{m_k}
e^{-\nu\dz 2^{k-2}}
\sum_{j=-M_2}^{M_2}\sum_{\bz\in\scg_j}
\lf(f,{\mathfrak s}^j_{\az^i_{j,\,\bz}}\r)\lf(g,\psi^j_\bz\r)
e^{\nu\dz 2^{k-2}}s^j_{\az^i_{j,\,\bz}}\psi^j_\bz
\end{eqnarray}
in $\lon$.

To further estimate $\Pi_1(f,g)$, we introduce the following operator
$U_{k,\,i}$ for any $k\in\zz_+$ and $i\in\{1,\ldots,m_k\}$.
For any $(j,\bz)\in\scc$, let
\begin{equation}\label{x.d}
U_{k,\,i}\lf(\psi^j_\bz\r):=\wz{\psi}^{k,\,i}_{j,\,\bz}
\end{equation}
with $\wz{\psi}^{k,\,i}_{j,\,\bz}$ as in \eqref{4.4.x}
and $\psi^j_\bz$ as in Theorem \ref{tb.a} with $k$ and $\az$
replaced by $j$ and $\bz$, respectively.
Now we first show that $U_{k,\,i}$ can be extended to a bounded linear
operator on $\ltw$. Indeed,
for any $g\in\ltw$, by Theorem \ref{tb.a}, we know that
$$
g=\sum_{j\in\zz}\sum_{\bz\in\scg_j}
\lf(g,\psi^j_\bz\r) \psi^j_\bz
\quad {\rm in}\quad \ltw.
$$
Fix a collection
$\lf\{\scc_N:\ N\in\nn,\ \scc_N\st\scc\ {\rm and\ \scc_N\ is\ finite}\r\}$
as in \eqref{4.5x}.
For any fixed $k\in\zz$ and $N\in\nn$,
$(j,\bz)\in\scc$ and $i\in\{1,\ldots,m_k\}$, let
$g_N:=\sum_{(j,\,\bz)\in\scc_N}(g,\psi^j_\bz) \psi^j_\bz$
and $U^N_{k,\,i}g$ be as in \eqref{4.4.x}.
Thus, by the finiteness of $\scg_N$,
it is obvious that $U^N_{k,\,i}g\in\ltw$.

By Proposition \ref{pe.e} and its proof,
we see that $\{U^N_{k,\,i}g\}_{N\in\nn}$
is a Cauchy sequence in $\ltw$,
which further implies that there exists $G\in\ltw$
such that
\begin{equation}\label{e.j}
G=\lim_{N\to\fz}U^N_{k,\,i}g\quad {\rm in}\quad \ltw.
\end{equation}
Obviously, for any $N\in\nn$,
$
U_{k,\,i}\lf(g_N\r):=\sum_{(j,\,\bz)\in
\wz{\scc}_N}(g,\psi^j_\bz)  \wz{\psi}^{k,\,i}_{j,\,\bz}
=U_{k,\,i}^Ng.
$
Thus, we define
$$
U_{k,\,i}g:=\lim_{N\to\fz}U_{k,\,i}\lf(g_N\r)
=\lim_{N\to\fz}U^N_{k,\,i}g=G.
$$
Now we show that $U_{k,\,i}g$ is well defined.
To this end, it suffices to prove that the definition of
$U_{k,\,i}g$ is independent of the choice of $\{\scc_N\}_{N\in\nn}$.
Indeed, if there exists another $\wz{\scc}_N$
such that $\wz{U}^N_{k,\,i}g:=\sum_{(j,\,\bz)\in
\wz{\scc}_N}(g,\psi^j_\bz)  \wz{\psi}^{k,\,i}_{j,\,\bz}$ also
satisfies \eqref{e.j}, then let $\mathscr{E}_N:=\scc_N\cup\wz{\scc}_N$ for
any $N\in\nn$. By $\{\mathscr{E}_N\}_{N\in\nn}$ is non-decreasing,
$\bigcup_{N\in\nn}\mathscr{E}_N=\scc$,
Theorem \ref{tb.a} and Proposition \ref{pe.e} and its proof, we see that
\begin{eqnarray*}
&&\lf\|\wz{U}^N_{k,\,i}g-U^N_{k,\,i}g\r\|_{\ltw}\\
&&\hs\le\lf\|\sum_{(j,\,\bz)\in\mathscr{E}_N\bh\scc_N}\lf(g,\psi^j_\bz\r)
\wz{\psi}^{k,\,i}_{j,\,\bz}\r\|_{\ltw}
+\lf\|\sum_{(j,\,\bz)\in\mathscr{E}_N\bh\wz{\scc}_N}\lf(g,\psi^j_\bz\r)
\wz{\psi}^{k,\,i}_{j,\,\bz}\r\|_{\ltw}\\
&&\hs\le\lf[\sum_{(j,\,\bz)\in
\mathscr{E}_N\bh\scc_N}\lf|\lf(g,\psi^j_\bz\r)  \r|^2\r]^{1/2}
+\lf[\sum_{(j,\,\bz)\in
\mathscr{E}_N\bh\wz{\scc}_N}\lf|\lf(g,\psi^j_\bz\r)  \r|^2\r]^{1/2}
\to0\quad {\rm as}\quad N\to\fz.
\end{eqnarray*}

Thus,  $\{U^N_{k,\,i}g\}_{N\in\nn}$ and $\{\wz{U}^N_{k,\,i}g\}_{N\in\nn}$
are equivalent Cauchy sequences in $\ltw$ and, therefore, they
have the same limit in $\ltw$. This implies that
$U_{k,\,i}$ is well defined.
Moreover, from Proposition \ref{pe.e}, it follows
easily that $U_{k,\,i}$ is bounded on $\ltw$.

Then, for each $N\in\nn$, we consider the integral kernels of $U_{k,\,i}^N$
and $U_{k,\,i}$. For each $N\in\nn$,
$k\in\zz$, $i\in\{1,\ldots,m_k\}$, with $m_k$ as in \eqref{4.5z},
and $(x,y)\in(\cx\times\cx)\bh \{(x,x):\ x\in\cx\}$, let
$$
K^N_{k,\,i}(x,y):=\sum_{(j,\,\bz)\in\scc_N}
\wz{\psi}^{k,\,i}_{j,\,\bz}(x)\overline{\psi^j_\bz(y)}
$$
and
$$
K_{k,\,i}(x,y):=\sum_{j\in\zz}\sum_{\bz\in\scg_j}
\wz{\psi}^{k,\,i}_{j,\,\bz}(x)\overline{\psi^j_\bz(y)}.
$$
Now we claim that, for every $N\in\nn$,
$K^N_{k,\,i}$, $K_{k,\,i}$ are the integral kernels of $U_{k,\,i}^N$
and $U_{k,\,i}$, respectively.

Indeed, by Proposition \ref{pe.f}, we conclude that
$K^N_{k,\,i},\,K_{k,\,i}\in L^1_{\loc}
(\{\cx\times\cx\}\bh\{(x,x):\ x\in\cx\})$ are
the Calder\'on-Zygmund kernels with $c_{(K^N_{k,\,i})}$ and $C_{(K^N_{k,\,i})}$
independent of $N$.
It is obvious that, for every $N\in\nn$,
$K^N_{k,\,i}$ is the integral kernel of $U_{k,\,i}^N$.
Furthermore, by these facts and the definition of
$G$, together with the Lebesgue
dominated convergence theorem and the Fubini theorem, we conclude that,
for all $g,\,h\in C_b^{\eta/2}(\cx)$ with
$\supp(g)\cap\supp(h)=\emptyset$,
\begin{eqnarray*}
\langle K_{k,\,i},g\otimes h\rangle&&=\lim_{N\to\fz}
\lf\langle K^N_{k,\,i},g\otimes h\r\rangle\\
&&=\lim_{N\to\fz}\int_\cx\int_\cx
K^N_{k,\,i}(x,y)g(y)h(x)\,d\mu(y)d\mu(x)\\
&&=\lim_{N\to\fz}\lf( U^N_{k,\,i}g,h\r)
=\lf( U_{k,\,i}g,h\r),
\end{eqnarray*}
where $g\otimes h$ denotes the tensor product of $g$ and $h$.
This shows the above claim.

In order to prove that $U_{k,\,i}$ is bounded on $\hona$,
in view of Theorem \ref{te.g}(ii), it remains to show
that, for each $(1,2)$-atom $a$,
\begin{equation}\label{4.13x}
\int_{\cx}U_{k,\,i}a(x)\,d\mu(x)=0,
\end{equation}
observing that, by Theorem \ref{te.g}(i), $U_{k,\,i}a\in\lon$.

To this end, we need to use some arguments
similar to those used in the proof of \cite[p.\,22, Lemma 3]{m97}
as follows.

Let $a$ support in the ball $B_0:=B(x_0,r_0)$
for some $x_0\in\cx$ and $r_0\in(0,\fz)$.
Then we write
$$
\lf\langle U_{k,\,i}a,1\r\rangle=\lf\langle U_{k,\,i}a,
\chi_{2B_0}\r\rangle
+\lf\langle U_{k,\,i}a,\,\chi_{\cx\bh 2B_0}\r\rangle.
$$
By \eqref{e.j}, we know that
\begin{equation}\label{e.l}
\lf\langle U_{k,\,i}a,\chi_{2B_0}\r\rangle
=\lim_{N\to\fz}\lf\langle U^N_{k,\,i}a,\chi_{2B_0}\r\rangle
\end{equation}
and we then show that
\begin{equation}\label{e.k}
\lf\langle U_{k,\,i}a,\chi_{\cx\bh 2B_0}\r\rangle
=\lim_{N\to\fz}\lf\langle U^N_{k,\,i}a,\chi_{\cx\bh 2B_0}\r\rangle.
\end{equation}
To this end, by Proposition \ref{pe.f} and \eqref{d.a},
we first observe that,
for all $x\in\cx\bh 2B_0$ and $y\in B_0$,
$$
\lf|K^N_{k,\,i}(x,y)\r|\ls\frac1{V(x,y)}\ls\frac1{V(x,r_0)}.
$$
By this, the Lebesgue dominated convergence theorem and
the fact that $K^N_{k,\,i}(x,y)$ converge to $K(x,y)$ for all
$x,\,y\in\cx$ with $x\neq y$ (see the proof of Proposition \ref{pe.f}),
we conclude that, for all $x\in\cx\bh 2B_0$,
$$
\lim_{N\to\fz}U^N_{k,\,i}a(x)=U_{k,\,i}a(x).
$$

Moreover, by $\int_\cx a\,d\mu=0$, Proposition \ref{pe.f}
and \eqref{d.g},
we see that, for all $N\in\nn$ and $x\in\cx\bh 2B_0$,
\begin{eqnarray*}
\lf|U^N_{k,\,i}a(x)\r|&&\le\int_{B_0}\lf|K^N_{k,\,i}(x,y)-K^N_{k,\,i}(x,x_0)\r|
|a(y)|\,d\mu(y)\\
&&\noz\ls\int_{B_0}\lf[\frac{d(y,x_0)}{d(x,x_0)}\r]^s
\frac1{V(x,x_0)}|a(y)|\,d\mu(y)\\
&&\noz\ls\lf[\frac{r_0}{d(x,x_0)}\r]^s
\frac1{V(x,x_0)}\|a\|_{\lon}
\ls\lf[\frac{r_0}{d(x,x_0)}\r]^s\frac1{V(x,x_0)},
\end{eqnarray*}
where $s=\eta/2$ and $\eta$ is as in \eqref{b.1}.
From this and the Lebesgue dominated convergence theorem,
we deduce that \eqref{e.k} holds true.

Moreover, for all $j\in\zz$, let $V_j$ and $W_j$
be as in Lemma \ref{le.a}. By
$s^j_{\az^i_{j,\,\bz}}\in V_j$, $\psi^j_\bz\in W_j$
and $V_j\bot W_j$, we see that $(s^j_{\az^i_{j,\,\bz}},\psi^j_\bz)=0$
and hence $\int_\cx\wz{\psi}^{k,\,i}_{j,\,\bz}\,d\mu=0$.
By this,  \eqref{e.l} and the Fubini theorem, we conclude that,
\begin{eqnarray*}
\int_\cx U_{k,\,i}a(x)\,d\mu(x)
&&=\lf\langle U_{k,\,i}a,1\r\rangle
=\lf\langle U_{k,\,i}a,\chi_{2B_0}\r\rangle
+\lf\langle U_{k,\,i}a,\chi_{\cx\bh 2B_0}\r\rangle\\
&&=\lim_{N\to\fz}\lf\langle U^N_{k,\,i}a,\chi_{2B_0}\r\rangle
+\lim_{N\to\fz}\lf\langle U^N_{k,\,i}a,\chi_{\cx\bh 2B_0}\r\rangle
=\lim_{N\to\fz}\lf\langle U^N_{k,\,i}a,1\r\rangle\\
&&=\lim_{N\to\fz}\int_\cx\int_\cx a(y)
\sum_{(j,\,\bz)\in\scc_N}\wz{\psi}^{k,\,i}_{j,\,\bz}(x)\psi^j_\bz(y)
\,d\mu(y)d\mu(x)\\
&&=\lim_{N\to\fz}\sum_{(j,\,\bz)\in\scc_N}\int_\cx
\wz{\psi}^{k,\,i}_{j,\,\bz}(x)\,d\mu(x)\int_\cx a(y)\psi^j_\bz(y)\,d\mu(y)=0.
\end{eqnarray*}
That is, \eqref{4.13x} holds true. Thus, by Theorem \ref{te.g}(ii),
$U_{k,\,i}$ is bounded on $\hona$.

Now we claim that, for any $k\in\zz_+$ and $i\in\{1,\ldots,m_k\}$,
\begin{equation}\label{4.37x}
\sum_{j=-M_2}^{M_2}
\sum_{\bz\in\scg_j}\lf(f,{\mathfrak s}^j_{\az^i_{j,\,\bz}}\r)
\lf(g,\psi^j_\bz\r)\psi^j_\bz\in\hona.
\end{equation}
Indeed, by $\az^i_{j,\,\bz}\in\sca^k_{j,\,\bz}$ with
$\sca^k_{j,\,\bz}$ as in \eqref{4.5y},
\eqref{a.b} and $\mu^j_{\az^i_{j,\,\bz}}\sim
\nu^j_{\az^i_{j,\,\bz}}$, we see that
\begin{equation}\label{4.18x}
V\lf(y^j_\bz,\dz^j\r)\le V\lf(x^j_{\az^i_{j,\,\bz}},2^{k+2}\dz^j\r)
\ls2^{nk}V\lf(x^j_{\az^i_{j,\,\bz}},\dz^j\r)
\sim2^{nk}\mu^j_{\az^i_{j,\,\bz}}\sim2^{nk}\nu^j_{\az^i_{j,\,\bz}}.
\end{equation}

Moreover, by the proof of \cite[Lemma 3.7]{fy1}, we know that,
for any $j\in\zz$ and $\bz\in\scg_j$,
$\frac{\psi^j_\bz}{\sqrt{V(y^j_\bz,\dz^j)}}$ is a
$(1,2,\eta)$-molecule multiplied by a positive constant
independent of $j$ and $\bz$.
Thus, from this, the completion of $\hona$, Theorem \ref{tc.y},
\eqref{4.18x}, the H\"older inequality,
Theorems \ref{tb.a} and \ref{tb.n},
and the fact that, for any $\bz\in\scg_j$,  there
are at most $m_k$ points ($\az^i_{j,\,\bz}$)
in $\sca^k_{j,\,\bz}\st\sca_j$ corresponding
to $\bz$, we conclude that
\begin{eqnarray*}
&&\lf\|\sum_{j=-M_2}^{M_2}
\sum_{\bz\in\scg_j}\lf(f,{\mathfrak s}^j_{\az^i_{j,\,\bz}}\r)
\lf(g,\psi^j_\bz\r)\psi^j_\bz\r\|_{\hona}\\
&&\hs\le\sum_{j=-M_2}^{M_2}
\sum_{\bz\in\scg_j}\lf|\lf(f,{\mathfrak s}^j_{\az^i_{j,\,\bz}}\r)\r|
\lf|\lf(g,\psi^j_\bz\r)\r|\lf\|\psi^j_\bz\r\|_{\hona}\\
&&\hs\ls\sum_{j=-M_2}^{M_2}
\sum_{\bz\in\scg_j}\lf|\lf(f,\frac{s^j_{\az^i_{j,\,\bz}}}
{\sqrt{\nu^j_{\az^i_{j,\,\bz}}}}\r)\r|
\lf|\lf(g,\psi^j_\bz\r)\r| \frac{\sqrt{V(y^j_\bz,\dz^j)}}
{\sqrt{\nu^j_{\az^i_{j,\,\bz}}}}\\
&&\hs\ls2^{nk/2}\sum_{j=-M_2}^{M_2}\lf[\sum_{\bz\in\scg_j}
\lf|\lf(f,\frac{s^j_{\az^i_{j,\,\bz}}}
{\sqrt{\nu^j_{\az^i_{j,\,\bz}}}}\r)\r|^2\r]^{1/2}
\lf[\sum_{\bz\in\scg_j}\lf|\lf(g,\psi^j_\bz\r)\r|^2\r]^{1/2}\\
&&\hs\ls2^{nk/2} m_k^{1/2}\sum_{j=-M_2}^{M_2}\lf[\sum_{\az\in\sca_j}
\lf|\lf(f,\frac{s^j_{\az}}
{\sqrt{\nu^j_{\az}}}\r)\r|^2\r]^{1/2}\|g\|_{\ltw}\\
&&\hs\ls2^{nk/2} m_k^{1/2}M_2\|f\|_{\ltw}\|g\|_{\ltw}<\fz,
\end{eqnarray*}
which completes the proof of the above claim \eqref{4.37x}.

From \eqref{e.g}, \eqref{x.d}, the above claim and the boundedness of
$U_{k,\,i}$ on $\hona$ uniformly in $k$ and $i$, we deduce that
\begin{eqnarray*}
\Pi_1(f,g)&&=\sum_{k=0}^\fz\sum_{i=1}^{m_k}e^{-\nu\dz 2^{k-2}}
\sum_{j=-M_2}^{M_2}\sum_{\bz\in\scg_j}
\lf(f,{\mathfrak s}^j_{\az^i_{j,\,\bz}}\r)\lf(g,\psi^j_\bz\r)
U_{k,\,i}\lf(\psi^j_\bz\r)\\
&&\noz=\sum_{k=0}^\fz\sum_{i=1}^{m_k}
e^{-\nu\dz 2^{k-2}}U_{k,\,i}\lf(\sum_{j=-M_2}^{M_2}
\sum_{\bz\in\scg_j}\lf(f,{\mathfrak s}^j_{\az^i_{j,\,\bz}}\r)
\lf(g,\psi^j_\bz\r)\psi^j_\bz\r)
\end{eqnarray*}
in $\lon$. By the above claim, \eqref{4.37x}, together with
the boundedness of $U_{k,\,i}$ on $\hona$ uniformly in $k$ and $i$,
and Theorem \ref{tc.d}, we conclude that
\begin{eqnarray}\label{d.x}
{\rm L}:=&&\sum_{k=0}^\fz\sum_{i=1}^{m_k}
e^{-\nu\dz 2^{k-2}}\lf\|U_{k,\,i}\lf(\sum_{j=-M_2}^{M_2}
\sum_{\bz\in\scg_j}\lf(f,{\mathfrak s}^j_{\az^i_{j,\,\bz}}\r)
\lf(g,\psi^j_\bz\r)
\psi^j_\bz\r)\r\|_{\hona}\\
\noz\ls&&\sum_{k=0}^\fz\sum_{i=1}^{m_k}
e^{-\nu\dz 2^{k-2}}\lf\|\sum_{j=-M_2}^{M_2}
\sum_{\bz\in\scg_j}\lf(f,{\mathfrak s}^j_{\az^i_{j,\,\bz}}\r)
\lf(g,\psi^j_\bz\r)
\psi^j_\bz\r\|_{\hona}\\
\noz\ls&&\sum_{k=0}^\fz\sum_{i=1}^{m_k}
e^{-\nu\dz 2^{k-2}}\lf\|\lf\{\sum_{(j,\,\gz,\,\bz)\in\sci}
\lf|\lf(f,{\mathfrak s}^j_{\az^i_{j,\,\bz}}\r)
\lf(g,\psi^j_{\gz,\,\bz}\r)  \r|^2\frac{\chi_{Q^j_\gz}}
{\mu(Q^j_\gz)}\r\}^{1/2}\r\|_{\lon}.
\end{eqnarray}

Furthermore, from $\az^i_{j,\,\bz}\in\sca^k_{j,\,\bz}$,
$(j+1,\bz)\le(j,\gz)$ and Remark \ref{rb.d}(i),
it follows that
$$
d\lf(x^j_{\az^i_{j,\,\bz}},x^j_{\gz}\r)
\le d\lf(x^j_{\az^i_{j,\,\bz}},y^j_{\bz}\r)+d\lf(y^j_{\bz},x^j_{\gz}\r)
<2^{k+1}\dz^{j+1}+2\dz^{j+1}\le2^{k+2}\dz^{j+1},
$$
which, combined with Theorem \ref{tb.c}(iv), implies that
$
Q^j_\gz\st B(x^j_\gz,4\dz^j)\st
B(x^j_{\az^i_{j,\,\bz}},2^{k+3}\dz^j)
$.
By these inclusion relations, \eqref{b.x} and \eqref{a.b},
we further conclude that, for all $x\in\cx$,
\begin{eqnarray*}
&&\lf|\lf(f, {\mathfrak s}^j_{\az^i_{j,\,\bz}}\r)\r|\frac{\chi_{Q^j_\gz}(x)}
{\mu(Q^j_\gz)}\\
&&\hs\le\lf|\lf(f, {\mathfrak s}^j_{\az^i_{j,\,\bz}}\r)\r|
\chi_{B(x^j_{\az^i_{j,\,\bz}},\,2^{k+3}\dz^{j})}(x)
\frac{\chi_{Q^j_\gz}(x)}{\mu(Q^j_\gz)}\\
&&\hs\ls\frac{[\wz{C}_{(\cx)}]^k}{V(x^j_{\az^i_{j,\,\bz}},2^{k+3}\dz^{j})}
\int_{B(x^j_{\az^i_{j,\,\bz}},\,2^{k+3}\dz^{j})}|f(y)|\,d\mu(y)
\chi_{B(x^j_{\az^i_{j,\,\bz}},\,2^{k+3}\dz^{j})}(x)
\frac{\chi_{Q^j_\gz}(x)}{\mu(Q^j_\gz)}\\
&&\hs\ls[\wz{C}_{(\cx)}]^k M(f)(x)\frac{\chi_{Q^j_\gz}(x)}{\mu(Q^j_\gz)},
\end{eqnarray*}
which, together with \eqref{d.x},
$m_k:=N_02^{(k+1)G_0}$, the H\"older inequality,
the boundedness of the Hardy-Littlewood maximal function
$M$ on $\ltw$ and Theorem \ref{tb.a}, further implies that
\begin{eqnarray*}
{\rm L}&&\ls\sum_{k=0}^\fz \lf[\wz{C}_{(\cx)}\r]^k m_k
e^{-\nu\dz 2^{k-2}}\lf\|M(f)\lf\{\sum_{(j,\,\gz,\,\bz)\in\sci}
\lf|\lf(g,\psi^j_{\gz,\,\bz}\r)\r|^2\frac{\chi_{Q^j_\gz}}
{\mu(Q^j_\gz)}\r\}^{1/2}\r\|_{\lon}\\
&&\ls\sum_{k=0}^\fz \lf[\wz{C}_{(\cx)}\r]^k m_k
e^{-\nu\dz 2^{k-2}}\|M(f)\|_{\ltw}\lf\{\sum_{(j,\,\gz,\,\bz)\in\sci}
\lf|\lf(g,\psi^j_{\gz,\,\bz}\r)\r|^2\r\}^{1/2}\\
&&\ls\sum_{k=0}^\fz \lf[\wz{C}_{(\cx)}\r]^k m_k
e^{-\nu\dz 2^{k-2}}\|f\|_{\ltw}\|g\|_{\ltw}
\ls\|f\|_{\ltw}\|g\|_{\ltw}.
\end{eqnarray*}
This, combined with the completion of $\hona$,
then implies that $\Pi_1(f,g)\in\hona$ and
$$
\lf\|\Pi_1(f,g)\r\|_{\hona}\ls{\rm L}\ls\|f\|_{\ltw}\|g\|_{\ltw},
$$
which, together with the fact that the functions in
$L^2(\cx)$ with finite wavelet decompositions
as in \eqref{4.1x} are dense in $L^2(\cx)$ as well
as a standard density argument, further
finishes the proof of Lemma \ref{le.c}.
\end{proof}

\begin{remark}\label{re.g}
Using $\Pi_2(f,g)=\Pi_1(g,f)$ for all $f,\,g\in\ltw$
and Lemma \ref{le.c}, we see that $\Pi_2$ as in \eqref{e.z} can also be
extended to a bounded bilinear operator from $\ltw\times\ltw$ into $\hona$.
\end{remark}

\section{Products of Functions in $\hona$ and
$\bmo$}\label{s5}

\hskip\parindent In this section, we prove Theorem \ref{ta.a}.
To this end, we first give the meanings of $P_jg$ and $Q_jg$ for all $j\in\zz$
and $g\in\bmo$.

By \cite[Corollary 11.2]{ah13}, we know that, for any $j\in\zz$,
$\bz\in\scg_j$ and $g\in\bmo$, $\langle g,\psi^j_\bz\rangle$ is well defined
and there exists a positive constant $C$ such that,
for all $g\in\bmo$,
\begin{equation}\label{f.a}
\lf|\lf\langle g,\psi^j_\bz\r\rangle
\r|\le C\|g\|_{\bmo}\sqrt{V(y^j_\bz,\dz^j)}.
\end{equation}
Moreover, let $j\in\zz$, $\az\in\sca_j$
and $g\in\bmo$. By $g\in L^1_{\loc}(\cx)$ and \eqref{b.x},
we know that $|\langle g,s^j_\az\rangle|$ is finite, where
$$
\lf\langle g,s^j_\az\r\rangle:=\int_{\cx}gs^j_\az\,d\mu.
$$
From \eqref{b.x}, the geometrically doubling condition and
Remark \ref{rb.l}(ii), it follows that, for any fixed $x\in\cx$,
only finite items in $\sum_{\az\in\sca_j}\langle g,s^j_\az\rangle s^j_\az$
are non-zero. Thus,
$$
P_jg:=\sum_{\az\in\sca_j}\lf\langle g,s^j_\az\r\rangle s^j_\az
$$
is pointwise well defined. Then we show that $Q_jg$ is
also pointwise well defined. Indeed, by \eqref{f.a},
\eqref{b.a} and Lemma \ref{lb.x}, we see that
there exists a positive constant $C$ such that,
for all $g\in\bmo$ and $x\in\cx$,
\begin{eqnarray*}
\sum_{\bz\in\scg_j}\lf|\lf\langle g,\psi^j_\bz\r\rangle  \r|\lf|\psi^j_\bz(x)\r|
&&\le C\|g\|_{\bmo}\sum_{\bz\in\scg_j}\sqrt{V(y^j_\bz,\dz^j)}
\frac{e^{-\nu\dz^{-j}d(x,\,y^j_\bz)}}{\sqrt{V(y^j_\bz,\dz^j)}}\\
&&\le C\|g\|_{\bmo}e^{-\nu\dz^{-j}d(x,\,\scy^j)/2}
\le C\|g\|_{\bmo}<\fz.
\end{eqnarray*}
Thus,  $Q_jg:=\sum_{\bz\in\scg_j}\langle g,\psi^j_\bz\rangle
\psi^j_\bz$ is pointwise well defined.

Now we recall the following wavelet characterization of BMO$(\cx)$ from
\cite[Theorem 11.4]{ah13}.
A sequence $\{b^j_\bz\}_{j\in\zz,\,\bz\in\scg_j}$,
with $\scg_j$ as in \eqref{b.w} for any $j\in\zz$,
is said to belong to the \emph{Carleson sequence space} Car$(\cx)$ if
$$
\lf\|\lf\{b^j_\bz\r\}_{j\in\zz,\,\bz\in\scg_j}\r\|_{\rm Car(\cx)}
:=\sup_{k\in\zz,\,\az\in\sca_k}\lf[\frac1{\mu(Q_\az^k)}
\sum_{\begin{subarray}{c}
j\in\zz,\,\bz\in\scg_j\\(j+1,\,\bz)\le(k,\,\az)
\end{subarray}}\lf|b^j_\bz\r|^2\r]^{1/2}<\fz.
$$

\begin{theorem}\label{tf.a}
Let $(\cx,d,\mu)$ be a metric measure space of homogeneous type.
Then the space $\bmo/\cc$ ($\rm BMO(\cx)$ functions modulo constants)
and Car$(\cx)$ are isomorphic. This isomorphism is realized via
$b\mapsto\{\langle b,\psi^j_\bz\rangle\}_{j\in\zz,\,\bz\in\scg_j}
=:\{b^j_\bz\}_{j\in\zz,\,\bz\in\scg_j}$ with the inverse given by
$$
\{b^j_\bz\}_{j\in\zz,\,\bz\in\scg_j}\mapsto\sum_{j\in\zz,\,\bz\in\scg_j}
b^j_\bz\lf[\psi^j_\bz-\chi_{\{k\in\zz:\ \dz^k>r_0\}}(j)\psi^j_\bz(x_0)\r]
=:\wz{b},$$
where $\scg_j$ with $j\in\zz$ is as in \eqref{b.w},
the series converges in $L^2_{\loc}(\cx)$ for every $x_0\in\cx$
and $r_0\in(0,\fz)$, and the choices of $x_0$ and $r_0$ only alter
the result by an additive constant.
\end{theorem}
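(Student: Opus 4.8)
\emph{Strategy.} The plan is to prove the two directions of the claimed isomorphism separately, following the classical wavelet (``$T(1)$/Carleson'') characterization of $\mathrm{BMO}$, adapted to $\cx$ via the regularity and exponential decay of the Auscher--Hyt\"onen wavelets. \emph{Analysis (from $\bmo$ to $\mathrm{Car}(\cx)$):} given $b\in\bmo$, I would show $\|\{\langle b,\psi^j_\bz\rangle\}\|_{\mathrm{Car}(\cx)}\ls\|b\|_{\bmo}$. Fix $(k,\az)\in\sca$ and, by Theorem \ref{tb.c}(iv), enclose $Q^k_\az$ in $B(Q^k_\az)=B(x^k_\az,4\dz^k)$. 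For a large dilation factor $c\in(1,\fz)$ depending only on $\dz,\nu$, decompose $b=b_1+b_2+b_3$ with $b_3:=m_{B(Q^k_\az)}(b)$, $b_1:=(b-b_3)\chi_{cB(Q^k_\az)}$ and $b_2:=(b-b_3)\chi_{\cx\bh cB(Q^k_\az)}$. Every $\psi^j_\bz$ with $(j+1,\bz)\le(k,\az)$ satisfies $\int_\cx\psi^j_\bz\,d\mu=0$ by \eqref{b.c}, so $b_3$ drops out. For $b_1$, the John--Nirenberg inequality on the space of homogeneous type $\cx$ gives $\|b_1\|_{\ltw}^2\ls\|b\|_{\bmo}^2\mu(Q^k_\az)$, so by Bessel's inequality for the orthonormal system $\{\psi^j_\bz\}$ of Theorem \ref{tb.a}, $\sum_{(j+1,\bz)\le(k,\az)}|\langle b_1,\psi^j_\bz\rangle|^2\le\|b_1\|_{\ltw}^2\ls\|b\|_{\bmo}^2\mu(Q^k_\az)$. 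For $b_2$, since each such $\psi^j_\bz$ is centered at $y^j_\bz\in B(Q^k_\az)$ while $b_2$ lives outside $cB(Q^k_\az)$, I would estimate $|\langle b_2,\psi^j_\bz\rangle|$ by the classical annular decomposition of $\cx\bh cB(Q^k_\az)$, using on each annulus the rapid decay \eqref{b.a} of $\psi^j_\bz$, the logarithmic growth $|m_{2^\ell B(Q^k_\az)}(b)-m_{B(Q^k_\az)}(b)|\ls(1+\ell)\|b\|_{\bmo}$, and the doubling property \eqref{a.b}; choosing $c$ large makes the resulting geometric series small, and summing over $\bz\in\scg_j$ and over $j$ via Lemma \ref{lb.x} and estimates of the type \eqref{3.9x1} yields $\sum_{(j+1,\bz)\le(k,\az)}|\langle b_2,\psi^j_\bz\rangle|^2\ls\|b\|_{\bmo}^2\mu(Q^k_\az)$. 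Combining the three contributions gives the Carleson bound.

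\emph{Synthesis (from $\mathrm{Car}(\cx)$ to $\bmo$):} given $\{b^j_\bz\}\in\mathrm{Car}(\cx)$, define $\wz b$ by the stated series with reference data $x_0,r_0$. I would first show convergence in $L^2_\loc(\cx)$. Fix a ball, which may be taken to be $B(Q^k_\az)$ for some $(k,\az)$ with $\dz^k$ comparable to its radius, and split the sum into: (i) the local part over $(j+1,\bz)\le(k,\az)$, which converges in $\ltw$ with norm $\ls\|\{b^j_\bz\}\|_{\mathrm{Car}}\sqrt{\mu(Q^k_\az)}$ by orthonormality and the Carleson condition; (ii) the remaining fine scales $\dz^j\le r_0$, whose wavelets are essentially supported away from $Q^k_\az$ and contribute an absolutely convergent series on $Q^k_\az$ by \eqref{b.a}, \eqref{a.b} and Lemma \ref{lb.x} (using $|b^j_\bz|\ls\|\{b^j_\bz\}\|_{\mathrm{Car}}\sqrt{V(y^j_\bz,\dz^j)}$); and (iii) the coarse scales $\dz^j>r_0$, where the modification $\psi^j_\bz-\psi^j_\bz(x_0)$ is present and, since $d(x,x_0)\le r_0<\dz^j$ on the ball, the H\"older estimate \eqref{b.b} gives $|b^j_\bz[\psi^j_\bz(x)-\psi^j_\bz(x_0)]|\ls\|\{b^j_\bz\}\|_{\mathrm{Car}}(d(x,x_0)/\dz^j)^\eta e^{-\nu\dz^{-j}d(x,\,y^j_\bz)}$; summing over $\bz\in\scg_j$ by Lemma \ref{lb.x} and then over $j$ with $\dz^j>r_0$ (a convergent geometric series in $\dz^\eta$) again produces an absolutely convergent series with bound $\ls\|\{b^j_\bz\}\|_{\mathrm{Car}}$. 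Replacing $(x_0,r_0)$ by other admissible data alters the whole series by a locally uniformly convergent series of constants, hence by an additive constant. Next, the $\bmo$ bound $\|\wz b\|_{\bmo}\ls\|\{b^j_\bz\}\|_{\mathrm{Car}}$ follows by, for each ball $B$, taking the comparable dyadic cube $Q^k_\az\supset B$, bounding the mean oscillation on $B$ of the local part (i) by its $\ltw$-norm $\ls\|\{b^j_\bz\}\|_{\mathrm{Car}}\sqrt{\mu(Q^k_\az)}$, and bounding the oscillation of the nonlocal parts (ii)--(iii) pointwise via the cancellation \eqref{b.c} and smoothness \eqref{b.b} of the wavelets together with Lemma \ref{lb.x}. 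Finally, the reproducing identity $\langle\wz b,\psi^{j'}_{\bz'}\rangle=b^{j'}_{\bz'}$ follows from orthonormality of $\{\psi^j_\bz\}$ and $\int_\cx\psi^{j'}_{\bz'}\,d\mu=0$ (which annihilates every constant modification term), interchanging sum and integral by the $L^2_\loc$ convergence; this, with the analysis step, shows the two maps are mutually inverse modulo constants.

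\emph{Main obstacle.} I expect the genuinely delicate points to be the nonlocal (tail) estimates in \emph{both} directions: controlling, uniformly in scales and positions, sums of the form $\sum_{j,\bz}|b^j_\bz|\,|\psi^j_\bz(x)-(\text{point value})|$ and $\sum_{j,\bz}|\langle b_2,\psi^j_\bz\rangle|^2$ \emph{without} the compact support of the wavelets, which forces a careful interplay of the exponential decay \eqref{b.a}, the H\"older regularity \eqref{b.b}, the $1$-separated counting Lemma \ref{lb.x}, and annular telescoping of $\bmo$ means; closely tied to this is making the $L^2_\loc$ convergence of the synthesis series, and its independence of $(x_0,r_0)$ up to an additive constant, fully rigorous. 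The purely local pieces, by contrast, reduce via John--Nirenberg on $\cx$ and Bessel's inequality to routine estimates.
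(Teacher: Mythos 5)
You should note at the outset that the paper does not prove Theorem \ref{tf.a} at all: it is quoted verbatim from Auscher and Hyt\"onen \cite[Theorem 11.4]{ah13}, so there is no internal proof to compare against. Your sketch reconstructs the standard analysis/synthesis argument, which is essentially the route taken in the cited source: for the analysis direction, the three-way splitting $b=b_1+b_2+b_3$ (local part, far part, mean), John--Nirenberg plus Bessel for the local part, and exponential decay \eqref{b.a} combined with the logarithmic growth of $\mathop\mathrm{BMO}$ averages and the counting Lemma \ref{lb.x} for the tail; for the synthesis direction, the scale splitting at $\dz^j\lessgtr r_0$ with the H\"older estimate \eqref{b.b} handling the coarse scales. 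All of this is sound in outline, and the estimates you flag as delicate are indeed where the work lies.

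Two points deserve more care than your sketch gives them. First, before the analysis step can even begin you must explain what $\langle b,\psi^j_\bz\rangle$ means for $b\in\bmo$, since $\psi^j_\bz$ is not compactly supported and $b$ need not be integrable against it in an obvious way; this is exactly the content of \cite[Corollary 11.2]{ah13} (recalled in the paper as \eqref{f.a}), and the absolute convergence of $\int_\cx b\,\psi^j_\bz\,d\mu$ after subtracting a suitable mean must be established using the same decay-versus-logarithmic-growth interplay. Second, your closing sentence claims the two maps are ``mutually inverse modulo constants,'' but what you actually verify is $\langle\wz b,\psi^{j'}_{\bz'}\rangle=b^{j'}_{\bz'}$, i.e., analysis composed with synthesis is the identity on $\mathrm{Car}(\cx)$. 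The other composition --- that for $b\in\bmo$ the function $\wz b$ built from its own coefficients differs from $b$ by a constant (the content of Remark \ref{rf.z}) --- requires the separate fact that a $\mathop\mathrm{BMO}$ function all of whose wavelet coefficients vanish is constant. This is not automatic from orthonormality, since $b\notin\ltw$; it is typically proved by testing $b-\wz b$ against a dense family in $\hona$ (e.g., finite linear combinations of wavelets or atoms) and invoking the duality $(\hona)^*=\bmo$. Without this injectivity-modulo-constants step the word ``isomorphism'' in the statement is not justified, so you should add it explicitly.
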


\begin{remark}\label{rf.z}
From the proof of \cite[Theorem 11.4]{ah13}, we deduce that,
if $b\in\bmo$, then $\wz{b}-b=\rm constant$ and hence
$$
b=\wz{b}=\sum_{j\in\zz,\,\bz\in\scg_j}
b^j_\bz\lf[\psi^j_\bz-\chi_{\{k\in\zz:\ \dz^k>r_0\}}(j)\psi^j_\bz(x_0)\r]
$$
converges in $\bmo$ for every $x_0\in\cx$ and $r_0\in(0,\fz)$.
\end{remark}

In order to further investigate the boundedness of $\Pi_1$,
$\Pi_2$ and $\Pi_3$ in \eqref{e.z},
we need to first establish a criterion on the boundedness
of sublinear operators from $\hona$ into a quasi-Banach space.
We first recall the following
notion of the finite atomic Hardy space
(see, for example, \cite{gly1,mm11}).

\begin{definition}\label{da.m}
Let $q\in(1, \fz]$. A function $f\in L^1(\cx)$
is said to be in the {\it finite atomic Hardy space
$H^{1,\,q}_{\at,\,{\rm fin}}(\mathcal{X})$} if there exist $N\in\nn$,
$(1,q)$-atoms (as in Definition \ref{dc.k}) $\{a_j\}_{j=1}^N$ and
numbers $\{\lz_j\}_{j=1}^N\subset\cc$ such that
\begin{equation}\label{c.zx}
f=\sum_{j=1}^N\lz_j a_j.
\end{equation}
Moreover, the norm of $f$ in $H^{1,\,q}_{\at,\,{\rm fin}}(\mathcal{X})$
is defined by setting
$$
\|f\|_{H^{1,\,q}_{\at,\,{\rm fin}}(\mathcal{X})}
:=\inf\lf\{\sum_{j=1}^N|\lz_j|\r\},
$$
where the infimum is taken over all possible finite decompositions of
$f$ as in \eqref{c.zx}.
\end{definition}

\begin{remark}\label{ra.c}
It is obvious that $H^{1,\,q}_{\at,\,{\rm fin}}(\mathcal{X})$
is dense in $H^{1,\,q}_{\at}(\mathcal{X})$ for all $q\in(1,\fz]$.
In what follows, we denote $H^{1,\,2}_{\at,\,{\rm fin}}(\mathcal{X})$
simply by $\hfa$.
\end{remark}

We then recall the following very useful result
from \cite[Theorem 3.2(ii)]{mm11}.

\begin{theorem}\label{tf.z}
Let $(\cx,d,\mu)$ be a metric measure space of homogeneous type.
Then, for each $q\in(1,\fz)$,
$\|\cdot\|_{H^{1,\,q}_{\at,\,{\rm fin}}(\cx)}$
and $\|\cdot\|_{\hona}$ are equivalent norms on
$H^{1,\,q}_{\at,\,{\rm fin}}(\cx)$.
\end{theorem}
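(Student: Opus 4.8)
The plan is to prove the nontrivial inequality, namely that there exists a positive constant $C$ such that, for every $f\in\hfa$ (with $q=2$; the case of general $q\in(1,\fz)$ being analogous), one has $\|f\|_{H^{1,\,2}_{\at,\,{\rm fin}}(\cx)}\le C\|f\|_{\hona}$; the reverse inequality $\|f\|_{\hona}\le\|f\|_{H^{1,\,2}_{\at,\,{\rm fin}}(\cx)}$ is immediate from the definitions, since a finite atomic decomposition is in particular an atomic decomposition. Since $\hfa$ is dense in $\hona$ and the two norms will then be comparable on this dense subspace, the result follows. The standard route (following Meda--Sj\"ogren--Vallarino and its adaptation in \cite{mm11}) is as follows. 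Fix $f\in\hfa$ with $\|f\|_{\hona}=1$; in particular $f\in\ltw$, $\supp f$ is contained in some fixed ball, and $\int_\cx f\,d\mu=0$.

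First I would perform a Calder\'on--Zygmund--type stopping-time decomposition of $f$ at dyadic height levels $2^\ell$, $\ell\in\zz$: writing $\boz_\ell:=\{x\in\cx:\ M f(x)>2^\ell\}$ for the level sets of the Hardy--Littlewood maximal function (or the grand maximal function), one obtains a Whitney-type covering of each $\boz_\ell$ by balls and an associated decomposition $f=g_\ell+b_\ell$, where $g_\ell$ is the ``good'' part with $\|g_\ell\|_{\li}\ls 2^\ell$ and $b_\ell=\sum_i b_{\ell,i}$ collects the ``bad'' pieces, each $b_{\ell,i}$ supported in a Whitney ball, having vanishing integral, and satisfying an $L^1$ bound controlled by $2^\ell\mu(\boz_\ell)$. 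Telescoping $f=\sum_\ell(g_{\ell+1}-g_\ell)$ and regrouping, one writes $f=\sum_{\ell}\sum_i \lz_{\ell,i}a_{\ell,i}$ where each $a_{\ell,i}$ is (a fixed multiple of) a $(1,\fz)$-atom or a $(1,2)$-atom, and $\sum_{\ell,i}|\lz_{\ell,i}|\ls\sum_\ell 2^\ell\mu(\boz_\ell)\ls\|Mf\|_{\lon}\ls\|f\|_{\hona}$ by the weak-$(1,1)$ bound for $M$ and the grand maximal function characterization of $\hona$ (cf.\ Definition~\ref{da.e} and the surrounding discussion, together with the equivalence of $\hona$ with the maximal-function Hardy space over spaces of homogeneous type). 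The key extra point, which distinguishes the finite from the infinite statement, is that since $f\in\ltw$ and is compactly supported, the sum over $\ell$ is \emph{effectively finite}: for $\ell$ very negative, $\boz_\ell$ is the whole (relevant part of the) space and the pieces can be absorbed using $\int f=0$; for $\ell$ very positive, $\boz_\ell=\emptyset$ because $Mf\in\li$ on the support, since $f\in\ltw$. One then truncates the series at a large $|\ell|$, estimates the tail in $\ltw$ (hence in $\lon$ on the fixed support, hence its $\hona$-norm, hence by the already-established infinite atomic decomposition its atomic norm) by something arbitrarily small, and shows the truncated remainder is itself a single $(1,2)$-atom times a constant comparable to that small quantity. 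This yields a \emph{finite} atomic decomposition of $f$ with $\sum|\lz_j|\ls\|f\|_{\hona}$.

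The main obstacle, and the point requiring the most care, is controlling the ``overlap'' in the Whitney decomposition and verifying that the bad pieces $b_{\ell,i}$ can genuinely be assembled into honest atoms (with the correct normalization $\|a\|_{\ltw}\le[\mu(B)]^{-1/2}$ and vanishing integral) rather than merely into molecules; this is where one uses the doubling property \eqref{a.b}, the geometrically doubling property (Definition~\ref{db.b} and Remark~\ref{rb.l}), and the bounded overlap of Whitney balls to keep $\sum_{\ell,i}|\lz_{\ell,i}|$ under control. A secondary technical issue is the $\ltw$ estimate of the truncated tail: one needs $\|f-\sum_{|\ell|\le L}\cdots\|_{\ltw}\to0$ as $L\to\fz$, which follows from $f\in\ltw$ together with the $L^2$-boundedness of the pieces, but requires being careful that the regrouping does not destroy $L^2$-convergence. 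Once this is in place, combining with Remark~\ref{ra.c} (density of $\hfa$ in $\hona$) gives that $\|\cdot\|_{H^{1,\,2}_{\at,\,{\rm fin}}(\cx)}$ and $\|\cdot\|_{\hona}$ are equivalent on $\hfa$, which is the assertion of Theorem~\ref{tf.z}; the case of general $q\in(1,\fz)$ is handled identically, using $(1,q)$-atoms in place of $(1,2)$-atoms and the $L^q$-boundedness of $M$.
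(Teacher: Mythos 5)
You should first be aware that the paper does not prove Theorem \ref{tf.z} at all: it is quoted verbatim from Mauceri and Meda \cite[Theorem 3.2(ii)]{mm11}, so there is no in-paper argument to compare against. What you have sketched is, in broad outline, the strategy of that cited source (which in turn follows the Meda--Sj\"ogren--Vallarino scheme): a Calder\'on--Zygmund decomposition of $f$ at the dyadic levels of a maximal function, telescoping to produce an atomic decomposition with $\sum_{\ell,i}|\lz_{\ell,i}|\ls\|Mf\|_{\lon}\ls\|f\|_{\hona}$, and then exploiting $f\in\lq$ with compact support and $\int_\cx f\,d\mu=0$ to truncate the level sum to a finite one. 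So the route is the right one, and the easy inequality $\|f\|_{\hona}\le\|f\|_{H^{1,\,q}_{\at,\,{\rm fin}}(\cx)}$ is indeed immediate.

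That said, two points in your sketch need repair. First, the assertion that $\boz_\ell=\emptyset$ for $\ell$ large positive ``because $Mf\in\li$ on the support, since $f\in\ltw$'' is false: $f\in\ltw$ gives $Mf\in\ltw$, not $Mf\in\li$ (already on $\rr$, $f(x)=|x|^{-1/4}\chi_{[-1,1]}(x)$ is in $L^2$ with $Mf(0)=\fz$), so the high level sets are never empty. Fortunately this claim is redundant in your own outline, since the correct mechanism is the truncation you describe next; but note that the tail $f-\sum_{|\ell|\le L}(\cdots)$ must be exhibited as a \emph{single} $(1,q)$-atom with small coefficient (small $L^q$ norm, support in a fixed dilate of $\supp f$, vanishing integral) --- appealing to ``the already-established infinite atomic decomposition'' of the tail would just reintroduce an infinite sum and proves nothing about the finite norm. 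Second, the treatment of the very negative levels is the genuinely delicate step and cannot be dismissed with ``the pieces can be absorbed using $\int f=0$'': one cannot sum $2^\ell\mu(\boz_\ell)$ over all $\ell<\ell'$ (each term is already of size $\|f\|_{\lon}$ by the weak $(1,1)$ bound, so the series diverges), and the sets $\boz_\ell$ grow without bound as $\ell\to-\fz$, so the residual good part $g_{\ell'}$ is not obviously supported in a fixed dilate of $\supp f$. One must choose the bottom threshold $\ell'$ in terms of $\mu$ of a dilate of the supporting ball and verify, using the doubling property and the structure of the Whitney covering, that $g_{\ell'}$ is a bounded, compactly supported, mean-zero function, i.e.\ again a single atom with controlled coefficient. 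These are exactly the points where the proof in \cite{mm11} does real work; as written, your proposal records them only as assertions, so it is a correct strategy but not yet a proof.
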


We also need to recall the following notions of quasi-Banach spaces and
sublinear operators; see also \cite{gly1, yz08, k14}.

\begin{definition}\label{dd.c}
(i) A \emph{quasi-Banach space} $\cb$ is a vector space
endowed with a \emph{quasi-norm}
$\|\cdot\|_{\cb}$ which is complete, non-negative,
non-degenerate (namely, $\|f\|_{\cb}=0$
if and only if $f=0$), homogeneous, and obeys the quasi-triangle inequality,
namely, there exists a constant $K\in[1,\fz)$ such that, for all $f,\,g\in\cb$,
$\|f+g\|_{\cb}\le K(\|f\|_{\cb}+\|g\|_{\cb})$. Notice that
a quasi-Banach space $\cb$ is called a \emph{Banach space} if $K=1$.

(ii) For any given quasi-Banach space $\cb$ and linear space $\cy$,
an operator $T$ from $\cy$ to $\cb$ is said to be
\emph{$\cb$-sublinear} if there exists a positive constant $C$
such that, for any $\lz,\,\nu\in\cc$ and $f,\,g\in\cy$,
$$
\|T(\lz f+\nu g)\|_{\cb}\le C\lf(|\lz|\|Tf\|_{\cb}
+|\nu|\|Tg\|_{\cb}\r)
$$
and
$$
\|Tf-Tg\|_{\cb}\le C\|T(f-g)\|_{\cb}.
$$
\end{definition}

Obviously, any linear operator from $\cy$ to $\cb$ is $\cb$-sublinear.

Now, using Theorem \ref{tf.z}, we establish a
criterion on the boundedness of sublinear
operators from $\hona$ into a quasi-Banach space $\cb$,
which is a variant of \cite[Theorem 5.9]{gly1};
see also \cite[Theorem 3.5]{k14} and \cite[Theorem 1.1]{yz08}.

\begin{theorem}\label{td.f}
Let $(\cx,d,\mu)$ be a metric measure space of homogeneous type,
$q\in(1,\fz)$ and $\cb$ be a quasi-Banach space.
Suppose that $T:\ H^{1,\,q}_{\at,\,{\rm fin}}(\cx)\to\cb$
is a $\cb$-sublinear operator satisfying that
there exists a positive constant $A$ such that,
for all $f\in H^{1,\,q}_{\at,\,{\rm fin}}(\mathcal{X})$,
\begin{equation}\label{y.i}
\lf\|Tf\r\|_{\cb}\le A\|f\|_{H^{1,\,q}_{\at,\,{\rm fin}}(\mathcal{X})}.
\end{equation}
Then $T$ uniquely extends to a bounded sublinear operator
from $\hona$ into $\cb$.
Moreover, there exists a positive constant $C$ such that, for all $f\in\hona$,
$$
\lf\|Tf\r\|_{\cb}\le CA\|f\|_{\hona}.
$$
\end{theorem}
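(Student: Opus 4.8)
The plan is to build $Tf$ for $f\in\hona$ by approximating $f$ with finite linear combinations of atoms, the crucial point being that on such combinations the finite atomic norm $\|\cdot\|_{H^{1,\,q}_{\at,\,{\rm fin}}(\cx)}$ is comparable to $\|\cdot\|_{\hona}$ by Theorem \ref{tf.z}, so that the hypothesis \eqref{y.i} actually controls $\|Tf\|_{\cb}$ by $\|f\|_{\hona}$ on a dense subspace. A routine completion argument, using the $\cb$-sublinearity of $T$ to handle Cauchy sequences, then yields the extension.

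To carry this out, first note that, by Theorem \ref{tf.z}, there is a positive constant $C_0$ such that $\|f\|_{\hona}\le C_0\|f\|_{H^{1,\,q}_{\at,\,{\rm fin}}(\cx)}$ for all $f\in H^{1,\,q}_{\at,\,{\rm fin}}(\cx)$; combining this with \eqref{y.i} gives $\|Tf\|_{\cb}\le AC_0\|f\|_{\hona}$ for every such $f$. Now fix $f\in\hona$. By Remark \ref{ra.c}, $H^{1,\,q}_{\at,\,{\rm fin}}(\cx)$ is dense in $\hona$, so we may choose $\{f_m\}_{m\in\nn}\st H^{1,\,q}_{\at,\,{\rm fin}}(\cx)$ with $\|f_m-f\|_{\hona}\to0$; this sequence is Cauchy in $\hona$. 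Since $f_m-f_{m'}\in H^{1,\,q}_{\at,\,{\rm fin}}(\cx)$, the second inequality in Definition \ref{dd.c}(ii) together with the previous bound yields, for some constant $C$ independent of $m,\,m'$,
\[
\|Tf_m-Tf_{m'}\|_{\cb}\le C\|T(f_m-f_{m'})\|_{\cb}\le CAC_0\|f_m-f_{m'}\|_{\hona}\longrightarrow0
\]
as $m,\,m'\to\fz$. Hence $\{Tf_m\}_{m\in\nn}$ is Cauchy in the complete space $\cb$, and we \emph{define} $Tf:=\lim_{m\to\fz}Tf_m$. Interlacing two admissible approximating sequences and invoking the same estimate shows this limit is independent of the choice of $\{f_m\}_{m\in\nn}$, so $Tf$ is well defined and coincides with the original $T$ on $H^{1,\,q}_{\at,\,{\rm fin}}(\cx)$.

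It remains to check that the extension has the asserted properties. The two inequalities of Definition \ref{dd.c}(ii) pass to the limit: approximating the arguments simultaneously by finite atomic combinations and using the quasi-triangle inequality and the continuity of the vector operations in $\cb$, one sees that the extended $T$ is $\cb$-sublinear on $\hona$. For the norm bound, the quasi-triangle inequality in $\cb$, the estimate of the second step, and the continuity of $\|\cdot\|_{\hona}$ give
\[
\|Tf\|_{\cb}\le K\|Tf-Tf_m\|_{\cb}+KAC_0\|f_m\|_{\hona}
\]
for every $m\in\nn$, where $K$ is the quasi-triangle constant of $\cb$; letting $m\to\fz$ yields $\|Tf\|_{\cb}\le KAC_0\|f\|_{\hona}$, i.e. the claim with $C:=KC_0$. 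Uniqueness follows since any bounded $\cb$-sublinear operator on $\hona$ that agrees with $T$ on the dense subspace $H^{1,\,q}_{\at,\,{\rm fin}}(\cx)$ is, by continuity, forced to coincide with the one just constructed.

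The only genuinely non-trivial ingredient is Theorem \ref{tf.z}: without the comparison $\|\cdot\|_{\hona}\ls\|\cdot\|_{H^{1,\,q}_{\at,\,{\rm fin}}(\cx)}$ on finite atomic combinations, the hypothesis \eqref{y.i} would control $Tf$ only by the possibly much larger finite atomic norm and could not be propagated to all of $\hona$. A minor technical point, which we suppress, is the continuity of the quasi-norm $\|\cdot\|_{\cb}$ used in the limiting arguments above; this is standard, e.g. via the Aoki--Rolewicz theorem, which provides an equivalent $p$-norm on $\cb$ for some $p\in(0,1]$.
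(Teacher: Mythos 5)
Your argument is correct in substance and follows the same route as the paper: the norm equivalence of Theorem \ref{tf.z} on $H^{1,\,q}_{\at,\,{\rm fin}}(\cx)$, density of this space in $\hona$, and a Cauchy-sequence/completeness argument in $\cb$ that uses the $\cb$-sublinearity to control $\|Tf_m-Tf_{m'}\|_{\cb}$ by $\|T(f_m-f_{m'})\|_{\cb}$. One slip should be corrected: you quote Theorem \ref{tf.z} as giving $\|f\|_{\hona}\le C_0\|f\|_{H^{1,\,q}_{\at,\,{\rm fin}}(\cx)}$, which is the trivial direction (any finite atomic decomposition is an admissible decomposition of $f$ in $\hona$), and this inequality does \emph{not} combine with \eqref{y.i} to yield $\|Tf\|_{\cb}\le AC_0\|f\|_{\hona}$; the deduction requires the nontrivial reverse bound $\|f\|_{H^{1,\,q}_{\at,\,{\rm fin}}(\cx)}\le C_0\|f\|_{\hona}$, which is precisely the content of the Mauceri--Meda result and is what the stated equivalence of norms in Theorem \ref{tf.z} supplies. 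Your closing paragraph repeats the comparison in the wrong direction even though the surrounding prose correctly identifies the finite atomic norm as the possibly larger one that must be dominated. With the direction of that inequality fixed, the proof is complete and coincides with the paper's.
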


\begin{proof}
Assume that \eqref{y.i} holds true. For the sake of simplicity, we
only prove Theorem \ref{td.f} for $q=2$,
since the general case for $q\in(1,\fz)$
can be shown similarly.

Let $f\in\hona$. By the density of $\hfa$
in $\hona$ (see Remark \ref{ra.c}), we know that there
exists a Cauchy sequence $\{f_N\}_{N\in\nn}\st\hfa$ such that
$\lim_{N\to\fz}\|f_N-f\|_{\hona}=0$,
which, combined with \eqref{y.i} and Theorem \ref{tf.z},
further implies that
\begin{eqnarray*}
\|T\lf(f_N\r)-T\lf(f_M\r)\|_{\cb}
&&\ls\|T\lf(f_N-f_M\r)\|_{\cb}\ls A\lf\|f_N-f_M\r\|_{\hfa}\\
&&\sim A\lf\|f_N-f_M\r\|_{\hona}\to0, \quad {\rm as}\quad
N,\,M\to\fz.
\end{eqnarray*}
Thus, $\{T(f_N)\}_{N\in\nn}$ is a Cauchy sequence in $\cb$,
which, together with the completion of $\cb$, implies that
there exists $F\in\cb$ such that
$F=\lim_{N\to\fz}T(f_N)$ in $\cb$.
Let $T(f):=F$. From Theorem \ref{tf.z} and \eqref{y.i},
we easily deduce that $T(f)$ is well defined and
\begin{eqnarray*}
\|Tf\|_{\cb}&&\ls\limsup_{N\to\fz}\lf[
\|T(f)-T\lf(f_N\r)\|_{\cb}+\|T\lf(f_N\r)\|_{\cb}\r]
\ls\limsup_{N\to\fz}\|T\lf(f_N\r)\|_{\cb}\\
&&\ls A\limsup_{N\to\fz}\lf\|f_N\r\|_{\hfa}
\sim A\lim_{N\to\fz}\lf\|f_N\r\|_{\hona}\sim A\|f\|_{\hona},
\end{eqnarray*}
which completes the proof of Theorem \ref{td.f}.
\end{proof}

\begin{remark}\label{r5.7x}
Assume that $(\cx,d,\mu)$ is locally compact.
By \cite[Theorem 3.2(i)]{mm11} and some arguments similar
to those used in the proof of Theorem \ref{td.f}, we can also obtain
a criterion on the boundedness of sublinear
operators from $\hona$ into a quasi-Banach space $\cb$
with $(1,q)$-atoms, $q\in(1,\fz)$, replaced by continuous $(1,\fz)$-atoms.
\end{remark}

We now consider the boundedness of $\Pi_3$.
Recall that $\Pi_3$ in \eqref{e.z} is bounded from
$\ltw\times\ltw$ into $\lon$ (see Lemma \ref{le.b}).
To extend $\Pi_3$ to a bounded bilinear operator
from $\hona\times\bmo$ into $\lon$, we first formally write
\begin{equation}\label{x.e}
\Pi_3(a,g):=\sum_{j\in\zz}\lf[\sum_{\bz\in\scg_j}\lf(a,\psi^j_\bz\r)
\psi^j_\bz\r]\lf[\sum_{\gz\in\scg_j}\lf\langle g,\psi^j_\gz\r\rangle
\psi^j_\gz\r]
\end{equation}
for any $(1,2)$-atom $a$ and $g\in\bmo$,
where $\scg_j$ for any $j\in\zz$ is as in \eqref{b.w}.
Observe that, if $a,\,g\in L^2(\cx)$,
then $\Pi_3(a,g)$ in \eqref{x.e} coincides
with $\Pi_3(a,g)$ in \eqref{e.z} with $f$ replaced by $a$ and, in this case,
it is known that $\Pi_3(a,g)\in L^1(\cx)$ (see Lemma \ref{le.b}).

\begin{theorem}\label{tf.b}
Let $(\cx,d,\mu)$ be a metric measure space of homogeneous type.
Then, for any $(1,2)$-atom $a$ and $g\in\bmo$,
$\Pi_3(a,g)$ in \eqref{x.e} belongs to $\lon$ and
can be extended to a bounded bilinear operator
from $\hona\times\bmo$ into $\lon$.
\end{theorem}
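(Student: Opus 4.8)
The plan is to realize $\Pi_3(a,g)$ as (a finite sum of) Calder\'on--Zygmund operators applied to functions that are controlled in $\lon$, after first splitting $g$ via the wavelet characterization of $\bmo$ (Theorem \ref{tf.a} and Remark \ref{rf.z}). Fix a $(1,2)$-atom $a$ supported in a ball $B_0=B(x_0,r_0)$ and fix $g\in\bmo$. Using Remark \ref{rf.z} write $g=\sum_{j\in\zz,\bz\in\scg_j}g^j_\bz[\psi^j_\bz-\chi_{\{k:\,\dz^k>r_0\}}(j)\psi^j_\bz(x_0)]$ with $g^j_\bz:=\langle g,\psi^j_\bz\rangle$. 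Since the constant term in each bracket does not affect $Q_jg$ (because $Q_j1=0$ by $\int_\cx\psi^j_\bz\,d\mu=0$), plugging this into \eqref{x.e} and grouping the indices $j$ according to whether $\dz^j$ is comparable to, larger than, or smaller than $r_0$ (equivalently, whether the wavelet scale sees the atom as ``small'', ``at its own scale'', or ``large''), I would decompose
$$
\Pi_3(a,g)=\mathrm{I}+\mathrm{II}+\mathrm{III},
$$
where $\mathrm{I}$ collects the ``central'' scales $\dz^j\sim r_0$ (finitely many in the relevant sense, or more precisely a controlled range), $\mathrm{II}$ the fine scales $\dz^j\le r_0$, and $\mathrm{III}$ the coarse scales $\dz^j>r_0$; this is the place where, as the introduction stresses, the non-compact support of the Auscher--Hyt\"onen wavelets forces a genuine three-part split rather than the single piece available in \cite{bgk}.

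For the first term $\mathrm{I}$: here one replaces $g$ by a truncated (finite-scale, or $\ltw$) piece so that $a$ and the truncation both lie in $\ltw$, and invokes the $\ltw\times\ltw\to\lon$ boundedness of $\Pi_3$ from Lemma \ref{le.b}; the relevant piece of $g$ has $\bmo$-controlled coefficients by \eqref{f.a} and the Carleson condition, so $\|\mathrm{I}\|_{\lon}\ls\|g\|_{\bmo}$. For $\mathrm{II}$ and $\mathrm{III}$: I would estimate these directly and pointwise. One uses the exponential decay \eqref{b.a} and the Hölder regularity \eqref{b.b} of the $\psi^j_\bz$, the vanishing moment \eqref{b.c}, the $\bmo$-bound \eqref{f.a} on $g^j_\bz$, the size/cancellation of the atom $a$ (so that $\langle a,\psi^j_\bz\rangle$ gains a factor like $(r_0/\dz^j)^{\eta}$ when $\dz^j\ge r_0$, or localizes $x$ near $\mathrm{supp}\,a$ when $\dz^j\le r_0$), and summation lemmas for $1$-separated sets — precisely Lemma \ref{lb.x} together with the doubling estimate \eqref{3.9x1} and the geometric-series bookkeeping already used throughout Section \ref{s4}. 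Summing the resulting geometric series in $j$ (and over $\bz,\gz\in\scg_j$ via Lemma \ref{lb.x}) yields $\|\mathrm{II}\|_{\lon}+\|\mathrm{III}\|_{\lon}\ls\|g\|_{\bmo}$. Assembling the three bounds gives $\Pi_3(a,g)\in\lon$ with
$$
\|\Pi_3(a,g)\|_{\lon}\ls\|a\|_{H^{1,2}_{\at,\,\mathrm{fin}}(\cx)}\,\|g\|_{\bmo},
$$
uniformly over $(1,2)$-atoms $a$.

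Finally, to pass from atoms to all of $\hona$: for fixed $g$ the map $a\mapsto\Pi_3(a,g)$ is linear on $\hfa$, and the estimate above is exactly hypothesis \eqref{y.i} of Theorem \ref{td.f} with $\cb=\lon$ (note $\|a\|_{H^{1,2}_{\at,\,\mathrm{fin}}(\cx)}\sim\|a\|_{\hona}$ by Theorem \ref{tf.z}); hence $\Pi_3(\cdot,g)$ extends uniquely to a bounded operator $\hona\to\lon$ with norm $\ls\|g\|_{\bmo}$, and by construction this extension is bilinear in $(a,g)$. The main obstacle I anticipate is not the density/extension argument — that is routine given Theorems \ref{tf.z} and \ref{td.f} — but the careful bookkeeping of the off-diagonal and coarse-scale sums in $\mathrm{II}$ and $\mathrm{III}$: one must simultaneously exploit the wavelet cancellation, the atom's cancellation, the exponential spatial decay, and the geometrically-doubling cardinality bounds, and organize the double wavelet sum (over $\bz$ for $a$ and $\gz$ for $g$ at the same scale $j$) so that the two decay factors combine to a single summable quantity via Lemma \ref{lb.x}, as in the estimates of $\mathrm{I}_2,\mathrm{I}_3,\mathrm{I}_4$ in the proof of Proposition \ref{pe.e}. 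The verification that $\Pi_3(a,g)$ in \eqref{x.e} really coincides with the $\ltw$-definition when $a,g\in\ltw$ (so that the extension is consistent with Lemma \ref{le.a}) should be recorded as well, but is immediate from absolute convergence of the series once the above estimates are in hand.
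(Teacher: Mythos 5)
Your overall architecture --- a three-part decomposition of $\Pi_3(a,g)$, one part absorbed by the $\ltw\times\ltw\to\lon$ boundedness of Lemma \ref{le.b} with the Carleson condition supplying the $\ltw$ bound on the relevant piece of $g$, the other two parts estimated directly via decay, cancellation and Lemma \ref{lb.x}, and finally the extension to $\hona$ via Theorems \ref{tf.z} and \ref{td.f} --- is exactly the paper's strategy. However, the decomposition you actually write down, a three-way split of the scales $j$ alone (central, fine, coarse), is not the right one, and this is a genuine gap rather than a cosmetic difference. The piece of $g$ that can be fed into Lemma \ref{le.b} must be localized \emph{both} in scale and in space: the paper takes $g^{(1)}:=\sum_{\{\ell:\ \dz^\ell\le r_0\}}\sum_{\{\tz:\ y^\ell_\tz\in C_3B_0\}}\langle g,\psi^\ell_\tz\rangle\psi^\ell_\tz$, and it is precisely the double restriction that makes the Carleson condition applicable (the wavelets involved sit inside boundedly many dyadic cubes of generation $k_0$ meeting $C_3B_0$), yielding $\|g^{(1)}\|_{\ltw}\ls\|g\|_{\bmo}\sqrt{\mu(B_0)}$ as in \eqref{f.2}. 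Your term $\mathrm{I}$ (all wavelets at scales $\dz^j\sim r_0$, regardless of position) is not an $\ltw$ function of $g$ in general: for $g\in\bmo$ the coefficients at a single scale satisfy only $|\langle g,\psi^j_\gz\rangle|\ls\|g\|_{\bmo}\sqrt{V(y^j_\gz,\dz^j)}$ uniformly in $\gz$, and their $\ell^2$ sum over all $\gz\in\scg_j$ is typically infinite.

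Symmetrically, your term $\mathrm{II}$ (all fine scales, all positions) cannot be estimated ``directly and pointwise'' as you propose, because its near-field portion diverges when summed in absolute value: for $j>k_0$ and $y^j_\gz\in C_3B_0$ one only has $|\langle a,\psi^j_\bz\rangle|\le\|a\|_{\ltw}=[\mu(B_0)]^{-1/2}$ and $|\langle g,\psi^j_\gz\rangle|\ls\|g\|_{\bmo}\sqrt{V(y^j_\gz,\dz^j)}\ls\|g\|_{\bmo}\sqrt{\mu(B_0)}$, and after summing $\|\psi^j_\bz\psi^j_\gz\|_{\lon}\ls e^{-\frac{\nu}{2}\dz^{-j}d(y^j_\bz,\,y^j_\gz)}$ over $\bz$ via Lemma \ref{lb.x} one is left with $\sum_{j>k_0}\#\{\gz:\ y^j_\gz\in C_3B_0\}$, which grows like $\sum_{j>k_0}(r_0/\dz^j)^{G_0}=\fz$; the square-function structure (Carleson condition plus Lemma \ref{le.b}) is indispensable there. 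The correct split is the paper's: fine scales with $y^j_\gz\in C_3B_0$ (handled by Lemma \ref{le.b}), fine scales with $y^j_\gz\notin C_3B_0$ (direct, the far-field separation $d(y^j_\gz,x_0)\ge C_3r_0$ beating the factor $e^{\frac{\nu}{2}\dz^{-j}r_0}$ coming from \eqref{f.g}), and coarse scales $j\le k_0$ (direct, using the atom's cancellation to gain $\dz^{(k_0-j)\eta}$ as in \eqref{f.h}). Your closing extension step via Theorems \ref{tf.z} and \ref{td.f} is correct and is what the paper does.
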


\begin{proof}
We first show that, for any $(1,2)$-atom $a$
supported in the ball $B_0:=B(x_0,r_0)$,
for some $x_0\in\cx$ and $r_0\in(0,\fz)$,
and $g\in\bmo$, $\Pi_3(a,g)$ belongs to $\lon$ and
\begin{equation}\label{f.c}
\|\Pi_3(a,g)\|_{\lon}\ls\|g\|_{\bmo},
\end{equation}
where the implicit positive constant is independent of $a$ and $g$.

To this end, let $k_0\in\zz$ satisfy $\dz^{k_0+1}\le r_0<\dz^{k_0}$
and $C_3$ be a sufficiently large positive constant which will
be determined later. We formally write
\begin{eqnarray*}
\Pi_3(a,g)&&=\sum_{j=k_0+1}^\fz\lf[\sum_{\bz\in\scg_j}\lf(a,\psi^j_\bz\r)
\psi^j_\bz\r]\lf[\sum_{\{\gz\in\scg_j:\ y^j_\gz\in C_3B_0\}}
\lf\langle g,\psi^j_\gz\r\rangle\psi^j_\gz\r]\\
&&\hs+\sum_{j=k_0+1}^\fz\lf[\sum_{\bz\in\scg_j}\lf(a,\psi^j_\bz\r)
\psi^j_\bz\r]\lf[\sum_{\{\gz\in\scg_j:\ y^j_\gz\not\in C_3B_0\}}
\lf\langle g,\psi^j_\gz\r\rangle\psi^j_\gz\r]\\
&&\hs+\sum_{j=-\fz}^{k_0}\lf[\sum_{\bz\in\scg_j}\lf(a,\psi^j_\bz\r)
\psi^j_\bz\r]\lf[\sum_{\gz\in\scg_j}
\lf\langle g,\psi^j_\gz\r\rangle\psi^j_\gz\r]\\
&&=:\Pi_3^{(1)}(a,g)+\Pi_3^{(2)}(a,g)
+\Pi_3^{(3)}(a,g).
\end{eqnarray*}

To estimate $\Pi_3^{(1)}(a,g)$, let
\begin{eqnarray*}
g^{(1)}:=\sum_{\{\ell\in\zz:\ \dz^{\ell}\le r_0\}}
\sum_{\{\tz\in\scg_{\ell}:\ y^{\ell}_\tz\in C_3 B_0\}}
\lf\langle g,\psi^{\ell}_\tz\r\rangle  \psi^{\ell}_\tz.
\end{eqnarray*}
We now claim that $g^{(1)}\in\ltw$. Indeed,
let $\sca_{B_0}:=\{\gz\in\sca_{k_0}:\ Q^{k_0}_{\gz}\cap
C_3 B_0\neq\emptyset\}$ with $\sca_{k_0}$ as in \eqref{b.v}.
From Theorem \ref{tb.c}(iii), it follows that
$C_3B_0\st\bigcup_{\gz\in\sca_{B_0}}Q^{k_0}_\gz$.
Thus, by Theorem \ref{tb.c}(iv), we have
$$
d\lf(y^{k_0}_\gz,x_0\r)<(4+C_3)\dz^{k_0},
$$
which, combined with the geometrically doubling condition
and Remark \ref{rb.l}(ii), further implies that $\#\sca_{B_0}$
is bounded uniformly with respect to $k_0$.

By this, Theorem \ref{tb.a}, the Minkowski inequality,
Theorem \ref{tf.a}, $Q^{k_0}_\gz\st B(x_0,(4+C_3)\dz^{k_0})$,
$\dz^{k_0+1}\le r_0$ and \eqref{a.b}, we conclude that
\begin{eqnarray}\label{f.2}
\lf\|g^{(1)}\r\|_{\ltw}&&=\lf\{\sum_{\{\ell\in\zz:\ \dz^{\ell}\le r_0\}}
\sum_{\{\tz\in\scg_{\ell}:\ y^{\ell}_\tz\in C_3 B_0\}}
\lf|\lf\langle g,\psi^{\ell}_\tz\r\rangle  \r|^2\r\}^{1/2}\\
&&\noz\le\sum_{\gz\in\sca_{B_0}}\lf\{\sum_{\{\ell\in\zz:\ \dz^{\ell}\le r_0\}}
\sum_{\{\tz\in\scg_{\ell}:\ y^{\ell}_\tz\in Q^{k_0}_\gz\}}
\lf|\lf\langle g,\psi^{\ell}_\tz\r\rangle  \r|^2\r\}^{1/2}\\
&&\noz=\sum_{\gz\in\sca_{B_0}}\lf\{\sum_{\{\ell\in\zz:\ \dz^{\ell}\le r_0\}}
\sum_{\{\tz\in\scg_{\ell}:\ (\ell+1,\,\tz)\le(k_0,\,\gz)\}}
\lf|\lf\langle g,\psi^{\ell}_\tz\r\rangle  \r|^2\r\}^{1/2}\\
&&\noz\ls\|g\|_{\bmo}\sum_{\gz\in\sca_{B_0}}\sqrt{\mu(Q^{k_0}_\gz)}
\ls\|g\|_{\bmo}\sqrt{\mu(B_0)},
\end{eqnarray}
which shows the above claim.

By this claim and $a\in\ltw$, together with Lemma \ref{le.b}
and \eqref{f.2}, we conclude that
$\Pi_3^{(1)}(a,g)=\Pi_3(a,g^{(1)})$ belongs to $\lon$ and
$$
\lf\|\Pi_3^{(1)}(a,g)\r\|_{\lon}=\lf\|\Pi_3\lf(a,g^{(1)}\r)\r\|_{\lon}
\ls\|a\|_{\ltw}\lf\|g^{(1)}\r\|_{\ltw}
\ls\|g\|_{\bmo}.
$$

To deal with $\Pi_3^{(2)}(a,g)$,
we first estimate $|(a,\psi^j_\bz)|$
for all $(j,\bz)\in\scc$ with $\scc$ as in \eqref{4.4.x1}. By \eqref{b.a},
the H\"older inequality, the size condition of $a$
and \eqref{3.9x1}, we obtain
\begin{eqnarray}\label{f.g}
\lf|\lf(a,\psi^j_\bz\r)\r|&&\le\int_{\cx}|a(x)|
\lf|\psi^j_\bz(x)\r|\,d\mu(x)
\ls\int_{B_0}|a(x)|\frac{e^{-\nu\dz^{-j}d(x,y^j_\bz)}}
{\sqrt{V(y^j_\bz,\dz^j)}}\,d\mu(x)\\
&&\noz\ls e^{-\frac{\nu}{2}\dz^{-j}d(x_0,\,y^j_\bz)}
e^{\frac{\nu}{2}\dz^{-j}r_0}
\int_{B_0}|a(x)|\frac{e^{-\frac{\nu}{2}\dz^{-j}d(x,y^j_\bz)}}
{\sqrt{V(y^j_\bz,\dz^j)}}\,d\mu(x)\\
&&\noz\ls e^{-\frac{\nu}{2}\dz^{-j}d(x_0,\,y^j_\bz)}
e^{\frac{\nu}{2}\dz^{-j}r_0}
\|a\|_{\ltw}\lf\|\frac{e^{-\frac{\nu}{2}\dz^{-j}d(\cdot,y^j_\bz)}}
{\sqrt{V(y^j_\bz,\dz^j)}}\r\|_{\ltw}\\
&&\noz\ls e^{-\frac{\nu}{2}\dz^{-j}d(x_0,\,y^j_\bz)}
e^{\frac{\nu}{2}\dz^{-j}r_0}[\mu(B_0)]^{-1/2}.
\end{eqnarray}

Then we estimate $\int_\cx|\psi^j_\bz(x)\psi^j_\gz(x)|\,d\mu(x)$
for all $j\in\zz\cap[k_0+1,\fz)$ and $\bz,\,\gz\in\scg_j$
with $y^j_\gz\not\in C_3B_0$.
By \eqref{b.a}, the H\"older inequality and \eqref{3.9x1}, we have
\begin{eqnarray}\label{f.d}
&&\int_\cx\lf|\psi^j_\bz(x)\psi^j_\gz(x)\r|\,d\mu(x)\\
&&\noz\hs\ls\int_{\cx}\frac{e^{-\nu\dz^{-j}d(x,\,y^j_\bz)}}
{\sqrt{V(y^j_\bz,\dz^j)}}\frac{e^{-\nu\dz^{-j}d(x,\,y^j_\gz)}}
{\sqrt{V(y^j_\gz,\dz^j)}}\,d\mu(x)\\
&&\noz\hs\ls e^{-\frac{\nu}{2}\dz^{-j}d(y^j_\bz,\,y^j_\gz)}
\int_{\cx}\frac{e^{-\frac{\nu}{2}\dz^{-j}d(x,\,y^j_\bz)}}
{\sqrt{V(y^j_\bz,\dz^j)}}\frac{e^{-\frac{\nu}{2}\dz^{-j}d(x,\,y^j_\gz)}}
{\sqrt{V(y^j_\gz,\dz^j)}}\,d\mu(x)\\
&&\noz\hs\ls e^{-\frac{\nu}{2}\dz^{-j}d(y^j_\bz,\,y^j_\gz)}
\lf\|\frac{e^{-\frac{\nu}{2}\dz^{-j}d(\cdot,\,y^j_\bz)}}
{\sqrt{V(y^j_\bz,\dz^j)}}\r\|_{\ltw}
\lf\|\frac{e^{-\frac{\nu}{2}\dz^{-j}d(\cdot,\,y^j_\gz)}}
{\sqrt{V(y^j_\bz,\gz^j)}}\r\|_{\ltw}
\ls e^{-\frac{\nu}{2}\dz^{-j}d(y^j_\bz,\,y^j_\gz)}.
\end{eqnarray}

Notice that, for any $s\in\zz_+$ and $(j,\gz)\in\scc$, by
Remark \ref{rb.l}(ii) and $r_0<\dz^{k_0}$, we have
$$
\#\lf\{\gz\in\scg_j:\ y^j_\gz\in 2^{s+1}C_3 B_0\bh
2^s C_3 B_0\r\}\ls2^{(s+1)G_0}\lf[\frac{r_0}{\dz^j}\r]^{G_0}
\ls2^{(s+1)G_0}\dz^{(k_0-j)G_0},
$$
which, combined with \eqref{f.g}, \eqref{f.d},
$\dz^{k_0+1}\le r_0<\dz^{k_0}$,
\eqref{f.a}, Lemma \ref{lb.x} and \eqref{a.b}, implies that
\begin{eqnarray*}
&&\sum_{j=k_0+1}^\fz\sum_{\bz\in\scg_j}
\sum_{\{\gz\in\scg_j:\ y^j_\gz\not\in C_3B_0\}}
\lf|\lf(a,\psi^j_\bz\r)\r|\lf|\lf\langle g,\psi^j_\gz\r\rangle\r|
\lf\|\psi^j_\bz\psi^j_\gz\r\|_{\lon}\\
&&\hs\ls[\mu(B_0)]^{-1/2}\sum_{j=k_0+1}^\fz\sum_{\bz\in\scg_j}
\sum_{\{\gz\in\scg_j:\ y^j_\gz\not\in C_3B_0\}}
e^{-\frac{\nu}{2}\dz^{-j}d(x_0,\,y^j_\bz)}
e^{\frac{\nu}{2}\dz^{k_0-j}}\lf|\lf\langle g,\psi^j_\gz\r\rangle\r|
e^{-\frac{\nu}{2}\dz^{-j}d(y^j_\bz,\,y^j_\gz)}\\
&&\hs\ls[\mu(B_0)]^{-1/2}\|g\|_{\bmo}\sum_{j=k_0+1}^\fz
e^{\frac{\nu}{2}\dz^{k_0-j}}
\sum_{\{\gz\in\scg_j:\ y^j_\gz\not\in C_3B_0\}}\sqrt{V(y^j_\gz,\dz^j)}
e^{-\frac{\nu}{4}\dz^{-j}d(x_0,\,y^j_\gz)}\\
&&\hs\hs\times\sum_{\bz\in\scg_j}e^{-\frac{\nu}{4}
\dz^{-j}d(y^j_\bz,\,y^j_\gz)}\\
&&\hs\ls\|g\|_{\bmo}\sum_{j=k_0+1}^\fz
e^{\frac{\nu}{2}\dz^{k_0-j}}
\sum_{\{\gz\in\scg_j:\ y^j_\gz\not\in C_3B_0\}}
e^{-\frac{\nu}{4}\dz^{-j}d(x_0,\,y^j_\gz)}
\lf[\frac{d(x_0,y^j_\gz)+r_0}{r_0}\r]^{n/2}\\
&&\hs\ls\|g\|_{\bmo}\sum_{j=k_0+1}^\fz
e^{\frac{\nu}{2}\dz^{k_0-j}}
\sum_{s=0}^\fz
\sum_{\{\gz\in\scg_j:\ y^j_\gz\in2^{s+1}C_3B_0\bh2^sC_3B_0\}}
e^{-\frac{\nu}{4}\dz^{k_0+1-j}C_32^s}2^{(s+1)n}\\
&&\hs\ls\|g\|_{\bmo}\sum_{j=k_0+1}^\fz
\sum_{s=0}^\fz\dz^{(k_0-j)G_0}2^{(s+1)G_0}
e^{-\frac{\nu}{2}\dz^{k_0-j}(2^{s+1}-1)}2^{(s+1)n}\ls\|g\|_{\bmo},
\end{eqnarray*}
provided that $C_3\dz\ge4$.
By this and the completion of $\lon$, we conclude that
$\Pi_3^{(2)}(a,g)$ belongs to $\lon$ and
$$
\lf\|\Pi_3^{(2)}(a,g)\r\|_{\lon}\ls\|g\|_{\bmo}.
$$

Finally, we consider $\Pi_3^{(3)}(a,g)$.
To this end, we first estimate $|(a,\psi^j_\bz)|$
for all $j\in\zz\cap(-\fz,k_0]$ and $\bz\in\scg_j$
with $\scg_j$ as in \eqref{b.w}.
By $\int_\cx a(x)\,d\mu(x)=0$, $r_0<\dz^{k_0}\le\dz^{j}$
for all $j\le k_0$, and \eqref{b.b}, we obtain
\begin{eqnarray}\label{f.h}
\lf|\lf(a,\psi^j_\bz\r)\r|&&\le\int_{\cx}|a(x)|
\lf|\psi^j_\bz(x)-\psi^j_\bz(x_0)\r|\,d\mu(x)\\
&&\noz\ls\int_{B_0}|a(x)|\lf[\frac{d(x,x_0)}{\dz^j}\r]^{\eta}
\frac{e^{-\nu\dz^{-j}d(x_0,\,y^j_\bz)}}
{\sqrt{V(y^j_\bz,\dz^j)}}\,d\mu(x)\\
&&\noz\ls e^{-\nu\dz^{-j}d(x_0,\,y^j_\bz)}
\dz^{(k_0-j)\eta}\|a\|_{\lon}
\lf[\sqrt{V\lf(y^j_\bz,\dz^j\r)}\r]^{-1/2}\\
&&\noz\ls e^{-\nu\dz^{-j}d(x_0,\,y^j_\bz)}
\dz^{(k_0-j)\eta}
\lf[\sqrt{V\lf(y^j_\bz,\dz^j\r)}\r]^{-1/2}.
\end{eqnarray}

Moreover, observe that, from \eqref{b.w} and \eqref{b.z}, we deduce that,
for given $j\in\zz$ and $\bz\in\scg_j$, $\gz\in\scg_j$
and $\gz\neq\bz$ if and only if $\gz\in\scg_j$
and $d(y^j_\gz,y^j_\bz)=d(x^{j+1}_\gz,x^{j+1}_\bz)\ge\dz^{j+1}$.
By this, \eqref{f.a}, \eqref{f.h} and \eqref{f.d}, we further obtain
\begin{eqnarray*}
{\rm J}&&:=\sum_{j=-\fz}^{k_0}\sum_{\bz\in\scg_j}\sum_{\gz\in\scg_j}
\lf|\lf(a,\psi^j_\bz\r)\r|\lf|\lf\langle g,\psi^j_\gz\r\rangle\r|
\lf\|\psi^j_\bz\psi^j_\gz\r\|_{\lon}\\
&&\ls\|g\|_{\bmo}\sum_{j=-\fz}^{k_0}\dz^{(k_0-j)\eta}
\sum_{\bz\in\scg_j}\sum_{\gz\in\scg_j}e^{-\nu\dz^{-j}d(x_0,\,y^j_\bz)}
\lf[\frac{V(y^j_\gz,\dz^j)}{V(y^j_\bz,\dz^j)}\r]^{1/2}
e^{-\frac{\nu}{2}\dz^{-j}d(y^j_\bz,\,y^j_\gz)}\\
&&\sim\|g\|_{\bmo}\sum_{j=-\fz}^{k_0}\dz^{(k_0-j)\eta}
\sum_{\bz\in\scg_j}e^{-\nu\dz^{-j}d(x_0,\,y^j_\bz)}
+\|g\|_{\bmo}\sum_{j=-\fz}^{k_0}\dz^{(k_0-j)\eta}\\
&&\hs\times\sum_{\bz\in\scg_j}e^{-\nu\dz^{-j}d(x_0,\,y^j_\bz)}
\sum_{\{\gz\in\scg_j:\ d(y^j_\bz,\,y^j_\gz)\ge\dz^{j+1}\}}
\lf[\frac{V(y^j_\gz,\dz^j)}{V(y^j_\bz,\dz^j)}\r]^{1/2}
e^{-\frac{\nu}{2}\dz^{-j}d(y^j_\bz,\,y^j_\gz)}\\
&&=:{\rm J}_1+{\rm J}_2.
\end{eqnarray*}

From Lemma \ref{lb.x}, it follows easily that
\begin{eqnarray*}
{\rm J}_1\ls\|g\|_{\bmo}\sum_{j=-\fz}^{k_0}\dz^{(k_0-j)\eta}
\ls\|g\|_{\bmo}.
\end{eqnarray*}

For ${\rm J}_2$, notice that, for any $s\in\zz_+$,
$(j,\bz)\in\scc$, by Remark \ref{rb.l}(ii), we have
$$
\#\{\gz\in\scg_j:\ 2^s\dz^{j+1}\le
d(y^j_\bz,\,y^j_\gz)<2^{s+1}\dz^{j+1}\}
\ls2^{(s+1)G_0},
$$
which, together with \eqref{a.b} and Lemma \ref{lb.x},
further implies that
\begin{eqnarray*}
{\rm J}_2&&\ls\|g\|_{\bmo}\sum_{j=-\fz}^{k_0}\dz^{(k_0-j)\eta}
\sum_{\bz\in\scg_j}e^{-\nu\dz^{-j}d(x_0,\,y^j_\bz)}
\sum_{\{\gz\in\scg_j:\ d(y^j_\bz,\,y^j_\gz)\ge\dz^{j+1}\}}
\lf[\frac{d(y^j_\bz,y^j_\gz)+\dz^j}{\dz^j}\r]^{n/2}\\
&&\hs\times e^{-\frac{\nu}{2}\dz^{-j}d(y^j_\bz,\,y^j_\gz)}\\
&&\ls\|g\|_{\bmo}\sum_{j=-\fz}^{k_0}\dz^{(k_0-j)\eta}
\sum_{\bz\in\scg_j}e^{-\nu\dz^{-j}d(x_0,\,y^j_\bz)}\\
&&\hs\times\sum_{s=0}^\fz\sum_{\{\gz\in\scg_j:\ 2^s\dz^{j+1}\le
d(y^j_\bz,\,y^j_\gz)<2^{s+1}\dz^{j+1}\}}
2^{(s+1)n} e^{-\nu\dz2^{s-1}}\\
&&\ls\|g\|_{\bmo}\sum_{j=-\fz}^{k_0}\dz^{(k_0-j)\eta}
\sum_{s=0}^\fz2^{(s+1)G_0}2^{(s+1)n} e^{-\nu\dz2^{s-1}}
\ls\|g\|_{\bmo}.
\end{eqnarray*}
This, combined with the estimate of ${\rm J}_1$, implies that
$
{\rm J}\ls{\rm J}_1+{\rm J}_2\ls\|g\|_{\bmo}
$.
Thus, by the completion of $\lon$, we see that
$\Pi_3^{(3)}(a,g)\in\lon$ and
$$
\lf\|\Pi_3^{(3)}(a,g)\r\|_{\lon}\le{\rm J}\ls\|g\|_{\bmo}.
$$
By this and the estimates of $\Pi_3^{(1)}(a,g)$
and $\Pi_3^{(2)}(a,g)$, we conclude that $\Pi_3(a,g)$
belongs to $\lon$ and \eqref{f.c} holds true.

Moreover, we claim that, for any $(f,g)\in\hfa\times\bmo$,
\begin{equation}\label{y.g}
\lf\|\Pi(f,g)\r\|_{\lon}\ls\|f\|_{\hfa}\|g\|_{\bmo}.
\end{equation}
Indeed, for any $f\in\hfa$, there exists
a finite sequence $\{a_j\}_{j=1}^N$ ($N\in\nn$) of
$(1,2)$-atoms and $\{\lz_j\}_{j=1}^N\st\cc$ such that
\begin{equation}\label{y.h}
f=\sum_{i=1}^N\lz_j a_i \quad{\rm  and}\quad
\sum_{j=1}^N|\lz_i|\ls\|f\|_{\hfa}.
\end{equation}
Thus, from \eqref{f.c} and \eqref{y.h}, it follows that
\begin{eqnarray*}
\|\Pi(f,g)\|_{\lon}&&=\lf\|\sum_{j=1}^N\lz_j\Pi\lf(a_j,g\r)\r\|_{\lon}
\le\sum_{j=1}^N\lf|\lz_j\r|\lf\|\Pi\lf(a_j,g\r)\r\|_{\lon}\\
&&\ls\sum_{j=1}^N\lf|\lz_j\r|\|g\|_{\bmo}\ls\|f\|_{\hfa}\|g\|_{\bmo},
\end{eqnarray*}
which shows the claim \eqref{y.g}.
By the above claim and Theorem \ref{td.f} with $T(\cdot):=\Pi_3(\cdot,g)$
and $A\sim\|g\|_{\bmo}$ for any fixed $g\in\bmo$,
we see that $\Pi_3$ can be extended to a bounded bilinear operator
from $\hona\times\bmo$ into $\lon$, which completes
the proof of Theorem \ref{tf.b}.
\end{proof}

We then consider the boundedness of $\Pi_1$.
Recall that $\Pi_1$ in \eqref{e.z} is bounded from
$\ltw\times\ltw$ into $\hona$ (see Lemma \ref{le.c}).
To extend $\Pi_1$ to a bounded bilinear operator
from $\hona\times\bmo$ into $\hona$, we first formally write
\begin{equation}\label{x.f}
\Pi_1(a,g):=\sum_{j\in\zz}\lf[\sum_{\az\in\sca_j}\lf(a,{\mathfrak s}^j_\az\r)
s^j_\az\r]\lf[\sum_{\bz\in\scg_j}\lf\langle g,\psi^j_\bz\r\rangle
\psi^j_\bz\r]
\end{equation}
for any $(1,2)$-atom $a$ and $g\in\bmo$, where $\sca_j$ and $\scg_j$
for any $j\in\zz$ are as in, respectively, \eqref{b.v}
and \eqref{b.w}, ${\mathfrak s}^j_\az:=s^j_\az/\nu^j_\az$
and $\nu^j_\az:=\int_{\cx}s^j_\az\,d\mu$.
Notice that, if $a,\,g\in L^2(\cx)$,
then $\Pi_1(a,g)$ in \eqref{x.f} coincides
with $\Pi_1(a,g)$ in \eqref{e.z} with $f$ replaced by $a$ and, in this case,
it is known that $\Pi_1(a,g)\in\hona$ (see Lemma \ref{le.c}).

\begin{theorem}\label{tf.c}
Let $(\cx,d,\mu)$ be a metric measure space of homogeneous type. Then,
for any $(1,2)$-atom $a$ and $g\in\bmo$,
$\Pi_1(a,g)$ in \eqref{x.f} belongs to $\hona$ and
can be extended to a bounded bilinear operator
from $\hona\times\bmo$ into $\hona$.
\end{theorem}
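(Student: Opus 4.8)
The plan is to mimic the structure of the proof of Theorem~\ref{tf.b}, replacing the target space $\lon$ by $\hona$ and exploiting the structural fact that, unlike the building blocks $\psi^j_\bz\psi^j_\gz$ of $\Pi_3$, the building blocks $s^j_\az\psi^j_\bz$ of $\Pi_1$ have vanishing integral: since $s^j_\az\in V_j$, $\psi^j_\bz\in W_j$ and $V_j\bot W_j$, one has $\int_\cx s^j_\az\psi^j_\bz\,d\mu=0$. First I would fix a $(1,2)$-atom $a$ supported in $B_0:=B(x_0,r_0)$, pick $k_0\in\zz$ with $\dz^{k_0+1}\le r_0<\dz^{k_0}$ and a sufficiently large constant $C_1$, and split
$$
\Pi_1(a,g)=\Pi_1^{(1)}(a,g)+\Pi_1^{(2)}(a,g)+\Pi_1^{(3)}(a,g),
$$
where $\Pi_1^{(1)}$ collects the terms of \eqref{x.f} with $j\ge k_0+1$ and $y^j_\bz\in C_1B_0$, $\Pi_1^{(2)}$ those with $j\ge k_0+1$ and $y^j_\bz\notin C_1B_0$, and $\Pi_1^{(3)}$ those with $j\le k_0$. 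For $\Pi_1^{(1)}$, set $g^{(1)}:=\sum_{\{\ell:\ \dz^\ell\le r_0\}}\sum_{\{\tz\in\scg_\ell:\ y^\ell_\tz\in C_1B_0\}}\langle g,\psi^\ell_\tz\rangle\psi^\ell_\tz$; arguing as in the derivation of \eqref{f.2} (via Theorem~\ref{tf.a} and the geometric bound on the number of cubes meeting $C_1B_0$) gives $g^{(1)}\in\ltw$ with $\|g^{(1)}\|_{\ltw}\ls\|g\|_{\bmo}\sqrt{\mu(B_0)}$, and the orthonormality of the wavelets yields $\Pi_1^{(1)}(a,g)=\Pi_1(a,g^{(1)})$. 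Hence Lemma~\ref{le.c} gives $\Pi_1^{(1)}(a,g)\in\hona$ with $\|\Pi_1^{(1)}(a,g)\|_{\hona}\ls\|a\|_{\ltw}\|g^{(1)}\|_{\ltw}\ls\|g\|_{\bmo}$.

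For $\Pi_1^{(2)}$ and $\Pi_1^{(3)}$ I would argue directly, atom by atom. Using \eqref{b.x}, \eqref{b.a} and \eqref{a.b} one shows that $s^j_\az\psi^j_\bz$ is supported in $B(x^j_\az,8\dz^j)$, satisfies $\|s^j_\az\psi^j_\bz\|_{\li}\ls e^{-\frac{\nu}{4}\dz^{-j}d(x^j_\az,y^j_\bz)}[V(x^j_\az,\dz^j)]^{-1/2}$ (the doubling factor $[(\dz^j+d(x^j_\az,y^j_\bz))/\dz^j]^{n/2}$ being absorbed into the exponential), and has vanishing integral; therefore $s^j_\az\psi^j_\bz$ is a positive constant multiple, not exceeding $Ce^{-\frac{\nu}{4}\dz^{-j}d(x^j_\az,y^j_\bz)}\sqrt{V(x^j_\az,\dz^j)}$, of a $(1,2)$-atom, so that $\|s^j_\az\psi^j_\bz\|_{\hona}\ls e^{-\frac{\nu}{4}\dz^{-j}d(x^j_\az,y^j_\bz)}\sqrt{V(x^j_\az,\dz^j)}$. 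For $\Pi_1^{(3)}$ (large scales $j\le k_0$, only finitely many relevant $\az$ with $d(x^j_\az,x_0)\ls\dz^j$) one uses $\int_\cx a\,d\mu=0$ and the spline regularity \eqref{b.1} to get $|(a,{\mathfrak s}^j_\az)|\ls\dz^{(k_0-j)\eta}/V(x^j_\az,\dz^j)$; for $\Pi_1^{(2)}$ (small scales $j\ge k_0+1$, hence $x^j_\az\in 9B_0$) one uses the size condition of $a$ and the H\"older inequality to get $|(a,{\mathfrak s}^j_\az)|\ls[\mu(B_0)]^{-1/2}[V(x^j_\az,\dz^j)]^{-1/2}$. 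Combining these with \eqref{f.a}, the cardinality estimates of Remark~\ref{rb.l}(ii) and Lemma~\ref{lb.x}, and then summing the resulting multiple series over $j$, $\az$, $\bz$, shows in both cases that the corresponding series converges absolutely in the Banach space $\hona$ with sum of $\hona$-norms $\ls\|g\|_{\bmo}$; for $\Pi_1^{(2)}$ the key point is that $y^j_\bz\notin C_1B_0$ together with $x^j_\az\in 9B_0$ forces $d(x^j_\az,y^j_\bz)\gs C_1r_0$, so that for $j>k_0$ the factor $e^{-c\dz^{-j}C_1r_0}=e^{-cC_1\dz^{k_0+1-j}}$ decays double-exponentially in $j-k_0$ and beats all the polynomial-in-$j$ growth coming from the cardinality bounds, provided $C_1$ is large enough. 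Altogether $\Pi_1^{(2)}(a,g),\Pi_1^{(3)}(a,g)\in\hona$, and hence $\Pi_1(a,g)\in\hona$ with $\|\Pi_1(a,g)\|_{\hona}\ls\|g\|_{\bmo}$ uniformly in the atom $a$.

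Finally, as at the end of the proof of Theorem~\ref{tf.b}, for $f\in\hfa$ with a finite atomic decomposition $f=\sum_{i=1}^N\lz_ia_i$ the bilinearity of $\Pi_1$ in the first variable and the uniform atomic bound give $\|\Pi_1(f,g)\|_{\hona}\le\sum_{i=1}^N|\lz_i|\,\|\Pi_1(a_i,g)\|_{\hona}\ls\|g\|_{\bmo}\sum_{i=1}^N|\lz_i|$, whence, by Theorem~\ref{tf.z}, $\|\Pi_1(f,g)\|_{\hona}\ls\|f\|_{\hfa}\|g\|_{\bmo}\sim\|f\|_{\hona}\|g\|_{\bmo}$; applying Theorem~\ref{td.f} with $T:=\Pi_1(\cdot,g)$, $\cb:=\hona$ and $A\sim\|g\|_{\bmo}$ extends $\Pi_1$ to a bounded bilinear operator from $\hona\times\bmo$ into $\hona$. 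I expect the main obstacle to be the estimates for $\Pi_1^{(2)}$ and $\Pi_1^{(3)}$: pinning down the precise atomic normalization of $s^j_\az\psi^j_\bz$ with the right exponential gain, and then organizing the nested sums over $j$, $\az$ and $\bz$ so that the polynomial-in-$j$ factors produced by \eqref{a.b} and Remark~\ref{rb.l}(ii) are swallowed by the double-exponential decay; by contrast the $\Pi_1^{(1)}$ part, reduced to an $\ltw\times\ltw$ estimate through Lemma~\ref{le.c}, is comparatively routine.
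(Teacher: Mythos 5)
Your proposal is correct and follows essentially the same route as the paper's proof: the same three-way split according to the scale $j$ relative to $k_0$ and the location of $y^j_\bz$ relative to a dilate of $B_0$, the reduction of the first piece to Lemma \ref{le.c} via an $L^2$ function $g^{(1)}$ built from the Carleson/wavelet characterization of $\bmo$, the recognition of $s^j_\az\psi^j_\bz$ (suitably normalized, with exponential gain $e^{-c\dz^{-j}d(x^j_\az,y^j_\bz)}$) as a $(1,2)$-atom thanks to $V_j\perp W_j$, the cancellation of $a$ against the spline regularity for $j\le k_0$, and the extension via Theorems \ref{tf.z} and \ref{td.f}. The only differences are cosmetic (an $L^\infty$ rather than $L^2$ size bound for the atomic normalization, and a H\"older rather than $L^1$ estimate for $(a,{\mathfrak s}^j_\az)$ in the second piece), both of which work.
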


\begin{proof}
We first prove that, for any $(1,2)$-atom $a$ supported
in the ball $B_0:=B(x_0,r_0)$,
for some $x_0\in\cx$ and $r_0\in(0,\fz)$, and $g\in\bmo$,
$\Pi_1(a,g)$ belongs to $\hona$ and
\begin{equation}\label{f.e}
\|\Pi_1(a,g)\|_{\hona}\ls\|g\|_{\bmo},
\end{equation}
where the implicit positive constant is independent of $a$ and $g$.

To this end, let $k_0\in\zz$ satisfy $\dz^{k_0+1}\le r_0<\dz^{k_0}$
and $C_4$ be a sufficiently large positive constant which will
be determined later. We then formally write
\begin{eqnarray*}
\Pi_1(a,g)&&=\sum_{j=k_0+1}^\fz\lf[\sum_{\az\in\sca_j}
\lf(a,{\mathfrak s}^j_\az\r)
s^j_\az\r]\lf[\sum_{\{\bz\in\scg_j:\ y^j_\bz\in C_4B_0\}}
\lf\langle g,\psi^j_\bz\r\rangle\psi^j_\bz\r]\\
&&\hs+\sum_{j=k_0+1}^\fz\lf[\sum_{\az\in\sca_j}\lf(a,{\mathfrak s}^j_\az\r)
s^j_\az\r]\lf[\sum_{\{\bz\in\scg_j:\ y^j_\bz\not\in C_4B_0\}}
\lf\langle g,\psi^j_\bz\r\rangle\psi^j_\bz\r]\\
&&\hs+\sum_{j=-\fz}^{k_0}\lf[\sum_{\az\in\sca_j}\lf(a,{\mathfrak s}^j_\az\r)
s^j_\az\r]\lf[\sum_{\bz\in\scg_j}
\lf\langle g,\psi^j_\bz\r\rangle\psi^j_\bz\r]\\
&&=:\Pi_1^{(1)}(a,g)+\Pi_1^{(2)}(a,g)
+\Pi_1^{(3)}(a,g).
\end{eqnarray*}

Let
\begin{eqnarray*}
g_1:=\sum_{\{\ell\in\zz:\ \dz^{\ell}\le r_0\}}
\sum_{\{\tz\in\scg_{\ell}:\ y^{\ell}_\tz\in C_4 B_0\}}
\lf\langle g,\psi^{\ell}_\tz\r\rangle\psi^{\ell}_\tz.
\end{eqnarray*}
By \eqref{f.2} with $C_3$ replaced by $C_4$, we know that
$g_1\in\ltw$ and
$$
\lf\|g_1\r\|_{\ltw}\ls\|g\|_{\bmo}\sqrt{\mu(B_0)}.
$$
By this and $a\in\ltw$, together with Lemma \ref{le.c},
we conclude that $\Pi_1^{(1)}(a,g)=\Pi_1(a,g_1)$ belongs to $\hona$, which,
together with Lemma \ref{le.c} and an argument similar to that used in the
estimate for $\Pi_3^{(1)}(a,g)$ in the proof of
Theorem \ref{tf.b}, implies that
$$
\lf\|\Pi_1^{(1)}(a,g)\r\|_{\hona}
=\lf\|\Pi_1\lf(a,g_1\r)\r\|_{\hona}\ls\|g\|_{\bmo}.
$$

Observe that, for any $j\in\zz\cap[k_0+1,\fz)$ and $\az\in\sca_j$
with $\sca_j$ as in \eqref{b.v},
if $B(x^j_\az,8\dz^j)\cap B(x_0,r_0)\neq\emptyset$, then
$
x^j_\az\in B(x_0,r_0+8\dz^j)\st 9B_0
$.
Thus, we have
$$
\Pi_1^{(2)}(a,g)=\sum_{j=k_0+1}^\fz
\lf[\sum_{\{\az\in\sca_j:\ x^j_\az\in9B_0\}}
\lf(a,{\mathfrak s}^j_\az\r)s^j_\az\r]
\lf[\sum_{\{\bz\in\scg_j:\ y^j_\bz\not\in C_4B_0\}}
\lf\langle g,\psi^j_\bz\r\rangle \psi^j_\bz\r].
$$

Now we claim that, for all $j\in\zz$, $\az\in\sca_j$, and $\bz\in\scg_j$
with $\scg_j$ as in \eqref{b.w},
\begin{equation}\label{5.7z}
a^j_{\az,\,\bz}:=e^{\frac{\nu}{2}
\dz^{-j}d(x^j_\az,\,y^j_\bz)}\frac{s^j_\az\psi^j_\bz}
{\sqrt{V(x^j_\az,10\dz^j)}}\quad {\rm is\ a}\ (1,2)-{\rm atom},
\end{equation}
 multiplied by a positive constant, supported in
$B(x^j_\az,10\dz^j)$.
Indeed, from $s^j_\az\in V_j$, $\psi^j_\bz\in W_j$ and
$W_j\bot V_j$ with $V_j$ and $W_j$ for any $j\in\zz$
as in Lemma \ref{le.a}, it follows that
$\int_\cx s^j_\az(x)\psi^j_\bz(x)\,d\mu(x)=0$. Hence,
$\int_\cx a^j_{\az,\,\bz}\,d\mu=0$. Meanwhile, by \eqref{b.x},
we see that $\supp(s^j_\az\psi^j_\bz)\st B(x^j_\az,10\dz^j)$.
Moreover, from \eqref{b.x}, \eqref{b.a} and \eqref{3.9x1}, we deduce that
$$
\lf\|s^j_\az\psi^j_\bz\r\|^2_{\ltw}
\ls\int_{B(x^j_\az,\,\dz^j)}\frac{e^{-2\nu\dz^{-j}d(y^j_\bz,\,x)}}
{V(y^j_\bz,\dz^j)}\,d\mu(x)
\ls e^{-\nu\dz^{-j}d(y^j_\bz,\,x^j_\az)}.
$$
Thus,
$$
\|a^j_{\az,\,\bz}\|_{\ltw}\ls\lf[V\lf(x^j_\az,10\dz^j\r)\r]^{-1/2},
$$
which shows the above claim.

Observe that, for any $j\in\zz\cap[k_0+1,\fz)$, $\az\in\sca_j$,
$\bz\in\scg_j$, $x^j_\az\in 9B_0$ and $y^j_\bz\not\in C_4B_0$, we have
$d(y_\bz^j,x_0)\ge C_4r_0\ge2d(x^j_\az,x_0)$ provided that
$C_4\ge18$. Moreover, from Remark \ref{rb.l}(ii)
and $r_0<\dz^{k_0}$, it follows that, for any $j\in\zz$ and
$t\in\zz_+$,
\begin{equation}\label{x.a}
\#\lf\{\az\in\sca_j:\ x^j_\az\in9B_0\r\}
\ls\lf[\frac{r_0}{\dz^j}\r]^{G_0}\ls\dz^{(k_0-j)G_0}
\end{equation}
and
\begin{equation}\label{x.b}
\#\{\bz\in\scg_j:\ y^j_\bz\in2^{t+1}C_4B_0\bh2^tC_4B_0\}
\ls2^{tG_0}\lf[\frac{r_0}{\dz^j}\r]^{G_0}\ls2^{tG_0}\dz^{(k_0-j)G_0}.
\end{equation}
By the above claim \eqref{5.7z}, \eqref{f.a},
$\nu^j_\az\sim\mu^j_\az:=V(x^j_\az,\dz^j)$,
\eqref{b.x} and \eqref{a.b}, we conclude that
\begin{eqnarray*}
{\rm A}&&:=\sum_{j=k_0+1}^\fz\sum_{\{\az\in\sca_j:\ x^j_\az\in9B_0\}}
\sum_{\{\bz\in\scg_j:\ y^j_\bz\not\in C_4B_0\}}
\lf|\lf(a,{\mathfrak s}^j_\az\r)\r|\lf|\lf\langle g,\psi^j_\bz\r\rangle\r|
\lf\|s^j_\az\psi^j_\bz\r\|_{\hona}\\
&&\ls\sum_{j=k_0+1}^\fz\sum_{\{\az\in\sca_j:\ x^j_\az\in9B_0\}}
\sum_{\{\bz\in\scg_j:\ y^j_\bz\not\in C_4B_0\}}
\lf|\lf(a,{\mathfrak s}^j_\az\r)\r|\lf|\lf\langle g,\psi^j_\bz\r\rangle\r|
e^{-\frac{\nu}{2}\dz^{-j}d(x^j_\az,\,y^j_\bz)}
\sqrt{V(x^j_\az,10\dz^j)}\\
&&\ls\|g\|_{\bmo}\sum_{j=k_0+1}^\fz
\sum_{\{\az\in\sca_j:\ x^j_\az\in9B_0\}}
\sum_{\{\bz\in\scg_j:\ y^j_\bz\not\in C_4B_0\}}
\|a\|_{\lon}\lf[\frac{V(y^j_\bz,\dz^j)}{V(x^j_\az,\dz^j)}\r]^{1/2}\\
&&\hs\times e^{-\frac{\nu}{2}\dz^{-j}d(y^j_\bz,\,x^j_\az)}\\
&&\ls\|g\|_{\bmo}\sum_{j=k_0+1}^\fz
\sum_{\{\az\in\sca_j:\ x^j_\az\in9B_0\}}
\sum_{\{\bz\in\scg_j:\ y^j_\bz\not\in C_4B_0\}}
\lf[\frac{d(x^j_\az,y^j_\bz)+\dz^j}{\dz^j}\r]^{n/2}
e^{-\frac{\nu}{2}\dz^{-j}d(y^j_\bz,\,x^j_\az)},
\end{eqnarray*}
which, together with $d(y_\bz^j,x_0)\ge2d(x^j_\az,x_0)$,
\eqref{x.a}, \eqref{x.b} and $\dz^{k_0+1}\le r_0<\dz^{k_0}$,
further implies that
\begin{eqnarray*}
{\rm A}&&\ls\|g\|_{\bmo}\sum_{j=k_0+1}^\fz
\sum_{\{\az\in\sca_j:\ x^j_\az\in9B_0\}}
\sum_{\{\bz\in\scg_j:\ y^j_\bz\not\in C_4B_0\}}
e^{-\frac{\nu}{4}\dz^{-j}d(y^j_\bz,\,x^j_\az)}\\
&&\ls\|g\|_{\bmo}\sum_{j=k_0+1}^\fz
\sum_{\{\bz\in\scg_j:\ y^j_\bz\not\in C_4B_0\}}
e^{-\frac{\nu}{8}\dz^{-j}d(y^j_\bz,\,x_0)}
\dz^{(k_0-j)G_0}\\
&&\ls\|g\|_{\bmo}\sum_{j=k_0+1}^\fz
\sum_{t=0}^\fz\sum_{\{\bz\in\scg_j:\ y^j_\bz\in2^{t+1}C_4B_0\bh2^tC_4B_0\}}
e^{-\frac{\nu}{8}C_4\dz^{k_0-j+1}2^t}\dz^{(k_0-j)G_0}\\
&&\ls\|g\|_{\bmo}\sum_{j=k_0+1}^\fz\sum_{t=0}^\fz
2^{-tM_0}\dz^{(j-k_0)M_0}\dz^{(k_0-j)G_0}2^{tG_0}\dz^{(k_0-j)G_0}\ls\|g\|_{\bmo},
\end{eqnarray*}
where $M_0$ and $C_4$ are sufficiently large positive constants such that
$M_0>2G_0$, with $G_0$ as in Remark \ref{rb.l}(ii),
and $C_4\ge18$, respectively. Thus, by the completion of $\hona$,
we conclude that $\Pi_1^{(2)}(a,g)\in\hona$ and
$$
\lf\|\Pi_1^{(2)}(a,g)\r\|_{\hona}
\le{\rm A}\ls\|g\|_{\bmo}.
$$

Finally, we consider $\Pi_1^{(3)}(a,g)$.
Observe that, for any $j\in\zz\cap(-\fz,k_0]$ and $\az\in\sca_j$,
if $B(x_0,r_0)\cap B(x^j_\az,8\dz^j)\neq\emptyset$,
then $x^j_\az\in B(x_0,9\dz^j)$.
By this, we further formally write
$$
\Pi_1^{(3)}(a,g)=\sum_{j=-\fz}^{k_0}
\lf[\sum_{\{\az\in\sca_j:\ x^j_\az\in B(x_0,\,9\dz^j)\}}
\lf(a,{\mathfrak s}^j_\az\r)s^j_\az\r]\lf[\sum_{\bz\in\scg_j}
\lf\langle g,\psi^j_\bz\r\rangle\psi^j_\bz\r].
$$

We first estimate $|(a,s^j_\az)|$ for all $j\in\zz\cap(-\fz,k_0]$
and $\az\in\sca_j$ with $x^j_\az\in B(x_0,\,9\dz^j)$.
By $\int_\cx a(x)\,d\mu(x)=0$, $r_0<\dz^{k_0}\le\dz^{j}$
for all $j\le k_0$, and \eqref{b.1}, we obtain
\begin{eqnarray}\label{x.q}
\lf|\lf(a,s^j_\az\r)\r|&&\le\int_{B_0}|a(x)|
\lf|s^j_\az(x)-s^j_\az(x_0)\r|\,d\mu(x)\\
&&\noz\ls\int_{B_0}|a(x)|\lf[\frac{d(x,x_0)}{\dz^j}\r]^{\eta}\,d\mu(x)
\ls\dz^{(k_0-j)\eta}\|a\|_{\lon}
\ls\dz^{(k_0-j)\eta}.
\end{eqnarray}

Moreover, for any $j\in\zz$, by
Remark \ref{rb.l}(ii), we have
$$
\#\{\az\in\sca_j:\ x^j_\az\in B(x_0,\,9\dz^j)\}\ls1.
$$
From this, \eqref{x.q}, the above claim \eqref{5.7z}, \eqref{a.b},
\eqref{f.a}, $\nu^j_\az\sim\mu^j_\az:=V(x^j_\az,\dz^j)$ and
Lemma \ref{lb.x}, it follows that
\begin{eqnarray*}
&&\sum_{j=-\fz}^{k_0}\sum_{\{\az\in\sca_j:\ x^j_\az\in B(x_0,\,9\dz^j)\}}
\sum_{\bz\in\scg_j}\lf|\lf(a,{\mathfrak s}^j_\az\r)\r|
\lf|\lf\langle g,\psi^j_\bz\r\rangle  \r|
\lf\|s^j_\az\psi^j_\bz\r\|_{\hona}\\
&&\hs\ls\sum_{j=-\fz}^{k_0}\sum_{\{\az\in\sca_j:\ x^j_\az\in B(x_0,\,9\dz^j)\}}
\sum_{\bz\in\scg_j}
\lf|\lf(a,{\mathfrak s}^j_\az\r)\r|\lf|\lf\langle g,\psi^j_\bz\r\rangle  \r|
e^{-\frac{\nu}{2}\dz^{-j}d(y^j_\bz,\,y^j_\gz)}
\sqrt{V(x^j_\az,10\dz^j)}\\
&&\hs\ls\|g\|_{\bmo}\sum_{j=-\fz}^{k_0}
\dz^{(k_0-j)\eta}\sum_{\{\az\in\sca_j:\ x^j_\az\in B(x_0,\,9\dz^j)\}}
\sum_{\{\bz\in\scg_j:\ d(x^j_\az,\,y^j_\bz)\ge\dz^{j+1}\}}
e^{-\frac{\nu}{2}\dz^{-j}d(x^j_\az,\,y^j_\bz)}\\
&&\hs\hs\times\lf[\frac{V(y^j_\bz,\dz^j)}{V(x^j_\az,\dz^j)}\r]^{1/2}\\
&&\hs\ls\|g\|_{\bmo}\sum_{j=-\fz}^{k_0}
\dz^{(k_0-j)\eta}\sum_{\{\az\in\sca_j:\ x^j_\az\in B(x_0,\,9\dz^j)\}}
\sum_{\{\bz\in\scg_j:\ d(x^j_\az,\,y^j_\bz)\ge\dz^{j+1}\}}
e^{-\frac{\nu}{2}\dz^{-j}d(x^j_\az,\,y^j_\bz)}\\
&&\hs\hs\times\lf[\frac{d(y^j_\bz,x^j_\az)+\dz^j}{\dz^j}\r]^{1/2}\\
&&\hs\ls\|g\|_{\bmo}\sum_{j=-\fz}^{k_0}
\dz^{(k_0-j)\eta}\sum_{\{\az\in\sca_j:\ x^j_\az\in B(x_0,\,9\dz^j)\}}
\sum_{\{\bz\in\scg_j:\ d(x^j_\az,\,y^j_\bz)\ge\dz^{j+1}\}}
e^{-\frac{\nu}{4}\dz^{-j}d(x^j_\az,\,y^j_\bz)}\\
&&\hs\ls\|g\|_{\bmo}\sum_{j=-\fz}^{k_0}
\dz^{(k_0-j)\eta}\ls\|g\|_{\bmo},
\end{eqnarray*}
which, combined with the completion of $\hona$, further
implies that $\Pi_1^{(3)}(a,g)\in\hona$ and
$$
\lf\|\Pi_1^{(3)}(a,g)\r\|_{\hona}\ls\|g\|_{\bmo}.
$$

By this and the estimates of $\Pi_1^{(1)}(a,g)$
and $\Pi_1^{(2)}(a,g)$, we conclude that
$\Pi_1(a,g)$ in \eqref{x.f} belongs to $\hona$ and \eqref{f.e} holds true,
which, together with Theorem \ref{td.f} and an argument
similar to that used in the proof of Theorem \ref{tf.b},
further implies that $\Pi_1$ can be extended to a bounded bilinear
operator from $\hona\times\bmo$ into $\hona$.
This finishes the proof of Theorem \ref{tf.c}.
\end{proof}

Before considering the boundedness of $\Pi_2$,
we first recall some useful results from \cite{ky},
which are valid without resorting to the reverse doubling condition
after some careful examinations, the details being omitted.

\begin{lemma}\label{lf.t}(\cite[Proposition 3.1]{ky})
Let $(\cx,d,\mu)$ be a metric measure space of homogeneous type,
$\bz\in(0,1]$ and $\gz\in(0,\fz]$. Then, for all $h\in\cg(\bz,\gz)$,
there exists a positive constant $C$, independent of $h$,
such that, for any $g\in\bmo$,
$$
\lf\|hg\r\|_{\bmo}\le C\frac1{V_1(x_1)}\|h\|_{\cg(\bz,\gz)}
\|g\|_{\bmop},
$$
here and hereafter, for a fixed $x_1\in\cx$ and all $g\in\bmo$,
$$
\|g\|_{\bmop}:=\|f\|_{\bmo}+\frac1{V_1(x_1)}\int_{B(x_1,\,1)}
|f(x)|\,d\mu(x).
$$
\end{lemma}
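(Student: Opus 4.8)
The plan is to test the BMO norm of $hg$ on an arbitrary ball $B:=B(c_B,r_B)\subset\cx$, i.e. to bound $\frac1{\mu(B)}\int_B|h(x)g(x)-m_B(hg)|\,d\mu(x)$ by $C\,\frac1{V_1(x_1)}\|h\|_{\cg(\bz,\gz)}\|g\|_{\bmop}$ with $C$ independent of $B$, $h$, $g$; by homogeneity we may take $\|h\|_{\cg(\bz,\gz)}=1$. Since $\frac1{\mu(B)}\int_B|hg-m_B(hg)|\le\frac2{\mu(B)}\int_B|hg-c|$ for every constant $c$, it suffices to choose a convenient $c=c_B$ for each ball, and I would split into a \emph{local case} $r_B\le\frac18(1+d(x_1,c_B))$ and a \emph{global case} $r_B>\frac18(1+d(x_1,c_B))$, measuring the position of $B$ relative to the reference ball $B(x_1,1)$.

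In the local case I would take $c:=h(c_B)m_B(g)$, so that $hg-c=h(g-m_B(g))+m_B(g)(h-h(c_B))$. For the first summand, $\frac1{\mu(B)}\int_B|h|\,|g-m_B(g)|\le(\sup_B|h|)\|g\|_{\bmo}$, and (T1) together with the doubling property \eqref{a.b} gives $\sup_{x\in B}|h(x)|\ls\frac1{V_1(x_1)+V(x_1,c_B)}[\frac1{1+d(x_1,c_B)}]^\gz\le\frac1{V_1(x_1)}$, using that $1+d(x_1,x)\sim1+d(x_1,c_B)$ and $V(x_1,x)\sim V(x_1,c_B)$ for $x\in B$ in this regime. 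For the second summand, the threshold $\frac18$ is chosen precisely so that $d(x,c_B)\le2r_B\le\frac12(1+d(x_1,x))$ for all $x\in B$, whence (T2) applies and yields $\sup_{x\in B}|h(x)-h(c_B)|\ls[\frac{r_B}{1+d(x_1,c_B)}]^\bz\frac1{V_1(x_1)+V(x_1,c_B)}[\frac1{1+d(x_1,c_B)}]^\gz$; and $|m_B(g)|$ is controlled by the standard telescoping estimate for averages of a BMO function along a chain of doubling balls joining $B$ to $B(x_1,1)$, together with $|m_{B(x_1,1)}(g)|\le\|g\|_{\bmop}$, giving $|m_B(g)|\ls\|g\|_{\bmop}\,[1+\log\frac{1+d(x_1,c_B)+r_B}{\min\{r_B,1\}}]$. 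Multiplying these bounds, the second summand is $\ls\frac1{V_1(x_1)}\|g\|_{\bmop}$ once one checks $r_B^\bz\,[1+\log\frac{1+d(x_1,c_B)+r_B}{\min\{r_B,1\}}]\,(1+d(x_1,c_B))^{-\bz-\gz}\ls1$; this follows by separating the constant, the $\log(1+d(x_1,c_B))$ and the $\log(1/r_B)$ pieces and using, respectively, $r_B\le\frac18(1+d(x_1,c_B))$, that $\gz>0$, and the boundedness of $t\mapsto t^\bz\log(1/t)$ on $(0,1]$.

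In the global case, $r_B>\frac18(1+d(x_1,c_B))$ forces $r_B>\frac18$ and $d(x_1,c_B)<8r_B$, so $B(x_1,1)\subset CB$ for an absolute dilation $C$, and hence $V_1(x_1)\ls\mu(B)$ by \eqref{a.b}. Here I would just take $c:=0$ and estimate $\frac1{\mu(B)}\int_B|hg|\le\frac1{\mu(B)}\int_\cx|hg|$. Decomposing $\cx$ into $B(x_1,1)$ and the shells $B(x_1,2^{k+1})\setminus B(x_1,2^k)$, $k\in\nn$, and combining the pointwise bound from (T1), the doubling property, and the growth estimate $\frac1{\mu(B(x_1,2^{k+1}))}\int_{B(x_1,2^{k+1})}|g|\ls(k+1)\|g\|_{\bmop}$, one gets $\int_\cx|hg|\ls\|g\|_{\bmop}\sum_{k\ge0}2^{-k\gz}(k+1)\ls\|g\|_{\bmop}$, the series converging since $\gz>0$; dividing by $\mu(B)\gs V_1(x_1)$ gives the desired bound. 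Putting the two cases together and restoring the factor $\|h\|_{\cg(\bz,\gz)}$ completes the proof.

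The main obstacle I anticipate is the bookkeeping in the local case: since a BMO function may grow logarithmically away from $x_1$, the factor $|m_B(g)|$ inevitably carries a logarithmic term, and the whole point is that it is absorbed by the polynomial gains $r_B^\bz$ coming from the Hölder estimate (T2) and $[\frac1{1+d(x_1,c_B)}]^\gz$ coming from the decay in (T1); keeping the threshold $\frac18$ sharp enough that (T2) genuinely applies on all of $B$, and that the global balls really engulf $B(x_1,1)$, is the delicate point. It is also here that one sees why the norm $\|g\|_{\bmop}$ must contain the extra term $\frac1{V_1(x_1)}\int_{B(x_1,1)}|g|\,d\mu$: it fixes the otherwise free additive constant in $g$, which $\|g\|_{\bmo}$ alone does not see.
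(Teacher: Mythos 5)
Your proof is correct: the ball-by-ball test with the local/global dichotomy relative to $B(x_1,1)$, the use of (T1) and (T2) together with the logarithmic growth of BMO averages, and the absorption of the logarithm by the gains $r_B^{\bz}$ and $(1+d(x_1,c_B))^{-\gz}$ all check out, and the extra term in $\|\cdot\|_{\bmop}$ enters exactly where you say it must. The paper itself gives no proof of this lemma (it only cites Proposition 3.1 of Ky's paper, remarking that the argument needs no reverse doubling), and the cited proof follows the same Nakai--Yabuta-type pointwise-multiplier scheme you use, so your argument is essentially the standard one, here written out in full.
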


\begin{lemma}\label{lf.v}(\cite[Lemma 3.2]{ky})
Let $(\cx,d,\mu)$ be a metric measure space of homogeneous type
and $q\in(1,\fz]$. Then there exists a positive constant $C$
such that, for any $(1,q)$-atom $a$ supported in some ball $B$
and $g\in\bmo$,
$$
\lf\|\lf[g-m_B(g)\r]\cm(a)\r\|_{\lon}\le C\|g\|_{\bmo},
$$
where $\cm$ is as in \eqref{a.u} and
$m_B(g):=\frac1{\mu(B)}\int_B g\,d\mu$.
\end{lemma}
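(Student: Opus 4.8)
The plan is to reduce the estimate to two standard pieces by splitting the underlying integral according to the dilates of the ball $B=B(c_B,r_B)$ containing $\supp(a)$, and then to use, on the one side, the $L^q(\cx)$-boundedness of the grand maximal function $\cm$ and, on the other side, the pointwise decay of $\cm(a)$ away from $\supp(a)$. First I would write
$$
\lf\|\lf[g-m_B(g)\r]\cm(a)\r\|_{\lon}
=\int_{2B}\lf|g-m_B(g)\r|\cm(a)\,d\mu
+\int_{\cx\bh 2B}\lf|g-m_B(g)\r|\cm(a)\,d\mu=:{\rm I}+{\rm II}.
$$
For ${\rm I}$, by the John--Nirenberg inequality for $\bmo(\cx)$ (valid on any space of homogeneous type) together with the estimate $|m_{2B}(g)-m_B(g)|\ls\|g\|_{\bmo}$ and the doubling property \eqref{a.b}, the function $[g-m_B(g)]\chi_{2B}$ lies in $L^{q'}(\cx)$ with $\|[g-m_B(g)]\chi_{2B}\|_{L^{q'}(\cx)}\ls\|g\|_{\bmo}[\mu(B)]^{1/q'}$, where $1/q+1/q'=1$. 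Since $\cm$ is bounded on $L^q(\cx)$ for $q\in(1,\fz]$ (being controlled pointwise by the Hardy--Littlewood maximal operator), one has $\|\cm(a)\|_{L^q(\cx)}\ls\|a\|_{L^q(\cx)}\le[\mu(B)]^{1/q-1}$; the H\"older inequality then gives ${\rm I}\ls\|g\|_{\bmo}[\mu(B)]^{1/q'}[\mu(B)]^{1/q-1}\sim\|g\|_{\bmo}$.

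For ${\rm II}$, the key point is the decay estimate: exploiting $\int_\cx a\,d\mu=0$, $\supp(a)\st B$, and the Hölder regularity condition (T2) of the test functions entering the definition \eqref{a.u} of $\cm$, there exists $\bz\in(0,1]$ such that, for all $x\in\cx\bh 2B$,
$$
\cm(a)(x)\ls\frac1{V(c_B,x)}\lf[\frac{r_B}{d(c_B,x)}\r]^{\bz}.
$$
Then I would decompose $\cx\bh 2B=\bigcup_{k\in\nn}(2^{k+1}B\bh 2^kB)$; on the $k$-th annulus $\cm(a)(x)\ls2^{-k\bz}[\mu(2^kB)]^{-1}$, while the telescoping bound $|m_{2^{k+1}B}(g)-m_B(g)|\ls(k+1)\|g\|_{\bmo}$ yields
$$
\int_{2^{k+1}B}\lf|g(x)-m_B(g)\r|\,d\mu(x)\ls(k+1)\|g\|_{\bmo}\mu(2^{k+1}B).
$$
Summing over $k$ and invoking the doubling property together with $\sum_{k\in\nn}(k+1)2^{-k\bz}<\fz$ gives ${\rm II}\ls\|g\|_{\bmo}$. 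Combining the bounds for ${\rm I}$ and ${\rm II}$ then yields the claim with $C$ independent of $a$ and $g$.

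The step I expect to be the main obstacle is the pointwise decay estimate for $\cm(a)$ on $\cx\bh 2B$: since $\cm(a)(x)$ is a supremum over all normalized test functions $h\in\cg^{\ez}_0(\bz,\gz)$ with $\|h\|_{\cg(x,r,\bz,\gz)}\le1$ for \emph{some} $r\in(0,\fz)$, one must subtract $h(c_B)$, use the cancellation of $a$, and balance the regularity of $h$ against the size of $B$ relative to $r$, treating separately the cases $r\ge d(c_B,x)$ and $r<d(c_B,x)$ and controlling the volume factors via \eqref{a.b}. Everything else is routine; and since this statement is quoted from \cite[Lemma 3.2]{ky}, one may alternatively just cite that reference, after checking (as claimed in the text) that its proof uses neither the reverse doubling condition nor any structure unavailable on a general metric measure space of homogeneous type.
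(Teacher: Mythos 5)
Your argument is correct, but there is nothing in the paper to compare it against: the authors do not prove this lemma, they simply quote \cite[Lemma 3.2]{ky} and remark that the proof there survives without the reverse doubling condition. So your sketch is a self-contained substitute rather than a variant of an internal proof. That said, both nontrivial ingredients you use are already established inside the paper, in the proof of Theorem \ref{tf.y}: there the authors derive the pointwise domination $\cm(a)\ls M(a)$ by the Hardy--Littlewood maximal function (which, with H\"older and John--Nirenberg, gives your term ${\rm I}$; when $q=\fz$ one has $q'=1$ and John--Nirenberg is not even needed, the $L^1$ oscillation bound being the definition of $\bmo$), and they prove exactly the far-field decay you single out as the main obstacle, namely $\cm(a)(x)\ls [r_B/d(c_B,x)]^{\bz}[V(c_B,x)]^{-1}$ for $x\notin 2B$, by subtracting $h(c_B)$, using $\int_\cx a\,d\mu=0$ and condition (T2) of Definition \ref{da.d}. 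The case distinction you anticipate on $r$ versus $d(c_B,x)$ is in fact unnecessary: for $y\in B$ and $x\notin 2B$ one has $d(y,c_B)<r_B\le[r+d(x,c_B)]/2$ for every $r\in(0,\fz)$, so (T2) applies uniformly in $r$. Your telescoping bound $|m_{2^{k+1}B}(g)-m_B(g)|\ls(k+1)\|g\|_{\bmo}$ and the convergence of $\sum_{k}(k+1)2^{-k\bz}$ then close term ${\rm II}$; note that this part only uses $\|a\|_{\lon}\le1$ and the cancellation of $a$, so the restriction to $(1,2)$-atoms in Theorem \ref{tf.y} is immaterial and your proof covers all $q\in(1,\fz]$ as required.
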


\begin{lemma}\label{lf.u}(\cite[Proposition 3.2(ii)]{ky})
Let $(\cx,d,\mu)$ be a metric measure space of homogeneous type.
Then there exists a positive constant $C$
such that, for any $f\in\lon$ and $g\in\bmo$,
$$
\lf\|fg\r\|_{\llo}\le C\|f\|_{\lon}\|g\|_{\bmop}.
$$
\end{lemma}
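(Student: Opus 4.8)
Throughout, write $\Phi(x,t):=\frac{t}{\log(e+t)+\log(e+d(x_0,x))}$ for $x\in\cx$ and $t\ge0$, so that $\|h\|_{\llo}=\inf\{\lz>0:\ \int_{\cx}\Phi(x,|h(x)|/\lz)\,d\mu(x)\le1\}$. The plan is first to reduce everything to a single integral estimate. By the homogeneity of $\|\cdot\|_{\llo}$, $\|\cdot\|_{\lon}$ and $\|\cdot\|_{\bmop}$, we may assume $\|f\|_{\lon}=\|g\|_{\bmop}=1$, and then it suffices to prove $\int_{\cx}\Phi(x,|f(x)g(x)|)\,d\mu(x)\le C$ with $C$ depending only on $(\cx,d,\mu)$ and the fixed point $x_0$. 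Indeed, $\Phi$ satisfies $\Phi(x,st)\le C_p\,s^p\Phi(x,t)$ for all $s\in(0,1]$, $t\ge0$ and any fixed $p\in(0,1)$, which one checks directly from $\log(e+t)\le\log(e+st)+\log(1/s)$ and $\sup_{s\in(0,1]}s^{1-p}\log(1/s)<\fz$; hence replacing $fg$ by $fg/\lz$ with $\lz:=\max\{1,(C_pC)^{1/p}\}$ forces the Musielak integral to be at most $1$, so $\|fg\|_{\llo}\le\lz\lesssim1$. One may also assume $x_0$ equals the reference point $x_1$ of $\|\cdot\|_{\bmop}$, since passing from one reference point to another changes both norms only by a multiplicative constant depending on $d(x_0,x_1)$.

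Next I would run an annular decomposition. Put $R_0:=B(x_0,1)$, $R_k:=B(x_0,2^k)\bh B(x_0,2^{k-1})$ and $B_k:=B(x_0,2^k)$ for $k\in\nn$, so that $\cx=\bigcup_{k\ge0}R_k$ is a disjoint union and, on $R_k$, $\log(e+d(x_0,x))\sim k+1$. On $R_k$ write $g=[g-m_{B_k}(g)]+m_{B_k}(g)=:g_k'+m_k$; since $u\mapsto\Phi(x,u)$ is subadditive (because $\log(e+a+b)\ge\max\{\log(e+a),\log(e+b)\}$ gives $\Phi(x,a+b)\le\Phi(x,a)+\Phi(x,b)$), it suffices to control $\sum_k\int_{R_k}\Phi(x,|fm_k|)\,d\mu$ and $\sum_k\int_{R_k}\Phi(x,|fg_k'|)\,d\mu$. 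For the first sum, a telescoping estimate together with the doubling property \eqref{a.b} gives $|m_k|=|m_{B_k}(g)|\le|m_{B_0}(g)|+\sum_{i=1}^{k}|m_{B_i}(g)-m_{B_{i-1}}(g)|\lesssim(1+k)\|g\|_{\bmop}\lesssim1+k$; since on $R_k$ the denominator of $\Phi$ is $\gtrsim1+k$, the integrand $\Phi(x,|fm_k|)$ is $\lesssim|f|$, and summing over $k$ yields $\lesssim\|f\|_{\lon}=1$.

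The heart of the argument is the $g_k'$ sum, where the key device is the elementary two-parameter inequality: for all $s,t\ge0$, $L\ge1$ and fixed constants $1\le A\le A'$,
$$
\frac{st}{\log(e+st)+L}\le A's+e^{t/A-(A'/A)L},
$$
proved by splitting on whether $t\le A\log(e+st)+A'L$. I would apply this with $s=|f(x)|$, $t=|g_k'(x)|$, $L=\log(e+d(x_0,x))$, choosing $A:=\max\{1,1/c_0\}$ where $c_0$ is the John--Nirenberg exponent of $\bmo$ on $\cx$ (so $\int_{B_k}e^{|g_k'|/A}\,d\mu\le\int_{B_k}e^{c_0|g_k'|/\|g\|_{\bmo}}\,d\mu\lesssim\mu(B_k)$, using $\|g\|_{\bmo}\le1$), and $A':=(n+2)A$ with $n$ the exponent in \eqref{a.b}. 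This yields
$$
\int_{R_k}\Phi(x,|fg_k'|)\,d\mu\lesssim\int_{R_k}|f|\,d\mu+(e+2^{k-1})^{-A'/A}\int_{B_k}e^{|g_k'|/A}\,d\mu\lesssim\int_{R_k}|f|\,d\mu+2^{-k}\mu(B(x_0,1)),
$$
where the last step uses \eqref{a.b} in the form $\mu(B_k)\lesssim2^{nk}\mu(B(x_0,1))$ together with $A'/A=n+2>n+1$. Summing over $k$ produces $\lesssim\|f\|_{\lon}+\mu(B(x_0,1))\lesssim1$; combined with the $m_k$ contribution, this gives the integral bound and hence the lemma.

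The step I expect to be the genuine obstacle is precisely this summation over the annuli: one must exploit the factor $\log(e+d(x_0,x))$ in the denominator of $\Phi$ — which grows linearly in the annulus index $k$ — to manufacture, via the two-parameter inequality, a decay $(e+d(x_0,x))^{-A'/A}$ strong enough to dominate the volume growth $\mu(B(x_0,2^k))\lesssim2^{nk}$; this is what forces $A'/A>n+1$, while $A$ is in turn pinned down from below by the John--Nirenberg exponent of $\bmo$ on the space of homogeneous type. (A naive generalized H\"older inequality fails here, since a BMO function of size $\log(e+d(x_0,\cdot))$ already escapes the relevant exponential class once $n\ge1$, which is exactly why the splitting $g=g_k'+m_k$ on each annulus — separating slowly growing mean from bounded-oscillation part — is essential.) Everything else — the lower-type reduction, the subadditivity of $\Phi$, the telescoping bound on $m_{B_k}(g)$, and the elementary inequality itself — is routine once this decomposition is in place.
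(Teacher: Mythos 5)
Your argument is correct, but there is nothing in the paper to compare it against: the paper does not prove this lemma at all. It is recalled verbatim from \cite[Proposition 3.2(ii)]{ky}, with only the remark that Ky's proof survives ``without resorting to the reverse doubling condition,'' the details being omitted. What you have done is reconstruct that omitted proof. I checked the individual steps and they hold: the lower-type estimate $\Phi(x,st)\le C_ps^p\Phi(x,t)$ for $s\in(0,1]$ follows from $\log(e+t)\le\log(e+st)+\log(1/s)$ exactly as you say, and it legitimately converts the modular bound $\int_\cx\Phi(x,|fg|)\,d\mu\le C$ into the norm bound; the subadditivity of $\Phi$ in $t$ is correct; the telescoping bound $|m_{B_k}(g)|\lesssim(1+k)\|g\|_{\bmop}$ uses only \eqref{a.b}; the two-parameter inequality is verified by the case split you indicate (in the second case $t/A-(A'/A)L>\log(e+st)$ gives $e^{t/A-(A'/A)L}>e+st\ge st\ge\Phi(x,st)\cdot[\log(e+st)+L]/[\log(e+st)+L]$ since $L\ge1$); and the choice $A'/A=n+2$ does make the decay $(e+2^{k-1})^{-A'/A}$ dominate the volume growth $\mu(B_k)\lesssim2^{nk}\mu(B(x_0,1))$, leaving a summable tail. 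The only points worth flagging are cosmetic: the precise constant $A$ must be taken large enough relative to the normalization of the John--Nirenberg inequality on $(\cx,d,\mu)$ (possibly $2/c_0$ rather than $1/c_0$), and the final constant depends on $\mu(B(x_0,1))$ and on $d(x_0,x_1)$, which is harmless since both reference points are fixed in the definitions of $\llo$ and $\bmop$. Your route is essentially the standard Bonami--Iwaniec--Jones--Zinsmeister/Ky argument (annular decomposition, splitting off the slowly growing mean, exponential integrability of the oscillation), with the generalized H\"older inequality between $L^1$ and $\exp L$ replaced by your explicit elementary pointwise inequality; this makes the proof self-contained and makes transparent exactly where the logarithmic weight $\log(e+d(x_0,\cdot))$ is consumed, which is the step you correctly identify as the crux.
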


The following conclusion is an easy consequence of \cite[Lemma 4.4]{gly1},
which is valid without resorting to the reverse doubling condition.
We present some details here for the sake of clarity,
which are similar to those used in the proof of
\cite[Lemma (2.3)]{ms2}.

\begin{theorem}\label{tf.y}
Let $(\cx,d,\mu)$ be a metric measure space of homogeneous type. Then
there exists a positive constant $C$ such that, for all $f\in\hona$,
$$
\|\cm(f)\|_{\lon}\le C\|f\|_{\hona},
$$
where $\cm$ is as in \eqref{a.u}.
\end{theorem}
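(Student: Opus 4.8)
The plan is to deduce Theorem~\ref{tf.y} from the atomic decomposition of $\hona$ together with the known $\lon$-boundedness of the grand maximal function $\cm$ on individual atoms (which is essentially Lemma~\ref{lf.v} specialized to $g\equiv\mathrm{constant}$, or can be extracted directly from the arguments behind it). First I would recall that, by Definition~\ref{dc.k}, any $f\in\hona$ admits a decomposition $f=\sum_{j\in\nn}\lz_j a_j$ converging in $\lon$ with $\sum_j|\lz_j|\ls\|f\|_{\hona}$, where each $a_j$ is a $(1,2)$-atom. Since $\cm$ is sublinear and the series converges in $\lon\st(\cg^\ez_0(\bz,\gz))^*$, a standard argument gives the pointwise bound $\cm(f)(x)\le\sum_{j\in\nn}|\lz_j|\,\cm(a_j)(x)$ for $\mu$-almost every $x\in\cx$; taking $\lon$-norms and applying the Minkowski inequality reduces matters to showing a \emph{uniform} bound $\|\cm(a)\|_{\lon}\le C$ for every $(1,2)$-atom $a$, with $C$ independent of $a$.

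Next I would prove this uniform atomic estimate by the classical splitting into a local part and a tail part. Let $a$ be supported in a ball $B:=B(x_0,r_0)$. On $2B$ (say), one uses the $L^2$-size condition $\|a\|_{\ltw}\le[\mu(B)]^{-1/2}$ together with the $\ltw$-boundedness of $\cm$ (which follows, as in the proof of \cite[Lemma 4.4]{gly1} or \cite[Lemma (2.3)]{ms2}, from the $\ltw$-boundedness of the Hardy--Littlewood maximal function), and then the H\"older inequality: $\|\cm(a)\|_{\lon(2B)}\le[\mu(2B)]^{1/2}\|\cm(a)\|_{\ltw}\ls[\mu(2B)]^{1/2}\|a\|_{\ltw}\ls1$ by the doubling property \eqref{a.b}. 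On $\cx\bh 2B$, one exploits the cancellation $\int_\cx a\,d\mu=0$: for a test function $h$ with $\|h\|_{\cg(x,r,\bz,\gz)}\le1$ and $x\notin 2B$, writing $\langle a,h\rangle=\int_B a(y)[h(y)-h(x_0)]\,d\mu(y)$ and invoking the regularity condition (T2) of Definition~\ref{da.d}, one gets a pointwise bound of the form $\cm(a)(x)\ls[r_0/d(x,x_0)]^\bz\frac{1}{V(x_0,x)}$ for $x\in\cx\bh 2B$; integrating this over $\cx\bh 2B$ via the standard annular decomposition and \eqref{a.b} gives a bound independent of $a$.

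The main obstacle I anticipate is the careful verification of the pointwise tail estimate for $\cm(a)$ on $\cx\bh 2B$, i.e.\ correctly tracking the interplay between the normalization of the test functions $h$ (whose norm is measured with respect to the varying center $x$ and scale $r$) and the atom's cancellation and support, so that the resulting kernel-type bound decays like $[r_0/d(x,x_0)]^\bz$ and is summable against $d\mu$ over the annuli $2^{k+1}B\bh 2^kB$. This is routine but must be done with some care regarding the two regimes $r\le d(x,x_0)$ and $r>d(x,x_0)$; once this is in hand, the annular sum $\sum_{k\in\nn}2^{-k\bz}\mu(2^{k+1}B)/\mu(2^kB)\ls\sum_{k\in\nn}2^{-k\bz}2^{(k+1)n}<\fz$ (choosing $\bz>n$, which we may by Definition~\ref{da.d} since $\bz,\gz$ can be taken as small or, rather, the test-function parameters are at our disposal via Definition~\ref{df.w}; in fact one only needs $\gz$ large enough in (T1) for the size term, which the theory permits) closes the estimate. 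Finally, combining the local and tail bounds yields $\|\cm(a)\|_{\lon}\le C$ uniformly, and the Minkowski argument above completes the proof.
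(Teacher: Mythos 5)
Your proof is essentially correct, and the core atomic estimate (the split of $\int_\cx\cm(a)\,d\mu$ into the local part over $2B$, handled by the pointwise bound $\cm(a)\ls Ma$ plus H\"older and the $L^2$-boundedness of the Hardy--Littlewood maximal function, and the tail part over $\cx\bh 2B$, handled by the cancellation of $a$ and condition (T2)) is exactly what the paper does. Where you genuinely diverge is in the extension step: you pass from the uniform atom bound to all of $\hona$ via the infinite atomic decomposition and the pointwise inequality $\cm(f)\le\sum_j|\lz_j|\cm(a_j)$, whereas the paper only uses \emph{finite} atomic decompositions and then invokes Theorem \ref{td.f}, which rests on the Mauceri--Meda equivalence of norms (Theorem \ref{tf.z}). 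Your shortcut is legitimate for this particular operator, because $\cm$ is a supremum of pairings against test functions that are bounded by (T1), and the series $\sum_j\lz_ja_j$ converges in $\lon$, so $\langle f,h\rangle=\sum_j\lz_j\langle a_j,h\rangle$ and the pointwise bound follows; this sidesteps the well-known danger that an operator uniformly bounded on atoms need not extend boundedly to $H^1$. What the paper's route buys is uniformity with the rest of the article, where Theorem \ref{td.f} is needed anyway for operators (such as $\Pi_1$, $\Pi_2$, $\Pi_3$) for which no such pointwise sublinearity over infinite sums is available.

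Two small slips to correct. First, the parenthetical claim that the atom bound is ``Lemma \ref{lf.v} specialized to $g\equiv\mathrm{constant}$'' is vacuous: for constant $g$ both sides of that lemma are zero, so it yields no information (your direct argument is what actually does the work). Second, your anticipated need to ``choose $\bz>n$'' is based on a miscomputation and is in fact impossible, since $\bz\in(0,1]$. The correct bookkeeping is that, for $x\in 2^{k+1}B\bh 2^kB$, one has $V(x_0,x)\ge\mu(2^kB)$, so each annulus contributes at most a constant times $2^{-k\bz}\,\mu(2^{k+1}B)/\mu(2^kB)\ls 2^{-k\bz}$, the ratio of consecutive annuli measures being bounded by the doubling constant (one application of \eqref{a.b} with $\lz=2$), not by $2^{(k+1)n}$. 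Hence the tail sum converges for every $\bz\in(0,1]$, exactly as in the paper's estimate $\sum_{t=1}^\fz 2^{-t\bz}\ls1$, and no largeness condition on the test-function parameters is required.
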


\begin{proof}
We first prove that,
for any $(1,2)$-atoms $a$, supported in the ball $B_0:=B(x_0,r_0)$
for some $x_0\in\cx$ and $r_0\in(0,\fz)$,
\begin{equation}\label{5.7x}
\|\cm(a)\|_{\lon}\ls1.
\end{equation}

Indeed, we first write
$$
\int_\cx\cm(a)\,d\mu(x)=\int_{B(x_0,\,2r_0)}\cm(a)\,d\mu(x)
+\int_{\cx\bh B(x_0,\,2r_0)}\cdots=:{\rm I}+{\rm II}.
$$

To estimate I, let $\ez\in(0,1)$ and
$\bz,\,\gz\in(0,\ez)$ be as in Definition \ref{da.d}.
By (T1) of Definition \ref{da.d}, we know that,
for all $x\in\cx$, $r\in(0,\fz)$ and $h\in\cg(x,r,\bz,\gz)$
with $\|h\|_{\cg(x,r,\bz,\gz)}\le1$,
\begin{eqnarray*}
|\langle a,h\rangle|&&\le\int_{B(x_0,\,r_0)}|a(y)h(y)|\,d\mu(y)\\
&&\ls\frac1{V(x,r)}\int_{B(x_0,\,r_0)\cap B(x,\,r)}|a(y)|\,d\mu(y)\\
&&\hs+\sum_{t=0}^\fz\int_{B(x,\,2^{t+1}r)\bh B(x,\,2^t r)}
\frac{|a(y)|}{V(x,y)}\lf[\frac{r}{r+d(x,y)}\r]^\gz\,d\mu(y)\\
&&\ls Ma(x)+\sum_{t=0}^\fz2^{-t\gz}\frac1{V(x,2^t r)}
\int_{B(x,\,2^{t+1}r)}|a(y)|\,d\mu(y)\\
&&\ls Ma(x)+\sum_{t=0}^\fz2^{-t\gz}Ma(x)\ls Ma(x).
\end{eqnarray*}
Thus, for all $x\in\cx$, $\cm a(x)\ls Ma(x)$,
which, combined with the H\"older inequality, \eqref{a.b}
and the boundedness of $M$ on $\ltw$ (see \cite[(3.6)]{cw77}),
further implies that
 \begin{eqnarray*}
{\rm I}&&\ls\int_{B(x_0,\,2r_0)}Ma(x)\,d\mu(x)
\ls\|Ma\|_{\ltw}[V(x_0,r_0)]^{1/2}
\ls\|a\|_{\ltw}[V(x_0,r_0)]^{1/2}\ls1.
\end{eqnarray*}

Now we turn to estimate II. Observe that,
for all $x\not\in B(x_0,2r_0)$ and $y\in B(x_0,r_0)\cap B(x,r)$,
we see that $r>r_0$ and hence $$
d(y,x_0)<r_0=\frac{2r_0}2\le\frac{r_0+d(x,x_0)}{2}
<\frac{r+d(x,x_0)}2.
$$
Thus, from this, $\int_\cx a\,d\mu=0$, (T2) of Definition \ref{da.d}
and $r>r_0$, we deduce that, for $h\in\cg(x,r,\bz,\gz)$
satisfying that $\|h\|_{\cg(x,r,\bz,\gz)}\le1$,
\begin{eqnarray*}
|\langle a,h\rangle|&&\le\int_{B(x_0,\,r_0)}|a(y)||h(y)-h(x_0)|\,d\mu(y)\\
&&\ls\int_{B(x_0,\,r_0)\cap B(x,\,r)}
\lf[\frac{d(y,x_0)}{r+d(x,x_0)}\r]^{\bz}
\frac{|a(y)|}{V(x,r)+V(x,x_0)}\lf[\frac{r}{r+d(x,x_0)}\r]^{\gz}
\,d\mu(y)\\
&&\ls\lf[\frac{r_0}{r_0+d(x,x_0)}\r]^{\bz}
\frac1{V(x,x_0)}\|a\|_{\lon}\ls\lf[\frac{r_0}{r_0+d(x,x_0)}\r]^{\bz}
\frac1{V(x,x_0)}.
\end{eqnarray*}
This, together with \eqref{a.b}, further implies that
\begin{eqnarray*}
{\rm II}&&\ls\int_{\cx\bh B(x_0,\,2r_0)}
\lf[\frac{r_0}{r_0+d(x,x_0)}\r]^{\bz}\frac1{V(x,x_0)}\,d\mu(x)\\
&&\ls\sum_{t=1}^\fz\frac1{V(x_0,2^tr_0)}
\int_{B(x_0,\,2^{t+1}r_0)\bh B(x_0,\,2^t r_0)}
\lf[\frac{r_0}{r_0+2^t r_0}\r]^{\bz}\,d\mu(x)
\ls\sum_{t=1}^\fz 2^{-t\bz}\ls1,
\end{eqnarray*}
Thus, \eqref{5.7x} holds true.

Moreover, for all $f\in\hfa$, there exist $N\in\nn$, a sequence
$\{a_j\}_{j=1}^N$ of $(1,2)$-atoms and $\{\lz_j\}_{j=1}^N\st\cc$
such that $f=\sum_{j=1}^N\lz_ja_j$ and
$\sum_{j=1}^N|\lz_j|\ls\|f\|_{\hfa}$.
From this, the fact that $\cm$ is sublinear and \eqref{5.7x}, we deduce that
$$
\|\cm(f)\|_{\lon}\le\sum_{j=1}^N\lf|\lz_j\r|\lf\|\cm\lf(a_j\r)\r\|_{\lon}
\ls\sum_{j=1}^N\lf|\lz_j\r|\ls\|f\|_{\hfa},
$$
which, together with Theorem \ref{td.f}, implies that
$\cm$ can be extended to a bounded sublinear operator from $\hona$
into $\lon$.
This finishes the proof of Theorem \ref{tf.y}.
\end{proof}

By Remark \ref{rf.x} and Theorem \ref{tf.y}, we easily
obtain the following result, the details being omitted.

\begin{corollary}\label{cf.o}
Let $(\cx,d,\mu)$ be a metric measure space of homogeneous type.
Then $\hona\st\hlo$ and there exists a positive constant $C$
such that, for all $f\in\hona$,
$$
\|f\|_{\hlo}\le C\|f\|_{\hona}.
$$
\end{corollary}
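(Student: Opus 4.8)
The plan is to derive Corollary \ref{cf.o} as a two-line consequence of Theorem \ref{tf.y} and Remark \ref{rf.x}, so the main work is just to check that the two ingredients fit together. First I would note that every $f\in\hona$ may legitimately be regarded as an element of $(\cg^\ez_0(\bz,\gz))^*$: indeed $f\in\lon$, and each $h\in\cg^\ez_0(\bz,\gz)$ is bounded (by (T1) of Definition \ref{da.d}, $h\in\li$), so the pairing $\langle f,h\rangle:=\int_\cx fh\,d\mu$ is well defined and defines a continuous linear functional on $\cg^\ez_0(\bz,\gz)$. Consequently the grand maximal function $\cm(f)$ in \eqref{a.u} is defined for $f\in\hona$, and the quantity $\|f\|_{\hlo}=\|\cm(f)\|_{\llo}$ from Definition \ref{df.w} makes sense.

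Next I would invoke Theorem \ref{tf.y}, which gives $\cm(f)\in\lon$ together with the estimate $\|\cm(f)\|_{\lon}\le C\|f\|_{\hona}$. Then Remark \ref{rf.x} supplies the inclusion $\lon\st\llo$ with $\|g\|_{\llo}\le\|g\|_{\lon}$ for all $g\in\lon$; applying this with $g:=\cm(f)$ yields
$$
\|f\|_{\hlo}=\|\cm(f)\|_{\llo}\le\|\cm(f)\|_{\lon}\le C\|f\|_{\hona}.
$$
In particular $\cm(f)\in\llo$, hence $f\in\hlo$, and the displayed chain of inequalities is exactly the asserted norm bound.

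There is essentially no obstacle in this argument; the only points deserving a word of care are the identification of $\hona$ with a subspace of $(\cg^\ez_0(\bz,\gz))^*$ made above, and the observation that the grand maximal function appearing in Theorem \ref{tf.y} is literally the same $\cm$ used to define $\hlo$ in Definition \ref{df.w}. Both are immediate from the definitions, so the proof is genuinely short and the details may be omitted, as the paper does.
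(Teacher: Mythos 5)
Your proof is correct and follows exactly the route the paper takes: the corollary is obtained by combining Theorem \ref{tf.y} (the bound $\|\cm(f)\|_{\lon}\le C\|f\|_{\hona}$) with Remark \ref{rf.x} (the embedding $\lon\st\llo$ with $\|\cdot\|_{\llo}\le\|\cdot\|_{\lon}$), which is precisely what the paper does while omitting the details. Your extra care in checking that $f\in\hona$ acts on $\cg^\ez_0(\bz,\gz)$ so that $\cm(f)$ and $\|f\|_{\hlo}$ are well defined is a reasonable addition, not a deviation.
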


Now we deal with the boundedness of $\Pi_2$.
Recall that $\Pi_2$ in \eqref{e.z} is bounded from
$\ltw\times\ltw$ into $\hona$ (see Remark \ref{re.g}).
To extend $\Pi_2$ into a bounded bilinear operator
from $\hona\times\bmo$ into $\hlo$, we first formally write
\begin{equation}\label{x.g}
\Pi_2(a,g):=\sum_{j\in\zz}\lf[\sum_{\bz\in\scb_j}\lf(a,\psi^j_\bz\r)
\psi^j_\bz\r]\lf[\sum_{\az\in\sca_j}\lf\langle g,{\mathfrak s}^j_\az\r\rangle
s^j_\az\r]
\end{equation}
for any $(1,2)$-atom $a$ and $g\in\bmo$, where $\sca_j$ and $\scg_j$
for any $j\in\zz$ are as in, respectively, \eqref{b.v}
and \eqref{b.w}, ${\mathfrak s}^j_\az:=s^j_\az/\nu^j_\az$
and $\nu^j_\az:=\int_{\cx}s^j_\az\,d\mu$.
We point out that, if $a,\,g\in L^2(\cx)$,
then $\Pi_2(a,g)$ in \eqref{x.g} coincides
with $\Pi_2(a,g)$ in \eqref{e.z} with $f$ replaced by $a$ and, in this case,
it is known that $\Pi_2(a,g)\in\hona$ (see Remark \ref{re.g}).

\begin{theorem}\label{tf.d}
Let $(\cx,d,\mu)$ be a metric measure space of homogeneous type. Then,
for any $(1,2)$-atom $a$ and $g\in\bmo$,
$\Pi_2(a,g)$ as in \eqref{x.g} belongs to $\hlo$ and
can be extended to a bounded bilinear operator
from $\hona\times\bmo$ into $\hlo$.
\end{theorem}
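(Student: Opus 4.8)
The plan is to follow the same three-part strategy that succeeded for $\Pi_3$ in Theorem \ref{tf.b} and for $\Pi_1$ in Theorem \ref{tf.c}: first prove the atomic estimate $\|\Pi_2(a,g)\|_{\hlo}\lesssim\|g\|_{\bmop}$ (or $\lesssim\|g\|_{\bmo}$ after subtracting a constant from $g$) for every $(1,2)$-atom $a$ supported in $B_0:=B(x_0,r_0)$, then pass to $\hfa\times\bmo$ by finite linear combinations, and finally invoke Theorem \ref{td.f} with $T(\cdot):=\Pi_2(\cdot,g)$ and $A\sim\|g\|_{\bmo}$ to get a bounded bilinear operator $\hona\times\bmo\to\hlo$. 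Fix $k_0\in\zz$ with $\dz^{k_0+1}\le r_0<\dz^{k_0}$ and a large constant $C_5$, and decompose, exactly as before,
$$
\Pi_2(a,g)=\Pi_2^{(1)}(a,g)+\Pi_2^{(2)}(a,g)+\Pi_2^{(3)}(a,g),
$$
where $\Pi_2^{(1)}$ collects the terms with $j>k_0$ and $y^j_\bz\in C_5B_0$, $\Pi_2^{(2)}$ the terms with $j>k_0$ and $y^j_\bz\notin C_5B_0$, and $\Pi_2^{(3)}$ the terms with $j\le k_0$.

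For $\Pi_2^{(1)}(a,g)$: as in \eqref{f.2}, the function $a^{(1)}:=\sum_{\{j:\,\dz^j\le r_0\}}\sum_{\{\bz\in\scg_j:\,y^j_\bz\in C_5B_0\}}(a,\psi^j_\bz)\psi^j_\bz$ lies in $\ltw$ with $\|a^{(1)}\|_{\ltw}\lesssim\|a\|_{\ltw}$ (it is just a partial wavelet sum of $a$), while the $g$-part $g^{(1)}:=\sum_{\az\in\sca_j}\langle g,{\mathfrak s}^j_\az\rangle s^j_\az$ restricted to the relevant range is in $\ltw$; one checks $\Pi_2^{(1)}(a,g)=\Pi_2(a^{(1)},g^{(1)})$ coincides with the $\ltw$-defined paraproduct, so Remark \ref{re.g} gives $\Pi_2^{(1)}(a,g)\in\hona\subset\hlo$ (Corollary \ref{cf.o}) with norm $\lesssim\|g\|_{\bmo}$. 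For $\Pi_2^{(2)}(a,g)$: here I would mimic the estimate of $\Pi_1^{(2)}$, using that $s^j_\az\psi^j_\bz$ (suitably normalized and with the exponential factor $e^{\frac{\nu}{2}\dz^{-j}d(x^j_\az,y^j_\bz)}$) is a $(1,2)$-atom (claim \eqref{5.7z}), together with \eqref{f.a}, the vanishing moment $\int_\cx s^j_\az\psi^j_\bz\,d\mu=0$, the exponential decay of the wavelets, Lemma \ref{lb.x}, the counting bounds \eqref{x.a}, \eqref{x.b}, and \eqref{a.b}; this produces a geometric sum in $j-k_0$ and $t$ bounded by $\|g\|_{\bmo}$, hence $\Pi_2^{(2)}(a,g)\in\hona\subset\hlo$ with norm $\lesssim\|g\|_{\bmo}$.

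The genuinely new piece is $\Pi_2^{(3)}(a,g)$, the low-frequency part, and here the key simplification promised in the introduction is the identity $\Pi_2(a,1)=a$ (to be recorded as \eqref{5.10y}), which follows from $\sum_{\az\in\sca_j}\langle 1,{\mathfrak s}^j_\az\rangle s^j_\az=\sum_{\az\in\sca_j}s^j_\az=1$ by \eqref{b.y} together with the MRA telescoping of Lemma \ref{le.a}. Writing $g=m_{B_0}(g)+(g-m_{B_0}(g))$, the constant part contributes $m_{B_0}(g)\cdot(\text{low-frequency part of }\Pi_2(a,1))$, which, since $\Pi_2(a,1)=a$ and the high-frequency parts of $\Pi_2(a,1)$ are exactly $\Pi_2^{(1)}(a,1)+\Pi_2^{(2)}(a,1)$ already controlled, reduces $\Pi_2^{(3)}(a,m_{B_0}(g))$ to $m_{B_0}(g)a$ minus controlled terms; using $|m_{B_0}(g)|\lesssim\|g\|_{\bmop}$ and that $a$ is a $(1,2)$-atom (so $m_{B_0}(g)a\in\hona$), this is bounded in $\hlo$ by $\|g\|_{\bmop}$. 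For the oscillation part $g-m_{B_0}(g)$, I would estimate the grand maximal function directly: by Lemma \ref{lf.v}, $\|[g-m_{B_0}(g)]\cm(a)\|_{\lon}\lesssim\|g\|_{\bmo}$, and by Lemma \ref{lf.u} together with Theorem \ref{tf.y}, products of $\lon$ functions with $\bmo$ functions land in $\llo$ with the right bound; combining these with the pointwise domination of $\cm(\Pi_2^{(3)}(a,g-m_{B_0}(g)))$ by (a constant times) $[|g|+|m_{B_0}(g)|]\cm(a)$ plus tail terms handled by the exponential decay of the spline/wavelet kernels and \eqref{a.b}, one gets $\|\Pi_2^{(3)}(a,g-m_{B_0}(g))\|_{\hlo}\lesssim\|g\|_{\bmop}$. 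Summing the three parts yields $\|\Pi_2(a,g)\|_{\hlo}\lesssim\|g\|_{\bmop}$, and after the standard reduction (replacing $g$ by $g-m_B(g)$ on the ball of the atom so that only the true $\bmo$ seminorm appears) the finite-atom estimate $\|\Pi_2(f,g)\|_{\hlo}\lesssim\|f\|_{\hfa}\|g\|_{\bmo}$ follows by finite additivity; Theorem \ref{td.f} then finishes the proof. The main obstacle I anticipate is the bookkeeping in $\Pi_2^{(3)}$: correctly isolating the $\Pi_2(a,1)=a$ cancellation, matching up which high-frequency truncation terms must be added back, and making the $\hlo$-norm (as opposed to $\lon$- or $\hona$-norm) estimates via Lemmas \ref{lf.v}, \ref{lf.u} and Theorem \ref{tf.y} interact cleanly with the spline tails — everything else is parallel to Theorems \ref{tf.b} and \ref{tf.c}.
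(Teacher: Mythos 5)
Your overall scaffolding (atomic estimate, then finite linear combinations, then Theorem \ref{td.f}) matches the paper, and you correctly spot the identity $\Pi_2(a,1)=a$ and the relevance of Lemmas \ref{lf.v} and \ref{lf.u}. But your primary decomposition is the wrong one and would fail. You split by frequency and by the location of the wavelet centers $y^j_\bz$, transplanting the scheme used for $\Pi_1$ and $\Pi_3$. In those operators $g$ enters through \emph{wavelet} coefficients $\langle g,\psi^j_\gz\rangle$, which are controlled by $\|g\|_{\bmo}$ via \eqref{f.a} thanks to the vanishing moment \eqref{b.c}. In $\Pi_2$ as in \eqref{x.g}, $g$ enters through \emph{spline} averages $\langle g,{\mathfrak s}^j_\az\rangle$, which are genuine local means of $g$ and are not controlled by $\|g\|_{\bmo}$ at any scale. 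Consequently your claim that the "$g$-part restricted to the relevant range is in $\ltw$" with a usable norm bound is false, and your plan to run the $\Pi_1^{(2)}$ argument "using \eqref{f.a}" for the far part has no analogue: there is no bound of the form $|\langle g,{\mathfrak s}^j_\az\rangle|\lesssim\|g\|_{\bmo}\sqrt{\mu^j_\az}$. The paper instead makes the splitting of $g$ itself the primary decomposition, writing $g=[g-m_{B_0}(g)]\chi_{C_5B_0}+[g-m_{B_0}(g)]\chi_{\cx\bh C_5B_0}+m_{B_0}(g)$; the first piece is in $\ltw$ with norm $\lesssim\|g\|_{\bmo}\sqrt{\mu(C_5B_0)}$ by John--Nirenberg, the second has spline coefficients bounded by $\|g\|_{\bmo}$ times a logarithmic factor (estimate \eqref{f.f}), and only then does one split the second piece by $j\lessgtr k_0$.

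A second genuine error is your assertion that $|m_{B_0}(g)|\lesssim\|g\|_{\bmop}$, from which you conclude $m_{B_0}(g)a\in\hona$ with norm $\lesssim\|g\|_{\bmop}$. This is false: $m_{B_0}(g)$ grows logarithmically in the size and location of $B_0$, uniformly only up to a factor $1+\log\frac{1+r_0+d(x_0,x_1)}{\min\{1,r_0\}}$. This is precisely why $\Pi_2$ lands in $\hlo$ rather than $\hona$: the constant contribution $\Pi_2(a,m_{B_0}(g))=m_{B_0}(g)a$ must be estimated through $\|\cm(m_{B_0}(g)a)\|_{\llo}\le\|[m_{B_0}(g)-g]\cm(a)\|_{\llo}+\||g|\cm(a)\|_{\llo}$, where Lemma \ref{lf.v} handles the first term and Lemma \ref{lf.u} (whose logarithmic weight in the $\llo$ norm absorbs the growth of $m_{B_0}(g)$) handles the second. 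Your write-up applies these lemmas to the oscillation part instead, and treats the constant part as if it were harmless in $\hona$; reorganized as above, the argument closes, but as written the constant part is not controlled.
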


\begin{proof}
We first prove that, for any $(1,2)$-atom $a$
supported in the ball $B_0:=B(x_0,r_0)$,
with $x_0\in\cx$ and $r_0\in(0,\fz)$,
and $g\in\bmo$, $\Pi_2(a,g)$ belongs to $\hlo$ and
\begin{equation}\label{f.i}
\|\Pi_2(a,g)\|_{\hlo}\ls\|g\|_{\bmop},
\end{equation}
where the  implicit positive constant is independent of $a$ and $g$.

Let $k_0\in\zz$ satisfy $\dz^{k_0+1}\le r_0<\dz^{k_0}$
and $C_5$ be a sufficiently large positive constant which will
be determined later. We formally write
\begin{eqnarray*}
\Pi_2(a,g)&&=\sum_{j\in\zz}\lf\{\sum_{\bz\in\scg_j}
\lf(a,\psi^j_\bz\r)\psi^j_\bz\r\}
\lf\{\sum_{\az\in\sca_j}\lf(\lf[g-m_{B_0}(g)\r]
\chi_{C_5B_0},{\mathfrak s}^j_\az\r)s^j_\az\r\}\\
&&\hs+\sum_{j\in\zz}\lf\{\sum_{\bz\in\scg_j}\lf(a,\psi^j_\bz\r)\psi^j_\bz\r\}
\lf\{\sum_{\az\in\sca_j}\lf\langle\lf[g-m_{B_0}(g)\r]
\chi_{\cx\bh C_5B_0},{\mathfrak s}^j_\az\r\rangle s^j_\az\r\}\\
&&\hs+\sum_{j\in\zz}\lf\{\sum_{\bz\in\scg_j}\lf(a,\psi^j_\bz\r)\psi^j_\bz\r\}
\lf\{\sum_{\az\in\sca_j}\lf\langle m_{B_0}(g),{\mathfrak s}^j_\az\r
\rangle s^j_\az\r\}\\
&&=:\Pi_2\lf(a,\lf[g-m_{B_0}(g)\r]\chi_{C_5B_0}\r)
+\Pi_2\lf(a,\lf[g-m_{B_0}(g)\r]\chi_{\cx\bh C_5B_0}\r)
+\Pi_2\lf(a,m_{B_0}(g)\r)\\
&&=:\Pi_2^{(1)}(a,g)+\Pi_2^{(2)}(a,g)+\Pi_2^{(3)}(a,g),
\end{eqnarray*}
where $m_{B_0}(g):=[\mu(B_0)]^{-1}\int_{B_0}g\,d\mu$.

By the John-Nirenberg inequality (see \cite{cw77})
and \eqref{a.b}, we have
\begin{eqnarray*}
&&\lf\|\lf[g-m_{B_0}(g)\r]\chi_{C_5B_0}\r\|_{\ltw}\\
&&\hs\le\lf[\int_{C_5B_0}
\lf|g(x)-m_{C_5B_0}(g)\r|^2\,d\mu(x)\r]^{1/2}
+\lf[\mu(C_5B_0)\r]^{1/2}\lf|m_{B_0}(g)-m_{C_5B_0}(g)\r|\\
&&\hs\ls\lf[\mu(C_5B_0)\r]^{1/2}\|g\|_{\bmo},
\end{eqnarray*}
which, combined with $a\in\ltw$ and Remark \ref{re.g},
implies that $\Pi_2^{(1)}(a,g)$ belongs to $\hona$ and
\begin{eqnarray*}
\lf\|\Pi_2^{(1)}(a,g)\r\|_{\hona}
&&=\lf\|\Pi_2\lf(a,\lf[g-m_{B_0}(g)\r]\chi_{C_5B_0}\r)\r\|_{\hona}\\
&&\ls\|a\|_{\ltw}\lf\|\lf[g-m_{B_0}(g)\r]\chi_{C_5B_0}\r\|_{\ltw}\\
&&\ls\lf[\mu(B_0)\r]^{-1/2}\lf[\mu(C_5B_0)\r]^{1/2}\|g\|_{\bmo}
\ls\|g\|_{\bmo}.
\end{eqnarray*}

From this and Corollary \ref{cf.o}, it follows that
$\Pi_2^{(1)}(a,g)\in\hlo$ and
$$
\lf\|\Pi_2^{(1)}(a,g)\r\|_{\hlo}\ls\lf\|\Pi_2^{(1)}(a,g)\r\|_{\hona}
\ls\|g\|_{\bmo}.
$$

To deal with $\Pi_2^{(2)}(a,g)$, we first estimate
$|([g-m_{B_0}(g)]\chi_{\cx\bh C_5B_0},{\mathfrak s}^j_\az)|$
for all $(j,\az)\in\sca$ with $\sca$ as in \eqref{b.s}.
Indeed, from \eqref{b.x}, \cite[Lemma 11.1]{ah13}
and \eqref{a.b}, we deduce that
\begin{eqnarray}\label{f.f}
&&\lf|\lf(\lf[g-m_{B_0}(g)\r]\chi_{\cx\bh C_5B_0},{\mathfrak s}^j_\az\r)\r|\\
&&\noz\hs\le\frac1{V(x^j_\az,\dz^j)}
\int_{B(x^j_\az,\,8\dz^j)}\lf|g(x)-m_{B_0}(g)\r|\,d\mu(x)\\
&&\noz\hs\le\frac1{V(x^j_\az,\dz^j)}\int_{B(x^j_\az,\,8\dz^j)}\lf|g(x)
-m_{B(x^j_\az,\,8\dz^j)}(g)\r|\,d\mu(x)\\
&&\noz\hs\hs+\lf|m_{B_0}(g)-m_{B(x^j_\az,\,8\dz^j)}(g)\r|\\
&&\noz\hs\ls\|g_{\bmo}\|\lf[1+\log\frac{8\dz^j+r_0+d(x^j_\az,x_0)}
{\min\{8\dz^j,r_0\}}\r]\\
&&\noz\hs\ls\|g\|_{\bmo}\lf[1+\log\frac{\dz^j+r_0
+d(x^j_\az,x_0)}{\min\{\dz^j,r_0\}}\r].
\end{eqnarray}

By the claim \eqref{5.7z} in the proof of Theorem \ref{tf.c} that,
for any $j\in\zz$, $\az\in\sca_j$ with $\sca_j$ as in \eqref{b.v},
and $\bz\in\scg_j$ with $\scg_j$ as in \eqref{b.w},
$a^j_{\az,\,\bz}$ is a $(1,2)$-atom,
multiplied by a positive
constant, supported in $B(x^j_\az,10\dz^j)$, we find that
\begin{eqnarray*}
{\rm I}:=&&\sum_{j\in\zz}\sum_{\az\in\sca_j}\sum_{\bz\in\scg_j}
\lf|\lf(\lf[g-m_{B_0}(g)\r]\chi_{\cx\bh C_5B_0},{\mathfrak s}^j_\az\r)\r|
\lf|\lf(a,\psi^j_\bz\r)\r|\lf\|s^j_\az\psi^j_\bz\r\|_{\hona}\\
\ls&&\sum_{j\in\zz}\sum_{\az\in\sca_j}\sum_{\bz\in\scg_j}
\lf|\lf(\lf[g-m_{B_0}(g)\r]\chi_{\cx\bh C_5B_0},{\mathfrak s}^j_\az\r)\r|
\lf|\lf(a,\psi^j_\bz\r)\r|\\
&&\times\sqrt{V\lf(x^j_\az,10\dz^j\r)}
e^{-\frac{\nu}{2}\dz^{-j}d(x^j_\az,\,y^j_\bz)}\\
\sim&&\sum_{j=k_0+1}^\fz\sum_{\az\in\sca_j}\sum_{\bz\in\scg_j}
\lf|\lf(\lf[g-m_{B_0}(g)\r]\chi_{\cx\bh C_5B_0},{\mathfrak s}^j_\az\r)\r|
\lf|\lf(a,\psi^j_\bz\r)\r|\\
&&\times\sqrt{V\lf(x^j_\az,10\dz^j\r)}
e^{-\frac{\nu}{2}\dz^{-j}d(x^j_\az,\,y^j_\bz)}
+\sum_{j=-\fz}^{k_0}\cdots=:{\rm I}_1+{\rm I}_2.
\end{eqnarray*}

Observe that, for any $j\in\zz\cap[k_0+1,\fz)$ and $\az\in\sca_j$,
if $B(x^j_\az,8\dz^j)\cap(\cx\bh C_5B_0)\neq\emptyset$, then,
by $\dz^{k_0+1}<r_0$, there exists $y\in
B(x^j_\az,8\dz^j)\cap(\cx\bh C_5B_0)$ such that
$$
d(y,x_0)\ge C_5r_0>C_5\dz^{k_0+1}\ge C_5\dz^{j}\ge18\dz^j
>2d\lf(y,x^j_\az\r)
$$
provided that $C_5\ge18$, and hence
\begin{equation}\label{y.u}
d\lf(x^j_\az,x_0\r)\ge d(y,x_0)-d\lf(x^j_\az,y\r)
>\frac12d\lf(y,x_0\r)\ge \frac{C_5}{2}r_0.
\end{equation}

Meanwhile, from an argument similar to that used in the proof of
\eqref{f.g}, we deduce that, for all $j\in\zz\cap[k_0+1,\fz)$ and
$\bz\in\scg_j$,
\begin{eqnarray}\label{x.p}
\qquad\lf|\lf(a,\psi^j_\bz\r)\r|&&
\ls\int_{B_0}|a(x)|\frac{e^{-\nu\dz^{-j}d(x,y^j_\bz)}}
{\sqrt{V(y^j_\bz,\dz^j)}}\,d\mu(x)
\ls e^{-\nu\dz^{-j}d(x_0,\,y^j_\bz)}
\frac{e^{\nu\dz^{-j}r_0}}{\sqrt{V(y^j_\bz,\dz^j)}}\|a\|_{\lon}\\
&&\noz\ls e^{-\nu\dz^{-j}d(x_0,\,y^j_\bz)}
\frac{e^{\nu\dz^{-j}r_0}}{\sqrt{V(y^j_\bz,\dz^j)}}.
\end{eqnarray}

Moreover, from Remark \ref{rb.l}(ii), it follows that,
for any $j\in\zz$ and $s\in\zz_+$,
\begin{equation}\label{x.c}
\#\lf\{\az\in\sca_j:\ x^j_\az\in 2^{s+1}\frac{C_5}2B_0
\bh 2^s\frac{C_5}2B_0\r\}\ls 2^{sG_0}\lf[\frac{r_0}{\dz^j}\r]^{G_0}.
\end{equation}
By \eqref{y.u}, \eqref{x.p}, \eqref{f.f}, $\dz^j\le\dz^{k_0+1}<r_0$
for each $j\in\zz\cap[k_0+1,\fz)$,
\eqref{a.b} and $C_5\ge18$, we conclude that
\begin{eqnarray*}
{\rm I}_1&&\ls\|g\|_{\bmo}
\sum_{j=k_0+1}^\fz\sum_{\{\az\in\sca_j:\ x^j_\az\not\in \frac{C_5}2B_0\}}
\sum_{\bz\in\scg_j}
\lf[1+\log\frac{r_0+d(x^j_\az,x_0)}{\dz^j}\r]\\
&&\hs\times e^{-\nu\dz^{-j}d(x_0,\,y^j_\bz)}e^{\nu\dz^{-j}r_0}
\lf[\frac{V(x^j_\az,\dz^j)}{V(y^j_\bz,\dz^j)}\r]^{1/2}
e^{-\frac{\nu}{2}\dz^{-j}d(x^j_\az,\,y^j_\bz)}\\
&&\ls\|g\|_{\bmo}\sum_{j=k_0+1}^\fz
\sum_{\{\az\in\sca_j:\ x^j_\az\not\in \frac{C_5}2B_0\}}e^{\nu\dz^{-j}r_0}
e^{-\frac{\nu}{4}\dz^{-j}d(x_0,\,x^j_\az)}
\lf[1+\log\frac{r_0+d(x^j_\az,x_0)}{\dz^j}\r]\\
&&\hs\times \sum_{\{\bz\in\scg_j:\ d(x^j_\az,\,y^j_\bz)\ge\dz^{j+1}\}}
e^{-\frac{3\nu}{4}\dz^{-j}d(x_0,\,y^j_\bz)}
e^{-\frac{\nu}{4}\dz^{-j}d(x^j_\az,\,y^j_\bz)}
\lf[\frac{d(x^j_\az,y^j_\bz)+\dz^j}{\dz^j}\r]^{n/2}\\
&&\ls\|g\|_{\bmo}\sum_{j=k_0+1}^\fz
\sum_{\{\az\in\sca_j:\ x^j_\az\not\in \frac{C_5}2B_0\}}e^{\nu\dz^{-j}r_0}
e^{-\frac{\nu}{4}\dz^{-j}d(x_0,\,x^j_\az)}
\lf[1+\log\frac{r_0+d(x^j_\az,x_0)}{\dz^j}\r]\\
&&\hs\times \sum_{\{\bz\in\scg_j:\ d(x^j_\az,\,y^j_\bz)\ge\dz^{j+1}\}}
e^{-\frac{3\nu}{4}\dz^{-j}d(x_0,\,y^j_\bz)},
\end{eqnarray*}
which, combined with Lemma \ref{lb.x} and \eqref{x.c}, further implies that
\begin{eqnarray*}
{\rm I}_1&&\ls\|g\|_{\bmo}\sum_{j=k_0+1}^\fz
\sum_{\{\az\in\sca_j:\ x^j_\az\not\in \frac{C_5}2B_0\}}e^{\nu\dz^{-j}r_0}
e^{-\frac{\nu}{4}\dz^{-j}d(x_0,\,x^j_\az)}
\lf[1+\log\frac{r_0+d(x^j_\az,x_0)}{\dz^j}\r]\\
&&\ls\|g\|_{\bmo}\sum_{j=k_0+1}^\fz\sum_{s=0}^\fz
\sum_{\{\az\in\sca_j:\ x^j_\az\in 2^{s+1}\frac{C_5}2B_0
\bh 2^s\frac{C_5}2B_0\}}e^{\nu\dz^{-j}r_0}
e^{-\frac{\nu}{4}\dz^{-j}2^{s-1}C_5r_0}\\
&&\hs\times\lf[1+\log\frac{r_0+2^sC_5}{\dz^j}\r]\\
&&\ls\|g\|_{\bmo}\sum_{j=k_0+1}^\fz\sum_{s=0}^\fz
e^{-(2^{s+1}-1)\nu\dz^{-j}r_0}2^{sG_0}\lf(\frac{r_0}{\dz^j}\r)^{G_0}
\lf(1+s+\log\frac{r_0}{\dz^j}\r)\\
&&\ls\|g\|_{\bmo}.
\end{eqnarray*}

Furthermore, we observe that, for any given $j\in\zz\cap[-M_2,M_2]$,
with $M_2$ as in \eqref{4.1x}, and $\bz\in\scg_j$,
with $\scg_j$ as in \eqref{b.w}, by $\scg_j,\,\sca_j\st\sca_{j+1}$,
$\scg_j\cap\sca_j=\emptyset$ and \eqref{b.z},
we know that $\az\in\sca_j$ if and
only if $\az\in\sca_j$ and $d(x^j_{\az},y^j_\bz)\ge\dz^{j+1}$.
By this, \eqref{f.f}, \eqref{f.h}, $\dz^j\ge\dz^{k_0}>r_0$
for any $j\in\zz\cap(-\fz,k_0]$, \eqref{a.b} and Lemma \ref{lb.x},
we obtain
\begin{eqnarray*}
{\rm I}_2&&\ls\|g\|_{\bmo}
\sum_{j=-\fz}^{k_0}\lf[\frac{r_0}{\dz^j}\r]^{\eta}
\sum_{\az\in\sca_j}\sum_{\bz\in\scg_j}
\lf[1+\log\frac{\dz^j+d(x^j_\az,x_0)}{r_0}\r]\\
&&\hs\times e^{-\nu\dz^{-j}d(x_0,\,y^j_\bz)}
e^{-\frac{\nu}{2}\dz^{-j}d(x^j_\az,\,y^j_\bz)}
\lf[\frac{V(x^j_\az,\dz^j)}{V(y^j_\bz,\dz^j)}\r]^{1/2}\\
&&\ls\|g\|_{\bmo}
\sum_{j=-\fz}^{k_0}\lf[\frac{r_0}{\dz^j}\r]^{\eta}
\sum_{\bz\in\scg_j}e^{-\frac{3\nu}{4}\dz^{-j}d(x_0,\,y^j_\bz)}
\sum_{\{\az\in\sca_j:\ d(x^j_\az,\,y^j_\bz)\ge\dz^{j+1}\}}
e^{-\frac{\nu}{4}\dz^{-j}d(x^j_\az,\,x_0)}\\
&&\hs\times\lf[1+\log\frac{\dz^j+d(x^j_\az,x_0)}{r_0}\r]
e^{-\frac{\nu}{4}\dz^{-j}d(x^j_\az,\,y^j_\bz)}
\lf[\frac{d(x^j_\az,y^j_\bz)+\dz^j}{\dz^j}\r]^{n/2}\\
&&\ls\|g\|_{\bmo}
\sum_{j=-\fz}^{k_0}\lf[\frac{r_0}{\dz^j}\r]^{\eta}
\sum_{\bz\in\scg_j}e^{-\frac{3\nu}{4}\dz^{-j}d(x_0,\,y^j_\bz)}
\sum_{\{\az\in\sca_j:\ d(x^j_\az,\,y^j_\bz)\ge\dz^{j+1}\}}
e^{-\frac{\nu}{4}\dz^{-j}d(x^j_\az,\,x_0)}\\
&&\hs\times\lf[1+\log\frac{\dz^j+d(x^j_\az,x_0)}{r_0}\r]\\
&&\ls\|g\|_{\bmo}
\sum_{j=-\fz}^{k_0}\lf[\frac{r_0}{\dz^j}\r]^{\eta}
\sum_{\az\in\sca_j}
e^{-\frac{\nu}{4}\dz^{-j}d(x^j_\az,\,x_0)}
\lf[1+\log\frac{\dz^j+d(x^j_\az,x_0)}{r_0}\r].
\end{eqnarray*}

Moreover, notice that, for any $j\in\zz$ and $t\in\zz_+$,
from Remark \ref{rb.l}(ii), we deduce that
$
\#\{\az\in\sca_j:\ 2^t\dz^j\le d(x^j_\az,\,x_0)<2^{t+1}\dz^j\}
\ls2^{tG_0}
$,
which implies that
\begin{eqnarray*}
&&\sum_{\az\in\sca_j}
e^{-\frac{\nu}{4}\dz^{-j}d(x^j_\az,\,x_0)}
\lf[1+\log\frac{\dz^j+d(x^j_\az,x_0)}{r_0}\r]\\
&&\hs\ls\sum_{\{\az\in\sca_j:\ d(x^j_\az,\,x_0)<\dz^j\}}
\lf[1+\log\frac{\dz^j}{r_0}\r]\\
&&\hs\hs+\sum_{t=0}^\fz\sum_{\{\az\in\sca_j:\ 2^t\dz^j\le
d(x^j_\az,\,x_0)<2^{t+1}\dz^j\}}
e^{-\frac{\nu}{4}2^t}\lf[1+\log\frac{\dz^j+2^t\dz^j}{r_0}\r]\\
&&\hs\ls1+\log\frac{\dz^j}{r_0}
+\sum_{t=0}^\fz2^{(t+1)G_0}
e^{-\frac{\nu}{4}2^t}(1+t)\lf[1+\log\frac{\dz^j}{r_0}\r]
\ls1+\log\frac{\dz^j}{r_0}.
\end{eqnarray*}
By this, we further conclude that
$$
{\rm I}_2\ls\|g\|_{\bmo}
\sum_{j=-\fz}^{k_0}\lf[\frac{r_0}{\dz^j}\r]^{\eta}
\lf[1+\log\frac{\dz^j}{r_0}\r]\ls\|g\|_{\bmo},
$$
which, combined with Corollary \ref{cf.o}, the completion of $\hona$
and the estimate of $\rm I_1$,
further implies that $\Pi_2^{(2)}(a,g)\in\hlo$ and
$$
\lf\|\Pi_2^{(2)}(a,g)\r\|_{\hlo}\ls\lf\|\Pi_2^{(2)}(a,g)\r\|_{\hona}
\ls{\rm I}\ls{\rm I}_1+{\rm I}_2\ls\|g\|_{\bmo}.
$$

Finally, we deal with $\Pi_2^{(3)}(a,g)$.
By \eqref{b.y}, $a\in\ltw$ and Theorem \ref{tb.a}, we have
\begin{equation}\label{5.10y}
\Pi_2(a,1)=a.
\end{equation}
From this, Remark \ref{rf.x}, Lemmas \ref{lf.u} and \ref{lf.v},
it follows that
\begin{eqnarray*}
\lf\|\Pi_2^{(3)}(a,g)\r\|_{\llo}
&&\ls\lf\|\lf|m_{B_0}(g)-g\r|\cm(a)\r\|_{\llo}
+\lf\||g|\cm(a)\r\|_{\llo}\\
&&\ls\lf\|\lf|m_{B_0}(g)-g\r|\cm(a)\r\|_{\lon}
+\|a\|_{\lon}\||g|\|_{\bmop}\\
&&\ls\|g\|_{\bmo}+\|g\|_{\bmop}\ls\|g\|_{\bmop},
\end{eqnarray*}
which, combined with the estimates of $\Pi_2^{(1)}(a,g)$
and $\Pi_2^{(2)}(a,g)$, implies that $\Pi_2(a,g)$
belongs to $\hlo$ and \eqref{f.i} holds true.

From the above proof of \eqref{f.i},
we deduce that there exists $h:=\Pi^{(1)}_2(a,g)
+\Pi^{(2)}_2(a,g)\in\hona$ satisfying that
$\|h\|_{\hona}\ls\|g\|_{\bmo}$ and
$$\Pi_2(a,g)=h+am_{B_0}(g),$$
which, together with Lemmas \ref{lf.v} and \ref{lf.u}, and some
arguments similar to those used in the proof of
\cite[(5.6)]{bgk}, further implies that, for all
$f\in\hfa$ and $g\in\bmo$,
\begin{equation*}
\lf\|\Pi_2(f,g)\r\|_{\hlo}=
\lf\|\cm\lf(\Pi_2(f,g)\r)\r\|_{\llo}\ls\|f\|_{\hfa}\|g\|_{\bmo}.
\end{equation*}
By this, Theorem \ref{td.f} with $T(\cdot):=\Pi_2(\cdot,g)$
and $A\sim\|g\|_{\bmo}$ for any fixed $g\in\bmo$
and the fact that $\hlo$ is a quasi-Banach space
(see, for example, \cite[Section 2.4]{k14}),
we know that $\Pi_2$ can be extended to a bounded bilinear
operator from $\hona\times\bmo$ into $\hlo$, which
completes the proof of Theorem \ref{tf.d}.
\end{proof}

Now we are ready to prove Theorem \ref{ta.a}.

\begin{proof}[Proof of Theorem \ref{ta.a}]
We first claim that, to show Theorem \ref{ta.a}, it suffices to prove that,
for any $(1,2)$-atom $a$, supported in a ball $B_0:=B(x_0,r_0)$
for some $x_0\in\cx$ and $r_0\in(0,\fz)$,
and $g\in\bmo$,
\begin{equation}\label{f.b}
a\times g=\Pi_1(a,g)+\Pi_2(a,g)+\Pi_3(a,g) \quad {\rm in}\quad
\lf(\cg^\ez_0(\bz,\gz)\r)^*
\end{equation}
with $\ez$, $\bz$ and $\gz$ as in Theorem \ref{ta.a}.

Assuming that \eqref{f.b} holds true, we now show the conclusion
of Theorem \ref{ta.a}. Indeed, for any $f\in\hona$, by Definition \ref{dc.k},
we know that there exist a sequence $\{a_j\}_{j\in\nn}$
of $(1,2)$-atoms and $\{\lz_j\}_{j\in\nn}\st\cc$ such that
$f=\sum_{j\in\nn}\lz_j a_j$ in $\hona$.
For each $N\in\nn$, let $f_N:=\sum_{j=1}^N\lz_j a_j$.
We then have
\begin{equation}\label{y.f}
\lim_{N\to\fz}f_N=f\quad{\rm in}\quad \hona.
\end{equation}

From \eqref{f.b}, it follows easily that
\begin{equation}\label{y.e}
f_N\times g=\Pi_1\lf(f_N,g\r)+\Pi_2\lf(f_N,g\r)
+\Pi_3\lf(f_N,g\r) \quad {\rm in}\quad\lf(\cg^\ez_0(\bz,\gz)\r)^*.
\end{equation}
We now show that
\begin{equation}\label{y.d}
\lim_{N\to\fz}f_N\times g=f\times g\quad {\rm in}\quad\lf(\cg^\ez_0(\bz,\gz)\r)^*.
\end{equation}

To this end, for any $h\in\cg^\ez_0(\bz,\gz)$, by Lemma \ref{lf.t}
and \eqref{y.f}, we obtain
\begin{eqnarray*}
&&\lf|\langle f_N\times g,h\rangle-\langle f\times g,h\rangle\r|\\
&&\noz\hs=\lf|\langle gh,f_N-f\rangle\r|
\le\lf\|gh\r\|_{\bmo}\lf\|f_N-f\r\|_{\hona}\\
&&\noz\hs\ls\frac1{V_1(x_1)}\|h\|_{\cg(\bz,\,\gz)}\|g\|_{\bmop}
\lf\|f_N-f\r\|_{\hona}\to 0, \quad {\rm as} \quad N\to\fz,
\end{eqnarray*}
which shows \eqref{y.d}.

Moreover, from \eqref{y.f}, Theorems \ref{tf.b}, \ref{tf.c}
and \ref{tf.d}, we deduce that
$$\lim_{N\to\fz}\Pi_3(f_N,g)=\Pi_3(f,g)\quad{\rm in}\quad
\lon,$$
$\lim_{N\to\fz}\Pi_1(f_N,g)=\Pi_1(f,g)$ in $\hona$, and
$\lim_{N\to\fz}\Pi_2(f_N,g)=\Pi_2(f,g)$ in $\hlo$,
which immediately imply that they all also hold true in
$(\cg^\ez_0(\bz,\gz))^*$.
By these facts, \eqref{y.f}, \eqref{y.e} and \eqref{y.d},
we conclude that
\begin{eqnarray*}
f\times g&&=\lim_{N\to\fz}f_N\times g
=\lim_{N\to\fz}\lf[\Pi_1\lf(f_N,g\r)+\Pi_2\lf(f_N,g\r)+\Pi_3\lf(f_N,g\r)\r]\\
&&=\Pi_1(f,g)+\Pi_2(f,g)+\Pi_3(f,g)\quad {\rm in}\quad
\lf(\cg^\ez_0(\bz,\gz)\r)^*,
\end{eqnarray*}
which, combined with Theorems \ref{tf.b}, \ref{tf.c}
and \ref{tf.d}, then completes the proof of Theorem \ref{ta.a}
with $\scl:=\Pi_3$ and $\sch:=\Pi_1+\Pi_2$.

Now we show \eqref{f.b}.
By Theorem \ref{tf.a} and Remark \ref{rf.z}, we know that
\begin{equation}\label{f.1}
\wz{g}:=\sum_{j\in\zz}\sum_{\bz\in\scg_j}\lf\langle g,\psi^j_\bz\r\rangle
\lf[\psi^j_\bz-\chi_{\{k\in\zz:\ \dz^k>r_0\}}(j)\psi^j_\bz(x_0)\r]
\end{equation}
converges in both $L^2_{\loc}(\cx)$ and $\bmo$.

Now we choose a fixed collection
$\lf\{\scc_N:\ N\in\nn,\ \scc_N\st\scc\ {\rm and\ \scc_N\ is\ finite}\r\}$
as in \eqref{4.5x} and let
$$\wz{g}_N:=\sum_{(j,\,\bz)\in\scc_N}\lf\langle g,\psi^j_\bz\r\rangle
\lf[\psi^j_\bz-\chi_{\{k\in\zz:\ \dz^k>r_0\}}(j)\psi^j_\bz(x_0)\r]
=\sum_{(j,\,\bz)\in\scc_N}\lf\langle g,\psi^j_\bz\r\rangle  \psi^j_\bz=:g_N$$
in $\bmo$.

By the finiteness of $\scc_N$, we know that $g_N\in\ltw$,
which, together with Lemmas \ref{le.a}, \ref{le.b} and \ref{le.c},
and Remark \ref{re.g}, further implies that, for any $N\in\nn$,
\begin{equation}\label{f.z}
ag_N=\Pi_1\lf(a,g_N\r)+\Pi_2\lf(a,g_N\r)
+\Pi_3\lf(a,g_N\r) \quad {\rm in}\quad \lon.
\end{equation}

Then we claim that, for all $h\in\cg^{\ez}_0(\bz,\gz)$,
$
\lim_{N\to\fz}\langle a\times\wz{g}_N,h\rangle
=\langle a\times\wz{g},h\rangle.
$
Indeed, by the definition of the distribution,
the duality between $\hona$ and $\bmo$,
Lemma \ref{lf.t} and \eqref{f.1},
we conclude that
\begin{eqnarray}\label{f.y}
&&\lf|\langle a\times\wz{g}_N,h\rangle-\langle a\times\wz{g},h\rangle\r|\\
&&\noz\hs=\lf|\langle\lf(\wz{g}_N-\wz{g}\r)h,a\rangle\r|
\le\lf\|\lf(\wz{g}_N-\wz{g}\r)h\r\|_{\bmo}\|a\|_{\hona}\\
&&\noz\hs\ls\frac1{V_1(x_1)}\|h\|_{\cg(\bz,\,\gz)}
\lf\|\wz{g}_N-\wz{g}\r\|_{\bmop}\\
&&\noz\hs\ls\frac1{V_1(x_1)}\|h\|_{\cg(\bz,\,\gz)}
\lf[\lf\|\wz{g}_N-\wz{g}\r\|_{\bmo}
+\frac1{\sqrt{V_1(x_1)}}\lf\|\lf(\wz{g}_N-\wz{g}\r)
\chi_{B(x_1,\,1)}\r\|_{\ltw}\r]\\
&&\noz\hs\to0,\quad N\to\fz.
\end{eqnarray}
This proves the above claim.

By Remark \ref{rf.z}, we know that $g-\wz{g}=:c_4$ is
a constant. Let $c_{(N)}:=\wz{g}_N-g_N$ for any $N\in\nn$.
It is easy to see that $c_{(N)}$ is a constant, depending on $N$,
for each $N\in\nn$.
From this, \eqref{f.y}, \eqref{f.1}, \eqref{f.z},
Theorems \ref{tf.b}, \ref{tf.c}
and \ref{tf.d}, $\Pi_2(a,1)=a$ and \eqref{b.c}, we deduce that
\begin{eqnarray*}
a\times g&&=a\times\wz{g}+c_4 a=\lim_{N\to\fz}a\times\wz{g}_N+c_4a
=\lim_{N\to\fz}\lf[ag_N+c_{(N)}a\r]+c_4a\\
&&=\lim_{N\to\fz}\lf[\Pi_1\lf(a,g_N\r)+\Pi_2\lf(a,g_N\r)
+\Pi_3\lf(a,g_N\r)+c_{(N)}\Pi_2(a,1)\r]+c_4 a\\
&&=\lim_{N\to\fz}\lf[\Pi_1\lf(a,\wz{g}_N\r)+\Pi_2\lf(a,\wz{g}_N\r)
+\Pi_3\lf(a,\wz{g}_N\r)\r]+c_4 a\\
&&=\Pi_1\lf(a,\wz{g}\r)+\Pi_3\lf(a,\wz{g}\r)
+\lf[\Pi_2\lf(a,\wz{g}\r)+c_4\Pi_2(a,1)\r]\\
&&=\Pi_1\lf(a,g\r)+\Pi_2\lf(a,g\r)
+\Pi_3(a,g)\quad {\rm in}\quad (\cg^{\ez}_0(\bz,\gz))^*,
\end{eqnarray*}
which completes the
proof of \eqref{f.b} and hence Theorem \ref{ta.a}.
\end{proof}

\bigskip

Xing Fu, Dachun Yang (Corresponding author) and Yiyu Liang

\medskip

School of Mathematical Sciences, Beijing Normal University,
Laboratory of Mathematics and Complex Systems, Ministry of
Education, Beijing 100875, People's Republic of China

\smallskip

{\it E-mails}: \texttt{xingfu@mail.bnu.edu.cn} (X. Fu)

\hspace{1.55cm}\texttt{dcyang@bnu.edu.cn} (D. Yang)

\hspace{1.55cm}\texttt{liangyiyu@mail.bnu.edu.cn} (Y. Liang)

\end{document}